\theoremstyle{plain}
\newtheorem{thm}{Theorem}[section]
\newtheorem*{thm*}{Theorem}
\newtheorem{prop}[thm]{Proposition}
\newtheorem{lem}[thm]{Lemma}
\newtheorem{cor}[thm]{Corollary}
\theoremstyle{definition}
\theoremstyle{remark}
\newtheorem{rem}{Remark}[section]
\newcommand{\vol}{\operatorname{vol}}
\newcommand{\ric}{\operatorname{Ric}}
\newcommand{\Div}{\operatorname{div}}
\newcommand{\Hess}{\operatorname{Hess}}
\newcommand{\bm}{\partial M}
\newcommand{\ball}{B^{n}_{\kappa,\lambda}}
\newcommand{\const}{C_{\kappa,\lambda}}
\newcommand{\bconst}{\bar{C}_{\kappa,\lambda}}
\newcommand{\bball}{\partial B^{n}_{\kappa,\lambda}}
\newcommand{\sk}{s_{\kappa}}
\newcommand{\ck}{c_{\kappa}}
\newcommand{\dm}{D(M,\partial M)}
\newcommand{\tr}{\operatorname{trace}}
\newcommand{\cut}{\mathrm{Cut}\,}
\newcommand{\expp}{\exp^{\perp}}
\newcommand{\inte}{\mathrm{Int}\,}
\newcommand{\tbp}{T^{\perp}\bm}
\newcommand{\thetaex}{\theta}
\title[rigidity of manifolds with boundary]{rigidity of manifolds with boundary under a lower Bakry-\'Emery\\ Ricci curvature bound}
\author{Yohei Sakurai}
\date{September 21, 2016}
\address{Graduate School of Pure and Applied Sciences, University of Tsukuba, Tennodai 1-1-1, Tsukuba, Ibaraki, 305-8577, Japan}
\email{sakurai@math.tsukuba.ac.jp}
\thanks{Research Fellow of Japan Society for the Promotion of Science for 2014-2016}
\subjclass[2010]{Primary 53C20}
\keywords{Manifold with boundary; Bakry-\'Emery Ricci curvature}
\begin{document}
\maketitle

\begin{abstract}
We study Riemannian manifolds with boundary under a lower Bakry-\'Emery Ricci curvature bound.
In our weighted setting,
we prove several rigidity theorems for such manifolds with boundary. 
We conclude a rigidity theorem for the inscribed radii,
a volume growth rigidity theorem for the metric neighborhoods of the boundaries,
and various splitting theorems.
We also obtain rigidity theorems for the smallest Dirichlet eigenvalues for the weighted $p$-Laplacians.
\end{abstract}

\section{Introduction}
For Riemannian manifolds without boundary,
under a lower Bakry-\'Emery Ricci curvature bound,
we know several comparison results and rigidity theorems (see e.g., \cite{FLZ}, \cite{Lo}, \cite{Q} and \cite{WW}).
For metric measure spaces,
Lott and Villani \cite{LV1}, \cite{LV2},
and Sturm \cite{St2}, \cite{St3} have introduced the so-called curvature dimension condition
that is equivalent to a lower Bakry-\'Emery Ricci curvature bound for manifolds without boundary.
Under a curvature dimension condition,
they have obtained comparison results in \cite{LV2} and \cite{St2}.
Under a more restricted condition,
Gigli \cite{G},
and Ketterer \cite{Ke1}, \cite{Ke2} have recently studied rigidity theorems.

In this paper,
we study Riemannian manifolds with boundary
under a lower Bakry-\'Emery Ricci curvature bound,
and under a lower mean curvature bound for the boundary.
For such manifolds with boundary,
we obtain several comparison results,
and we prove rigidity theorems.
In an unweighted standard setting,
for instance,
Heintze and Karcher \cite{HK},
and Kasue \cite{K2} have obtained comparison results,
and Kasue \cite{K3}, \cite{K4},
and the author \cite{Sa} have done rigidity theorems.
We generalize them in our weighted setting.
\subsection{Setting}
For $n\geq 2$,
let $M$ be an $n$-dimensional, 
connected complete Riemannian manifold with boundary with Riemannian metric $g$.
The boundary $\bm$ is assumed to be smooth.
We denote by $d_{M}$ the Riemannian distance on $M$ induced from the length structure determined by $g$.
Let $f:M\to \mathbb{R}$ be a smooth function.
For the Riemannian volume measure $\vol_{g}$ on $M$ induced from $g$,
we put $m_{f}:=e^{-f}\, \vol_{g}$.

We denote by $\ric_{g}$ the Ricci curvature on $M$ defined by $g$.
We denote by $\nabla f$ the \textit{gradient} of $f$,
and by $\Hess f$ the \textit{Hessian} of $f$.
For $N\in (-\infty,\infty]$,
the \textit{Bakry-\'Emery Ricci curvature} $\ric^{N}_{f}$ is defined as follows (\cite{BE}, \cite{Q}):
If $N \in (-\infty,\infty)\setminus\{n\}$,
then
\begin{equation*}
\ric^{N}_{f}:=\ric_{g}+\Hess f-\frac{\nabla f \otimes \nabla f}{N-n};
\end{equation*}
if $N=\infty$,
then $\ric^{N}_{f}:=\ric_{g}+\Hess f$;
if $N=n$,
and if $f$ is a constant function,
then $\ric^{N}_{f}:=\ric_{g}$;
if $N=n$,
and if $f$ is not constant,
then put $\ric^{N}_{f}:=-\infty$.
For $K \in \mathbb{R}$, 
by $\ric^{N}_{f,M}\geq K$
we mean that the infimum of $\ric^{N}_{f}$ on the unit tangent bundle on the interior $\inte M$ of $M$ is at least $K$.
For $x\in \bm$,
we denote by $H_{x}$ the mean curvature on $\bm$ at $x$ in $M$ defined as the trace of the shape operator for the unit inner normal vector $u_{x}$ at $x$.
The \textit{$f$-mean curvature} $H_{f,x}$ at $x$ is defined by
\begin{equation*}
H_{f,x}:=H_{x}+g((\nabla f)_{x},u_{x}).
\end{equation*}
For $\Lambda \in \mathbb{R}$, 
by $H_{f,\bm}\geq \Lambda$
we mean $\inf_{x\in \bm} H_{f,x}\geq \Lambda$.
The subject of our study is a metric measure space $(M,d_{M},m_{f})$ such that
for $N\in [n,\infty)$,
and for $\kappa,\lambda \in \mathbb{R}$,
we have $\ric^{N}_{f,M}\geq (N-1)\kappa$ and $H_{f,\bm}\geq (N-1)\lambda$,
or such that $\ric^{\infty}_{f,M}\geq 0$ and $H_{f,\bm}\geq 0$.

\subsection{Inscribed radius rigidity}
For $\kappa \in \mathbb{R}$,
we denote by $M^{n}_{\kappa}$ the $n$-dimensional space form with constant curvature $\kappa$.
We say that $\kappa\in \mathbb{R}$ and $\lambda \in \mathbb{R}$ satisfy the \textit{ball-condition}
if there exists a closed geodesic ball $\ball$ in $M^{n}_{\kappa}$ with non-empty boundary $\bball$
such that $\bball$ has a constant mean curvature $(n-1)\lambda$.
We denote by $\const$ the radius of $\ball$.
We see that $\kappa$ and $\lambda$ satisfy the ball-condition if and only if either
(1) $\kappa>0$; 
(2) $\kappa=0$ and $\lambda>0$;
or (3) $\kappa<0$ and $\lambda>\sqrt{\vert \kappa \vert}$.
Let $s_{\kappa,\lambda}(t)$ be a unique solution of the so-called Jacobi-equation
\begin{equation*}
\phi''(t)+\kappa \phi(t)=0
\end{equation*}
with initial conditions $\phi(0)=1$ and $\phi'(0)=-\lambda$.
We see that $\kappa$ and $\lambda$ satisfy the ball-condition if and only if
the equation $s_{\kappa,\lambda}(t)=0$ has a positive solution;
in particular,
$\const=\inf \{t>0 \mid s_{\kappa,\lambda}(t)=0\}$.

Let $\rho_{\bm}:M\to \mathbb{R}$ be the distance function from $\bm$ defined as $\rho_{\bm}(p):=d_{M}(p,\bm)$.
The \textit{inscribed radius of} $M$ is defined as
\begin{equation*}
\dm:=\sup _{p\in M}\rho_{\bm}(p).
\end{equation*}

We have the following rigidity theorem for the inscribed radius:
\begin{thm}\label{thm:Ball rigidity}
Let $M$ be an $n$-dimensional, 
connected complete Riemannian manifold with boundary,
and let $f$ be a smooth function on $M$.
Let $\kappa \in \mathbb{R}$ and $\lambda \in \mathbb{R}$ satisfy the ball-condition.
For $N\in [n,\infty)$,
we suppose $\ric^{N}_{f,M}\geq (N-1)\kappa$ and $H_{f,\bm}\geq (N-1)\lambda$.
Then we have $D(M,\bm)\leq \const$.
Moreover,
if there exists $p\in M$ such that $\rho_{\bm}(p)=\const$,
then $(M,d_{M})$ is isometric to $(\ball,d_{\ball})$ and $N=n$;
in particular,
$f$ is constant on $M$.
\end{thm}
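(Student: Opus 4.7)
The plan is to derive a weighted Riccati inequality along geodesics normal to $\bm$, convert it into a sharp comparison for $\Delta_{f}\rho_{\bm}$ with the model warping function $s_{\kappa,\lambda}$, and read off both the inscribed radius bound and the rigidity from its equality case.

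For the comparison, fix $x\in\bm$ and let $\gamma_{x}:[0,t_{0})\to M$ be the unit-speed geodesic with $\gamma_{x}(0)=x$, $\dot\gamma_{x}(0)=u_{x}$, where $t_{0}$ is the first focal time of $\rho_{\bm}$ along $\gamma_{x}$. On this regular interval the Hessian $A_{t}:=\Hess\rho_{\bm}|_{\dot\gamma_{x}^{\perp}}$ obeys $A_{t}'=-A_{t}^{2}-R_{t}$, where $R_{t}(V)=R(V,\dot\gamma_{x})\dot\gamma_{x}$. Taking trace, using $\tr(A_{t}^{2})\ge(\tr A_{t})^{2}/(n-1)$, the identity $\ric_{g}=\ric^{N}_{f}-\Hess f+(df\otimes df)/(N-n)$, and the convexity bound $\alpha^{2}/(n-1)+\beta^{2}/(N-n)\ge(\alpha-\beta)^{2}/(N-1)$, one obtains for $\theta_{f}(t):=(\Delta_{f}\rho_{\bm})(\gamma_{x}(t))$ the Riccati-type inequality
\begin{equation*}
\theta_{f}'(t)\le -\frac{\theta_{f}(t)^{2}}{N-1}-(N-1)\kappa,\qquad \theta_{f}(0)=-H_{f,x}\le -(N-1)\lambda.
\end{equation*}
The model extremal $(N-1)s_{\kappa,\lambda}'(t)/s_{\kappa,\lambda}(t)$ saturates both the ODE and the initial datum, so a Sturm comparison yields $\theta_{f}(t)\le (N-1)s_{\kappa,\lambda}'(t)/s_{\kappa,\lambda}(t)$ on $[0,\min\{t_{0},\const\})$. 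Since the right-hand side tends to $-\infty$ as $t\uparrow\const$, the geodesic $\gamma_{x}$ must leave the regular set before $\const$, giving $\dm\le\const$.

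For the rigidity, suppose $\rho_{\bm}(p)=\const$ and pick a minimizing normal segment $\gamma_{x}:[0,\const]\to M$ with $\gamma_{x}(\const)=p$. Equality must hold throughout the comparison chain on $[0,\const]$, forcing $A_{t}=(s_{\kappa,\lambda}'/s_{\kappa,\lambda})\,\id$ (saturated $\tr(A^{2})$), sectional curvatures $\equiv\kappa$ on $2$-planes containing $\dot\gamma_{x}$, $\ric^{N}_{f}(\dot\gamma_{x},\dot\gamma_{x})\equiv(N-1)\kappa$, $H_{f,x}=(N-1)\lambda$, and the Cauchy--Schwarz relation $g(\nabla f,\dot\gamma_{x}(t))=-(N-n)s_{\kappa,\lambda}'(t)/s_{\kappa,\lambda}(t)$. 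Integrating the last identity gives $f(\gamma_{x}(t))=f(x)-(N-n)\log s_{\kappa,\lambda}(t)$; as $t\uparrow\const$ the right-hand side diverges to $+\infty$ unless $N-n=0$, while $f$ is smooth near the interior point $p$, so $N=n$ and $\nabla f\equiv 0$ along $\gamma_{x}$. The same equalities apply along every minimizing geodesic from any boundary point $y\in\bm$ to $p$, since $p$ is a maximum of $\rho_{\bm}$ and every unit vector in $T_{p}M$ exponentiates back to $\bm$ within distance $\const$. Consequently $\expp$ is surjective onto $M$, smooth and nonsingular on $\bm\times[0,\const)$, collapses $\bm\times\{\const\}$ to $p$, and pulls back $g$ to the warped metric $dt^{2}+s_{\kappa,\lambda}(t)^{2}g_{\bm}$; the smooth collapse at the focal radius forces $g_{\bm}$ to be the round metric of $\bball$, identifying $(M,d_{M})$ with $(\ball,d_{\ball})$.

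The main obstacle I anticipate is this global step: upgrading equality along a single maximizing geodesic to a complete warped-product description of $M$. One must argue that \emph{every} boundary point contributes a rigid normal segment of length $\const$ ending at $p$, control the regularity of $\expp$ up to the focal locus $\bm\times\{\const\}$ where $\rho_{\bm}$ ceases to be smooth, and deduce from the smooth collapse there that $(\bm,g_{\bm})$ is isometric to $\bball$ rather than some other cross-section compatible with the same warping profile. The local-to-global bridge provided by integrating the Cauchy--Schwarz equality to extract $N=n$ is clean, but the exponential-map regularity at the focal set remains the most technical component.
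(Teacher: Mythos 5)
Your derivation of the inscribed radius bound via the weighted Riccati comparison is essentially equivalent to the paper's Lemma~\ref{lem:Inscribed radius comparison} (which lets $\theta_{f}(t,x)\le e^{-f(x)}s^{N-1}_{\kappa,\lambda}(t)$ force a conjugate point before $\const$). Likewise, your extraction of $N=n$ from the divergence of $f\circ\gamma_{x}=f(x)-(N-n)\log s_{\kappa,\lambda}$ near $t=\const$ matches the paper's final step. The problem is the global rigidity argument, where you yourself flag the obstacle and then do not bridge it.

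The specific gap: your sentence ``the same equalities apply along every minimizing geodesic from any boundary point $y\in\bm$ to $p$, since $p$ is a maximum of $\rho_{\bm}$ and every unit vector in $T_{p}M$ exponentiates back to $\bm$ within distance $\const$'' asserts precisely what must be proved. A priori you only know one maximizing normal segment $\gamma_{x_{p_0}}:[0,\const]\to M$ from a foot point of $p$ to $p$; you do \emph{not} know that $M=B_{\const}(p)$, nor that the cut locus of $p$ is empty, nor that every $q\in M$ lies on a $\bm$-normal geodesic passing through $p$. These are global facts, and the equality case of your pointwise Riccati inequality along a single geodesic does not propagate to them. The paper's proof supplies the missing bridge via a maximum-principle argument: it sets $\Omega=\{q\in\inte M\setminus\{p_{0}\}\mid\rho_{\bm}(q)+\rho_{p_{0}}(q)=\const\}$, shows $\Omega$ is closed (it contains the initial geodesic) and open (by the sub-mean-value inequality for $\rho_{\bm}+\rho_{p_{0}}$ obtained by combining Lemma~\ref{lem:Laplacian comparison} with the weighted pointed Laplacian comparison of Lemma~\ref{lem:finite pointed Laplacian comparison}, feeding into the Calabi-type maximum principle, Lemma~\ref{lem:maximal principle}), and concludes $\Omega=\inte M\setminus\{p_{0}\}$. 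This single step yields $M=B_{\const}(p_{0})$, forces the cut locus of $p_{0}$ to be empty, and makes every point of $M$ sit on a rigid segment, so that the isometry to $\ball$ can be written down via $\exp_{p_{0}}$ and the $s_{\kappa}$-form of the radial Jacobi fields (Remark~\ref{rem:equality pointed Laplacian comparison}). Without some argument of this kind --- open-and-closed for the equality locus, or an equivalent global mechanism --- your proof does not establish the warped-product structure, and the subsequent claim that ``the smooth collapse at the focal radius forces $g_{\bm}$ to be the round metric of $\bball$'' remains unjustified.
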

Kasue \cite{K3} has proved Theorem \ref{thm:Ball rigidity}
in the standard case where $f=0$ and $N=n$.
We prove Theorem \ref{thm:Ball rigidity} in a similar way to that in \cite{K3}.
\begin{rem}
M. Li \cite{L} later than \cite{K3} has proved Theorem \ref{thm:Ball rigidity} when $f=0,\, N=n$ and $\kappa=0$.
H. Li and Wei have proved Theorem \ref{thm:Ball rigidity} in \cite{LW2} when $\kappa=0$,
and in \cite{LW1} when $\kappa<0$.
In \cite{LW1} and \cite{LW2},
Theorem \ref{thm:Ball rigidity} in the specific cases have been proved in a similar way to that in \cite{L}.
\end{rem}

\subsection{Volume growth rigidity}
For $\kappa\in \mathbb{R}$ and $\lambda\in \mathbb{R}$,
if $\kappa$ and $\lambda$ satisfy the ball-condition,
then we put $\bar{C}_{\kappa,\lambda}:=\const$;
otherwise,
$\bar{C}_{\kappa,\lambda}:=\infty$.
We define a function $\bar{s}_{\kappa,\lambda}:[0,\infty)\to \mathbb{R}$ by
\[
\bar{s}_{\kappa,\lambda}(t) := \begin{cases}
                                            s_{\kappa,\lambda}(t) & \text{if $t< \bar{C}_{\kappa,\lambda}$},\\
                                            0                     & \text{if $t\geq    \bar{C}_{\kappa,\lambda}$}.
                                 \end{cases}
\]
For $N\in [2,\infty)$,
we define a function $s_{N,\kappa,\lambda}:(0,\infty)\to \mathbb{R}$ by
\begin{equation*}
s_{N,\kappa,\lambda}(r):=\int^{r}_{0}\, \bar{s}^{N-1}_{\kappa,\lambda}(t)\,dt.
\end{equation*}

For $r>0$,
we put $B_{r}(\bm):=\{\,p\in M \mid \rho_{\bm}(p) \leq r\,\}$.
For $x\in \bm$,
let $\gamma_{x}:[0,T)\to M$ be the geodesic with initial conditions $\gamma_{x}(0)=x$ and $\gamma_{x}'(0)=u_{x}$.
We denote by $h$ the induced Riemnnian metric on $\bm$.
For the Riemannian volume measure $\vol_{h}$ on $\bm$ induced from $h$,
we put $m_{f,\bm}:=e^{-f|_{\bm}}\, \vol_{h}$.
For an interval $I$,
and for a connected component $\bm_{1}$ of $\bm$,
let $I \times_{\kappa,\lambda} \bm_{1}$ denote the warped product $(I \times \bm_{1}, dt^{2}+s^{2}_{\kappa,\lambda}(t)h)$.
We put $I_{\kappa,\lambda}:=[0,\bar{C}_{\kappa,\lambda}]\setminus \{\infty\}$,
and denote by $d_{\kappa,\lambda}$ the Riemannian distance on $I_{\kappa,\lambda} \times_{\kappa,\lambda} \bm$.

We obtain relative volume comparison theorems of Bishop-Gromov type for the metric neighborhoods of the boundaries (see Theorems \ref{thm:volume comparison} and \ref{thm:infinite volume comparison}).
We conclude rigidity theorems concerning the equality cases in those comparison theorems (see Subsection \ref{sec:Volume growth rigidity}).

One of the volume growth rigidity results is the following:
\begin{thm}\label{thm:volume growth distance rigidity}
Let $M$ be an $n$-dimensional,
connected complete Riemannian manifold with boundary,
and let $f:M\to \mathbb{R}$ be a smooth function.
Suppose that
$\bm$ is compact.
For $N\in [n,\infty)$,
we suppose $\ric^{N}_{f,M}\geq (N-1)\kappa$ and $H_{f,\bm} \geq (N-1)\lambda$.
If we have
\begin{equation}\label{eq:assumption of volume growth}
\liminf_{r\to \infty}\frac{m_{f}(B_{r}(\bm))}{s_{N,\kappa,\lambda}(r)}\geq m_{f,\bm}(\bm),
\end{equation}
then $(M,d_{M})$ is isometric to $(I_{\kappa,\lambda}\times_{\kappa,\lambda}\bm,d_{\kappa,\lambda})$,
and for every $x\in \bm$
we have $f\circ \gamma_{x}=f(x)-(N-n)\log s_{\kappa,\lambda}$ on $I_{\kappa,\lambda}$.
Moreover,
if $\kappa$ and $\lambda$ satisfy the ball-condition,
then $(M,d_{M})$ is isometric to $(\ball,d_{\ball})$ and $N=n$;
in particular,
$f$ is constant on $M$.
\end{thm}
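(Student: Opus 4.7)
The plan is to deduce the theorem from the weighted Bishop--Gromov comparison (Theorems \ref{thm:volume comparison} and \ref{thm:infinite volume comparison}), by showing that the volume growth assumption \eqref{eq:assumption of volume growth} forces equality in the comparison for every $r>0$, and then unpacking that equality to recover both the warped product structure and the prescribed form of $f$ along normal geodesics.

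First I would invoke the relative volume comparison to obtain the monotone non-increase of the ratio $r\mapsto m_{f}(B_{r}(\bm))/s_{N,\kappa,\lambda}(r)$, and compare it to the boundary volume by the derivative (or the limit as $r\to 0^{+}$) which evaluates to $m_{f,\bm}(\bm)$. Together with the hypothesis \eqref{eq:assumption of volume growth}, monotonicity forces
\begin{equation*}
m_{f}(B_{r}(\bm))=m_{f,\bm}(\bm)\cdot s_{N,\kappa,\lambda}(r)\qquad \text{for all }r>0.
\end{equation*}
In particular, $B_{r}(\bm)$ has positive $m_{f}$-measure for all $r<\bar{C}_{\kappa,\lambda}$, so the normal exponential map $\expp$ is surjective onto $M$ up to a set of measure zero, and $\dm\geq\bar{C}_{\kappa,\lambda}$.

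Next I would transport the equality down to individual normal geodesics. Writing $m_{f}(B_{r}(\bm))$ via the coarea formula as an integral over $\bm$ of the weighted Jacobian $\thetae_{f}(t,x)$ of $\expp$ (extended by $0$ past the focal cut locus of $\bm$), the pointwise weighted Jacobian inequality $\thetae_{f}(t,x)\leq \bar{s}^{N-1}_{\kappa,\lambda}(t)\,e^{-f(x)}$ coming from the Riccati/Bochner argument along $\gamma_{x}$ must saturate for a.e.\ $(t,x)$, hence for all $(t,x)$ with $t<\bar{C}_{\kappa,\lambda}$ by continuity. Standard rigidity in the weighted Riccati comparison then gives two things simultaneously along each $\gamma_{x}$: the shape operator of the level set $\rho_{\bm}=t$ is $(s_{\kappa,\lambda}'/s_{\kappa,\lambda})(t)\,\id$, and $(\log s_{\kappa,\lambda}^{N-n})'(t)=-(f\circ\gamma_{x})'(t)$. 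Integrating these yields, on the one hand, that $\gamma_{x}$ has no focal point in $[0,\bar{C}_{\kappa,\lambda})$ and the pulled-back metric equals $dt^{2}+s_{\kappa,\lambda}^{2}(t)h$, and, on the other hand, that $f\circ\gamma_{x}=f(x)-(N-n)\log s_{\kappa,\lambda}(t)$.

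To upgrade the local isometry $(t,x)\mapsto \gamma_{x}(t)$ to a global isometry $(I_{\kappa,\lambda}\times_{\kappa,\lambda}\bm,d_{\kappa,\lambda})\to (M,d_{M})$ I would use compactness of $\bm$ together with completeness of $M$: the map is a smooth immersion without focal points, each $\gamma_{x}$ is a minimizing segment up to its endpoint, and distinct geodesics $\gamma_{x}$ cannot meet before time $\bar{C}_{\kappa,\lambda}$ by the equality in the volume comparison. This establishes the first assertion. Finally, in the ball-condition case, $\bar{C}_{\kappa,\lambda}=\const<\infty$ and the warped product degenerates at $t=\const$ to a single point, so $\dm=\const$ is attained; Theorem \ref{thm:Ball rigidity} then yields the isometry with $(\ball,d_{\ball})$, the value $N=n$, and the constancy of $f$.

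The main obstacle I expect is the rigidity step for the weighted Jacobian: one must argue cleanly that equality in the integrated comparison propagates to equality of the Riccati inequality along every radial geodesic, and that the exponential map $\expp:[0,\bar{C}_{\kappa,\lambda})\times\bm\to M$ is in fact a diffeomorphism onto (rather than merely a surjection up to measure zero), which requires the absence of both focal points and double points. Compactness of $\bm$ and the explicit form of the shape operator obtained from the equality make this surmountable, but it is the part that requires the most care.
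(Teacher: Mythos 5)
Your proposal is correct and takes essentially the same route as the paper: force equality in the integrated Bishop--Gromov comparison using Lemma~\ref{lem:absolute volume comparison} and Theorem~\ref{thm:volume comparison}, localize to pointwise saturation of the weighted Jacobian along every $\gamma_{x}$, read off the warped-product metric and the formula for $f\circ\gamma_{x}$ from the Jacobi/Riccati rigidity (the paper's Lemmas~\ref{lem:half} and~\ref{lem:half rigidity} supply exactly the two steps you flag as needing care, namely that equality forces $\tau\geq R$ on $\bm$ via continuity of $\tau$, and the cut-locus-free gluing of the local isometries $\Phi_{i}$), and finish the ball-condition case via Theorem~\ref{thm:Ball rigidity}. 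The only wobble is the intermediate sentence claiming $\dm\geq\bar{C}_{\kappa,\lambda}$ from ``positive measure of $B_{r}(\bm)$'' --- that inference as stated is vacuous, and the correct argument (Lemma~\ref{lem:half}) is a contradiction argument, but you correctly identify this as the main obstacle rather than asserting it is done.
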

In \cite{Sa},
Theorem \ref{thm:volume growth distance rigidity} has been proved when $f=0$ and $N=n$.

In the case of $N=\infty$,
we have the following:
\begin{thm}\label{thm:infinite volume growth distance rigidity}
Let $M$ be a connected complete Riemannian manifold with boundary,
and let $f:M\to \mathbb{R}$ be a smooth function.
Suppose that
$\bm$ is compact.
Suppose $\ric^{\infty}_{f,M}\geq 0$ and $H_{f,\bm} \geq 0$.
If we have
\begin{equation}\label{eq:assumption of infinite volume growth}
\liminf_{r\to \infty}\frac{m_{f}(B_{r}(\bm))}{r}\geq m_{f,\bm}(\bm),
\end{equation}
then $(M,d_{M})$ is isometric to $([0,\infty)\times \bm,d_{[0,\infty)\times \bm})$.
\end{thm}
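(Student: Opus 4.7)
\medskip
\noindent
The plan is to deduce Theorem \ref{thm:infinite volume growth distance rigidity} from the sharpness of the $N=\infty$ relative volume comparison theorem (Theorem \ref{thm:infinite volume comparison}) for metric neighborhoods of the boundary, in the same spirit as Theorem \ref{thm:volume growth distance rigidity} in the finite-$N$ case. Under $\ric^{\infty}_{f,M}\geq 0$ and $H_{f,\bm}\geq 0$ with $\bm$ compact, that comparison asserts that $r\mapsto m_{f}(B_{r}(\bm))/r$ is non-increasing on $(0,\infty)$ with $\lim_{r\to 0^{+}} m_{f}(B_{r}(\bm))/r = m_{f,\bm}(\bm)$. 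In particular, $m_{f}(B_{r}(\bm))\leq r\,m_{f,\bm}(\bm)$ for every $r>0$, while the hypothesis \eqref{eq:assumption of infinite volume growth} supplies the reverse bound in the limit $r\to\infty$. Combined with the monotonicity, this forces the equality $m_{f}(B_{r}(\bm)) = r\,m_{f,\bm}(\bm)$ for every $r>0$.

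\medskip
\noindent
Next I unpack this equality pointwise along the inward normal geodesics. For $x\in \bm$, let $\tau(x)>0$ denote the cut distance of $\gamma_{x}$ from $\bm$, and set $\phi_{x}(t):=e^{-f(\gamma_{x}(t))}J_{x}(t)$, where $J_{x}(t)$ is the Jacobian of $\expp$ at $tu_{x}$. The $N=\infty$ weighted Riccati identity reads
\[
(\log \phi_{x})''(t) = -\tr(A_{x}(t)^{2}) - \ric^{\infty}_{f}(\gamma_{x}'(t),\gamma_{x}'(t)),
\]
where $A_{x}(t)$ is the Weingarten operator of the parallel hypersurface $\rho_{\bm}^{-1}(t)$ with respect to $\gamma_{x}'(t)$. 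The right side is non-positive, so $\log \phi_{x}$ is concave with $(\log \phi_{x})'(0)=-H_{f,x}\leq 0$, yielding the pointwise bound $\phi_{x}(t)\leq e^{-f(x)}$ on $[0,\tau(x))$. Combined with the coarea-type identity
\[
m_{f}(B_{r}(\bm)) = \int_{\bm}\int_{0}^{\min(r,\tau(x))} \phi_{x}(t)\, dt\, d\vol_{h}(x),
\]
the global equality above forces, for every $x\in \bm$, both $\tau(x)=\infty$ and $\phi_{x}\equiv e^{-f(x)}$ on $[0,\infty)$.

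\medskip
\noindent
Since $\log\phi_{x}$ is then constant, the concavity inequality must be equality, so $\tr(A_{x}(t)^{2})\equiv 0$ and $\ric^{\infty}_{f}(\gamma_{x}',\gamma_{x}')\equiv 0$. Hence $A_{x}\equiv 0$ along each $\gamma_{x}$, so the Jacobi fields along $\gamma_{x}$ whose initial vectors are tangent to $\bm$ are parallel; consequently the pullback metric $(\expp)^{*}g$ on $\bm\times [0,\infty)$ takes the straight product form $dt^{2}+h$. The identity $\tau(x)=\infty$ for all $x$, together with compactness of $\bm$, completeness of $M$, and the fact that every $p\in M$ is joined to $\bm$ by a minimizer realized by some $\gamma_{x}$, forces $\expp\colon \bm\times [0,\infty)\to M$ to be a diffeomorphism. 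This yields the claimed isometry $(M,d_{M})\cong ([0,\infty)\times \bm,\,d_{[0,\infty)\times \bm})$.

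\medskip
\noindent
The main obstacle I expect is the equality analysis of the Riccati inequality in the $N=\infty$ case: unlike for finite $N$, there is no Bishop-type model ODE to compare $\phi_{x}$ against, so one must use log-concavity of $\phi_{x}$ and the initial condition $(\log \phi_{x})'(0)=-H_{f,x}\leq 0$ as the primary handle. A related subtlety is that to pass from equality in the comparison to the unwarped product $dt^{2}+h$ one needs the full vanishing $A_{x}\equiv 0$ as an operator, not merely the trace identity $\tr A_{x}=0$; this is exactly why we extract $\tr(A_{x}^{2})=0$ from $(\log \phi_{x})''\equiv 0$ rather than only the weaker condition on the mean curvature.
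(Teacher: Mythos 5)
Your proof is correct and follows essentially the same route as the paper: equality in the $N=\infty$ volume comparison (Lemma \ref{lem:infinite absolute volume comparison} and Theorem \ref{thm:infinite volume comparison}) forces $\tau\equiv\infty$ on $\bm$ and parallel $\bm$-Jacobi fields along every $\gamma_{x}$, after which $\expp$ is a Riemannian isometry from the unwarped product $[0,\infty)\times\bm$ onto $M$. The only technical difference is that you establish the comparison and its rigidity via the weighted Riccati equation, whereas the paper runs the index-form argument (Lemma \ref{lem:index form}) inside Lemma \ref{lem:infinite Basic comparison}; the two are equivalent here and extract the same equality data $\phi_{x}\equiv e^{-f(x)}$ and $A_{x}\equiv 0$.
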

\begin{rem}
On one hand,
under the same setting as in Theorem \ref{thm:volume growth distance rigidity},
we always have the following (see Lemma \ref{lem:absolute volume comparison}):
\begin{equation}\label{eq:volume growth sup}
\limsup_{r\to \infty}\frac{m_{f}(B_{r}(\bm))}{s_{N,\kappa,\lambda}(r)}\leq m_{f,\bm}(\bm).
\end{equation}
On the other hand,
under the same setting as in Theorem \ref{thm:infinite volume growth distance rigidity},
we always have the following (see Lemma \ref{lem:infinite absolute volume comparison}):
\begin{equation}\label{eq:infinite volume growth sup}
\limsup_{r\to \infty}\frac{m_{f}(B_{r}(\bm))}{r}\leq m_{f,\bm}(\bm).
\end{equation}
Theorems \ref{thm:volume growth distance rigidity} and \ref{thm:infinite volume growth distance rigidity} are concerned with rigidity phenomena.
\end{rem}
\begin{rem}
In the forthcoming paper \cite{Sa2},
we prove the same result as Theorem \ref{thm:infinite volume growth distance rigidity}
under a weaker assumption that $\ric^{N}_{f,M}\geq 0$ and $H_{f,\bm} \geq 0$ for $N<1$.
In the rigidity case,
we prove further that
for every $x\in \bm$
the function $f\circ \gamma_{x}$ is constant on $[0,\infty)$ (see Theorem 1.1 in \cite{Sa2}).
\end{rem}

\subsection{Splitting theorems}
Define a function $\tau:\bm\to \mathbb{R}\cup \{\infty\}$ by
\begin{equation}\label{eq:tau}
\tau(x):=\sup \{\,t \in (0,\infty) \mid \rho_{\bm}(\gamma_{x}(t))=t\,\}.
\end{equation}

We obtain the following splitting theorem:
\begin{thm}\label{thm:splitting}
Let $M$ be an $n$-dimensional,
connected complete Riemannian manifold with boundary,
and let $f:M\to \mathbb{R}$ be a smooth function.
Let $\kappa \leq 0$ and $\lambda:=\sqrt{\vert \kappa \vert}$.
For $N\in [n,\infty)$,
we suppose $\ric^{N}_{f,M}\geq (N-1)\kappa$ and $H_{f,\bm} \geq (N-1)\lambda$.
If for some $x_{0}\in \bm$
we have $\tau(x_{0})=\infty$,
then $(M,d_{M})$ is isometric to $([0,\infty)\times_{\kappa,\lambda}\bm,d_{\kappa,\lambda})$,
and for all $x\in \bm$ and $t\in [0,\infty)$
we have $(f\circ \gamma_{x})(t)=f(x)+(N-n)\lambda t$.
\end{thm}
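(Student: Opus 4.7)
The plan is to adapt the Busemann-function argument of Kasue (who used it in the unweighted case $f=0$, $N=n$) to the present Bakry--\'{E}mery setting: I would use the single infinite-length ray $\gamma_{x_{0}}$ to build an auxiliary function which, via a strong maximum principle, must coincide with $\rho_{\bm}$, thereby forcing a warped-product structure on all of $M$. Concretely, I define
\begin{equation*}
b(p):=\lim_{t\to\infty}\bigl(t-d_{M}(p,\gamma_{x_{0}}(t))\bigr).
\end{equation*}
The triangle inequality, together with $\rho_{\bm}(\gamma_{x_{0}}(t))=t$ (which holds because $\tau(x_{0})=\infty$), shows that the expression inside the limit is nondecreasing in $t$ and bounded above by $\rho_{\bm}(p)$; hence $b$ is well-defined, $b\leq\rho_{\bm}$ on $M$, and $b\equiv\rho_{\bm}$ along $\gamma_{x_{0}}([0,\infty))$.

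Next I would exploit two matching Laplacian comparisons. For the chosen parameters one has $s_{\kappa,\lambda}(t)=e^{-\lambda t}$ and $(\log s_{\kappa,\lambda})'\equiv-\lambda$, so the weighted Laplacian comparison for $\rho_{\bm}$ (invoking both $H_{f,\bm}\geq(N-1)\lambda$ and $\ric^{N}_{f,M}\geq(N-1)\kappa$) gives $\Delta_{f}\rho_{\bm}\leq-(N-1)\lambda$ on $M\setminus\bm$ in the barrier sense. On the other hand, the weighted Laplacian comparison for the distance from a point (using only $\ric^{N}_{f,M}\geq(N-1)\kappa$), combined with $\sqrt{|\kappa|}\coth(\sqrt{|\kappa|}r)\to\lambda$ as $r\to\infty$, gives $\Delta_{f}b\geq-(N-1)\lambda$ on $M\setminus\bm$ in the barrier sense. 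Hence $u:=\rho_{\bm}-b\geq 0$ is $f$-superharmonic and vanishes on the interior arc $\gamma_{x_{0}}((0,\infty))$, so the strong maximum principle for $\Delta_{f}$ forces $u\equiv 0$, i.e., $b=\rho_{\bm}$ on $M$.

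This identity makes $\rho_{\bm}$ smooth on $M\setminus\bm$ with $|\nabla\rho_{\bm}|\equiv 1$, so $\expp\colon\bm\times[0,\infty)\to M$ is a diffeomorphism and $\tau(x)=\infty$ for every $x\in\bm$. The pointwise equality $\Delta_{f}\rho_{\bm}\equiv-(N-1)\lambda$ then back-propagates through the Bochner--Riccati derivation of the comparison: equality in the weighted Cauchy--Schwarz bound $|A|^{2}+(f')^{2}/(N-n)\geq H_{f}^{2}/(N-1)$ and in $\ric^{N}_{f}(\gamma_{x}',\gamma_{x}')\geq(N-1)\kappa$ forces the shape operator of every level set $\{\rho_{\bm}=t\}$ to be $\lambda\cdot\id$ and $\langle\nabla f,\gamma_{x}'\rangle$ to be the constant $(N-n)\lambda$ along every normal geodesic. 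These two facts give $(M,d_{M})\cong([0,\infty)\times_{\kappa,\lambda}\bm,d_{\kappa,\lambda})$ and $(f\circ\gamma_{x})(t)=f(x)+(N-n)\lambda t$.

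The main obstacle is the combination of the previous two steps: both Laplacian comparisons must be set up in a weak/barrier sense compatible with the strong maximum principle for $\Delta_{f}$, and then every equality case --- in particular the rigidity in the \emph{weighted} Cauchy--Schwarz inequality --- must be tracked carefully enough that umbilicity of the parallel hypersurfaces and the precise linear growth of $f$ along normal geodesics emerge simultaneously from the single pointwise identity $\Delta_{f}\rho_{\bm}\equiv-(N-1)\lambda$.
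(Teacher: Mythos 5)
Your overall strategy is the same as the paper's: build the Busemann function of $\gamma_{x_0}$, pinch it between the pointed Laplacian comparison and the comparison for $\rho_{\bm}$ (both giving the bound $(N-1)\lambda$ in the limit), invoke a maximum principle to force $b=\rho_{\bm}$, and then read off the warped-product structure and the growth of $f$ from the equality case.

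There is, however, a genuine gap at the step ``$\Delta_{f}b\geq-(N-1)\lambda$ on $M\setminus\bm$ in the barrier sense, hence $u\equiv 0$ by the strong maximum principle.'' The barrier for $b$ at a point $q$ is produced from an asymptote of $\gamma_{x_0}$ from $q$, and the pointed Laplacian comparison (the paper's Lemma~\ref{lem:finite pointed Laplacian comparison}) requires that this geodesic lie in $\inte M$. In a manifold with boundary, asymptotes from an arbitrary interior point can touch $\bm$, so the barrier estimate is \emph{not} available globally; it is only available near the ray, where the paper's Lemma~\ref{lem:asymptote} guarantees asymptotes stay interior. The paper therefore cannot apply a global maximum principle and instead runs an open-and-closed argument: it shows the set $\Omega=\{y\in\bm_{0}\mid\tau(y)=\infty\}$ is open (using the local subharmonicity of $b_{\gamma_{y_0}}-\rho_{\bm}$ near $\gamma_{y_0}$, the Calabi maximum principle Lemma~\ref{lem:maximal principle}, and Lemma~\ref{lem:busemann function} to push the conclusion back to the foot points) and closed, so $\Omega=\bm_0$; then Lemma~\ref{lem:splitting1} yields that $\bm$ is connected and $\cut\bm=\emptyset$. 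Your sketch compresses all of this into a single global maximum-principle step, which as stated would fail at points whose asymptotes leave $\inte M$. You should either carry out the propagation explicitly, or show that the set $\{u=0\}$ is open by the same mechanism. (A secondary remark: the rigidity at the end is obtained in the paper via index-form/Jacobi-field equality rather than a Riccati--Cauchy--Schwarz computation; these are equivalent, so that part of your proposal is fine, though the constant level-set shape operator is $\lambda\cdot\id$ for every $t$ precisely because $s'_{\kappa,\lambda}/s_{\kappa,\lambda}\equiv-\lambda$ in this degenerate case.)
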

In the standard case where $f=0$ and $N=n$,
Kasue \cite{K3} has proved Theorem \ref{thm:splitting}
under the assumption that the boundary is compact (see also the work of Croke and Kleiner \cite{CK}). 
In the standard case,
Theorem \ref{thm:splitting} itself has been proved in \cite{Sa}.

In the case of $N=\infty$,
we have the following splitting theorem:
\begin{thm}\label{thm:infinite splitting}
Let $M$ be a connected complete Riemannian manifold with boundary,
and let $f:M\to \mathbb{R}$ be a smooth function such that $\sup f(M)<\infty$.
Suppose $\ric^{\infty}_{f,M}\geq 0$ and $H_{f,\bm} \geq 0$.
If for some $x_{0}\in \bm$
we have $\tau(x_{0})=\infty$,
then the metric space $(M,d_{M})$ is isometric to $([0,\infty)\times\bm,d_{[0,\infty)\times\bm})$.
\end{thm}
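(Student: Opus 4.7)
The plan is to adapt the proof of Theorem~\ref{thm:splitting} to the $N=\infty$ setting via a Busemann-function argument in the spirit of Wylie's splitting theorem for closed manifolds, with the hypothesis $\sup f(M)<\infty$ compensating for the failure of standard weighted Laplacian comparison in infinite dimension. First I would establish the inequality $\Delta_{f}\rho_{\bm}\leq 0$ in the barrier sense. Using $\ric^{\infty}_{f,M}\geq 0$ and $H_{f,\bm}\geq 0$, the $f$-weighted Bochner formula applied to $\rho_{\bm}$ along any normal geodesic $\gamma_{x}$ shows that $\theta(t):=\Delta_{f}\rho_{\bm}(\gamma_{x}(t))$ satisfies
\[
\theta'(t) \leq -\vert \Hess\rho_{\bm}(\gamma_{x}(t))\vert^{2} - \ric^{\infty}_{f}(\gamma_{x}'(t),\gamma_{x}'(t)) \leq 0,
\]
with boundary value $\theta(0^{+})=-H_{f,x}\leq 0$. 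Hence $\theta\leq 0$ pointwise on $M\setminus\cut\bm$, and the usual barrier argument extends this to all of $M$.

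Next I would exploit the infinite ray by introducing the Busemann function
\[
b(p) := \lim_{t\to\infty}\bigl(t - d_{M}(p,\gamma_{x_{0}}(t))\bigr).
\]
The limit exists (monotone and bounded), and the triangle inequality $t \leq d_{M}(p,\gamma_{x_{0}}(t)) + \rho_{\bm}(p)$ gives $b \leq \rho_{\bm}$ on $M$, with equality along the image of $\gamma_{x_{0}}$. Under $\ric^{\infty}_{f,M}\geq 0$ together with $\sup f<\infty$, a Wylie-type weighted Laplacian comparison produces $\Delta_{f}b \geq 0$ in the barrier sense. Setting $\phi := \rho_{\bm}-b$, I would use that $\phi \geq 0$ on $M$, $\phi \equiv 0$ along $\gamma_{x_{0}}$, and $\Delta_{f}\phi \leq 0$ in the barrier sense; the strong minimum principle for the $f$-Laplacian then forces $\phi \equiv 0$ throughout $\inte M$, and by continuity on all of $M$.

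With $\rho_{\bm}=b$ smooth on $\inte M$ and $\vert \nabla\rho_{\bm}\vert =1$, the equality case of the Bochner step in Step~1 forces $\Hess\rho_{\bm}\equiv 0$ and $\ric^{\infty}_{f}(\nabla\rho_{\bm},\nabla\rho_{\bm})\equiv 0$. Then $\nabla\rho_{\bm}$ is a parallel unit vector field on $\inte M$, each integral curve through $x\in\bm$ coincides with $\gamma_{x}$, and the map $\Phi\colon[0,\infty)\times\bm \to M$, $\Phi(t,x):=\gamma_{x}(t)$, is a global Riemannian isometry with the product metric, yielding the desired splitting. The main obstacle is establishing the weighted Laplacian inequality $\Delta_{f}b \geq 0$: the direct finite-dimensional estimate $\Delta_{f}d \leq (N-1)/d$ degenerates as $N \to \infty$, and one must instead leverage $\sup f<\infty$ via a weighted mean-curvature reformulation in Wylie's style, carefully adapted to the half-space geometry with a boundary.
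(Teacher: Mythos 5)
Your overall strategy—compare $\rho_{\bm}$ with the Busemann function $b=b_{\gamma_{x_0}}$, establish weighted Laplacian inequalities, invoke a strong minimum principle for $\rho_{\bm}-b$, and read off the splitting from the equality case—is the same strategy the paper uses, modulo a harmless sign convention (you take $\Delta=\operatorname{tr}\Hess$ while the paper uses $\Delta=-\operatorname{tr}\Hess$, so your $\Delta_f\rho_{\bm}\le 0$ is the paper's Lemma~\ref{lem:infinite Laplacian comparison} and your $\Delta_f b\ge 0$ is the paper's Lemma~\ref{lem:infinite pointed Laplacian comparison}).

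The genuine gap is in your assertion that $\Delta_f b\ge 0$ holds ``in the barrier sense'' on all of $\inte M$. The standard barrier for the Busemann function at a point $p$ is $q\mapsto b(p)+t-d_M(q,\gamma_p(t))$, where $\gamma_p$ is an asymptote for $\gamma_{x_0}$ from $p$; the weighted Laplacian comparison for this distance function, and hence the subharmonicity of $b$, requires that the minimal geodesic from $q$ to $\gamma_p(t)$ lie in $\inte M$. In a manifold with boundary there is no a priori reason why asymptotes emanating from an arbitrary $p\in\inte M$ should avoid $\bm$, and indeed the paper's Lemma~\ref{lem:infinite pointed Laplacian comparison} explicitly carries that hypothesis. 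The paper therefore cannot run a global minimum principle: it works locally near $p_0=\gamma_{y_0}(l)$ for $y_0$ in the set $\Omega:=\{y\in\bm_0:\tau(y)=\infty\}$, using Lemma~\ref{lem:asymptote} to guarantee that asymptotes from points near $p_0$ are interior, applying the maximum principle (Lemma~\ref{lem:maximal principle}) only on a small neighborhood $U$ to get $b_{\gamma_{y_0}}=\rho_{\bm}$ on $U$, then using Lemma~\ref{lem:busemann function} to enlarge $\Omega$. Closedness of $\Omega$ is from continuity of $\tau$, and connectedness of $\bm_0$ forces $\Omega=\bm_0$. Only after that does Lemma~\ref{lem:splitting1} give $\cut\bm=\emptyset$ and the connectedness of $\bm$. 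Your argument asserts the global weak subharmonicity of $b$ without justification and then applies the minimum principle across all of $\inte M$; absent the interior-asymptote control, the differential inequality for $b$ is simply not known to hold at most points, so the minimum principle cannot be invoked there. You flag the obstacle at the end (``one must instead leverage $\sup f<\infty$ via a weighted mean-curvature reformulation''), but the real obstacle is not the unweighted-to-weighted passage—$\sup f<\infty$ already handles that—it is the interaction of asymptotes with $\bm$, which you do not address at all. A secondary point: the conclusion that $\rho_{\bm}=b$ is smooth on $\inte M$ and that $\cut\bm=\emptyset$ (so that $\Phi(t,x)=\gamma_x(t)$ is globally well-defined and injective) needs an argument such as the paper's Lemma~\ref{lem:busemann function}; writing ``smooth on $\inte M$'' after the minimum principle presupposes it.
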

\begin{rem}
In Theorem \ref{thm:infinite splitting},
we need the assumption $\sup f(M)<\infty$.
We denote by $\mathbb{S}^{n-1}$ the $(n-1)$-dimensional standard unit sphere,
and by $ds^{2}_{n-1}$ the canonical metric on $\mathbb{S}^{n-1}$.
We put 
\begin{equation*}
M:=\left([0,\infty)\times \mathbb{S}^{n-1},dt^{2}+\cosh^{2}t\,ds^{2}_{n-1}\right).
\end{equation*}
Let $f$ be a function on $M$ defined by $f(p):=(n-1)\rho_{\bm}(p)^{2}$.
Then for all $x\in \bm$
we have $H_{f,x}=H_{x}=0$.
Take $p\in \inte M$,
and put $l:=\rho_{\bm}(p)$.
We choose an orthonormal basis of $\{e_{i}\}^{n}_{i=1}$ of $T_{p}M$ such that $e_{n}=\nabla \rho_{\bm}$.
For all $i=1,\dots,n-1$,
we have 
\begin{equation*}
\ric_{g}(e_{i},e_{i})=(n-2)\frac{1-\sinh^{2}l}{\cosh^{2}l}-1,\, \Hess f(e_{i},e_{i})=2(n-1)l\frac{\sinh l}{\cosh l},
\end{equation*}
and $\ric_{g}(e_{n},e_{n})=-(n-1),\,\Hess f(e_{n},e_{n})=2(n-1)$.
For all $i,j=1,\dots,n$ with $i\neq j$,
we have $\ric_{g}(e_{i},e_{j})=0$ and $\Hess f(e_{i},e_{j})=0$.
From direct computations,
it follows that 
if $n\geq 3$,
then $\ric^{\infty}_{f,M}\geq 0$ and $H_{f,\bm}\geq 0$.
On the other hand,
$M$ is not isometric to the direct product $[0,\infty)\times \mathbb{S}^{n-1}$.
\end{rem}
\begin{rem}
In \cite{Sa2},
we prove the same splitting theorem as Theorem \ref{thm:infinite splitting}
under a weaker assumption that $\ric^{N}_{f,M}\geq 0$ and $H_{f,\bm} \geq 0$ for $N<1$.
In the splitting case,
we show that
for every $x\in \bm$
the function $f\circ \gamma_{x}$ is constant (see Theorem 1.3 in \cite{Sa2}).
\end{rem}
In Theorems \ref{thm:splitting} and \ref{thm:infinite splitting},
by applying the splitting theorems of Cheeger-Gromoll type (cf. \cite{CG}) to the boundary,
we obtain the multi-splitting theorems (see Subsection \ref{sec:Multi-splitting}).
We also generalize the splitting theorems studied in \cite{K3} (and \cite{CK}, \cite{I})
for manifolds with boundary whose boundaries are disconnected (see Subsection \ref{sec:Variants of splitting theorems}).

\subsection{Eigenvalue rigidity}
For $p\in [1,\infty)$,
the \textit{$(1,p)$-Sobolev space} $W^{1,p}_{0}(M,m_{f})$ \textit{on} $(M,m_{f})$ \textit{with compact support}
is defined as the completion of the set of all smooth functions on $M$ whose support is compact and contained in $\inte M$
with respect to the standard $(1,p)$-Sobolev norm.
We denote by $\Vert \cdot \Vert$ the standard norm induced from $g$,
and by $\Div$ the divergence with respect to $g$.
For $p\in [1,\infty)$,
the \textit{$(f,p)$-Laplacian} $\Delta_{f,p}\, \phi$ for $\phi \in W^{1,p}_{0}(M,m_{f})$ is defined by
\begin{equation*}
\Delta_{f,p}\,\phi:=-e^{f}\,\Div \,\left(e^{-f} \Vert \nabla \phi \Vert^{p-2}\, \nabla \phi \right)
\end{equation*}
as a distribution on $W^{1,p}_{0}(M,m_{f})$.
A real number $\mu$ is said to be an \textit{$(f,p)$-Dirichlet eigenvalue} for $\Delta_{f,p}$ on $M$
if there exists a non-zero function $\phi \in W^{1,p}_{0}(M,m_{f})$
such that $\Delta_{f,p} \phi=\mu \vert \phi \vert^{p-2}\,\phi$ holds  on $\inte M$ in a distribution sense on $W^{1,p}_{0}(M,m_{f})$. 
For $p\in [1,\infty)$,
the \textit{Rayleigh quotient} $R_{f,p}(\phi)$ for $\phi \in W^{1,p}_{0}(M,m_{f})\setminus \{0\}$ is defined as
\begin{equation*}
R_{f,p}(\phi):=\frac{\int_{M}\, \Vert \nabla \phi \Vert^{p}\,d\,m_{f}}{\int_{M}\,  \vert \phi \vert^{p}\,d\,m_{f}}.
\end{equation*}
We put $\mu_{f,1,p}(M):=\inf_{\phi} R_{f,p}(\phi)$,
where the infimum is taken over all non-zero functions in $W^{1,p}_{0}(M,m_{f})$.
The value $\mu_{f,1,2}(M)$ is equal to the infimum of the spectrum of $\Delta_{f,2}$ on $(M,m_{f})$.
If $M$ is compact,
and if $p\in (1,\infty)$,
then $\mu_{f,1,p}(M)$ is equal to the infimum of the set of all $(f,p)$-Dirichlet eigenvalues on $M$.

Let $p\in (1,\infty)$.
For $N\in [2,\infty)$,
$\kappa,\lambda\in \mathbb{R}$,
and $D\in (0,\bar{C}_{\kappa,\lambda}]\setminus \{\infty\}$,
let $\mu_{p,N,\kappa,\lambda,D}$ be the positive minimum real number $\mu$ such that
there exists a non-zero function $\phi:[0,D]\to \mathbb{R}$ satisfying
\begin{align}\label{eq:model space eigenvalue problem}
\left(\vert \phi'(t)\vert^{p-2} \phi'(t)\right)'&+(N-1)\frac{s'_{\kappa,\lambda}(t)}{s_{\kappa,\lambda}(t)}(\vert \phi'(t) \vert^{p-2} \phi'(t))\\
                                                              &+\mu\, \vert \phi(t)\vert^{p-2}\phi(t)=0, \quad \phi(0)=0, \quad \phi'(D)=0.                           \notag
\end{align}
For $D\in (0,\infty)$,
let $\mu_{p,\infty,D}$ be the positive minimum real number $\mu$ such that
there exists a non-zero function $\phi:[0,D]\to \mathbb{R}$ satisfying
\begin{equation}\label{eq:infinite model space eigenvalue problem}
\left(\vert \phi'(t)\vert^{p-2} \phi'(t)\right)'+\mu\, \vert \phi(t)\vert^{p-2}\phi(t)=0, \quad \phi(0)=0, \quad \phi'(D)=0.
\end{equation}

We recall the notion of model spaces
that has been introduced by Kasue in \cite{K4} in our setting.
We say that
$\kappa \in \mathbb{R}$ and $\lambda \in \mathbb{R}$ satisfy the \textit{model-condition}
if the equation $s'_{\kappa,\lambda}(t)=0$ has a positive solution.
We see that
$\kappa$ and $\lambda$ satisfy the model-condition if and only if either
(1) $\kappa>0$ and $\lambda<0$;
(2) $\kappa=0$ and $\lambda=0$;
or (3) $\kappa<0$ and $\lambda \in (0,\sqrt{\vert \kappa \vert})$.

Let $\kappa\in \mathbb{R}$ and $\lambda \in \mathbb{R}$ satisfy the ball-condition or
the model-condition.
Suppose that
$M$ is compact.
For $\kappa$ and $\lambda$ satisfying the model-condition,
we define a positive number $D_{\kappa,\lambda}(M)$ as follows:
If $\kappa=0$ and $\lambda=0$,
then $D_{\kappa,\lambda}(M):=D(M,\bm)$;
otherwise,
$D_{\kappa,\lambda}(M):=\{t>0\mid s'_{\kappa,\lambda}(t)=0\}$.
We say that $(M,d_{M})$ is a \textit{$(\kappa,\lambda)$-equational model space}
if $M$ is isometric to either
(1) for $\kappa$ and $\lambda$ satisfying the ball-condition,
the closed geodesic ball $\ball$;
(2) for $\kappa$ and $\lambda$ satisfying the model-condition,
and for a connected component $\bm_{1}$ of $\bm$,
the warped product $[0,2D_{\kappa,\lambda}(M)]\times_{\kappa,\lambda} \bm_{1}$;
or (3) for $\kappa$ and $\lambda$ satisfying the model-condition,
and for an involutive isometry $\sigma$ of $\bm$ without fixed points,
the quotient space $([0,2D_{\kappa,\lambda}(M)]\times_{\kappa,\lambda} \bm)/G_{\sigma}$,
where $G_{\sigma}$ is the isometry group on $[0,2D_{\kappa,\lambda}(M)]\times_{\kappa,\lambda} \bm$ whose elements consist of the identity and the involute isometry $\hat{\sigma}$ defined by $\hat{\sigma}(t,x):=(2D_{\kappa,\lambda}(M)-t,\sigma(x))$.

Let $p\in (1,\infty)$.
Let $M$ be a $(\kappa,\lambda)$-equational model space.
From a standard argument,
we see that if $M$ is isometric to $\ball$,
then $\mu_{0,1,p}(M)=\mu_{p,n,\kappa,\lambda,\const}$.
Furthermore,
if $M$ is not isometric to $\ball$,
then $\mu_{0,1,p}(M)=\mu_{p,n,\kappa,\lambda,D_{\kappa,\lambda}(M)}$ for the corresponding $\kappa,\lambda$ and $D_{\kappa,\lambda}(M)$.

We establish the following rigidity theorem for $\mu_{f,1,p}$:
\begin{thm}\label{thm:eigenvalue rigidity}
Let $M$ be an $n$-dimensional,
connected complete Riemannian manifold with boundary,
and let $f:M\to \mathbb{R}$ be a smooth function.
Suppose that
$M$ is compact.
Let $p\in (1,\infty)$.
For $N\in [n,\infty)$,
we suppose $\ric^{N}_{f,M}\geq (N-1)\kappa$ and $H_{f,\bm} \geq (N-1)\lambda$.
For $D\in (0,\bar{C}_{\kappa,\lambda}]\setminus \{\infty\}$,
we assume $\dm \leq D$.
Then we have
\begin{equation}\label{eq:eigenvalue rigidity}
\mu_{f,1,p}(M)\geq \mu_{p,N,\kappa,\lambda,D}.
\end{equation}
If the equality in $(\ref{eq:eigenvalue rigidity})$ holds,
then $(M,d_{M})$ is a $(\kappa,\lambda)$-equational model space;
more precisely,
the following hold:
\begin{enumerate}
\item if $D=\bar{C}_{\kappa,\lambda}$,
then $\kappa$ and $\lambda$ satisfy the ball-condition,
$(M,d_{M})$ is isometric to $(\ball,d_{\ball})$,
and $N=n$;
in particular,
$f$ is constant on $M$;
\item if $D \in (0,\bar{C}_{\kappa,\lambda})$,
then $\kappa$ and $\lambda$ satisfy the model-condition,
$(M,d_{M})$ is a $(\kappa,\lambda)$-equational model space,
and $f\circ \gamma_{x}=f(x)-(N-n)\log s_{\kappa,\lambda}$ on $[0,D_{\kappa,\lambda}(M)]$ for all $x\in \bm$.
\end{enumerate}
\end{thm}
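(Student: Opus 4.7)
The plan is to prove the inequality (\ref{eq:eigenvalue rigidity}) by transplanting the one-dimensional model eigenfunction via $\rho_{\bm}$, applying the weighted Laplacian comparison for the distance to the boundary, and closing the argument with a weighted Picone identity; the rigidity is then extracted by chasing equality through those two comparisons and invoking Theorem~\ref{thm:Ball rigidity} together with the warped-product rigidity already developed in the paper.

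For the inequality, I let $\phi\colon[0,D]\to\mathbb{R}$ be a first eigenfunction of (\ref{eq:model space eigenvalue problem}) with $\mu=\mu_{p,N,\kappa,\lambda,D}$, normalized so that $\phi>0$ on $(0,D]$ and $\phi'>0$ on $[0,D)$ with $\phi'(D)=0$ (the first-eigenfunction profile forced by the Sturm--Liouville structure of (\ref{eq:model space eigenvalue problem})). Set $v:=\phi\circ\rho_{\bm}$ on $M$. On $M\setminus\cut\bm$, where $\rho_{\bm}$ is smooth with $\|\nabla\rho_{\bm}\|=1$, a direct computation yields
\[
-\Delta_{f,p}v=\bigl(|\phi'|^{p-2}\phi'\bigr)'(\rho_{\bm})+|\phi'|^{p-2}\phi'(\rho_{\bm})\,\Delta_{f}\rho_{\bm},
\]
with $\Delta_{f}:=\Delta-g(\nabla f,\nabla\cdot\,)$. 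Substituting (\ref{eq:model space eigenvalue problem}) into the first summand and applying the weighted Laplacian comparison $\Delta_{f}\rho_{\bm}\leq(N-1)\,s'_{\kappa,\lambda}(\rho_{\bm})/s_{\kappa,\lambda}(\rho_{\bm})$ (valid on the smooth part under our curvature and mean-curvature hypotheses together with $\dm\leq D\leq\bconst$), combined with $|\phi'|^{p-2}\phi'\geq 0$, gives $\Delta_{f,p}v\geq\mu_{p,N,\kappa,\lambda,D}\,v^{p-1}$ off $\cut\bm$. The cut locus contributes a non-negative distributional term in the standard way, so the inequality holds distributionally on all of $M$. Pairing with a non-negative first $(f,p)$-eigenfunction $u$ through the weighted Picone identity
\[
\int_{M}\|\nabla u\|^{p}\,dm_{f}\geq\int_{M}\frac{\Delta_{f,p}v}{v^{p-1}}\,u^{p}\,dm_{f}
\]
(applied to $v+\varepsilon$ and then letting $\varepsilon\downarrow 0$ to handle the vanishing of $v$ on $\bm$) yields $R_{f,p}(u)\geq\mu_{p,N,\kappa,\lambda,D}$, which gives (\ref{eq:eigenvalue rigidity}).

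Suppose now that equality in (\ref{eq:eigenvalue rigidity}) holds. Since $D\mapsto\mu_{p,N,\kappa,\lambda,D}$ is strictly decreasing, the same inequality applied with $D$ replaced by $\dm$ forces $\dm=D$. Equality in Picone forces $u=c\cdot v=c\,\phi\circ\rho_{\bm}$ for some $c>0$, and equality in the pointwise bound forces equality in the weighted Laplacian comparison on $\{\rho_{\bm}<D\}\setminus\cut\bm$ (the set where $\phi'(\rho_{\bm})>0$). By the rigidity in the weighted Heintze--Karcher-type comparison already established in the paper, this implies that $\{\rho_{\bm}<D\}$ is isometric to the warped product $[0,D)\times_{\kappa,\lambda}\bm$ with $f\circ\gamma_{x}(t)=f(x)-(N-n)\log s_{\kappa,\lambda}(t)$. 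In Case (1), $D=\bconst$, so $\dm=\bconst$ and Theorem~\ref{thm:Ball rigidity} immediately gives $(M,d_{M})\cong(\ball,d_{\ball})$, $N=n$, and $f$ constant.

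The main technical obstacle lies in Case (2), $D<\bconst$, where one must establish that $\kappa$ and $\lambda$ satisfy the model-condition (i.e., $s'_{\kappa,\lambda}(D)=0$ with $D=D_{\kappa,\lambda}(M)$) and globally identify $M$ with a $(\kappa,\lambda)$-equational model space. Since $s_{\kappa,\lambda}(D)>0$, the warped product is non-degenerate at $t=D$, so points of the equator $\{\rho_{\bm}=D\}$ are interior points of $M$; at any such point, $\nabla u=c\phi'(\rho_{\bm})\nabla\rho_{\bm}=0$ because $\phi'(D)=0$, so the equator consists of critical points of the smooth eigenfunction $u$. Smoothness of $M$ across the equator, together with the warped-product structure just proved on $\{\rho_{\bm}<D\}$, forces the profile $s_{\kappa,\lambda}$ to extend smoothly and evenly across $t=D$, yielding $s'_{\kappa,\lambda}(D)=0$. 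A standard reflection/gluing argument (as in Kasue \cite{K3}, \cite{K4} and the splitting-type rigidity established earlier in the paper) then identifies $M$ either with $[0,2D]\times_{\kappa,\lambda}\bm_{1}$ when the equator has two pre-images corresponding to two components of $\bm$, or with the quotient $([0,2D]\times_{\kappa,\lambda}\bm)/G_{\hat\sigma}$ by a fixed-point-free involution $\hat\sigma$ of $\bm$ otherwise, completing the identification of $M$ as a $(\kappa,\lambda)$-equational model space.
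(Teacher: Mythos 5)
Your inequality argument is essentially the paper's: transplant the model eigenfunction via $\rho_{\bm}$, apply the weighted Laplacian comparison off $\cut\bm$, close with a Picone-type inequality (the paper makes the distributional step precise via Lemma~\ref{lem:avoiding the cut locus2} and Proposition~\ref{prop:global p-Laplacian comparison}, but the idea is the same).

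The rigidity part has a genuine gap. From equality in the Picone and Laplacian comparisons you obtain Jacobi-field rigidity along $\gamma_{x}$ only on $[0,\tau(x)]$, i.e.\ up to the cut time, not up to $D$. You then assert that $\{\rho_{\bm}<D\}$ is isometric to $[0,D)\times_{\kappa,\lambda}\bm$, but that requires $\tau(x)\geq D$ for every $x$, equivalently $\cut\bm\subset S_{D}(\bm)$, and you never prove it. Knowing $\dm=D$ (your monotonicity argument, or otherwise) only gives $\sup_{x}\tau(x)=D$; some rays could still hit the cut locus strictly before $D$, in which case there is no warped product in a neighborhood of those cut points and your reflection/gluing step has nothing to glue. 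Proving $\cut\bm=S_{D}(\bm)$ is precisely the crux of the paper's argument, and the mechanism is absent from your proposal: the paper invokes the $C^{1,\alpha}$-regularity of the $(f,p)$-eigenfunction $\Psi$ (Proposition~\ref{prop:eigenfunction}, via Tolksdorf), notes that equality in Picone forces $\Phi=\phi_{p,N,\kappa,\lambda,D}\circ\rho_{\bm}$ to be a constant multiple of $\Psi$ and hence $C^{1,\alpha}$, and then combines this with the strict positivity $\phi'_{p,N,\kappa,\lambda,D}>0$ on $[0,D)$ and the non-differentiability of $\rho_{\bm}$ at a non-conjugate cut point to conclude that every cut point lies on $\{\rho_{\bm}=D\}$. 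Your proposal also repeatedly calls $u$ a ``smooth eigenfunction'' — for $p\neq 2$ the $(f,p)$-eigenfunction is in general only $C^{1,\alpha}$, and this regularity (rather than smoothness) is exactly what the paper exploits; treating $u$ as smooth hides the step where the regularity is actually doing work. Once $\cut\bm=S_{D}(\bm)$ is in place, the conclusion that $\kappa,\lambda$ satisfy the model-condition and that $M$ is a $(\kappa,\lambda)$-equational model space is handed off in the paper to Proposition~\ref{prop:conclude rigidity} (Kasue's reflection argument), which is roughly what you sketch, but your sketch cannot be run without first closing the cut-locus gap.
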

Kasue \cite{K4} has proved Theorem \ref{thm:eigenvalue rigidity} when $p=2,f=0$ and $N=n$.
It seems that
the method of the proof in \cite{K4} does not work in our non-linear case of $p\neq 2$ (see Remark \ref{rem:method of the proof}).
We prove Theorem \ref{thm:eigenvalue rigidity}
by a global Laplacian comparison result for $\rho_{\bm}$ (see Proposition \ref{prop:global p-Laplacian comparison})
and an inequality of Picone type for the $p$-Laplacian (see Lemma \ref{lem:Picone identity}).

In the case of $N=\infty$,
we have the following:
\begin{thm}\label{thm:infinite eigenvalue rigidity}
Let $M$ be a connected complete Riemannian manifold with boundary,
and let $f:M\to \mathbb{R}$ be a smooth function.
Suppose that
$M$ is compact.
Let $p\in (1,\infty)$.
Suppose $\ric^{\infty}_{f,M}\geq 0$ and $H_{f,\bm} \geq 0$.
For $D\in (0,\infty)$,
we assume $\dm \leq D$.
Then we have
\begin{equation}\label{eq:infinite eigenvalue rigidity}
\mu_{f,1,p}(M)\geq \mu_{p,\infty,D}.
\end{equation}
If the equality in $(\ref{eq:infinite eigenvalue rigidity})$ holds,
then the metric space $(M,d_{M})$ is a $(0,0)$-equational model space,
and $\dm=D$.
\end{thm}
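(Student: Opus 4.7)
The proof closely parallels that of Theorem \ref{thm:eigenvalue rigidity}, with the $N=\infty$ comparison machinery replacing the finite-$N$ version. Let $\phi:[0,D]\to[0,\infty)$ be the (positive on $(0,D]$, nondecreasing) first eigenfunction of (\ref{eq:infinite model space eigenvalue problem}) at eigenvalue $\mu_{p,\infty,D}$. Since $\dm\leq D$, the composition $v:=\phi\circ\rho_{\bm}$ is a nonnegative Lipschitz function on $M$ vanishing on $\bm$. A chain-rule computation on the smooth set of $\rho_{\bm}$, combined with the one-dimensional eigenvalue ODE and the fact that $\|\nabla\rho_{\bm}\|=1$ a.e.\ (so that $\Delta_{f,p}\rho_{\bm}$ reduces to the drift Laplacian applied to $\rho_{\bm}$), gives
\begin{equation*}
\Delta_{f,p} v \,=\, \mu_{p,\infty,D}\,v^{p-1}+(\phi'\circ\rho_{\bm})^{p-1}\,\Delta_{f,p}\rho_{\bm}.
\end{equation*}
The $N=\infty$ analog of the global Laplacian comparison (Proposition \ref{prop:global p-Laplacian comparison}), which follows from the weighted Riccati identity together with $\ric^{\infty}_{f,M}\geq 0$ and $H_{f,\bm}\geq 0$, yields $\Delta_{f,p}\rho_{\bm}\geq 0$ as a distribution on $W^{1,p}_0(M,m_f)$, so $v$ becomes a weak supersolution: $\Delta_{f,p} v\geq \mu_{p,\infty,D}\,v^{p-1}$.

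Given the supersolution $v$, the inequality (\ref{eq:infinite eigenvalue rigidity}) is obtained from a standard application of the Picone-type identity (Lemma \ref{lem:Picone identity}). For an arbitrary $u\in W^{1,p}_0(M,m_f)\setminus\{0\}$, apply Picone to $|u|$ and $v_{\varepsilon}:=v+\varepsilon$, integrate by parts against the admissible test function $|u|^p/v_\varepsilon^{p-1}$ (valid since $u$ has compact support in $\inte M$), and pass to the limit $\varepsilon\to 0$; this yields
\begin{equation*}
\int_M\|\nabla u\|^p\,dm_f\,\geq\,\mu_{p,\infty,D}\int_M|u|^p\,dm_f,
\end{equation*}
from which (\ref{eq:infinite eigenvalue rigidity}) follows by taking the infimum over $u$.

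For the rigidity case, equality in (\ref{eq:infinite eigenvalue rigidity}) together with compactness of $M$ and $p\in(1,\infty)$ produces a minimizing eigenfunction $u_\ast\in W^{1,p}_0(M,m_f)$; the Picone equality case then forces $u_\ast=c\,v$ almost everywhere for some $c\neq 0$, so $v$ itself is a first $(f,p)$-Dirichlet eigenfunction on $M$. Retracing the supersolution inequality then forces $\Delta_{f,p}\rho_{\bm}\equiv 0$ on the open dense set $\{\phi'\circ\rho_{\bm}>0\}=M\setminus\rho_{\bm}^{-1}(D)$. Feeding this back into the Riccati/Bochner derivation that underlies the Laplacian comparison gives $\Hess\rho_{\bm}\equiv 0$ and $g(\nabla f,\nabla\rho_{\bm})\equiv 0$ on $\inte M\setminus\cut\bm$. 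Integrating along the flow of $\nabla\rho_{\bm}$ and analyzing the maximum set $\rho_{\bm}^{-1}(D)$ identifies $(M,d_M)$ either with the direct product $([0,2D]\times\bm_1,d_{[0,2D]\times\bm_1})$ for a connected component $\bm_1$ of $\bm$, or with the involutive quotient $([0,2D]\times\bm)/G_{\hat\sigma}$ from the definition of a $(0,0)$-equational model space; in either case $\dm=D$.

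The principal obstacle, exactly as in Theorem \ref{thm:eigenvalue rigidity}, lies in the rigidity step. The nonlinearity of $\Delta_{f,p}$ for $p\neq 2$ rules out spectral arguments, so all geometric information must be squeezed simultaneously out of the Picone equality and the sharp case of the Laplacian comparison. In particular, globalizing the flatness $\Hess\rho_{\bm}\equiv 0$ across the focal/cut locus contained in $\rho_{\bm}^{-1}(D)$, and distinguishing the product model from the $\mathbb{Z}/2$-quotient model via a suitable involutive isometry of $\bm$, requires a delicate analysis analogous to the ball-condition versus model-condition dichotomy of the finite-$N$ proof, now degenerated to the limiting case $s_{\kappa,\lambda}\equiv 1$.
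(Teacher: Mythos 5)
Your derivation of the inequality (\ref{eq:infinite eigenvalue rigidity}) is essentially the paper's approach: Picone's inequality (Lemma \ref{lem:Picone identity}) applied to the trial function $\Phi:=\phi_{p,\infty,D}\circ\rho_{\bm}$, combined with the global weighted $p$-Laplacian comparison (Proposition \ref{prop:global infinite p-Laplacian comparison}); the paper tests Picone directly against a compactly supported nonnegative $\psi$ and integrates by parts on exhausting domains avoiding $\cut\bm$ (via Lemma \ref{lem:avoiding the cut locus2}) rather than going through the $v_\varepsilon$-regularization, but this is a cosmetic difference. Note however that the step from the pointwise chain-rule identity $\Delta_{f,p}v=\mu v^{p-1}+(\phi'\circ\rho_{\bm})^{p-1}\Delta_{f,p}\rho_{\bm}$ on $\inte M\setminus\cut\bm$ plus $\Delta_{f,p}\rho_{\bm}\geq 0$ distributionally to ``$v$ is a weak supersolution'' is not automatic for nonlinear $\Delta_{f,p}$; the paper sidesteps this by proving the distributional inequality \emph{for $\Phi$ itself}, not for $\rho_{\bm}$, which is why Proposition \ref{prop:global infinite p-Laplacian comparison} is stated the way it is.

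The genuine gap is in the rigidity step. You write as if $\cut\bm\subset\rho_{\bm}^{-1}(D)$ were already known (``the focal/cut locus contained in $\rho_{\bm}^{-1}(D)$''), but a priori $\dm\leq D$ only gives $S_D(\bm)\subset\cut\bm$, not the converse; proving $\cut\bm\subset S_D(\bm)$ is exactly the point where the argument succeeds or fails. Your proposal of extracting $\Hess\rho_{\bm}\equiv 0$ on $\inte M\setminus\cut\bm$ and then ``integrating along the flow of $\nabla\rho_{\bm}$'' cannot cross a cut point $p_0$ with $\rho_{\bm}(p_0)<D$, and you offer no mechanism to rule such $p_0$ out. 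The paper's mechanism is the elliptic regularity of the first Dirichlet eigenfunction: by Proposition \ref{prop:eigenfunction} (Tolksdorf-type $C^{1,\alpha}$ regularity), the minimizer $\Psi$ is $C^{1,\alpha}$; the equality case of Picone forces $\Phi=c\,\Psi$, so $\Phi=\phi_{p,\infty,D}\circ\rho_{\bm}$ is $C^{1,\alpha}$; at a cut point $p_0$ where $\rho_{\bm}$ is not differentiable, this forces $\phi'_{p,\infty,D}(\rho_{\bm}(p_0))=0$, and since $\phi'_{p,\infty,D}>0$ on $[0,D)$ this yields $\rho_{\bm}(p_0)=D$. That identification $\cut\bm=S_D(\bm)$ (hence $\dm=D$), together with the parallelism $Y_{x,i}=E_{x,i}$ coming from the equality case of Lemma \ref{lem:infinite Basic comparison}, is precisely the hypothesis of Kasue's Proposition \ref{prop:conclude rigidity}, which then produces the $(0,0)$-equational model space directly. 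Without invoking the regularity of the eigenfunction, your sketch has no handle on the cut locus, which is the main obstruction you correctly sensed but did not resolve.
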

\begin{rem}
In \cite{Sa2},
we prove the same rigidity result as Theorem \ref{thm:infinite eigenvalue rigidity}
under a weaker assumption that $\ric^{N}_{f,M}\geq 0$ and $H_{f,\bm} \geq 0$ for $N<1$.
In the rigidity case,
we also prove that
for every $x\in \bm$
the function $f\circ \gamma_{x}$ is constant on $[0,D]$ (see Theorem 1.5 in \cite{Sa2}).
\end{rem}
In Theorems \ref{thm:eigenvalue rigidity} and \ref{thm:infinite eigenvalue rigidity},
we have explicit lower bounds for $\mu_{f,1,p}$ (see Subsection \ref{sec:Concrete large lower bounds}).

We show some volume estimates for a relatively compact domain in $M$ (see Propositions \ref{prop:Kasue volume estimate} and \ref{prop:infinite Kasue volume estimate}).
From the volume estimates,
we derive lower bounds for $\mu_{f,1,p}$ for manifolds with boundary that are not necessarily compact (see Theorems \ref{thm:p-Laplacian1} and \ref{thm:infinite p-Laplacian1}).
By using the estimate for $\mu_{f,1,p}$,
and by using Theorem \ref{thm:splitting},
we obtain the following:
\begin{thm}\label{thm:spectrum rigidity}
Let $M$ be an $n$-dimensional,
connected complete Riemannian manifold with boundary.
Let $f:M\to \mathbb{R}$ be a smooth function.
Suppose that
$\bm$ is compact.
Let $p\in (1,\infty)$.
Let $\kappa<0$ and $\lambda:=\sqrt{\vert \kappa \vert}$.
For $N\in [n,\infty)$,
we suppose $\ric^{N}_{f,M}\geq (N-1)\kappa$ and $H_{f,\bm} \geq (N-1)\lambda$.
Then we have
\begin{equation}\label{eq:noncompact estimate}
\mu_{f,1,p}(M)\geq \left(\frac{(N-1)\lambda}{p}\right)^{p}.
\end{equation}
If the equality in $(\ref{eq:noncompact estimate})$ holds,
then the metric space $(M,d_{M})$ is isometric to $([0,\infty)\times_{\kappa,\lambda}\bm, d_{\kappa,\lambda})$,
and for all $x\in \bm$ and $t\in [0,\infty)$
we have $(f \circ \gamma_{x})(t)=f(x)+(N-n)\lambda t$.
\end{thm}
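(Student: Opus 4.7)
The plan is to split the proof into the lower bound (\ref{eq:noncompact estimate}) and the rigidity statement. The lower bound follows as a special case of the non-compact eigenvalue estimate Theorem \ref{thm:p-Laplacian1}, which itself is deduced from the Kasue-type volume estimate Proposition \ref{prop:Kasue volume estimate}; specializing to $\kappa<0$ and $\lambda=\sqrt{\vert\kappa\vert}$ gives $s_{\kappa,\lambda}(t)=e^{-\lambda t}$ and $\bconst=\infty$, and the constant on the right-hand side of that estimate reduces to $((N-1)\lambda/p)^p$.

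For the rigidity I would argue by a dichotomy on the inscribed radius $\dm$, assuming equality in (\ref{eq:noncompact estimate}). If $\dm<\infty$, the compactness of $\bm$ together with completeness of $M$ implies $M=B_{\dm}(\bm)$ is itself compact, so Theorem \ref{thm:eigenvalue rigidity} applies with $D:=\dm$ and yields $\mu_{f,1,p}(M)\geq \mu_{p,N,\kappa,\lambda,\dm}$. Since $\kappa<0$ and $\lambda=\sqrt{\vert\kappa\vert}$, the pair $(\kappa,\lambda)$ satisfies neither the ball-condition (which requires $\lambda>\sqrt{\vert\kappa\vert}$) nor the model-condition (which requires $\lambda\in(0,\sqrt{\vert\kappa\vert})$); consequently neither alternative in the equality case of Theorem \ref{thm:eigenvalue rigidity} can occur, so the inequality is strict. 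Combined with the one-dimensional McKean-type bound $\mu_{p,N,\kappa,\lambda,D}\geq ((N-1)\lambda/p)^p$ (with saturation only in the limit $D\to\infty$; a direct ODE consequence for the weight $e^{-\lambda t}$), this forces $\mu_{f,1,p}(M)>((N-1)\lambda/p)^p$, contradicting equality.

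Hence $\dm=\infty$. Using compactness of $\bm$, pick $p_{n}\in M$ with $\rho_{\bm}(p_{n})\to\infty$, write $p_{n}=\gamma_{x_{n}}(t_{n})$ with $x_{n}\in\bm$ and $t_{n}=\rho_{\bm}(p_{n})$, so that $\tau(x_{n})\geq t_{n}\to\infty$, and extract a subsequence $x_{n}\to x_{0}\in\bm$. For every fixed $T>0$, eventually $\rho_{\bm}(\gamma_{x_{n}}(T))=T$; by joint continuity of the geodesic flow in $(x,t)$ and continuity of $\rho_{\bm}$, the limit satisfies $\rho_{\bm}(\gamma_{x_{0}}(T))=T$, so $\tau(x_{0})\geq T$, and letting $T\to\infty$ gives $\tau(x_{0})=\infty$. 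Applying Theorem \ref{thm:splitting} at the point $x_{0}$ produces the isometric splitting $(M,d_{M})\cong([0,\infty)\times_{\kappa,\lambda}\bm,d_{\kappa,\lambda})$ together with the identity $(f\circ\gamma_{x})(t)=f(x)+(N-n)\lambda t$ for all $x\in\bm$ and $t\geq 0$.

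The only piece requiring genuine work is the strict one-dimensional inequality $\mu_{p,N,\kappa,\lambda,D}>((N-1)\lambda/p)^{p}$ for every finite $D$, together with asymptotic saturation as $D\to\infty$. For $p=2$ this is elementary: the equation in (\ref{eq:model space eigenvalue problem}) becomes the constant-coefficient linear ODE $\phi''-(N-1)\lambda\phi'+\mu\phi=0$, and the Dirichlet/Neumann conditions at $0$ and $D$ force the characteristic roots to be complex, i.e.\ $\mu>((N-1)\lambda/2)^{2}$, while the smallest such $\mu$ tends to $((N-1)\lambda/2)^{2}$ as $D\to\infty$. For $p\neq 2$ one obtains the same monotonicity and limit by a Prüfer-type analysis of the nonlinear ODE; this is the principal analytic obstacle of the argument, the rest being a packaging of Theorems \ref{thm:p-Laplacian1}, \ref{thm:eigenvalue rigidity} and \ref{thm:splitting}.
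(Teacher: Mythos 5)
Your lower bound argument and the $\dm=\infty$ branch of the rigidity argument are sound and essentially agree with the paper: Theorem~\ref{thm:p-Laplacian1} with the explicit $C(N,\kappa,\lambda,\infty)=((N-1)\lambda)^{-1}$ yields (\ref{eq:noncompact estimate}), and once $\dm=\infty$ is known, compactness of $\bm$ plus continuity of $\tau$ produces a point $x_{0}$ with $\tau(x_{0})=\infty$ (equivalently, one invokes Corollary~\ref{cor:Kasue splitting}), and Theorem~\ref{thm:splitting} finishes the proof.

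The gap is in the $\dm<\infty$ branch. You route through Theorem~\ref{thm:eigenvalue rigidity} and then need the one-dimensional McKean-type inequality $\mu_{p,N,\kappa,\lambda,D}\geq ((N-1)\lambda/p)^{p}$ with strictness for finite $D$; you explicitly flag this nonlinear ODE estimate as the ``principal analytic obstacle'' and leave it unproved for $p\neq 2$. As written, this is a missing ingredient, and it is also an unnecessary detour. The paper never touches $\mu_{p,N,\kappa,\lambda,D}$ or Theorem~\ref{thm:eigenvalue rigidity} here. Instead, if $\dm<\infty$, apply Theorem~\ref{thm:p-Laplacian1} directly with $D=\dm$: since
\begin{equation*}
C(N,\kappa,\lambda,D)=\bigl((N-1)\lambda\bigr)^{-1}\bigl(1-e^{-(N-1)\lambda D}\bigr)
\end{equation*}
is strictly increasing in $D$ with limit $((N-1)\lambda)^{-1}$, a finite $\dm$ forces $C(N,\kappa,\lambda,\dm)<((N-1)\lambda)^{-1}$ and hence $\mu_{f,1,p}(M)\geq (p\,C(N,\kappa,\lambda,\dm))^{-p}>((N-1)\lambda/p)^{p}$, contradicting equality. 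This closes the dichotomy without any auxiliary one-dimensional spectral estimate; you should replace your detour through Theorem~\ref{thm:eigenvalue rigidity} with this direct monotonicity argument.
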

Theorem \ref{thm:spectrum rigidity} has been proved in \cite{Sa} in the standard case where $f=0$ and $N=n$.

\subsection{Organization}
In Section \ref{sec:Preliminaries},
we prepare some notations
and recall the basic facts for Riemannian manifolds with boundary.
In Section \ref{sec:Laplacian comparisons},
we show Laplacian comparison theorems for the distance function from the boundary.
In Section \ref{sec:Inscribed radius rigidity},
we prove Theorem \ref{thm:Ball rigidity}.
In Section \ref{sec:Volume comparisons},
we show several volume comparison theorems,
and conclude Theorems \ref{thm:volume growth distance rigidity} and \ref{thm:infinite volume growth distance rigidity}.
In Section \ref{sec:Splitting theorems},
we prove Theorems \ref{thm:splitting} and \ref{thm:infinite splitting},
and study the variants of the splitting theorems.
In Section \ref{sec:Eigenvalue rigidity},
we prove Theorems \ref{thm:eigenvalue rigidity} and \ref{thm:infinite eigenvalue rigidity},
and study explicit lower bounds for $\mu_{f,1,p}$.
In Section \ref{sec:First eigenvalue estimates},
we prove Theorem \ref{thm:spectrum rigidity}.

\subsection*{{\rm Acknowledgements}}
The author would like to express his gratitude to Professor Koichi Nagano for his constant advice and suggestions.
The author would also like to thank Professors Takashi Shioya and Christina Sormani for their valuable comments.
The author is grateful to an anonymous referee of some journal for useful comments.
One of the comments leads the author to study rigidity phenomena in weighting functions.

\section{Preliminaries}\label{sec:Preliminaries}
We refer to \cite{BBI} for the basics of metric geometry,
and to \cite{S} for the basics of Riemannian manifolds with boundary.
\subsection{Metric spaces}
Let $(X,d_{X})$ be a metric space with metric $d_{X}$.
For $r>0$ and $A\subset X$,
we denote by $U_{r}(A)$ the open $r$-neighborhood of $A$ in $X$,
and by $B_{r}(A)$ the closed one.
For $A_{1}, A_{2}\subset X$,
we put $d_{X}(A_{1},A_{2}):=\inf_{x_{1}\in A_{1}, x_{2}\in A_{2}}\,d_{X}(x_{1},x_{2})$.

For a metric space $(X,d_{X})$,
the length metric $\bar{d}_{X}$ is defined as follows:
For two points $x_{1},x_{2}\in X$,
we put $\bar{d}_{X}(x_{1},x_{2})$
to the infimum of the length of curves connecting $x_{1}$ and $x_{2}$ with respect to $d_{X}$.
A metric space $(X,d_{X})$ is said to be a \textit{length space} if $d_{X}=\bar{d}_{X}$.

Let $(X,d_{X})$ be a metric space.
For an interval $I$,
we say that
a curve $\gamma:I\to X$ is a \textit{normal minimal geodesic}
if for all $s,t\in I$
we have $d_{X}(\gamma(s),\gamma(t))=\vert s-t\vert$,
and $\gamma$ is a \textit{normal geodesic}
if for each $t\in I$
there exists an interval $J\subset I$ with $t\in J$ such that $\gamma|_{J}$ is a normal minimal geodesic.
A metric space $(X,d_{X})$ is said to be a \textit{geodesic space} if
for every pair of points in $X$,
there exists a normal minimal geodesic connecting them.
A metric space is \textit{proper} if all closed bounded subsets of the space are compact.
The Hopf-Rinow theorem for length spaces states that 
if a length space $(X,d_{X})$ is complete and locally compact,
and if $d_{X}<\infty$,
then $(X,d_{X})$ is a proper geodesic space (see e.g., Theorem 2.5.23 in \cite{BBI}).

\subsection{Riemannian manifolds with boundary}
For $n\geq 2$, 
let $M$ be an $n$-dimensional,
connected Riemannian manifold with (smooth) boundary
with Riemannian metric $g$.
For $p\in \inte M$, 
let $T_{p}M$ be the tangent space at $p$ on $M$,
and let $U_{p}M$ be the unit tangent sphere at $p$ on $M$.
We denote by $\Vert \cdot \Vert$ the standard norm induced from $g$.
If $v_{1},\dots,v_{k}\in T_{p}M$ are linearly independent,
then we see $\Vert v_{1}\wedge \cdots \wedge v_{k} \Vert=\sqrt{\det (g(v_{i},v_{j}))}$.
Let $d_{M}$ be the length metric induced from $g$.
If $M$ is complete with respect to $d_{M}$,
then the Hopf-Rinow theorem for length spaces tells us that the metric space $(M,d_{M})$ is a proper geodesic space.

For $i=1,2$,
let $M_{i}$ be connected Riemannian manifolds with boundary with Riemannian metric $g_{i}$.
For each $i$,
the boundary $\bm_{i}$ carries the induced Riemannian metric $h_{i}$.
We say that a homeomorphism $\Phi:M_{1}\to M_{2}$ is a \textit{Riemannian isometry with boundary} from $M_{1}$ to $M_{2}$ if $\Phi$ satisfies the following conditions:
\begin{enumerate}
 \item $\Phi|_{\inte M_{1}}:\inte M_{1} \to \inte M_{2}$ is smooth, and $(\Phi|_{\inte M_{1}})^{\ast} (g_{2})=g_{1}$;\label{enum:inner isom}
 \item $\Phi|_{\bm_{1}}:\bm_{1} \to \bm_{2}$ is smooth, and $(\Phi|_{\bm_{1}})^{\ast} (h_{2})=h_{1}$.\label{enum:bdry isom}
\end{enumerate}
If $\Phi:M_{1}\to M_{2}$ is a Riemannian isometry with boundary,
then the inverse $\Phi^{-1}$ is also a Riemannian isometry with boundary.
Notice that
there exists a Riemannian isometry with boundary from $M_{1}$ to $M_{2}$ if and only if
the metric space $(M_{1},d_{M_{1}})$ is isometric to $(M_{2},d_{M_{2}})$ (see e.g., Section 2 in \cite{Sa}).

\subsection{Jacobi fields orthogonal to the boundary}
Let $M$ be a connected Riemannian manifold with boundary with Riemannian metric $g$.
For a point $x\in \bm$,
and for the tangent space $T_{x}\bm$ at $x$ on $\bm$,
let $T_{x}^{\perp} \bm$ be the orthogonal complement of $T_{x}\bm$ in the tangent space at $x$ on $M$.
Take $u\in T_{x}^{\perp}\bm$.
For the second fundamental form $S$ of $\bm$,
let $A_{u}:T_{x}\bm \to T_{x}\bm$ be the \textit{shape operator} for $u$ defined as
\begin{equation*}
g(A_{u}v,w):=g(S(v,w),u).
\end{equation*}
We denote by $u_{x}$ the unit inner normal vector at $x$.
The \textit{mean curvature} $H_{x}$ at $x$ is defined as $H_{x}:=\tr A_{u_{x}}$.
We denote by $\gamma_{x}:[0,T)\to M$ the normal geodesic with initial conditions $\gamma_{x}(0)=x$ and $\gamma_{x}'(0)=u_{x}$.
We say that a Jacobi field $Y$ along $\gamma_{x}$ is a $\bm$-\textit{Jacobi field} if $Y$ satisfies the following initial conditions:
\begin{equation*}
Y(0)\in T_{x}\bm, \quad Y'(0)+A_{u_{x}}Y(0)\in T_{x}^{\perp}\bm.
\end{equation*}
We say that $\gamma_{x}(t_{0})$ is a \textit{conjugate point} of $\bm$ along $\gamma_{x}$
if there exists a non-zero $\bm$-Jacobi field $Y$ along $\gamma_{x}$ with $Y(t_{0})=0$.
We denote by $\tau_{1}(x)$ the first conjugate value for $\bm$ along $\gamma_{x}$.
It is well-known that for all $x\in \bm$ and $t>\tau_{1}(x)$,
we have $t>\rho_{\bm}(\gamma_{x}(t))$.

For a point $x\in \bm$,
and for a piecewise smooth vector field $X$ along $\gamma_{x}$ with $X(0)\in T_{x}\bm$,
the \textit{index form} of $\gamma_{x}$ is defined as
\begin{align*}
I_{\bm}(X,X):&= \int_{0}^{t}g(X'(t),X'(t))-g(R(X(t),\gamma'_{x}(t))\gamma'_{x}(t),X(t))\,dt\\
             &- g(A_{u_{x}}X(0),X(0)).
\end{align*}
\begin{lem}\label{lem:index form}
For $x\in \bm$,
we suppose that
there exists no conjugate point of $\bm$ on $\gamma_{x}|_{[0,t_{0}]}$.
Then for every piecewise smooth vector field $X$ along $\gamma_{x}$ with $X(0)\in T_{x}\bm$,
there exists a unique $\bm$-Jacobi field $Y$ along $\gamma_{x}$ with $X(t_{0})=Y(t_{0})$ such that
\begin{equation*}
I_{\bm}(Y,Y)\leq I_{\bm}(X,X);
\end{equation*}
the equality holds if and only if $X=Y$ on $[0,t_{0}]$.
\end{lem}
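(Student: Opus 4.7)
The plan is the classical Morse-theoretic index-lemma argument, adapted to the $\bm$-Jacobi boundary condition at $t=0$. I would construct the comparison $\bm$-Jacobi field $Y$, decompose $X = Y + Z$, and show that the index form decomposes as $I_{\bm}(Y,Y) + I_{\bm}(Z,Z)$ with $I_{\bm}(Z,Z) \geq 0$, with equality iff $Z\equiv 0$. For existence and uniqueness of $Y$: the space $\mathcal{J}$ of $\bm$-Jacobi fields along $\gamma_{x}$ is $n$-dimensional, since its elements are parametrized by $Y(0)\in T_{x}\bm$ together with the normal part of $Y'(0)$ in $T_{x}^{\perp}\bm$ (the tangential part of $Y'(0)$ being forced to equal $-A_{u_{x}}Y(0)$). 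The hypothesis that $\gamma_{x}|_{[0,t_{0}]}$ contains no conjugate point of $\bm$ means precisely that the evaluation map $Y\mapsto Y(t_{0})$ from $\mathcal{J}$ to $T_{\gamma_{x}(t_{0})}M$ is injective, hence an isomorphism by dimension count, so a unique $Y\in\mathcal{J}$ with $Y(t_{0})=X(t_{0})$ exists.

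Setting $Z:=X-Y$, we have $Z(0)=X(0)-Y(0)\in T_{x}\bm$ and $Z(t_{0})=0$. Bilinearity of $I_{\bm}$ gives
\begin{equation*}
I_{\bm}(X,X)=I_{\bm}(Y,Y)+2\,I_{\bm}(Y,Z)+I_{\bm}(Z,Z).
\end{equation*}
Integration by parts on each smooth subinterval of $Z$, together with the Jacobi equation $Y''+R(Y,\gamma_{x}')\gamma_{x}'=0$ and cancellation of the interior endpoint terms from continuity of $Z$, reduces $I_{\bm}(Y,Z)$ to $-g(Y'(0)+A_{u_{x}}Y(0),Z(0))$, which vanishes because $Z(0)\in T_{x}\bm$ and $Y'(0)+A_{u_{x}}Y(0)\in T_{x}^{\perp}\bm$. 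To see $I_{\bm}(Z,Z)\geq 0$, I would pick a basis $Y_{1},\dots,Y_{n}$ of $\mathcal{J}$ whose values span $T_{\gamma_{x}(t)}M$ for each $t\in(0,t_{0}]$, write $Z=\sum_{i}f_{i}Y_{i}$ on $(0,t_{0}]$, and use the Wronskian identity $g(Y_{i}',Y_{j})=g(Y_{i},Y_{j}')$ for $\bm$-Jacobi fields (valid at $t=0$ by symmetry of $A_{u_{x}}$ on $T_{x}\bm$, and preserved along $\gamma_{x}$ by the Jacobi equation) to rearrange
\begin{equation*}
I_{\bm}(Z,Z)=\int_{0}^{t_{0}}\Bigl\|\sum_{i}f_{i}'(t)\,Y_{i}(t)\Bigr\|^{2}\,dt.
\end{equation*}
The boundary contributions at $t_{0}$ vanish from $Z(t_{0})=0$, and those at $t=0$ cancel exactly with the term $-g(A_{u_{x}}Z(0),Z(0))$ in $I_{\bm}$ via the $\bm$-Jacobi condition. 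Equality then forces $\sum_{i}f_{i}'Y_{i}\equiv 0$, so linear independence makes each $f_{i}$ constant, $Z$ is itself a $\bm$-Jacobi field, and $Z(t_{0})=0$ together with the no-conjugate-point hypothesis gives $Z\equiv 0$.

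The main obstacle I anticipate is the singular behaviour of the Jacobi basis at $t=0$: the $\bm$-Jacobi fields need not all be non-vanishing there, so some coefficients $f_{i}$ may blow up as $t\to 0^{+}$. To handle it, I would split the basis into $n-1$ fields with initial values spanning $T_{x}\bm$ and one extra field with $Y_{n}(0)=0$ and $Y_{n}'(0)\in T_{x}^{\perp}\bm$, then verify that the boundary limit $\lim_{t\to 0^{+}}\sum_{i,j}f_{i}(t)f_{j}(t)\,g(Y_{i}(t),Y_{j}'(t))$ extends continuously to $-g(A_{u_{x}}Z(0),Z(0))$, via the initial identity $g(Y_{i}(0),Y_{j}'(0))=-g(A_{u_{x}}Y_{i}(0),Y_{j}(0))$ on the tangential piece. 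This is the standard analytic subtlety of the Morse-theoretic index lemma, which the usual approximation argument resolves.
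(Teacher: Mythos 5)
The paper does not actually give a proof of Lemma \ref{lem:index form}; it is stated without argument or citation, being the classical basic index lemma for focal points (see, e.g., Sakai's \emph{Riemannian Geometry} or Warner's work on extensions of Rauch). Your proof is the standard Morse-theoretic argument for this lemma, and it is correct. Briefly checking the pieces: the space of $\bm$-Jacobi fields along $\gamma_{x}$ is indeed $n$-dimensional (parametrized by $Y(0)\in T_{x}\bm$ and by the normal component of $Y'(0)$), and the no-conjugate-point hypothesis makes the evaluation map $Y\mapsto Y(t_{0})$ injective, hence an isomorphism; your integration by parts for $I_{\bm}(Y,Z)$ correctly produces $-g(Y'(0)+A_{u_{x}}Y(0),Z(0))$, which vanishes by orthogonality of the boundary conditions; and the Wronskian identity $g(Y_{i}',Y_{j})=g(Y_{i},Y_{j}')$ for $\bm$-Jacobi fields, together with the Jacobi equation, reduces $I_{\bm}(Z,Z)$ to $\int_{0}^{t_{0}}\|\sum_{i}f_{i}'Y_{i}\|^{2}\,dt$ after the boundary terms at $t=0$ cancel against $-g(A_{u_{x}}Z(0),Z(0))$.

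You are also right to flag the only genuine subtlety: the Jacobi basis is singular at $t=0$, because any basis of the $n$-dimensional space $\mathcal{J}$ necessarily includes one field with $Y_{n}(0)=0$ and $Y_{n}'(0)$ normal, so $f_{n}(t)$ is a priori only defined on $(0,t_{0}]$. Your proposed resolution is the right one: since $Z(0)\in T_{x}\bm$, the normal component of $Z$ is $O(t)$, while $Y_{n}(t)=t\,u_{x}+O(t^{2})$ and the tangential $Y_{i}$ ($i<n$) have normal component $O(t^{2})$; a short computation with the fundamental matrix shows $f_{n}$ stays bounded and the limiting boundary term is exactly $-g(A_{u_{x}}Z(0),Z(0))$. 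The equality analysis (each $f_{i}$ constant, so $Z$ itself is a $\bm$-Jacobi field with $Z(t_{0})=0$, hence $Z\equiv 0$) is correct.

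There is no gap in the plan; what remains is routine bookkeeping for the $t\to 0^{+}$ limit, which you have already identified and sketched how to handle.
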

For the normal tangent bundle $\tbp:=\bigcup_{x\in \bm} T_{x}^{\perp}\bm$ of $\bm$,
let $0(\tbp)$ be the zero-section $\bigcup_{x\in \bm} \{\,0_{x}\in T_{x}^{\perp}\bm\, \}$ of $T^{\perp}\bm$.
On an open neighborhood of $0(\tbp)$ in $\tbp$, 
the normal exponential map $\expp$ of $\bm$ is defined as $\expp(x,u):=\gamma_{x}(\Vert u\Vert)$
for $x\in \bm$ and $u\in T_{x}^{\perp}\bm$.

For $x\in \bm$ and $t\in [0,\tau_{1}(x))$,
we denote by $\theta(t,x)$ the absolute value of the Jacobian of $\expp$ at $(x,tu_{x})\in \tbp$.
For each $x\in \bm$,
we choose an orthonormal basis $\{e_{x,i}\}_{i=1}^{n-1}$ of $T_{x}\bm$.
For each $i$,
let $Y_{x,i}$ be the $\bm$-Jacobi field along $\gamma_{x}$ with initial conditions $Y_{x,i}(0)=e_{x,i}$ and $Y'_{x,i}(0)=-A_{u_{x}}e_{x,i}$.
Note that for all $x\in \bm$ and $t\in [0,\tau_{1}(x))$,
we have $\theta(t,x)=\Vert Y_{x,1}(t)\wedge \cdots \wedge Y_{x,n-1}(t)\Vert$.
This does not depend on the choice of the orthonormal bases.
\subsection{Cut locus for the boundary}
We recall the basic properties of the cut locus for the boundary.
The basic properties seem to be well-known.
We refer to \cite{Sa} for the proofs.

Let $M$ be a connected complete Riemannian manifold with boundary with Riemannian metric $g$.
For $p\in M$, 
we call $x\in \bm$ a \textit{foot point} on $\bm$ of $p$ if $d_{M}(p,x)=\rho_{\bm}(p)$.
Since $(M,d_{M})$ is proper, 
every point in $M$ has at least one foot point on $\bm$.
For $p\in \inte M$, 
let $x\in \bm$ be a foot point on $\bm$ of $p$.
Then there exists a unique normal minimal geodesic $\gamma:[0,l]\to M$ from $x$ to $p$
such that $\gamma=\gamma_{x}|_{[0,l]}$,
where $l=\rho_{\bm}(p)$.
In particular,
$\gamma'(0)=u_{x}$ and $\gamma|_{(0,l]}$ lies in $\inte M$.

Let $\tau:\bm\to \mathbb{R}\cup \{\infty\}$ be the function defined as (\ref{eq:tau}).
By the property of $\tau_{1}$,
for all $x\in \bm$
we have $0<\tau(x)\leq \tau_{1}(x)$.
The function $\tau$ is continuous on $\bm$.

We have already known the following (see e.g., Section 3 in \cite{Sa}):
\begin{prop}\label{prop:metric neighborhood}
For every $r\in (0,\infty)$
we have
\begin{equation*}\label{eq:neighborhood}
B_{r}(\bm)=\expp \left(\bigcup_{x\in \bm} \{tu_{x}\mid t\in [0,\min\{r,\tau(x)\}] \}\right).
\end{equation*}
\end{prop}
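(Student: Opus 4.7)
The plan is to prove the set equality by two inclusions, using the properness of $(M,d_{M})$ from the Hopf-Rinow theorem together with the facts already recorded in the excerpt about foot points and normal minimal geodesics from $\bm$.

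For the inclusion $\supseteq$, I would take $p = \expp(tu_{x})$ with $x \in \bm$ and $t \in [0,\min\{r,\tau(x)\}]$. Then $p = \gamma_{x}(t)$, and since $t \leq \tau(x)$, the definition of $\tau(x)$ as a supremum combined with the continuity of $\tau$ gives $\rho_{\bm}(\gamma_{x}(t)) = t$. (To be safe at the endpoint, one uses that $\rho_{\bm}(\gamma_{x}(s)) = s$ on $[0,\tau(x)]$; this follows because on $[0,\tau(x))$ equality holds by definition of the sup, and one passes to the limit $s \to \tau(x)$ using continuity of $\rho_{\bm}$ and $\gamma_{x}$.) Hence $\rho_{\bm}(p) = t \leq r$, so $p \in B_{r}(\bm)$.

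For the inclusion $\subseteq$, I would take $p \in B_{r}(\bm)$, so $\rho_{\bm}(p) \leq r$. Since $(M,d_{M})$ is a proper geodesic space, the distance $\rho_{\bm}(p)$ is realized: there exists a foot point $x \in \bm$ with $d_{M}(p,x) = \rho_{\bm}(p) =: l$. If $l = 0$ then $p = x = \expp(0_{x})$ and we are done, so assume $l > 0$. By the fact recalled just before the statement (and proved in \cite{Sa}), there is a unique normal minimal geodesic from $x$ to $p$, and it coincides with $\gamma_{x}|_{[0,l]}$; in particular $p = \expp(lu_{x})$. It remains to show $l \leq \tau(x)$, for then $l \leq \min\{r,\tau(x)\}$ as required. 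For each $s \in (0,l]$, the curve $\gamma_{x}|_{[0,s]}$ must be a shortest path from $\bm$ to $\gamma_{x}(s)$; otherwise a shorter path from $\bm$ to $\gamma_{x}(s)$ concatenated with $\gamma_{x}|_{[s,l]}$ would produce a curve from $\bm$ to $p$ of length strictly less than $l$, contradicting $\rho_{\bm}(p) = l$. Thus $\rho_{\bm}(\gamma_{x}(s)) = s$ for every $s \in (0,l]$, so by definition $l \leq \tau(x)$.

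The argument is essentially bookkeeping once the background on foot points, uniqueness of the normal minimal geodesic from $\bm$, and properness is in place. The only subtle point, and the step I would double-check most carefully, is handling the boundary case $t = \tau(x)$ in the $\supseteq$ direction: one must know that the relation $\rho_{\bm}(\gamma_{x}(t)) = t$ persists at $t = \tau(x)$, which follows from continuity of both sides, but is worth stating explicitly since the defining set uses a strict supremum.
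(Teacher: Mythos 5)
Your proof is correct and complete. Note that the paper does not actually supply a proof of this proposition---it is stated as a known fact with a pointer to Section~3 of \cite{Sa}---but the argument you give is the standard one that underlies it: the inclusion $\supseteq$ follows from the definition of $\tau$ (observing that $\{s\mid \rho_{\bm}(\gamma_{x}(s))=s\}$ is an interval, since a subsegment of a minimizer from $\bm$ is again a minimizer, and then passing to the endpoint $s=\tau(x)$ by continuity), and the inclusion $\subseteq$ follows from properness (existence of a foot point) together with the uniqueness of the normal minimal geodesic $\gamma_{x}|_{[0,l]}$ to the foot point, plus the same subsegment argument to force $l\leq\tau(x)$. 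The endpoint subtlety you flag is the only point requiring care, and you handle it correctly.
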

For the inscribed radius $D(M,\bm)$ of $M$,
from the definition of $\tau$,
we deduce $D(M,\bm)=\sup_{x\in \bm} \tau(x)$.

The continuity of $\tau$ implies the following (see e.g., Section 3 in \cite{Sa}):
\begin{lem}\label{lem:bmcompact}
Suppose that
$\bm$ is compact.
Then $D(M,\bm)$ is finite if and only if $M$ is compact.
\end{lem}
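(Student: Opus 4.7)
My plan is to handle the two directions of the biconditional separately; only the reverse direction actually uses that $\bm$ is compact. For ``$M$ compact $\Rightarrow \dm<\infty$'', I would just observe that $\rho_{\bm}$ is $1$-Lipschitz, hence continuous, so on the compact space $M$ it attains its supremum: $\dm=\max_{p\in M}\rho_{\bm}(p)<\infty$. This half uses neither the continuity of $\tau$ nor the compactness of $\bm$.

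For the converse, suppose $\bm$ is compact and $D:=\dm<\infty$. My plan is to show that $M$ has finite diameter and then invoke properness of $(M,d_{M})$. First, by the definition of $\dm$, every $p\in M$ satisfies $\rho_{\bm}(p)\leq D$, so $M=B_{D}(\bm)$. Second, since $(M,d_{M})$ is proper by the Hopf-Rinow theorem for length spaces (as recalled in the preliminaries), every $p\in M$ admits a foot point $x_{p}\in\bm$ with $d_{M}(p,x_{p})\leq D$. The triangle inequality then gives, for $p,q\in M$ with foot points $x_{p},x_{q}\in\bm$,
\[
d_{M}(p,q)\leq d_{M}(p,x_{p})+d_{M}(x_{p},x_{q})+d_{M}(x_{q},q)\leq 2D+\diam(\bm).
\]
Since $\bm$ is compact, $\diam(\bm)<\infty$, so $M$ is a closed bounded subset of the proper space $(M,d_{M})$, and hence compact.

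I do not anticipate any real obstacle; the statement is a purely metric-geometric fact that uses no curvature assumption, which is why it sits in the preliminaries. The only point worth flagging is that both hypotheses on the boundary are genuinely needed in the nontrivial direction: if $\bm$ is merely closed but not compact, one can have $\dm<\infty$ while $M$ remains non-compact (e.g.\ the slab $\mathbb{R}^{n-1}\times[0,1]$), so the compactness assumption on $\bm$ is not cosmetic.
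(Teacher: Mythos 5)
Your proof is correct, and it does not use the continuity of $\tau$ at all, which is a genuine departure from the route the paper hints at. The paper frames the lemma as a consequence of the continuity of $\tau$ (citing Section~3 of \cite{Sa}): the intended argument is presumably to take $r=\dm$ in Proposition~\ref{prop:metric neighborhood}, so that $M=\expp\bigl(\bigcup_{x\in\bm}\{tu_x\mid t\in[0,\tau(x)]\}\bigr)$, and then to observe that this preimage set is compact because $\bm$ is compact and $\tau$ is a continuous function bounded above by $\dm<\infty$; compactness of $M$ follows since $\expp$ is continuous. You instead argue purely metrically: the forward direction is immediate from the continuity of $\rho_{\bm}$ on compact $M$, and for the reverse direction you bound $\diam(M)\leq 2\dm+\diam(\bm)$ by the triangle inequality through foot points and then invoke properness of $(M,d_{M})$ from Hopf--Rinow (stated in the Preliminaries). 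This is self-contained and avoids relying on the continuity of $\tau$, which is itself a nontrivial fact imported from \cite{Sa}, so your argument is at least as clean. One micro-remark: you don't even need foot points (hence properness) to get the bound --- for any $\epsilon>0$, pick $x\in\bm$ with $d_M(p,x)<\rho_{\bm}(p)+\epsilon$ and let $\epsilon\to 0$; but since $\bm$ is compact, foot points exist in any case, so your phrasing is fine.
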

We put
\begin{align*}
TD_{\bm}  &:= \bigcup_{x\in \bm} \{\,t\,u_{x} \in T^{\perp}_{x}\bm \mid t\in[0,\tau(x)) \,\},\\
T\cut \bm &:= \bigcup_{x\in \bm} \{\,\tau(x)\,u_{x}\in T^{\perp}_{x}\bm \mid \tau(x)<\infty \,\},
\end{align*}
and define $D_{\bm}:=\expp (TD_{\bm})$ and $\cut \bm:=\expp (T\cut \bm)$.
We call $\cut \bm$ the \textit{cut locus for the boundary} $\bm$.
By the continuity of $\tau$,
the set $\cut \bm$ is a null set of $M$.
Furthermore,
we have
\begin{equation*}
\inte M=(D_{\bm}\setminus \bm) \sqcup \cut \bm,\quad M=D_{\bm}\sqcup \cut \bm.
\end{equation*}
This implies that
if $\cut\bm=\emptyset$,
then $\bm$ is connected.
The set $TD_{\bm}\setminus 0(T^{\perp}\bm)$ is a maximal domain in $T^{\perp}\bm$ on which $\expp$ is regular and injective.

The following has been shown in the proof of Theorem 1.3 in \cite{Sa}:
\begin{lem}\label{lem:splitting1}
If there exists a connected component $\bm_{0}$ of $\bm$ such that 
for all $x\in \bm_{0}$ we have $\tau(x)=\infty$,
then $\bm$ is connected and $\cut \bm=\emptyset$.
\end{lem}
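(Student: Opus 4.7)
The plan is to show that the image of the normal exponential map restricted to the single component $\bm_0$ already fills $M$, from which both conclusions of the lemma follow at once. Concretely, define $F:\bm_0 \times [0,\infty)\to M$ by $F(x,t):=\gamma_x(t)$ and set $E:=F(\bm_0\times[0,\infty))$. Since $M$ is connected and $\bm_0\subseteq E$ is nonempty, it suffices to show that $E$ is both closed and open in $M$.

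For closedness, I take a sequence $p_k=\gamma_{x_k}(t_k)\in E$ converging to some $p\in M$. The hypothesis $\tau(x_k)=\infty$ forces $t_k=\rho_{\bm}(p_k)$, so continuity of $\rho_{\bm}$ gives $t_k\to t:=\rho_{\bm}(p)$. The triangle inequality yields $d_M(x_k,p)\leq d_M(x_k,p_k)+d_M(p_k,p)=t_k+d_M(p_k,p)$, which stays bounded. Since $(M,d_M)$ is proper by the Hopf-Rinow theorem for length spaces and $\bm_0$ is closed in $M$ (as a connected component of the manifold $\bm$, which is closed in $M$), a subsequence of $\{x_k\}$ converges to some $x\in\bm_0$; continuity of the exponential map then gives $p=\gamma_x(t)\in E$.

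For openness, I pick $p=F(x_0,t_0)\in E$. If $t_0>0$, then $\tau_1(x_0)\geq\tau(x_0)=\infty$ ensures that there is no conjugate point of $\bm$ at $\gamma_{x_0}(t_0)$, so the differential of $F$ at $(x_0,t_0)$ is an isomorphism and $F$ is a local diffeomorphism near $(x_0,t_0)$, placing an open neighborhood of $p$ inside $E$. If $t_0=0$, then $p\in\bm_0$, and since $\bm_0$ is open in $\bm$, I choose a neighborhood $V\subseteq\bm_0$ of $p$ in $\bm$ and apply Proposition \ref{prop:metric neighborhood} with small $r$ together with the tubular collar of $\bm$ to produce a neighborhood of $p$ in $M$ of the form $F(V\times[0,r))\subseteq E$.

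Combining, $E=M$. Since $\gamma_x((0,\infty))\subseteq\inte M$ for every $x\in\bm_0$, any boundary point of $M$ lying in $E$ must come from $t=0$, giving $\bm\cap E=\bm_0$ and hence $\bm=\bm_0$, which is connected. Moreover, the set $T\cut\bm$ is empty by its very definition because $\tau(x)=\infty$ holds for every $x\in\bm=\bm_0$, so $\cut\bm=\expp(T\cut\bm)=\emptyset$. The only delicate step is the closedness argument, which requires properness of $(M,d_M)$ to extract a convergent subsequence from $\{x_k\}$ when $\bm_0$ is possibly non-compact.
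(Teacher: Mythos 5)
Your proof is correct. The paper itself does not supply a proof of this lemma but defers to the proof of Theorem~1.3 in \cite{Sa}, so a line-by-line comparison is not possible here; nonetheless your open-closed argument on the set $E=F(\bm_{0}\times[0,\infty))$ is a clean and self-contained route. The key ingredients are all in place: the identity $\rho_{\bm}(\gamma_{x}(t))=t$ for all $t\geq 0$ when $\tau(x)=\infty$ (used to identify $t_{k}=\rho_{\bm}(p_{k})$), properness of $(M,d_{M})$ from the Hopf--Rinow theorem to extract a convergent subsequence of foot points in the closedness step, the inequality $\tau(x)\leq \tau_{1}(x)$ to rule out focal points and get local diffeomorphism of $\expp$ in the interior openness step, the collar neighborhood theorem for the boundary case $t_{0}=0$, and the observation that $\gamma_{x}((0,\infty))\subset\inte M$ so that $E\cap\bm=\bm_{0}$. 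Once $E=M$, connectedness of $\bm$ and emptiness of $T\cut\bm$ (hence of $\cut\bm$) follow directly from the definitions, exactly as you state.
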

The function $\rho_{\bm}$ is smooth on $\inte M\setminus \cut \bm$.
For each $p\in \inte M\setminus \cut \bm$,
the gradient vector $\nabla \rho_{\bm}(p)$ of $\rho_{\bm}$ at $p$
is given by $\nabla \rho_{\bm}(p)=\gamma'(l)$,
where $\gamma:[0,l]\to M$ is the normal minimal geodesic from the foot point on $\bm$ of $p$ to $p$.

For $\Omega \subset M$,
we denote by $\bar{\Omega}$ the closure of $\Omega$ in $M$,
and by $\partial \Omega$ the boundary of $\Omega$ in $M$.
For a domain $\Omega$ in $M$ such that
$\partial \Omega$ is a smooth hypersurface in $M$,
we denote by $\vol_{\partial \Omega}$ the canonical Riemannian volume measure on $\partial \Omega$.

We have the following fact to avoid the cut locus for the boundary:
\begin{lem}\label{lem:avoiding the cut locus2}
Let $\Omega$ be a domain in $M$ such that
$\partial \Omega$ is a smooth hypersurface in $M$.
Then there exists a sequence $\{\Omega_{k}\}_{k\in \mathbb{N}}$ of closed subsets of $\bar{\Omega}$ satisfying that
for every $k\in \mathbb{N}$,
the set $\partial \Omega_{k}$ is a smooth hypersurface in $M$ except for a null set in $(\partial \Omega,\vol_{\partial \Omega})$,
and satisfying the following properties:
\begin{enumerate}
\item for all $k_{1},k_{2}\in \mathbb{N}$ with $k_{1}<k_{2}$,
         we have $\Omega_{k_{1}}\subset \Omega_{k_{2}}$;
\item $\bar{\Omega} \setminus \cut \bm=\bigcup_{k\in \mathbb{N}}\,\Omega_{k}$;
\item for every $k\in \mathbb{N}$,
         and for almost every point $p \in \partial \Omega_{k}\cap \partial \Omega$ in $(\partial \Omega,\vol_{\partial \Omega})$,
         there exists the unit outer normal vector for $\Omega_{k}$ at $p$
         that coincides with the unit outer normal vector on $\partial \Omega$ for $\Omega$ at $p$;
\item for every $k\in \mathbb{N}$,
         on $\partial \Omega_{k}\setminus \partial \Omega$,
         there exists the unit outer normal vector field $\nu_{k}$ for $\Omega_{k}$ such that $g(\nu_{k},\nabla \rho_{\bm})\geq 0$.
\end{enumerate}
Moreover,
if $\bar{\Omega}=M$,
then for every $k\in \mathbb{N}$,
the set $\partial \Omega_{k}$ is a smooth hypersurface in $M$,
and satisfies $\partial \Omega_{k}\cap \bm=\bm$.
\end{lem}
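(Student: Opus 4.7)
The plan is to exhaust $\bar{\Omega}\setminus \cut \bm$ from inside by closed regions obtained by truncating the normal exponential map along a smooth approximation of $\tau$. Because $\tau:\bm\to(0,\infty]$ is continuous, a standard partition-of-unity smoothing on $\bm$ produces an increasing sequence of smooth functions $\tau_{k}:\bm\to (0,\infty)$ with $\tau_{k}(x)<\tau(x)$ for every $x\in \bm$ and $\tau_{k}(x)\nearrow \tau(x)$. Since $\expp$ is a diffeomorphism from $TD_{\bm}\setminus 0(\tbp)$ onto $D_{\bm}\setminus \bm$, the foot-point map $F:D_{\bm}\setminus \bm\to \bm$ is smooth; I would set
\begin{equation*}
\phi_{k}:=\tau_{k}\circ F-\rho_{\bm}\quad \text{on }D_{\bm}\setminus \bm,
\end{equation*}
extended continuously by $\phi_{k}|_{\bm}=\tau_{k}$, and put $\widetilde{\Omega}_{k}:=\{\phi_{k}\geq 0\}=\expp(\{tu_{x}:0\leq t\leq \tau_{k}(x)\})$. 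Finally define $\Omega_{k}:=\bar{\Omega}\cap \widetilde{\Omega}_{k}$. Since $\tau_{k}<\tau$, the level set $\{\phi_{k}=0\}$ lies in $\inte M\setminus \cut \bm$ and is a smooth hypersurface in $M$.

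Property (1) is immediate from $\tau_{k}\leq \tau_{k+1}$. For (2), every $p\in \bar{\Omega}\setminus \cut \bm$ lies in $D_{\bm}$ and satisfies $\rho_{\bm}(p)<\tau(F(p))$, so $\rho_{\bm}(p)<\tau_{k}(F(p))$ for all sufficiently large $k$, whence $p\in \Omega_{k}$. For (4), on the cap $\partial \Omega_{k}\setminus \partial \Omega\subset \{\phi_{k}=0\}$ the unit outer normal for $\Omega_{k}$ is
\begin{equation*}
\nu_{k}=\frac{-\nabla \phi_{k}}{\Vert \nabla \phi_{k}\Vert}=\frac{\nabla \rho_{\bm}-\nabla(\tau_{k}\circ F)}{\Vert\nabla \rho_{\bm}-\nabla(\tau_{k}\circ F)\Vert};
\end{equation*}
because $\tau_{k}\circ F$ is constant along every normal geodesic $\gamma_{x}$, one has $g(\nabla(\tau_{k}\circ F),\nabla \rho_{\bm})=0$, and hence $g(\nu_{k},\nabla \rho_{\bm})=\Vert\nabla \rho_{\bm}\Vert^{2}/\Vert \nabla \phi_{k}\Vert>0$. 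For (3), at any $p\in \partial \Omega_{k}\cap \partial \Omega$ with $\phi_{k}(p)>0$, the set $\Omega_{k}$ coincides with $\bar{\Omega}$ on a neighborhood of $p$, so the unit outer normal for $\Omega_{k}$ at $p$ exists and equals that of $\Omega$. The remaining corner set $\partial \Omega\cap \{\phi_{k}=0\}$ is handled by Sard's theorem applied to $\phi_{k}|_{\partial \Omega}$: for an arbitrarily small regular value $c_{k}>0$, replacing $\tau_{k}$ by $\tau_{k}-c_{k}$ preserves all required properties and makes $\{\phi_{k}=0\}$ meet $\partial \Omega$ transversally, so the intersection has Hausdorff dimension $n-2$ and is therefore $\vol_{\partial \Omega}$-null. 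The moreover statement is immediate: when $\bar{\Omega}=M$ one has $\Omega_{k}=\widetilde{\Omega}_{k}$, and since $\tau_{k}>0$ the cap $\{\phi_{k}=0\}$ is disjoint from $\bm$, so $\partial \Omega_{k}=\bm\cup \{\phi_{k}=0\}$ is a smooth hypersurface with $\partial \Omega_{k}\cap \bm=\bm$.

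The main technical obstacle is the initial smoothing step: producing a smooth, monotone approximation $\tau_{k}\nearrow \tau$ with $\tau_{k}<\tau$ pointwise is delicate because $\tau$ is only continuous and may take the value $\infty$. One handles this by a locally finite cover of $\bm$ by charts on which $\tau$ is bounded below by a positive constant, mollifying $\tau$ in each chart with appropriate scale parameters, and patching by a subordinate partition of unity; a small multiplicative deflation of the resulting function preserves the strict inequality $\tau_{k}<\tau$. The transversality correction via Sard is a straightforward but essential second step needed to secure the null-set clause in (3).
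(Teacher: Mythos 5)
Your proposal is correct and follows exactly the strategy the paper indicates: the paper omits the proof, referring to the analogous result for the cut locus of a single point (Theorem~4.1 of Cheeger's lecture notes), and your construction is the standard adaptation of that technique to the cut locus for the boundary, built on a smooth monotone approximation $\tau_{k}\nearrow \tau$ from below composed with the (smooth, since $\expp$ is a diffeomorphism off the zero section of $TD_{\bm}$) foot-point map, followed by a Sard-type transversality correction. One small refinement: when $\bm$ is non-compact one cannot necessarily choose a single constant $c_{k}<\inf_{\bm}\tau_{k}$ for the transversality shift, so it is cleaner to apply Sard to the ratio $\rho_{\bm}/(\tau_{k}\circ F)$ on $\partial\Omega\cap D_{\bm}$ and deflate $\tau_{k}$ multiplicatively by a regular value close to $1$, which preserves positivity, $\tau_{k}<\tau$, and (with a decreasing choice of deflation factors) the monotonicity needed for property~(1).
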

For the cut locus for a single point,
a similar result to Lemma \ref{lem:avoiding the cut locus2} is well-known (see e.g., Theorem 4.1 in \cite{Che2}).
One can prove Lemma \ref{lem:avoiding the cut locus2}
by a similar method to that of the proof of the result for the cut locus for a single point.
We omit the proof.
\subsection{Busemann functions and asymptotes}
Let $M$ be a connected complete Riemannian manifold with boundary.
A normal geodesic $\gamma:[0,\infty)\to M$ is said to be a \textit{ray}
if for all $s,t\in [0,\infty)$
it holds that $d_{M}(\gamma(s),\gamma(t))=|s-t|$.
For a ray $\gamma:[0,\infty)\to M$, 
the \textit{Busemann function} $b_{\gamma}:M\to \mathbb{R}$ \textit{of} $\gamma$ is defined as
\begin{equation*}
b_{\gamma}(p):=\lim_{t\to \infty}(t-d_{M}(p,\gamma(t))).
\end{equation*}

Take a ray $\gamma:[0,\infty)\to M$
and a point $p\in \inte M$,
and choose a sequence $\{t_{i}\}$ with $t_{i}\to \infty$.
For each $i$,
we take a normal minimal geodesic $\gamma_{i}:[0,l_{i}]\to M$ from $p$ to $\gamma(t_{i})$.
Since $\gamma$ is a ray,
it follows that $l_{i}\to \infty$.
Take a sequence $\{T_{j}\}$ with $T_{j}\to \infty$.
Using the fact that $M$ is proper,
we take a subsequence $\{\gamma_{1,i}\}$ of $\{\gamma_{i}\}$,
and a normal minimal geodesic $\gamma_{p,1}:[0,T_{1}]\to M$ from $p$ to $\gamma_{p,1}(T_{1})$
such that $\gamma_{1,i}|_{[0,T_{1}]}$ uniformly converges to $\gamma_{p,1}$.
In this manner,
take a subsequence $\{\gamma_{2,i}\}$ of $\{\gamma_{1,i}\}$
and a normal minimal geodesic $\gamma_{p,2}:[0,T_{2}]\to M$ from $p$ to $\gamma_{p,2}(T_{2})$
such that $\gamma_{2,i}|_{[0,T_{2}]}$ uniformly converges to $\gamma_{p,2}$,
where $\gamma_{p,2}|_{[0,T_{1}]}=\gamma_{p,1}$.
By means of a diagonal argument,
we obtain a subsequence $\{\gamma_{k}\}$ of $\{\gamma_{i}\}$
and a ray $\gamma_{p}$ in $M$ such that
for every $t\in (0,\infty)$
we have $\gamma_{k}(t)\to \gamma_{p}(t)$ as $k\to \infty$.
We call such a ray $\gamma_{p}$ an \textit{asymptote for} $\gamma$ \textit{from} $p$.

The following lemmas have been shown in \cite{Sa}.
\begin{lem}\label{lem:busemann function}
Suppose that for some $x \in \bm$
we have $\tau(x)=\infty$.
Take $p\in \inte M$.
If $b_{\gamma_{x}}(p)=\rho_{\bm}(p)$,
then $p\notin \cut \bm$.
Moreover,
for the unique foot point $y$ on $\bm$ of $p$,
we have $\tau(y)=\infty$.
\end{lem}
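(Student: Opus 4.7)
My plan is to show that any asymptote $\gamma_{p}$ from $p$ for $\gamma_{x}$ is the smooth prolongation of the normal geodesic $\gamma_{y}$ from a foot point $y$; this will force $\tau(y)=\infty$ and $p\notin \cut\bm$.

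First I would establish the global inequality $b_{\gamma_{x}}\leq \rho_{\bm}$ on $M$. For any $z\in \bm$, the hypothesis $\tau(x)=\infty$ means that $\gamma_{x}|_{[0,t]}$ realizes the distance from $\bm$, so $d_{M}(z,\gamma_{x}(t))\geq d_{M}(\gamma_{x}(t),\bm)=t$ and hence $b_{\gamma_{x}}(z)\leq 0$. Combined with the $1$-Lipschitz property of $b_{\gamma_{x}}$, this gives $b_{\gamma_{x}}(q)\leq d_{M}(q,z)$ for each $z\in \bm$, and taking the infimum yields $b_{\gamma_{x}}(q)\leq \rho_{\bm}(q)$.

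Next I would fix an asymptote $\gamma_{p}:[0,\infty)\to M$ from $p$, obtained as the limit of normal minimal geodesics $\gamma_{k}:[0,l_{k}]\to M$ from $p$ to $\gamma_{x}(t_{k})$, and verify the standard co-ray identity
\begin{equation*}
b_{\gamma_{x}}(\gamma_{p}(t))=b_{\gamma_{x}}(p)+t \quad (t\geq 0).
\end{equation*}
The lower bound comes from the triangle inequality $d_{M}(\gamma_{p}(t),\gamma_{x}(t_{k}))\leq d_{M}(\gamma_{p}(t),\gamma_{k}(t))+(l_{k}-t)$ together with $t_{k}-l_{k}\to b_{\gamma_{x}}(p)$ and $d_{M}(\gamma_{p}(t),\gamma_{k}(t))\to 0$; the upper bound is the $1$-Lipschitz property. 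Using the hypothesis $b_{\gamma_{x}}(p)=\rho_{\bm}(p)$ together with the inequality from the previous paragraph,
\begin{equation*}
\rho_{\bm}(\gamma_{p}(t))\geq b_{\gamma_{x}}(\gamma_{p}(t))=\rho_{\bm}(p)+t,
\end{equation*}
while the reverse inequality is immediate from $d_{M}(\gamma_{p}(t),p)=t$, so $\rho_{\bm}(\gamma_{p}(t))=\rho_{\bm}(p)+t$ for every $t\geq 0$.

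To finish, let $y$ be any foot point on $\bm$ of $p$, and concatenate the normal minimal geodesic $\gamma_{y}|_{[0,\rho_{\bm}(p)]}$ (whose initial velocity is $u_{y}$) with $\gamma_{p}|_{[0,t]}$. This broken curve has length $\rho_{\bm}(p)+t=d_{M}(\gamma_{p}(t),\bm)$ and connects $y\in \bm$ to $\gamma_{p}(t)$, so it is a minimizing geodesic from $\bm$; by the standard first-variation argument it has no break and therefore coincides with $\gamma_{y}|_{[0,\rho_{\bm}(p)+t]}$. Since $t$ is arbitrary, $\gamma_{y}$ is a ray realizing $\rho_{\bm}(\gamma_{y}(s))=s$ for all $s\geq 0$, which is exactly $\tau(y)=\infty$. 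Moreover $p=\gamma_{y}(\rho_{\bm}(p))$ with $\rho_{\bm}(p)<\tau(y)$ lies in $D_{\bm}\setminus \bm$, so $p\notin \cut\bm$, and the uniqueness of the foot point $y$ follows from the injectivity of $\expp$ on $TD_{\bm}\setminus 0(\tbp)$. The main subtlety is the smoothness of the broken geodesic at $p$; this is handled once one sees that the concatenation competes against all curves from $\bm$ (not merely from $y$) and wins, so no cornering is possible.
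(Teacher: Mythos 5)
Your proof is correct. The paper itself gives no proof of this lemma (it defers to the earlier reference \cite{Sa}), but your argument is the standard one and is complete: the elementary bound $b_{\gamma_{x}}\leq\rho_{\bm}$ on $M$, the co-ray identity along an asymptote $\gamma_{p}$, the resulting equality $\rho_{\bm}(\gamma_{p}(t))=\rho_{\bm}(p)+t$, and the concatenation of $\gamma_{y}|_{[0,\rho_{\bm}(p)]}$ with $\gamma_{p}|_{[0,t]}$ being globally minimizing from $y$ (its length $\rho_{\bm}(p)+t$ equals $\rho_{\bm}(\gamma_{p}(t))\leq d_{M}(y,\gamma_{p}(t))$), which forces smoothness at $p$ and gives $\rho_{\bm}(\gamma_{y}(s))=s$ for all $s$, hence $\tau(y)=\infty$ and $p\in D_{\bm}\setminus\bm$. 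One small remark on exposition: you do not actually need the concatenation to ``compete against all curves from $\bm$'' to get smoothness at $p$ — it already competes against all curves from $y$ to $\gamma_{p}(t)$ and ties the lower bound $d_{M}(y,\gamma_{p}(t))\geq\rho_{\bm}(\gamma_{p}(t))$, so it is a minimizing geodesic from $y$; competing against all of $\bm$ is what you need for $\tau(y)\geq\rho_{\bm}(p)+t$.
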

\begin{lem}\label{lem:asymptote}
Suppose that for some $x\in \bm$
we have $\tau(x)=\infty$.
For $l \in (0,\infty)$,
put $p:=\gamma_{x}(l)$.
Then there exists $\epsilon \in (0,\infty)$ such that
for all $q\in B_{\epsilon}(p)$,
all asymptotes for the ray $\gamma_{x}$ from $q$ lie in $\inte M$.
\end{lem}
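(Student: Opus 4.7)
The plan is to use the Busemann function $b_{\gamma_x}$ of the ray $\gamma_x$ to bound the distance from $\bm$ along any asymptote $\gamma_q$ from below, and then exploit the 1-Lipschitz continuity of $b_{\gamma_x}$ together with $b_{\gamma_x}(p)=l>0$ to produce the desired neighborhood. Note first that the hypothesis $\tau(x)=\infty$ makes $\gamma_x$ a ray: for every $t>0$ one has $\rho_{\bm}(\gamma_x(t))=t$, so $d_{M}(x,\gamma_x(t))=t$, and for $0\leq s<t$ the triangle inequality gives $d_{M}(\gamma_x(s),\gamma_x(t))\geq d_{M}(x,\gamma_x(t))-d_{M}(x,\gamma_x(s))=t-s$, with the reverse inequality coming from the length of $\gamma_x|_{[s,t]}$. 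Hence $b_{\gamma_x}$ is defined on $M$, is 1-Lipschitz, and $b_{\gamma_x}(\gamma_x(l))=l$.

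Next I would establish the pointwise bound $b_{\gamma_x}\leq\rho_{\bm}$ on $M$. For $r\in M$ pick a foot point $y\in\bm$ of $r$, so $\rho_{\bm}(r)=d_{M}(r,y)$; since $d_{M}(\gamma_x(t),y)\geq\rho_{\bm}(\gamma_x(t))=t$, the triangle inequality yields $d_{M}(r,\gamma_x(t))\geq t-\rho_{\bm}(r)$, and letting $t\to\infty$ gives $b_{\gamma_x}(r)\leq\rho_{\bm}(r)$. Then for any asymptote $\gamma_q$ arising as a limit of normal minimal geodesics $\gamma_i:[0,l_i]\to M$ from $q$ to $\gamma_x(t_i)$ with $t_i\to\infty$, I would verify the identity
\[
b_{\gamma_x}(\gamma_q(s))=b_{\gamma_x}(q)+s \quad \text{for every } s\geq 0.
\]
For $s\leq l_i$, minimality of $\gamma_i$ gives $d_{M}(\gamma_i(s),\gamma_x(t_i))=d_{M}(q,\gamma_x(t_i))-s$; passing to the limit in $i$ (using $\gamma_i(s)\to\gamma_q(s)$, continuity of the distance, and existence of the Busemann limit along the specific sequence $\{t_i\}$) produces the equality.

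Combining these two steps yields $\rho_{\bm}(\gamma_q(s))\geq b_{\gamma_x}(\gamma_q(s))=b_{\gamma_x}(q)+s$ for every asymptote $\gamma_q$ from $q$ and every $s\geq 0$. Since $b_{\gamma_x}$ is 1-Lipschitz with $b_{\gamma_x}(p)=l$, choosing any $\epsilon\in(0,l)$ forces $b_{\gamma_x}(q)\geq l-\epsilon>0$ for all $q\in B_{\epsilon}(p)$, whence $\rho_{\bm}(\gamma_q(s))\geq l-\epsilon>0$ for all $s\geq 0$, so $\gamma_q$ lies in $\inte M$. The one mildly delicate step is the asymptote identity, which requires an interchange between the limit $i\to\infty$ used to construct $\gamma_q$ and the limit $t\to\infty$ defining $b_{\gamma_x}$; this is routine because $b_{\gamma_x}$ is 1-Lipschitz and the sequence $t_i$ already tends to $\infty$, so I do not foresee any genuine obstacle.
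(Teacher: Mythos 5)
Your argument is correct. The three key observations — that $\tau(x)=\infty$ forces $\gamma_x$ to be a ray with $b_{\gamma_x}(\gamma_x(l))=l$, that $b_{\gamma_x}\le\rho_{\bm}$ globally (via the foot-point/triangle-inequality estimate), and that $b_{\gamma_x}$ increases linearly along any asymptote — combine with the $1$-Lipschitz property of $b_{\gamma_x}$ to give $\rho_{\bm}(\gamma_q(s))\ge b_{\gamma_x}(q)+s\ge l-\epsilon>0$ for any $\epsilon\in(0,l)$, which is exactly the desired conclusion. The limit interchange in the asymptote identity is also handled correctly: you use the minimality identity $d_M(\gamma_i(s),\gamma_x(t_i))=d_M(q,\gamma_x(t_i))-s$ \emph{before} passing to the limit, and then only invoke $1$-Lipschitz continuity of $d_M(\cdot,\gamma_x(t_i))$ together with the already-established existence of the Busemann limit along the subsequence. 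This is the standard Busemann-function argument, and since the paper only cites \cite{Sa} for this lemma it is almost certainly the same route the author uses there; I see no gap and no genuine difference in approach.
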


\subsection{Weighted Laplacians}
Let $M$ be a connected complete Riemannian manifold with boundary with Riemannian metric $g$,
and let $f:M\to \mathbb{R}$ be a smooth function.
For a smooth function $\phi$ on $M$,
the \textit{weighted Laplacian $\Delta_{f} \phi$ for $\phi$} is defined by
\begin{equation*}
\Delta_{f} \phi:=\Delta \phi+g(\nabla f,\nabla \phi),
\end{equation*}
where $\Delta \phi$ is the Laplacian for $\phi$ defined as the minus of the trace of its Hessian.
Notice that
$\Delta_{f}$ coincides with the $(f,2)$-Laplacian $\Delta_{f,2}$.

For $x \in \bm$ and $t \in [0,\tau_{1}(x))$,
we put $\theta_{f}(t,x):=e^{-f(\gamma_{x}(t))}\, \theta(t,x)$.
For all $x\in \bm$ and $t\in (0,\tau(x))$,
we see
\begin{equation}\label{eq:Laplacian representation}
\Delta_{f}\, \rho_{\bm}(\gamma_{x}(t)) =-(\log \theta(t,x))'+f(\gamma_{x}(t))'=-\frac{\theta_{f}'(t,x)}{\theta_{f}(t,x)}.
\end{equation}

For $\kappa\in \mathbb{R}$, 
let $s_{\kappa}(t)$ be a unique solution of the so-called Jacobi-equation $\phi''(t)+\kappa \phi(t)=0$ with initial conditions $\phi(0)=0$ and $\phi'(0)=1$.
We put $c_{\kappa}(t):=s'_{\kappa}(t)$.

For $p\in M$,
let $\rho_{p}:M\to \mathbb{R}$ denote the distance function from $p$ defined as $\rho_{p}(q):=d_{M}(p,q)$.

Qian \cite{Q} has proved a Laplacian comparison inequality for the distance function from a single point (see equation 7 in \cite{Q}).
In our setting,
the comparison inequality holds in the following form:
\begin{lem}[\cite{Q}]\label{lem:finite pointed Laplacian comparison}
For $N\in [n,\infty)$,
we suppose $\ric^{N}_{f,M}\geq (N-1)\kappa$.
Take $p\in \inte M$.
Assume that
there exists $q\in \inte M\setminus \{p\}$ such that
a normal minimal geodesic in $M$ from $p$ to $q$ lies in $\inte M$,
and $\rho_{p}$ is smooth at $q$.
Then
\begin{equation}\label{eq:finite pointed Laplacian comparison}
\Delta_{f}\, \rho_{p}(q)\geq -(N-1)\frac{c_{\kappa}(\rho_{p}(q))}{s_{\kappa}(\rho_{p}(q))}.
\end{equation}
\end{lem}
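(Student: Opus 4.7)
The plan is to derive a first-order (Riccati-type) differential inequality for $m(t):=\Delta_{f}\rho_{p}(\gamma(t))$ along the minimal geodesic from $p$ to $q$, and then compare with the model ODE whose extremal solution is $-(N-1)c_{\kappa}/s_{\kappa}$.

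Let $L:=\rho_{p}(q)$ and let $\gamma:[0,L]\to M$ be the unit speed normal minimal geodesic from $p$ to $q$. By hypothesis $\gamma([0,L])$ lies in $\inte M$, and $\rho_{p}$ is smooth at $q$; by standard arguments $\rho_{p}$ is then smooth on a neighborhood of $\gamma((0,L])$, so everything below is justified. Set $v(t):=\Delta\rho_{p}(\gamma(t))$ (the sign convention of the paper, i.e.\ $\Delta=-\tr\Hess$). From the Bochner formula applied to $\rho_{p}$ together with $\Vert\nabla\rho_{p}\Vert\equiv 1$ along $\gamma$, one obtains
\begin{equation*}
v'(t)\;=\;\Vert \Hess\rho_{p}\Vert^{2}(\gamma(t))\,+\,\ric_{g}(\gamma'(t),\gamma'(t)).
\end{equation*}
Because $\Hess\rho_{p}(\gamma',\cdot)=0$, the Cauchy–Schwarz estimate on the $(n-1)$-dimensional orthogonal space gives $\Vert\Hess\rho_{p}\Vert^{2}\geq v^{2}/(n-1)$.

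Next I add the weight. Writing $m(t):=\Delta_{f}\rho_{p}(\gamma(t))=v(t)+(f\circ\gamma)'(t)$ and differentiating, one finds $m'(t)=v'(t)+\Hess f(\gamma',\gamma')$, so using the above inequality,
\begin{equation*}
m'(t)\;\geq\;\frac{v(t)^{2}}{n-1}+\bigl(\ric_{g}+\Hess f\bigr)(\gamma',\gamma')
\;=\;\frac{v(t)^{2}}{n-1}+\ric^{N}_{f}(\gamma',\gamma')+\frac{((f\circ\gamma)'(t))^{2}}{N-n}
\end{equation*}
when $N>n$. The elementary inequality $\frac{a^{2}}{n-1}+\frac{b^{2}}{N-n}\geq\frac{(a+b)^{2}}{N-1}$ applied with $a=v(t),\ b=(f\circ\gamma)'(t)$ then yields the desired Riccati inequality
\begin{equation*}
m'(t)\;\geq\;\frac{m(t)^{2}}{N-1}+(N-1)\kappa,
\end{equation*}
which also holds in the boundary case $N=n$ (where $f$ is constant, $m=v$, and the estimate is immediate).

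To conclude, I substitute $y(t):=\bigl(e^{-f(\gamma(t))}\theta_{p}(t)\bigr)^{1/(N-1)}$, where $\theta_{p}(t)$ denotes the Jacobian of $\exp_{p}$ at $t\gamma'(0)$, so that $m(t)=-(N-1)\,y'(t)/y(t)$. A direct computation converts the Riccati inequality into $y''(t)+\kappa\, y(t)\leq 0$. Compared with $s_{\kappa}$, which satisfies $s_{\kappa}''+\kappa s_{\kappa}=0$ with $s_{\kappa}(0)=0$, the Wronskian $W(t):=y(t)s_{\kappa}'(t)-y'(t)s_{\kappa}(t)$ has $W'=-s_{\kappa}(y''+\kappa y)\geq 0$ on $(0,L]$. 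Since $y(t)\sim c\,t^{(n-1)/(N-1)}\to 0$ and $s_{\kappa}(t)\to 0$ as $t\to 0^{+}$, we have $W(0^{+})=0$, so $W(t)\geq 0$ throughout, i.e.\ $y'(t)/y(t)\leq c_{\kappa}(t)/s_{\kappa}(t)$. Multiplying by $-(N-1)$ gives $(\ref{eq:finite pointed Laplacian comparison})$.

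The only delicate step is the Cauchy–Schwarz manipulation that absorbs the $\nabla f$ contribution into the Bakry–Émery Ricci bound at the correct dimension; the rest is a standard Sturm comparison. The boundary value $W(0^{+})=0$ relies on the asymptotic $y(t)\sim C\,t^{(n-1)/(N-1)}$, which is valid because $N\geq n$; this is why the argument breaks for $N<n$.
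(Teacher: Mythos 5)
Your proof is correct. The paper itself does not prove this lemma---it is stated as a citation to Qian \cite{Q}---but your argument (Bochner plus Cauchy--Schwarz to obtain the weighted Riccati inequality $m'\geq m^2/(N-1)+(N-1)\kappa$, then the substitution $y=(e^{-f\circ\gamma}\theta_p)^{1/(N-1)}$ converting it to $y''+\kappa y\leq 0$, and a Sturm/Wronskian comparison with $s_\kappa$ using the boundary asymptotic $y\sim Ct^{(n-1)/(N-1)}$) is exactly the standard derivation and is in the spirit of Qian's original proof. The two genuinely nontrivial points you flagged---absorbing the $\nabla f$ term via $\frac{a^2}{n-1}+\frac{b^2}{N-n}\geq\frac{(a+b)^2}{N-1}$ to reach the effective dimension $N$, and verifying $W(0^+)=0$ using $N\geq n$---are handled correctly; the only thing left implicit is that $s_\kappa>0$ on $(0,\rho_p(q)]$, which is forced anyway (for $\kappa>0$) by the absence of conjugate points along $\gamma$, and is presupposed by the statement's right-hand side.
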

\begin{rem}\label{rem:equality pointed Laplacian comparison}
In Lemma \ref{lem:finite pointed Laplacian comparison},
we choose a normal minimal geodesic $\gamma:[0,l]\to M$ from $p$ to $q$ that lies in $\inte M$,
and an orthonormal basis $\{e_{i}\}^{n}_{i=1}$ of $T_{p}M$ with $e_{n}=\gamma'(0)$.
Let $\{Y_{i}\}^{n-1}_{i=1}$ be the Jacobi fields along $\gamma$ with initial conditions $Y_{i}(0)=0$ and $Y'_{i}(0)=e_{i}$.
If the equality in (\ref{eq:finite pointed Laplacian comparison}) holds,
then for all $i$
we see $Y_{i}=s_{\kappa}\,E_{i}$ on $[0,l]$,
where $\{E_{i}\}^{n-1}_{i=1}$ are the parallel vector fields along $\gamma$ with initial condition $E_{i}(0)=e_{i}$.
\end{rem}
\begin{rem}
Kasue and Kumura \cite{KK} have been proved Lemma \ref{lem:finite pointed Laplacian comparison}
in the case where $N$ is an integer,
and $\kappa \leq 0$.
\end{rem}

Let $\phi:M\to \mathbb{R}$ be a continuous function,
and let $U$ be a domain contained in $\inte M$.
For $p \in U$,
and for a function $\psi$ defined on an open neighborhood of $p$,
we say that
$\psi$ is a \textit{support function of $\phi$ at $p$}
if we have $\psi(p)=\phi(p)$ and $\psi \leq \phi$.
We say that
$\phi$ is \textit{$f$-subharmonic on $U$}
if for every $p\in U$,
and for every $\epsilon \in (0,\infty)$,
there exists a smooth,
support function $\psi_{p,\epsilon}$ of $\phi$ at $p$ such that $\Delta_{f}\, \psi_{p,\epsilon}(p)\leq \epsilon$.

We recall the following maximal principle of Calabi type (see e.g., \cite{C}, and Lemma 2.4 in \cite{FLZ}).
\begin{lem}\label{lem:maximal principle}
If an $f$-subharmonic function on a domain $U$ contained in $\inte M$
takes the maximal value at a point in $U$,
then it must be constant on $U$.
\end{lem}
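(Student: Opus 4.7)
The plan is a Calabi-type barrier argument. Let $M_{0}:=\sup_{U}\phi$, attained by hypothesis, and set $V:=\phi^{-1}(M_{0})\cap U$. Assume for contradiction that $V\subsetneq U$. Using connectedness of $U$ and properness of $(M,d_{M})$, I first choose $q_{0}\in U\setminus V$ close enough to $V$ that $r_{0}:=d_{M}(q_{0},V)<d_{M}(q_{0},\partial U)$, so $\bar{B}_{r_{0}}(q_{0})\subset U$; then take $p^{*}\in V\cap \partial B_{r_{0}}(q_{0})$; and, by moving $q_{0}$ along the minimal geodesic from $q_{0}$ to $p^{*}$, further arrange $r_{0}<\inj(q_{0})$, so that $\rho_{q_{0}}:=d_{M}(q_{0},\cdot)$ is smooth on $\bar{B}_{r_{0}}(q_{0})\setminus\{q_{0}\}$.

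The key construction is the barrier $h(p):=e^{-\alpha\rho_{q_{0}}(p)^{2}}-e^{-\alpha r_{0}^{2}}$ for a parameter $\alpha>0$. Then $h$ vanishes on $\partial B_{r_{0}}(q_{0})$, is positive inside, and a direct computation using $\Vert\nabla\rho_{q_{0}}\Vert=1$ gives
\[
\Delta_{f} h \;=\; 2\alpha\,e^{-\alpha\rho_{q_{0}}^{2}}\bigl(1-2\alpha\rho_{q_{0}}^{2}-\rho_{q_{0}}\Delta_{f}\rho_{q_{0}}\bigr).
\]
For $\alpha$ large, the $-2\alpha\rho_{q_{0}}^{2}$ term dominates on any closed annular neighborhood $A:=\{r_{0}-\delta\leq \rho_{q_{0}}\leq r_{0}\}$, so $\Delta_{f} h<0$ on $A$, and $\nabla h(p^{*})=-2\alpha r_{0}e^{-\alpha r_{0}^{2}}\nabla\rho_{q_{0}}(p^{*})$ is a non-zero vector pointing inward into $B_{r_{0}}(q_{0})$. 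I then look at the perturbation $\phi+\eta h$ on $A$ for small $\eta>0$: on the outer sphere $\{\rho_{q_{0}}=r_{0}\}$ the function equals $\phi\leq M_{0}$, while on the inner sphere $\{\rho_{q_{0}}=r_{0}-\delta\}\subset B_{r_{0}}(q_{0})\setminus V$ compactness together with $\phi<M_{0}$ gives $\phi\leq M_{0}-\delta_{0}$ for some $\delta_{0}>0$, so $\phi+\eta h<M_{0}$ there for $\eta$ small.

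Now the case analysis. If $\sup_{A}(\phi+\eta h)>M_{0}$ for some small $\eta$, then the supremum is attained at an interior point $\tilde{p}\in A$; $f$-subharmonicity at $\tilde{p}$ with $\epsilon:=-\eta\Delta_{f}h(\tilde{p})/2>0$ supplies a smooth support function $\psi$, and $\psi+\eta h$ has an interior local maximum at $\tilde{p}$, so $\Delta_{f}(\psi+\eta h)(\tilde{p})\geq 0$ and hence $\eta\Delta_{f}h(\tilde{p})/2\geq 0$, contradicting $\Delta_{f}h<0$ on $A$. Otherwise $\phi+\eta h\leq M_{0}$ on $A$ for every small $\eta>0$; I pick a smooth support $\psi$ of $\phi$ at $p^{*}$, and since $\psi\leq\phi\leq M_{0}=\psi(p^{*})$ in a neighborhood of $p^{*}$ in $M$, $\psi$ attains a local maximum at the interior point $p^{*}$, forcing $\nabla\psi(p^{*})=0$. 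Then $\nabla(\psi+\eta h)(p^{*})=\eta\nabla h(p^{*})\neq 0$ points inward, so $\psi+\eta h$ strictly exceeds $M_{0}$ at nearby interior points of $A$ on the inward side of $p^{*}$, contradicting $\psi+\eta h\leq\phi+\eta h\leq M_{0}$ on $A$. The principal technical obstacle is securing the sign of $\Delta_{f}h$ in our weighted setting; the exponential barrier with large $\alpha$ is robust because the quadratic term $-2\alpha\rho_{q_{0}}^{2}$ dominates the unsigned contribution $\rho_{q_{0}}\Delta_{f}\rho_{q_{0}}$ on any annulus where $\rho_{q_{0}}$ stays away from zero, and the subsequent case split is a clean dichotomy between an interior-maximum argument and a Hopf-type gradient argument at $p^{*}$.
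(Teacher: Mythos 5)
The paper does not actually prove this lemma; it is cited directly from Calabi \cite{C} and from Lemma 2.4 of Fang--Li--Zhang \cite{FLZ}. Your proof is a correct, self-contained reconstruction of Calabi's classical barrier argument, adapted to the weighted operator $\Delta_f$ and to the paper's sign convention $\Delta = -\operatorname{trace}\operatorname{Hess}$ (so that a smooth function at an interior local maximum has $\Delta_f \geq 0$, as you use). The technical steps check out: the computation of $\Delta_f h$ for the radial barrier $h=e^{-\alpha\rho_{q_0}^2}-e^{-\alpha r_0^2}$ is right, and on the compact annulus $A$ the term $\rho_{q_0}\Delta_f\rho_{q_0}$ is bounded and $\rho_{q_0}$ is bounded away from $0$, so choosing $\alpha$ large indeed forces $\Delta_f h<0$; the reduction to $r_0<\operatorname{inj}(q_0)$ by sliding $q_0$ toward $p^*$ is legitimate since $q_0\in\operatorname{Int} M$; the dichotomy between an interior maximum of $\phi+\eta h$ (contradicting the $f$-subharmonicity via the support function with $\epsilon=-\eta\Delta_f h(\tilde p)/2$) and a Hopf-type gradient contradiction at $p^*$ (using that any support function of $\phi$ at the global-maximum point $p^*$ has vanishing gradient there) is exhaustive and each branch is carried out correctly. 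No gaps.
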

Fang, Li and Zhang \cite{FLZ} have proved a subharmonicity of Busemann functions on manifolds without boundary (see Lemma 2.1 in \cite{FLZ}).
In our setting,
the subharmonicity holds in the following form:
\begin{lem}[\cite{FLZ}]\label{lem:infinite pointed Laplacian comparison}
Assume $\sup f(M)<\infty$.
Suppose $\ric^{\infty}_{f,M}\geq 0$.
Let $\gamma:[0,\infty)\to M$ be a ray that lies in $\inte M$,
and let $U$ be a domain contained in $\inte M$ such that
for each $p\in U$,
there exists an asymptote for $\gamma$ from $p$
that lies in $\inte M$.
Then $b_{\gamma}$ is $f$-subharmonic on $U$. 
\end{lem}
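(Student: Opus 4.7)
The plan is to exhibit, for each $p\in U$ and each $\epsilon>0$, an explicit smooth support function of $b_\gamma$ at $p$ whose weighted Laplacian at $p$ is at most $\epsilon$. Fix $p\in U$ and a corresponding asymptote $\gamma_p:[0,\infty)\to \inte M$ for $\gamma$ from $p$. For $t>0$ I set
\begin{equation*}
\psi_t(q):=b_\gamma(p)+t-d_M(q,\gamma_p(t)).
\end{equation*}
The standard asymptote identity $b_\gamma(\gamma_p(t))=b_\gamma(p)+t$, together with the $1$-Lipschitz property of $b_\gamma$, gives $\psi_t\leq b_\gamma$, and evidently $\psi_t(p)=b_\gamma(p)$. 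Since $\gamma_p|_{[0,t]}$ lies in $\inte M$ and, being a segment of a ray, is the unique minimizing geodesic between its endpoints, the function $\rho_{\gamma_p(t)}$ is smooth on a neighborhood of $p$ whenever $p$ is not conjugate to $\gamma_p(t)$ along $\gamma_p$; since conjugate parameters form a discrete set, this may be arranged by a small adjustment of $t$.

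Next I would estimate $\Delta_f\psi_t(p)=-\Delta_f\rho_{\gamma_p(t)}(p)$ by the index form. Let $\{e_i\}_{i=1}^{n-1}$ be an orthonormal basis of the orthogonal complement of $\gamma_p'(0)$ in $T_pM$, and let $E_i$ be the parallel transport of $e_i$ along $\gamma_p$. Plugging the test fields $V_i(s):=(1-s/t)E_i(s)$ into the index inequality yields
\begin{equation*}
\sum_{i=1}^{n-1}\Hess\rho_{\gamma_p(t)}(e_i,e_i)\leq \frac{n-1}{t}-\int_0^t\Bigl(1-\frac{s}{t}\Bigr)^2\ric(\gamma_p',\gamma_p')\,ds.
\end{equation*}
Using $\ric^\infty_{f,M}\geq 0$ to replace $\ric(\gamma_p',\gamma_p')$ by $-(f\circ\gamma_p)''$ and integrating by parts twice converts the right-hand side into
\begin{equation*}
\frac{n-1}{t}-(f\circ\gamma_p)'(0)-\frac{2}{t}f(p)+\frac{2}{t^2}\int_0^t f\circ\gamma_p\,ds.
\end{equation*}
The emerging $-(f\circ\gamma_p)'(0)$ is precisely what cancels the drift contribution $g(\nabla f(p),\nabla\rho_{\gamma_p(t)}(p))=-(f\circ\gamma_p)'(0)$ appearing in $\Delta_f\rho_{\gamma_p(t)}(p)=-\sum_i\Hess\rho_{\gamma_p(t)}(e_i,e_i)-(f\circ\gamma_p)'(0)$, leaving
\begin{equation*}
\Delta_f\rho_{\gamma_p(t)}(p)\geq -\frac{n-1}{t}+\frac{2}{t}\bigl(f(p)-\bar f_t\bigr),\qquad \bar f_t:=\frac{1}{t}\int_0^t f\circ\gamma_p\,ds.
\end{equation*}

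Now the hypothesis $\sup f(M)<\infty$ enters decisively: it furnishes the uniform bound $\bar f_t\leq \sup f(M)$, so $\Delta_f\psi_t(p)=-\Delta_f\rho_{\gamma_p(t)}(p)\leq (n-1)/t+(2/t)(\sup f(M)-f(p))$, which is at most $\epsilon$ for $t$ sufficiently large. By the maximal-principle-style definition of $f$-subharmonicity, this exhibits $b_\gamma$ as $f$-subharmonic at $p$, and since $p\in U$ is arbitrary the conclusion follows on all of $U$. The main delicate point is the bookkeeping in the two integrations by parts and identifying the specific combination of boundary terms that cancels the drift, so that only the one-sided bound $\sup f(M)<\infty$ is required; the smoothness and conjugacy issues are dispatched by the freedom in choosing $t$, and the hypothesis that the asymptote lies in $\inte M$ is exactly what allows the second-variation computation to be carried out without boundary corrections.
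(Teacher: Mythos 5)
Your proof is correct. The paper does not prove this lemma itself — it simply cites Lemma~2.1 of Fang--Li--Zhang \cite{FLZ} and records the statement adapted to the with-boundary setting, the hypotheses that $\gamma$ and its asymptotes lie in $\inte M$ being exactly what removes any boundary corrections from the second-variation computation, as you correctly observe at the end. What you have written is a self-contained reconstruction of the FLZ argument: the support function $\psi_t=b_\gamma(p)+t-\rho_{\gamma_p(t)}$ via the asymptote identity and the $1$-Lipschitz bound, the index-form estimate with the linear cutoff field $(1-s/t)E_i$, two integrations by parts to turn $\int_0^t(1-s/t)^2(f\circ\gamma_p)''$ into boundary terms plus the average $\bar f_t$, the cancellation of the two $-(f\circ\gamma_p)'(0)$ contributions, and finally $\sup f(M)<\infty$ to force $\tfrac{2}{t}(\bar f_t - f(p))\to 0$. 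The bookkeeping, including the sign conventions imposed by the paper's choice $\Delta\phi=-\tr\Hess\phi$, checks out. One minor over-caution: a ray has no conjugate points at all (a conjugate parameter would contradict minimality of the ray beyond it), and the minimizing segment from $p$ to $\gamma_p(t)$ is automatically unique for the same reason, so the ``small adjustment of $t$'' you invoke to dodge conjugate parameters is never actually needed.
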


\section{Laplacian comparisons}\label{sec:Laplacian comparisons}
In this section,
let $M$ be an $n$-dimensional, 
connected complete Riemannian manifold with boundary with Riemannian metric $g$,
and let $f:M\to \mathbb{R}$ be a smooth function.
\subsection{Basic comparisons}
We prove the following basic lemma:
\begin{lem}\label{lem:Basic comparison}
Take $x\in \bm$.
For $N\in [n,\infty)$,
suppose that for all $t\in (0,\min \{\tau_{1}(x),\bar{C}_{\kappa,\lambda}\} )$
we have $\ric^{N}_{f}(\gamma'_{x}(t)) \geq (N-1)\kappa$,
and suppose $H_{f,x}\geq (N-1)\lambda$.
Then for all $t\in (0,\min \{\tau_{1}(x),\bar{C}_{\kappa,\lambda}\})$
we have
\begin{equation}\label{eq:Basic1}
\frac{\theta_{f}'(t,x)}{\theta_{f}(t,x)}\leq (N-1)\frac{s_{\kappa,\lambda}'(t)}{s_{\kappa,\lambda}(t)},
\end{equation}
and for all $s,t\in [0,\min \{\tau_{1}(x),\bar{C}_{\kappa,\lambda}\})$ with $s\leq t$
we have
\begin{equation}\label{eq:Basic2}
\frac{\theta_{f}(t,x)}{\theta_{f}(s,x)}\leq \frac{s_{\kappa,\lambda}^{N-1}(t)}{s_{\kappa,\lambda}^{N-1}(s)};
\end{equation}
in particular,
$\theta_{f}(t,x)\leq  e^{-f(x)}\,s_{\kappa,\lambda}^{N-1}(t)$.
\end{lem}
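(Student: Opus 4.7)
My plan is a Riccati comparison in its $N$-Bakry--\'Emery form. Along $\gamma_{x}$ on $[0,\tau_{1}(x))$, represent $\theta(t,x)=\det J(t)$, where $J(t)$ is the $(n-1)\times(n-1)$ matrix of $\bm$-Jacobi fields with $J(0)=I$ and $J'(0)=-A_{u_{x}}$. The shape-operator matrix $U:=J'J^{-1}$ is symmetric with $U(0)=-A_{u_{x}}$ and satisfies the matrix Riccati equation $U'+U^{2}+\mathcal{R}_{\gamma_{x}'}=0$, where $\tr\mathcal{R}_{\gamma_{x}'}=\ric_{g}(\gamma_{x}',\gamma_{x}')$. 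Setting $m(t):=\theta'(t,x)/\theta(t,x)=\tr U(t)$, taking the trace, and invoking Cauchy--Schwarz $\tr(U^{2})\geq m^{2}/(n-1)$ yield
\[
m'(t)+\frac{m(t)^{2}}{n-1}\leq -\ric_{g}(\gamma_{x}'(t),\gamma_{x}'(t)),\qquad m(0)=-H_{x}.
\]

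Now pass to the weighted logarithmic derivative $m_{f}(t):=\theta_{f}'(t,x)/\theta_{f}(t,x)=m(t)-(f\circ\gamma_{x})'(t)$, with initial value $m_{f}(0)=-H_{f,x}\leq-(N-1)\lambda$. For $N>n$, the elementary identity
\[
\frac{(m-(f\circ\gamma_{x})')^{2}}{N-1}-\frac{m^{2}}{n-1}-\frac{((f\circ\gamma_{x})')^{2}}{N-n}=-\frac{\bigl((N-n)m+(n-1)(f\circ\gamma_{x})'\bigr)^{2}}{(N-1)(n-1)(N-n)}\leq 0,
\]
combined with $(f\circ\gamma_{x})''=\Hess f(\gamma_{x}',\gamma_{x}')$ and the definition of $\ric^{N}_{f}$, upgrades the previous inequality to the sharp weighted Riccati bound
\[
m_{f}'(t)+\frac{m_{f}(t)^{2}}{N-1}\leq -\ric^{N}_{f}(\gamma_{x}'(t),\gamma_{x}'(t))\leq-(N-1)\kappa.
\]
The boundary case $N=n$ forces $f$ constant by the paper's convention for $\ric^{n}_{f}$ and reduces everything to the classical unweighted inequality.

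The model function $\phi(t):=(N-1)s_{\kappa,\lambda}'(t)/s_{\kappa,\lambda}(t)$, smooth on $[0,\bconst)$, satisfies $\phi(0)=-(N-1)\lambda$ and equality $\phi'+\phi^{2}/(N-1)=-(N-1)\kappa$. A standard scalar Riccati comparison --- at a hypothetical first crossing $m_{f}(t_{0})=\phi(t_{0})$ the two (in)equalities force $m_{f}'(t_{0})\leq\phi'(t_{0})$, forbidding the crossing --- yields $m_{f}(t)\leq\phi(t)$ on $(0,\min\{\tau_{1}(x),\bconst\})$, which is precisely (\ref{eq:Basic1}). Integrating (\ref{eq:Basic1}) shows that $t\mapsto\theta_{f}(t,x)/s_{\kappa,\lambda}^{N-1}(t)$ is non-increasing, which is (\ref{eq:Basic2}); letting $s\to 0^{+}$ and using $\theta(0,x)=1$ (orthonormal initial data for the $\bm$-Jacobi fields), so that $\theta_{f}(0,x)=e^{-f(x)}$, together with $s_{\kappa,\lambda}(0)=1$, yields the pointwise bound $\theta_{f}(t,x)\leq e^{-f(x)}s_{\kappa,\lambda}^{N-1}(t)$. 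The main obstacle is the algebraic identity in the second paragraph --- the Bakry--\'Emery--Qian trick --- which converts the $N$-weighted Ricci bound into a sharp scalar Riccati inequality with the effective dimension $N-1$ in place of $n-1$; once that is in hand, the remaining ODE comparison and monotonicity arguments are routine.
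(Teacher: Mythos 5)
Your proof is correct, but it takes a genuinely different route from the paper. The paper establishes (\ref{eq:Basic1}) via an index-form argument of Heintze--Karcher type: it fixes $t_0$, compares the $\bm$-Jacobi fields against the test fields $W_i = (s_{\kappa,\lambda}/s_{\kappa,\lambda}(t_0))E_i$ using the minimizing property of the index form (Lemma \ref{lem:index form}), expands $\sum I_{\bm}(W_i,W_i)$, integrates the $F''$-term by parts, and finally invokes the algebraic inequality (\ref{eq:Basic8}). You instead work purely at the ODE level: you derive the matrix Riccati equation for $U = J'J^{-1}$, trace it, apply Cauchy--Schwarz to pass to the scalar $m = \theta'/\theta$, and then use the Bakry--\'Emery--Qian completion-of-squares identity to convert the $n$-Riccati bound for $m$ into an $N$-Riccati bound for $m_f = m - F'$, from which (\ref{eq:Basic1}) follows by scalar Riccati comparison against $\phi = (N-1)s'_{\kappa,\lambda}/s_{\kappa,\lambda}$. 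The two completion-of-squares steps are in fact the same trick in disguise (your identity is the differentiated form of (\ref{eq:Basic8})), but the framing differs. The paper's index-form route is slightly more self-contained given that Lemma \ref{lem:index form} is already stated and also lines up naturally with the later equality-case analysis in Remark \ref{rem:Equality in Basic comparison}, where the rigidity of Jacobi fields is read off directly. Your Riccati route is shorter and more ODE-transparent, but one should be a little careful with the comparison step: the "forbid the first crossing" heuristic is not quite rigorous on its own (at a first crossing you only get $m_f'(t_0) = \phi'(t_0)$, not a contradiction); the clean argument is to set $w = m_f - \phi$, observe $w' \leq -\frac{m_f+\phi}{N-1}\,w$, and integrate with an exponential integrating factor so that $w(0)\leq 0$ forces $w\leq 0$. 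With that fixed, your argument is complete and equivalent in strength.
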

\begin{proof}
Put $F:=f\circ \gamma_{x}$.
From direct computations,
it follows that
\begin{equation}\label{eq:Basic3}
\frac{\theta_{f}'(t,x)}{\theta_{f}(t,x)}=\frac{\theta'(t,x)}{\theta(t,x)}-F'(t)
\end{equation}
for all $t\in (0,\min \{\tau_{1}(x),\bar{C}_{\kappa,\lambda}\})$.
Choose an orthonormal basis $\{ e_{i} \}_{i=1}^{n-1}$ of $T_{x}\partial M$.
For each $i$,
we denote by $E_{i}$ the parallel vector field along $\gamma_{x}$
with initial condition $E_{i}(0)=e_{i}$.
We fix $t_{0}\in (0,\min \{\tau_{1}(x),\bar{C}_{\kappa,\lambda}\})$,
and put $W_{i}(t):= (s_{\kappa,\lambda}(t)/s_{\kappa,\lambda}(t_{0})) E_{i}(t)$ for $t\in (0,\min \{\tau_{1}(x),\bar{C}_{\kappa,\lambda}\})$.
For a unique $\bm$-Jacobi field $Y_{t_{0},i}$ along $\gamma_{x}|_{[0,t_{0}]}$
with initial conditions $Y_{t_{0},i}(t_{0})=W_{i}(t_{0})\, (=E_{i}(t_{0}))$ and $Y'_{t_{0},i}(0)=-A_{u_{x}}Y_{t_{0},i}(0)$,
let $\theta_{t_{0}}(t):=\Vert Y_{t_{0},1}(t)\wedge \cdots \wedge Y_{t_{0},n-1}(t) \Vert$ for $t\in (0,\min \{\tau_{1}(x),\bar{C}_{\kappa,\lambda}\})$.
The linearity of the Jacobi equations implies that
for the $\bm$-Jacobi field $Y_{i}$ along $\gamma_{x}$ with initial conditions $Y_{i}(0)=e_{i}$ and $Y'_{i}(0)=-A_{u_{x}}Y_{i}(0)$,
there exist some constants $\{a_{ij}\}^{n-1}_{j=1}$ satisfying $Y_{i}=\sum_{j=1}^{n-1} a_{ij}\, Y_{t_{0},j}$.
Since $\theta_{t_{0}}(t_{0})=1$, 
we have $\thetaex'(t_{0},x)/\thetaex(t_{0},x)= \theta'_{t_{0}}(t_{0})$.
Furthermore,
\begin{equation}\label{eq:Basic4}
\theta'_{t_{0}}(t_{0})=\sum _{i=1}^{n-1} g(Y_{t_{0},i}(t_{0}),Y'_{t_{0},i}(t_{0}))= \sum _{i=1}^{n-1} I_{\bm}(Y_{t_{0},i},Y_{t_{0},i}).
\end{equation}
We have $Y_{t_{0},i}(t_{0})=W_{i}(t_{0})$.
Therefore,
Lemma \ref{lem:index form} implies
\begin{equation}\label{eq:Basic5}
\sum _{i=1}^{n-1} I_{\bm}(Y_{t_{0},i},Y_{t_{0},i})\leq \sum _{i=1}^{n-1} I_{\bm}(W_{i},W_{i}).
\end{equation}

We assume $N>n$.
Put $\phi(t):=\Vert W_{i}(t) \Vert(=s_{\kappa,\lambda}(t)/s_{\kappa,\lambda}(t_{0}))$ for $t\in (0,\min \{\tau_{1}(x),\bar{C}_{\kappa,\lambda}\})$.
Note that
we have $\phi'(t)=\Vert W_{i}'(t) \Vert$ for all $t\in (0,\min \{\tau_{1}(x),\bar{C}_{\kappa,\lambda}\})$.
From (\ref{eq:Basic3}), (\ref{eq:Basic4}) and (\ref{eq:Basic5}),
it follows that
\begin{multline*}
\frac{\theta_{f}'(t_{0},x)}{\theta_{f}(t_{0},x)}
\leq    (n-1) \int ^{t_{0}}_{0} \phi'(t)^{2}\,dt-\int ^{t_{0}}_{0} \ric_{g}(\gamma'_{x}(t))\, \phi(t)^{2}\,dt\\
\shoveright{-H_{x}\, \phi(0)^{2}-F'(t_{0})}\\
\setlength{\multlinegap}{0pt}\shoveleft{= (N-1) \int ^{t_{0}}_{0} \phi'(t)^{2}\,dt-\int ^{t_{0}}_{0}\,\ric^{N}_{f}(\gamma'_{x}(t)) \, \phi(t)^{2}\, dt-H_{f,x} \phi(0)^{2}}\\
-(N-n)\int ^{t_{0}}_{0} \phi'(t)^{2}\,dt+\int ^{t_{0}}_{0} \left(F''(t)-\frac{1}{N-n}F'(t)^{2} \right)\,\phi(t)^{2}\,dt\\
+F'(0)\, \phi(0)^{2}-F'(t_{0}).
\end{multline*}
From the curvature assumptions,
we derive
\begin{multline}\label{eq:Basic6}
\frac{\theta_{f}'(t_{0},x)}{\theta_{f}(t_{0},x)} \leq  (N-1)\frac{s_{\kappa,\lambda}'(t_{0})}{s_{\kappa,\lambda}(t_{0})}-(N-n)\int ^{t_{0}}_{0} \phi'(t)^{2}\,dt\\
+\int ^{t_{0}}_{0} \left(F''(t)-\frac{1}{N-n}F'(t)^{2} \right)\,\phi(t)^{2} \,dt+F'(0)\, \phi(0)^{2}-F'(t_{0}).
\end{multline}
By integration by parts,
we have
\begin{equation}\label{eq:Basic7}
\int ^{t_{0}}_{0} F''(t)\,\phi(t)^{2}\,dt=F'(t_{0})-F'(0)\, \phi(0)^{2}-2\int ^{t_{0}}_{0}\, F'(t)\phi'(t)\phi(t)\,dt.
\end{equation}
Furthermore,
for all $t\in (0,t_{0})$,
we have
\begin{align}\label{eq:Basic8}
&(N-n)\phi'(t)^{2}+2F'(t)\phi'(t)\phi(t)+\frac{F'(t)^{2}\phi(t)^{2}}{N-n}\\ \notag
=&\frac{\phi(t)^{2}}{N-n}\left((N-n)\frac{s'_{\kappa,\lambda}(t)}{s_{\kappa,\lambda}(t)}+F'(t)  \right)^{2}\geq 0.
\end{align}
By using (\ref{eq:Basic6}), (\ref{eq:Basic7}) and (\ref{eq:Basic8}),
we obtain (\ref{eq:Basic1}).

We assume $N=n$.
In this case,
$f$ is a constant function;
in particular,
$H_{f,x}=H_{x}$ and $F'(t_{0})=0$.
By Lemma \ref{lem:index form},
we see
\begin{equation*}
\frac{\theta_{f}'(t_{0},x)}{\theta_{f}(t_{0},x)}\leq    (n-1) \int ^{t_{0}}_{0} \phi'(t)^{2}\,dt-\int ^{t_{0}}_{0} \ric^{n}_{f}(\gamma'_{x}(t))\, \phi(t)^{2}\,dt-H_{f,x}\, \phi(0)^{2}.
\end{equation*}
The curvature assumptions imply (\ref{eq:Basic1}).

By (\ref{eq:Basic1}), 
for all $t\in (0,\min \{\tau_{1}(x),\bar{C}_{\kappa,\lambda}\})$,
we have
\begin{equation*}
\frac{d}{dt} \log \frac{s^{N-1}_{\kappa,\lambda}(t)}{\theta_{f}(t,x)}=(N-1)\frac{s_{\kappa,\lambda}'(t)}{s_{\kappa,\lambda}(t)}-\frac{\theta_{f}'(t,x)}{\theta_{f}(t,x)}\geq 0.
\end{equation*}
This implies the inequality (\ref{eq:Basic2}).
\end{proof}
In \cite{HK},
Lemma \ref{lem:Basic comparison} has been proved when $f= 0$ and $N=n$.
\begin{rem}\label{rem:Equality in Basic comparison}
In Lemma \ref{lem:Basic comparison},
choose an orthonormal basis $\{e_{x,i}\}_{i=1}^{n-1}$ of $T_{x}\bm$,
and let $\{Y_{x,i}\}^{n-1}_{i=1}$ be the $\bm$-Jacobi fields along $\gamma_{x}$
with initial conditions $Y_{x,i}(0)=e_{x,i}$ and $Y_{x,i}'(0)=-A_{u_{x}}e_{x,i}$.
Suppose that for some $t_{0}\in (0,\min \{\tau_{1}(x),\bar{C}_{\kappa,\lambda}\})$
the equality in (\ref{eq:Basic1}) holds.
Then the equality in (\ref{eq:Basic5}) also holds.
By Lemma \ref{lem:index form},
for all $i$
we have $Y_{x,i}=s_{\kappa,\lambda}\, E_{x,i}$ on $[0,t_{0}]$,
where $\{E_{x,i}\}^{n-1}_{i=1}$ are the parallel vector fields along $\gamma_{x}$ with initial condition $E_{x,i}(0)=e_{x,i}$.
Moreover,
if $N>n$,
then the equality in (\ref{eq:Basic8}) holds on $[0,t_{0}]$.
This implies $f\circ \gamma_{x}=f(x)-(N-n)\log s_{\kappa,\lambda}$ on $[0,t_{0}]$.
\end{rem}
In the case of $N=\infty$,
we have the following:
\begin{lem}\label{lem:infinite Basic comparison}
Take $x\in \bm$.
Suppose that for all $t\in (0,\tau_{1}(x))$
we have $\ric^{\infty}_{f}(\gamma'_{x}(t)) \geq 0$,
and suppose $H_{f,x}\geq 0$.
Then for all $t\in (0,\tau_{1}(x))$,
we have $\theta_{f}'(t,x)\leq 0$.
In particular,
for all $s,t\in [0,\tau_{1}(x))$ with $s\leq t$,
we have $\theta_{f}(t,x)\leq \theta_{f}(s,x)$.
\end{lem}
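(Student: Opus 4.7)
The plan is to adapt the strategy used for Lemma \ref{lem:Basic comparison}, replacing the scaled trial vector fields $s_{\kappa,\lambda} E_i$ by \emph{constant} (parallel) trial vector fields $E_i$. Intuitively, this is the ``$N=\infty$ limit'' of the previous argument: since we impose no effective dimension and hence no $(N-n)$-type correction term, the natural comparison model is flat, so the right trial field is the parallel extension itself.

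Setting $F := f\circ\gamma_{x}$ and starting from the identity (\ref{eq:Basic3}), namely
$\theta_{f}'(t,x)/\theta_{f}(t,x)=\theta'(t,x)/\theta(t,x)-F'(t)$,
I would fix $t_{0}\in(0,\tau_{1}(x))$, choose an orthonormal basis $\{e_{i}\}_{i=1}^{n-1}$ of $T_{x}\bm$ with parallel extensions $E_{i}$ along $\gamma_{x}$, and apply Lemma \ref{lem:index form} to the trial vector fields $W_{i}:=E_{i}$, which satisfy $W_{i}(t_{0})=E_{i}(t_{0})=Y_{t_{0},i}(t_{0})$. Since $E_{i}$ is parallel, $\|E_{i}'\|^{2}\equiv 0$, and summing over $i$ yields
\begin{equation*}
\frac{\theta'(t_{0},x)}{\theta(t_{0},x)}
=\sum_{i=1}^{n-1} I_{\bm}(Y_{t_{0},i},Y_{t_{0},i})
\leq \sum_{i=1}^{n-1} I_{\bm}(E_{i},E_{i})
=-\int_{0}^{t_{0}}\ric_{g}(\gamma_{x}'(t))\,dt-H_{x}.
\end{equation*}

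Combining this with the $-F'(t_{0})$ term, I would use the integration-by-parts identity $F'(t_{0})=F'(0)+\int_{0}^{t_{0}} F''(t)\,dt$ together with the definitions $H_{f,x}=H_{x}+F'(0)$ and $F''(t)=\Hess f(\gamma_{x}'(t),\gamma_{x}'(t))$ to rewrite
\begin{equation*}
\frac{\theta_{f}'(t_{0},x)}{\theta_{f}(t_{0},x)}
\leq -H_{f,x}-\int_{0}^{t_{0}}\left(\ric_{g}+\Hess f\right)(\gamma_{x}'(t))\,dt
=-H_{f,x}-\int_{0}^{t_{0}}\ric_{f}^{\infty}(\gamma_{x}'(t))\,dt.
\end{equation*}
The hypotheses $\ric_{f}^{\infty}(\gamma_{x}'(t))\geq 0$ and $H_{f,x}\geq 0$ then force the right-hand side to be nonpositive, giving $\theta_{f}'(t_{0},x)\leq 0$. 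Since $t_{0}$ was arbitrary in $(0,\tau_{1}(x))$, integrating $\theta_{f}'\leq 0$ over $[s,t]$ gives the stated monotonicity $\theta_{f}(t,x)\leq\theta_{f}(s,x)$.

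There is no real obstacle here; the entire argument is a straightforward parallel to the previous lemma, and all the analytic input (index form comparison, boundary Jacobi fields, the identity for $\theta_{f}'/\theta_{f}$) is already in place. The only point that requires care is selecting the correct trial field, which one sees by formally sending $N\to\infty$ in the previous proof: the scaled field $(s_{\kappa,\lambda}(t)/s_{\kappa,\lambda}(t_{0}))E_{i}$ would have to be replaced by a field whose derivative does not contribute, and the parallel field $E_{i}$ is the natural choice.
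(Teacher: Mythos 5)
Your proposal is correct and coincides with the paper's proof: the paper also fixes $t_{0}$, applies Lemma \ref{lem:index form} with the parallel trial fields $E_{i}$ (with $\phi\equiv 1$), and cancels the $F'(t_{0})$, $F'(0)$, and $\int F''$ terms by integration by parts to obtain $\theta_{f}'(t_{0},x)\leq 0$. The only cosmetic difference is that the paper keeps the $F$-terms explicit and shows they sum to zero, whereas you substitute $\ric_{g}+\Hess f=\ric^{\infty}_{f}$ up front; both are the same computation.
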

\begin{proof}
Let $F:=f\circ \gamma_{x}$.
Choose an orthonormal basis $\{ e_{i} \}_{i=1}^{n-1}$ of $T_{x}\partial M$.
For each $i$,
let $E_{i}$ denote the parallel vector field along $\gamma_{x}$
with initial condition $E_{i}(0)=e_{i}$. 
Put $\phi(t):=\Vert E_{i}(t) \Vert(=1)$ for $t\in (0,\tau_{1}(x))$.
Fix $t_{0}\in (0,\tau_{1}(x))$.
By Lemma \ref{lem:index form},
we see
\begin{multline*}
\frac{\theta_{f}'(t_{0},x)}{\theta_{f}(t_{0},x)}
\leq    -\int ^{t_{0}}_{0} \left(\ric^{\infty}_{f}(\gamma'_{x}(t))-F''(t) \right)\, \phi(t)^{2}\,dt\\
-  \left(H_{f,x}-F'(0)  \right) \phi(0)^{2}-F'(t_{0}).
\end{multline*}
By the curvature assumptions,
and by integration by parts,
we have
\begin{equation*}
\theta_{f}'(t_{0},x)\leq \theta_{f}(t_{0},x) \left(\int ^{t_{0}}_{0} F''(t)\,\phi(t)^{2}\,dt+F'(0)\, \phi(0)^{2}-F'(t_{0})\right)=0.
\end{equation*}
This proves the lemma.
\end{proof}
\begin{rem}\label{rem:Equality in infinite Basic comparison}
In Lemma \ref{lem:infinite Basic comparison},
choose an orthonormal basis $\{e_{x,i}\}_{i=1}^{n-1}$ of $T_{x}\bm$,
and let $\{Y_{x,i}\}^{n-1}_{i=1}$ be the $\bm$-Jacobi fields along $\gamma_{x}$
with initial conditions $Y_{x,i}(0)=e_{x,i}$ and $Y_{x,i}'(0)=-A_{u_{x}}e_{x,i}$.
Suppose that for some $t_{0}\in (0,\tau_{1}(x))$
we have $\theta_{f}'(t_{0},x)=0$.
By Lemma \ref{lem:index form},
for all $i$
we have $Y_{x,i}=E_{x,i}$ on $[0,t_{0}]$,
where $\{E_{x,i}\}^{n-1}_{i=1}$ are the parallel vector fields along $\gamma_{x}$ with initial condition $E_{x,i}(0)=e_{x,i}$.
\end{rem}
\subsection{Laplacian comparisons}
Combining Lemma \ref{lem:Basic comparison} and (\ref{eq:Laplacian representation}),
we have the following Laplacian comparison result:
\begin{lem}\label{lem:Laplacian comparison}
Take $x\in \bm$.
For $N\in [n,\infty)$,
we suppose that for all $t\in (0,\tau(x))$
we have $\ric^{N}_{f}(\gamma'_{x}(t)) \geq (N-1)\kappa$,
and suppose $H_{f,x}\geq (N-1)\lambda$.
Then for all $t\in (0,\tau(x))$
we have
\begin{equation*}\label{eq:Laplacian comparison}
\Delta_{f}\, \rho_{\bm} (\gamma_{x}(t)) \geq -(N-1)\frac{s_{\kappa,\lambda}'(t)}{s_{\kappa,\lambda}(t)}.
\end{equation*}
\end{lem}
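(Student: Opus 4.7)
The plan is to combine the Jacobian representation (\ref{eq:Laplacian representation}) with the key inequality (\ref{eq:Basic1}) from Lemma \ref{lem:Basic comparison}. Fix $t \in (0, \tau(x))$. Since $\tau(x) \leq \tau_{1}(x)$, there is no conjugate point of $\bm$ along $\gamma_{x}|_{[0,t]}$, so $\theta_{f}(t,x)>0$, and moreover $\gamma_{x}(t) \notin \cut \bm$ (this is precisely how $\tau$ is defined), so $\rho_{\bm}$ is smooth at $\gamma_{x}(t)$. Consequently, (\ref{eq:Laplacian representation}) applies and yields
\[
\Delta_{f}\,\rho_{\bm}(\gamma_{x}(t)) = -\frac{\theta_{f}'(t,x)}{\theta_{f}(t,x)}.
\]
If I can then invoke (\ref{eq:Basic1}) to bound $\theta_{f}'(t,x)/\theta_{f}(t,x)$ above by $(N-1)\, s'_{\kappa,\lambda}(t)/s_{\kappa,\lambda}(t)$, the conclusion of the lemma follows immediately.

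The one technical point is checking that the hypothesis $t < \min\{\tau_{1}(x),\bar{C}_{\kappa,\lambda}\}$ of Lemma \ref{lem:Basic comparison} is satisfied. The bound $t < \tau_{1}(x)$ is already noted. For $t < \bar{C}_{\kappa,\lambda}$: if $\kappa,\lambda$ do not satisfy the ball-condition, $\bar{C}_{\kappa,\lambda}=\infty$ and there is nothing to check. If they do, I would prove $\tau_{1}(x) \leq \bar{C}_{\kappa,\lambda}$ (hence $t < \tau(x) \leq \tau_{1}(x) \leq \bar{C}_{\kappa,\lambda}$) by contradiction. Assuming $\tau_{1}(x) > \bar{C}_{\kappa,\lambda}$, the curvature hypothesis of the present lemma (which is available throughout $(0,\tau(x)) \supset (0,\bar{C}_{\kappa,\lambda})$ once $\tau(x) > \bar{C}_{\kappa,\lambda}$, or alternatively by applying Lemma \ref{lem:Basic comparison} on each subinterval) gives $\theta_{f}(s,x) \leq e^{-f(x)}\, s_{\kappa,\lambda}^{N-1}(s)$ for all $s \in (0,\bar{C}_{\kappa,\lambda})$. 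Letting $s \to \bar{C}_{\kappa,\lambda}^{-}$ and using the continuity of $\theta_{f}(\cdot,x)$ on $[0,\tau_{1}(x))$ forces $\theta_{f}(\bar{C}_{\kappa,\lambda},x)=0$, contradicting the fact that $\theta_{f}$ is strictly positive on $[0,\tau_{1}(x))$. I expect no real obstacle here; the argument is essentially a tautological chain once the basic comparison and representation formula are in place, and the only moderately delicate step is this verification that $t$ lies in the legitimate domain of (\ref{eq:Basic1}).
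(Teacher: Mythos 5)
Your proof is correct and follows the same approach the paper indicates: the paper disposes of this lemma in one line ("Combining Lemma \ref{lem:Basic comparison} and (\ref{eq:Laplacian representation})"), and you have simply expanded that into a full argument. The one substantive addition you make is the verification that $t\in(0,\tau(x))$ actually lies inside the domain $(0,\min\{\tau_1(x),\bar{C}_{\kappa,\lambda}\})$ on which (\ref{eq:Basic1}) is stated, and this is a genuine point that the paper's one-liner glosses over. Your contradiction argument (if $\tau(x)>\bar{C}_{\kappa,\lambda}$, then Lemma \ref{lem:Basic comparison} applied on $(0,\bar{C}_{\kappa,\lambda})$ forces $\theta_f(\bar{C}_{\kappa,\lambda},x)=0$, contradicting positivity of $\theta_f$ before the first conjugate value) is exactly the mechanism the paper itself uses later in the proof of Lemma \ref{lem:Inscribed radius comparison}, so it is entirely in the spirit of the paper. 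One small phrasing quibble: what your argument actually establishes from the hypotheses of the present lemma (which restrict the curvature bound to $(0,\tau(x))$) is $\tau(x)\leq\bar{C}_{\kappa,\lambda}$ rather than $\tau_1(x)\leq\bar{C}_{\kappa,\lambda}$, but you note this caveat yourself and the weaker bound $\tau(x)\leq\bar{C}_{\kappa,\lambda}$ suffices. No gap.
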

In \cite{K2},
Lemma \ref{lem:Laplacian comparison} has been proved when $f= 0$ and $N=n$.

In the case of $N=\infty$,
by using Lemma \ref{lem:infinite Basic comparison} and (\ref{eq:Laplacian representation}),
we have:
\begin{lem}\label{lem:infinite Laplacian comparison}
Take $x\in \bm$.
Suppose that for all $t\in (0,\tau(x))$
we have $\ric^{\infty}_{f}(\gamma'_{x}(t)) \geq 0$,
and suppose $H_{f,x}\geq 0$.
Then for all $t\in (0,\tau(x))$
we have $\Delta_{f} \rho_{\bm} (\gamma_{x}(t)) \geq 0$.
\end{lem}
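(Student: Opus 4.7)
The plan is to derive the result as a direct corollary of Lemma \ref{lem:infinite Basic comparison} combined with the representation formula (\ref{eq:Laplacian representation}), mirroring exactly how Lemma \ref{lem:Laplacian comparison} is obtained from Lemma \ref{lem:Basic comparison} in the finite-dimensional case.

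First I would fix $x \in \bm$ and $t \in (0, \tau(x))$. Since $\tau(x) \leq \tau_{1}(x)$, the curvature and mean-curvature hypotheses in the statement transfer directly to the hypotheses of Lemma \ref{lem:infinite Basic comparison} on the interval $(0,\tau_{1}(x))$. Applying that lemma yields $\theta_{f}'(t,x) \leq 0$.

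Next I would invoke the identity (\ref{eq:Laplacian representation}), which is valid precisely on $(0,\tau(x))$:
\begin{equation*}
\Delta_{f}\, \rho_{\bm}(\gamma_{x}(t)) = -\frac{\theta_{f}'(t,x)}{\theta_{f}(t,x)}.
\end{equation*}
Since $\theta_{f}(t,x) = e^{-f(\gamma_{x}(t))}\,\theta(t,x) > 0$ on $(0,\tau(x))$ (the Jacobian $\theta(t,x)$ is positive before the first conjugate value), the sign of the right-hand side is the opposite of the sign of $\theta_{f}'(t,x)$. Combining this with the inequality $\theta_{f}'(t,x) \leq 0$ established above gives $\Delta_{f}\, \rho_{\bm}(\gamma_{x}(t)) \geq 0$, which is the desired conclusion.

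There is no real obstacle here: all the analytic work (the index form comparison, the integration by parts that handles the weight $f$, and the identification of $\theta_{f}'/\theta_{f}$ with $-\Delta_{f}\rho_{\bm}$) has already been carried out in Lemma \ref{lem:infinite Basic comparison} and in the derivation of (\ref{eq:Laplacian representation}). The proof is a one-line assembly, exactly parallel to the derivation of Lemma \ref{lem:Laplacian comparison} from Lemma \ref{lem:Basic comparison}.
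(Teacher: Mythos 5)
Your proof is correct and follows exactly the paper's approach: the paper obtains this lemma directly by combining Lemma \ref{lem:infinite Basic comparison} with the representation formula (\ref{eq:Laplacian representation}), which is precisely your argument. The only detail you spelled out that the paper leaves implicit is the positivity of $\theta_{f}(t,x)$ on $(0,\tau(x))$, which is indeed immediate from $\tau(x)\leq\tau_{1}(x)$.
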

\begin{rem}\label{rem:Equality in Laplacian comparison}
The equality case in Lemma \ref{lem:Laplacian comparison} (resp. \ref{lem:infinite Laplacian comparison})
results into that in Lemma \ref{lem:Basic comparison} (resp. \ref{lem:infinite Basic comparison}) (see Remarks \ref{rem:Equality in Basic comparison} and \ref{rem:Equality in infinite Basic comparison}).
\end{rem}
\subsection{Distributions}\label{sec:Distributions}
From Lemma \ref{lem:Laplacian comparison},
we derive the following:
\begin{lem}\label{lem:p-Laplacian comparison}
Take $x\in \bm$.
Let $p\in (1,\infty)$.
For $N\in [n,\infty)$,
we suppose that for all $t\in (0,\tau(x))$
we have $\ric^{N}_{f}(\gamma'_{x}(t)) \geq (N-1)\kappa$,
and suppose $H_{f,x}\geq (N-1)\lambda$.
Let $\phi:[0,\infty)\to \mathbb{R}$ be a monotone increasing smooth function.
Then for all $t\in (0,\tau(x))$ we have
\begin{equation}\label{eq:p-Laplacian comparison}
\Delta_{f,p}\, (\phi \circ \rho_{\bm}) (\gamma_{x}(t)) \geq -    \left(     \left(  \phi'  \right)^{p-1}     \right)'    (t)-(N-1)\frac{s'_{\kappa,\lambda}(t)}{s_{\kappa,\lambda}(t)}\,\phi'(t)^{p-1}.
\end{equation}
\end{lem}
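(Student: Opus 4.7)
The plan is to reduce the $(f,p)$-Laplacian comparison to the (linear) weighted Laplacian comparison already established in Lemma \ref{lem:Laplacian comparison} via a direct chain-rule computation, and then to use the monotonicity hypothesis on $\phi$ to preserve the inequality.

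First, I would work at a point $\gamma_{x}(t)$ with $t \in (0,\tau(x))$, where $\rho_{\bm}$ is smooth and $\Vert \nabla \rho_{\bm} \Vert = 1$. For $\psi := \phi \circ \rho_{\bm}$ the chain rule gives $\nabla \psi = (\phi' \circ \rho_{\bm})\, \nabla \rho_{\bm}$, and since $\phi' \geq 0$ by monotonicity,
\begin{equation*}
\Vert \nabla \psi \Vert^{p-2}\, \nabla \psi = (\phi' \circ \rho_{\bm})^{p-1}\, \nabla \rho_{\bm}.
\end{equation*}
Next I would expand the divergence using $\Div(uX) = g(\nabla u, X) + u\, \Div X$ with $u = e^{-f}\,(\phi' \circ \rho_{\bm})^{p-1}$ and $X = \nabla \rho_{\bm}$. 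Because $g(\nabla \rho_{\bm}, \nabla \rho_{\bm}) = 1$ and $(p-1)(\phi')^{p-2}\phi'' = ((\phi')^{p-1})'$, a short calculation yields
\begin{equation*}
\Delta_{f,p}\,(\phi \circ \rho_{\bm}) = -\bigl((\phi')^{p-1}\bigr)' \circ \rho_{\bm} + (\phi' \circ \rho_{\bm})^{p-1}\, \Delta_{f}\, \rho_{\bm},
\end{equation*}
where I use $\Delta_{f}\, \rho_{\bm} = \Delta \rho_{\bm} + g(\nabla f, \nabla \rho_{\bm})$ to absorb the $-e^{f}$ factor and the contribution from differentiating $e^{-f}$.

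Evaluating at $\gamma_{x}(t)$ and invoking Lemma \ref{lem:Laplacian comparison}, which gives $\Delta_{f}\, \rho_{\bm}(\gamma_{x}(t)) \geq -(N-1)\, s'_{\kappa,\lambda}(t)/s_{\kappa,\lambda}(t)$, the inequality
\begin{equation*}
\Delta_{f,p}\,(\phi \circ \rho_{\bm})(\gamma_{x}(t)) \geq -\bigl((\phi')^{p-1}\bigr)'(t) - (N-1)\frac{s'_{\kappa,\lambda}(t)}{s_{\kappa,\lambda}(t)}\, \phi'(t)^{p-1}
\end{equation*}
follows immediately, provided we may multiply the Laplacian bound by $\phi'(t)^{p-1}$ without reversing the sign. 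This is where the monotonicity hypothesis $\phi' \geq 0$ is essential, since $\phi'(t)^{p-1} \geq 0$ for $p > 1$.

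The only mildly delicate point is the pointwise meaning of $\Vert \nabla \psi \Vert^{p-2}$ at points where $\phi'(t) = 0$ when $p < 2$. I would handle this by noting that at such points both sides of the desired inequality reduce to $-((\phi')^{p-1})'(t)$, and interpret $\Delta_{f,p}\,(\phi \circ \rho_{\bm})$ there via continuous extension (or, equivalently, in the distributional sense built into the definition of $\Delta_{f,p}$); on the open set where $\phi' > 0$ the classical computation above is fully rigorous. No further ingredients beyond the chain rule and Lemma \ref{lem:Laplacian comparison} are needed, so I do not anticipate any substantive obstacle.
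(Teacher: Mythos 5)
Your proof is correct and follows essentially the same route as the paper: compute the pointwise identity $\Delta_{f,p}(\phi\circ\rho_{\bm})(\gamma_x(t)) = -((\phi')^{p-1})'(t) + \Delta_{f,2}\rho_{\bm}(\gamma_x(t))\,\phi'(t)^{p-1}$ via the chain rule and the fact that $\Vert\nabla\rho_{\bm}\Vert=1$, then apply Lemma \ref{lem:Laplacian comparison} and the sign condition $\phi'\geq 0$. You also flag and handle the degenerate case $\phi'(t)=0$ with $p<2$, a point the paper's proof leaves implicit.
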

\begin{proof}
By straightforward computations,
for all $t\in (0,\tau(x))$
\begin{equation*}
\Delta_{f,p}\, (\phi \circ \rho_{\bm}) (\gamma_{x}(t))=- \left(     \left(  \phi'  \right)^{p-1}     \right)'(t)  +\Delta_{f,2}\, \rho_{\bm}(\gamma_{x}(t))\, \phi'(t)^{p-1}.
\end{equation*}
This together with Lemma \ref{lem:Laplacian comparison},
we obtain (\ref{eq:p-Laplacian comparison}).
\end{proof}
In the case of $N=\infty$,
we have:
\begin{lem}\label{lem:infinite p-Laplacian comparison}
Take $x\in \bm$.
Let $p\in (1,\infty)$.
Suppose that for all $t\in (0,\tau(x))$
we have $\ric^{\infty}_{f}(\gamma'_{x}(t)) \geq 0$,
and suppose $H_{f,x}\geq 0$.
Let $\phi:[0,\infty)\to \mathbb{R}$ be a monotone increasing smooth function.
Then for all $t\in (0,\tau(x))$
\begin{equation}\label{eq:infinite p-Laplacian comparison}
\Delta_{f,p} (\phi \circ \rho_{\bm}) (\gamma_{x}(t)) \geq -\left(     \left(  \phi'  \right)^{p-1}     \right)'(t).
\end{equation}
\end{lem}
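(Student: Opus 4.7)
\medskip

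The plan is to run the same calculation as in the proof of Lemma \ref{lem:p-Laplacian comparison}, but with the $N=\infty$ Laplacian comparison (Lemma \ref{lem:infinite Laplacian comparison}) substituted in place of the finite-$N$ one. Concretely, I would begin from the pointwise identity
\begin{equation*}
\Delta_{f,p}(\phi\circ\rho_{\bm})(\gamma_{x}(t))
= -\bigl((\phi')^{p-1}\bigr)'(t) + \Delta_{f,2}\,\rho_{\bm}(\gamma_{x}(t))\,\phi'(t)^{p-1},
\end{equation*}
which was already derived (for any smooth monotone increasing $\phi$) in the proof of Lemma \ref{lem:p-Laplacian comparison} by a direct chain-rule computation using $\nabla(\phi\circ\rho_{\bm})=(\phi'\circ\rho_{\bm})\,\nabla\rho_{\bm}$ together with $\Vert\nabla\rho_{\bm}\Vert=1$ on the smooth locus.

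Next I would invoke Lemma \ref{lem:infinite Laplacian comparison}, whose hypotheses are exactly what is assumed here, to conclude $\Delta_{f,2}\,\rho_{\bm}(\gamma_{x}(t))\geq 0$ for every $t\in(0,\tau(x))$. Since $\phi$ is monotone increasing, $\phi'(t)\geq 0$, and hence $\phi'(t)^{p-1}\geq 0$; therefore the second term on the right-hand side of the identity above is non-negative. Dropping it gives the desired inequality \eqref{eq:infinite p-Laplacian comparison}.

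There is no real obstacle: once the decomposition identity and Lemma \ref{lem:infinite Laplacian comparison} are in hand, the argument is immediate. The only minor point worth noting is that in contrast to the finite-$N$ case, where the comparison term $-(N-1)s'_{\kappa,\lambda}/s_{\kappa,\lambda}$ can have either sign and must be multiplied by $\phi'^{p-1}\geq 0$ to preserve the inequality direction, in the present setting the comparison bound is simply zero, so no sign analysis is required.
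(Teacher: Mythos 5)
Your proof is correct and follows essentially the same approach as the paper: starting from the chain-rule decomposition of $\Delta_{f,p}(\phi\circ\rho_{\bm})$, applying Lemma \ref{lem:infinite Laplacian comparison} to get $\Delta_{f,2}\rho_{\bm}\geq 0$, and using $\phi'\geq 0$ to drop the non-negative term. The paper's proof is equally terse and uses the identical decomposition and lemma.
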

\begin{proof}
For all $t\in (0,\tau(x))$,
we have
\begin{equation*}
\Delta_{f,p}\, (\phi \circ \rho_{\bm}) (\gamma_{x}(t))=- \left(     \left(  \phi'  \right)^{p-1}     \right)'(t)  +\Delta_{f,2}\, \rho_{\bm}(\gamma_{x}(t))\, \phi'(t)^{p-1}.
\end{equation*}
Lemma \ref{lem:infinite Laplacian comparison} implies (\ref{eq:infinite p-Laplacian comparison}).
\end{proof}
\begin{rem}\label{rem:Equality in p-Laplacian comparison}
The equality case in Lemma \ref{lem:p-Laplacian comparison} (resp. \ref{lem:infinite p-Laplacian comparison})
results into that in Lemma \ref{lem:Laplacian comparison} (resp. \ref{lem:infinite Laplacian comparison})
(see Remarks \ref{rem:Equality in Basic comparison}, \ref{rem:Equality in infinite Basic comparison} and \ref{rem:Equality in Laplacian comparison}).
\end{rem}
By Lemma \ref{lem:p-Laplacian comparison},
we have the following:
\begin{prop}\label{prop:global p-Laplacian comparison}
Let $p\in (1,\infty)$.
For $N\in [n,\infty)$,
we suppose $\ric^{N}_{f,M} \geq (N-1)\kappa$ and $H_{f,\bm}\geq (N-1)\lambda$.
For a monotone increasing smooth function $\phi:[0,\infty)\to \mathbb{R}$,
we put $\Phi:=\phi \circ \rho_{\bm}$.
Then we have
\begin{equation*}
\Delta_{f,p}\,\Phi \geq \left( -\left( \left(\phi' \right)^{p-1} \right)' -(N-1)\frac{s'_{\kappa,\lambda}}{s_{\kappa,\lambda}}\left(\phi' \right)^{p-1}\right)\circ \rho_{\bm}
\end{equation*}
in a distribution sense on $M$.
More precisely,
for every non-negative smooth function $\psi:M\to \mathbb{R}$ whose support is compact and contained in $\inte M$,
we have
\begin{multline}\label{eq:global p-Laplacian comparison}
\int_{M}\,   \Vert \nabla \Phi \Vert^{p-2} g\left(\nabla \psi, \nabla \Phi  \right)\, d\,m_{f}\\
\geq \int_{M}\,\psi\,  \left(\left( -\left( \left(\phi' \right)^{p-1} \right)' -(N-1)\frac{s'_{\kappa,\lambda}}{s_{\kappa,\lambda}}\left(\phi' \right)^{p-1}\right)\circ \rho_{\bm}\right)   \,d\,m_{f}.
\end{multline}
\end{prop}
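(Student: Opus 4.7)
The plan is to derive the global distributional inequality by integration by parts on an exhaustion that avoids $\cut\bm$, and then apply the pointwise estimate from Lemma \ref{lem:p-Laplacian comparison}. Fix a non-negative smooth test function $\psi$ with compact support in $\inte M$, and set $\Phi:=\phi\circ\rho_{\bm}$. First I would invoke Lemma \ref{lem:avoiding the cut locus2} with $\Omega=\inte M$ (whose closure is $M$) to obtain a sequence $\{\Omega_{k}\}$ of closed subsets exhausting $M\setminus\cut\bm$ such that each $\partial\Omega_{k}$ is a smooth hypersurface in $M$ satisfying $\partial\Omega_{k}\cap\bm=\bm$, and such that on $\partial\Omega_{k}\setminus\bm$ there is a unit outer normal $\nu_{k}$ with $g(\nu_{k},\nabla\rho_{\bm})\geq 0$.

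Next, on $\Omega_{k}$ the function $\Phi$ is smooth away from $\bm$, and $\nabla\Phi=(\phi'\circ\rho_{\bm})\nabla\rho_{\bm}$ with $\Vert\nabla\Phi\Vert=\phi'\circ\rho_{\bm}\geq 0$ because $\phi$ is monotone increasing. Applying the weighted divergence theorem to the vector field $e^{-f}\Vert\nabla\Phi\Vert^{p-2}\nabla\Phi$ against $\psi$ on $\Omega_{k}$ yields
\begin{equation*}
\int_{\Omega_{k}}\Vert\nabla\Phi\Vert^{p-2}g(\nabla\psi,\nabla\Phi)\,dm_{f}=\int_{\Omega_{k}}\psi\,\Delta_{f,p}\Phi\,dm_{f}+\int_{\partial\Omega_{k}}\psi\,\Vert\nabla\Phi\Vert^{p-2}g(\nabla\Phi,\nu_{k})\,dm_{f,\partial\Omega_{k}}.
\end{equation*}
The boundary integral splits along $\partial\Omega_{k}\cap\bm=\bm$ and $\partial\Omega_{k}\setminus\bm$. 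The first piece vanishes because $\psi$ has compact support inside $\inte M$. On the second piece, $\Vert\nabla\Phi\Vert^{p-2}g(\nabla\Phi,\nu_{k})=(\phi'\circ\rho_{\bm})^{p-1}g(\nabla\rho_{\bm},\nu_{k})\geq 0$ by the properties of the exhaustion together with $\phi'\geq 0$, so the entire boundary contribution is non-negative and may be dropped.

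Now on $\Omega_{k}\setminus\bm\subset D_{\bm}\setminus\bm$, every point is of the form $\gamma_{x}(t)$ with $t\in(0,\tau(x))$, so Lemma \ref{lem:p-Laplacian comparison} gives the pointwise estimate
\begin{equation*}
\Delta_{f,p}\Phi\geq\left(-\left((\phi')^{p-1}\right)'-(N-1)\frac{s'_{\kappa,\lambda}}{s_{\kappa,\lambda}}(\phi')^{p-1}\right)\circ\rho_{\bm}.
\end{equation*}
Inserting this into the previous identity and using $\psi\geq 0$ yields the desired inequality restricted to $\Omega_{k}$.

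Finally, I would let $k\to\infty$. Since $\psi$ has compact support in $\inte M$, $\rho_{\bm}$ is bounded on $\mathrm{supp}\,\psi$, and $\phi,\phi'$ are smooth, both integrands are dominated uniformly in $k$ by a fixed $m_{f}$-integrable function. Because $\bigcup_{k}\Omega_{k}=M\setminus\cut\bm$ and $\cut\bm$ is $m_{f}$-null, dominated convergence delivers (\ref{eq:global p-Laplacian comparison}). The main technical obstacle is the handling of $\cut\bm$: one must ensure both that the boundary pieces on $\partial\Omega_{k}\setminus\bm$ (which accumulate on $\cut\bm$) have the right sign so they can be discarded, and that the pointwise comparison is valid everywhere in the interior of the exhausting sets; both issues are resolved precisely by the regularity clauses of Lemma \ref{lem:avoiding the cut locus2}.
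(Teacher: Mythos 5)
Your proof is correct and follows essentially the same route as the paper: use Lemma \ref{lem:avoiding the cut locus2} to exhaust $M\setminus\cut\bm$ by $\{\Omega_k\}$, apply the Green formula on each $\Omega_k$, discard the boundary term on $\partial\Omega_k\setminus\bm$ by the sign condition $g(\nu_k,\nabla\rho_{\bm})\geq 0$ together with $\phi'\geq 0$, insert the pointwise estimate of Lemma \ref{lem:p-Laplacian comparison}, and pass to the limit. The only addition you make is spelling out the dominated convergence step at the end, which the paper leaves implicit.
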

\begin{proof}
By Lemma \ref{lem:avoiding the cut locus2},
there exists a sequence $\{\Omega_{k}\}_{k\in \mathbb{N}}$ of closed subsets of $M$
satisfying that
for every $k$,
the set $\partial \Omega_{k}$ is a smooth hypersurface in $M$,
and satisfying the following:
(1) for all $k_{1},k_{2}$ with $k_{1}<k_{2}$,
we have $\Omega_{k_{1}}\subset \Omega_{k_{2}}$;
(2) $M\setminus \cut \bm=\bigcup_{k}\,\Omega_{k}$;
(3) $\partial \Omega_{k}\cap \bm=\bm$ for all $k$;
(4) for each $k$,
     on $\partial \Omega_{k}\setminus \bm$,
     there exists the unit outer normal vector field $\nu_{k}$ for $\Omega_{k}$ with $g(\nu_{k},\nabla \rho_{\bm})\geq 0$.

For the canonical Riemannian volume measure $\vol_{k}$ on $\partial \Omega_{k}\setminus \bm$,
put $m_{f,k}:=e^{  -f|_{\partial \Omega_{k}\setminus \bm}}\,\vol_{k}$.
Let $\psi:M\to \mathbb{R}$ be a non-negative smooth function whose support is compact and contained in $\inte M$.
By the Green formula,
and by $\partial \Omega_{k}\cap \bm=\bm$,
we have
\begin{align*}
&\quad \,  \int_{\Omega_{k}}\,  \Vert \nabla \Phi \Vert^{p-2} g\left(\nabla \psi, \nabla \Phi  \right)  \, d\,m_{f}\\
&=\int_{\Omega_{k}}\, \left(-\psi \, g\left(\nabla \left(\Vert \nabla \Phi \Vert^{p-2}\right), \nabla \Phi \right)+\Vert \nabla \Phi \Vert^{p-2}\,\psi\,\Delta_{f,2} \Phi \right) \,d\,m_{f}\\
&\qquad \qquad \qquad \qquad \qquad \qquad+\int_{\partial \Omega_{k}\setminus \bm}\, \Vert \nabla \Phi \Vert^{p-2}\, \psi \,g\left(\nu_{k},\nabla \Phi \right) \, d\,m_{f,k}\\
&=\int_{\Omega_{k}}\, \psi\,\Delta_{f,p} \Phi \,d\,m_{f}+\int_{\partial \Omega_{k}\setminus \bm}\, \Vert \nabla \Phi \Vert^{p-2}\, \psi \,g\left(\nu_{k},\nabla \Phi \right) \, d\,m_{f,k}.
\end{align*}
Lemma \ref{lem:p-Laplacian comparison} and $g(\nu_{k},\nabla \rho_{\bm})\geq 0$ imply
\begin{multline*}
\int_{\Omega_{k}}\,   \Vert \nabla \Phi \Vert^{p-2} g\left(\nabla \psi, \nabla \Phi  \right)\, d\,m_{f}\\
\geq \int_{\Omega_{k}}\,\psi\,  \left(\left( -\left( \left(\phi' \right)^{p-1} \right)' -(N-1)\frac{s'_{\kappa,\lambda}}{s_{\kappa,\lambda}}\left(\phi' \right)^{p-1}\right)\circ \rho_{\bm}\right)   \,d\,m_{f}.
\end{multline*}
Letting $k\to \infty$,
we obtain the desired inequality.
\end{proof}
In the case of $N=\infty$,
we have:
\begin{prop}\label{prop:global infinite p-Laplacian comparison}
Let $p\in (1,\infty)$.
Suppose $\ric^{\infty}_{f,M} \geq 0$ and $H_{f,\bm}\geq 0$.
For a monotone increasing smooth function $\phi:[0,\infty)\to \mathbb{R}$,
put $\Phi:=\phi \circ \rho_{\bm}$.
Then we have
\begin{equation*}
\Delta_{f,p}\,\Phi \geq  -\left( \left(\phi' \right)^{p-1} \right)' \circ \rho_{\bm}
\end{equation*}
in a distribution sense on $M$.
More precisely,
for every non-negative smooth function $\psi:M\to \mathbb{R}$ whose support is compact and contained in $\inte M$,
we have
\begin{equation}\label{eq:global infinite p-Laplacian comparison}
\int_{M}\,   \Vert \nabla \Phi \Vert^{p-2} g\left(\nabla \psi, \nabla \Phi  \right)\, d\,m_{f}\\
\geq \int_{M}\,\psi \left( -\left( \left(\phi' \right)^{p-1} \right)' \circ \rho_{\bm}\right)   \,d\,m_{f}.
\end{equation}
\end{prop}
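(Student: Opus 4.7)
The proof will follow the same structural pattern as the proof of Proposition \ref{prop:global p-Laplacian comparison}, with Lemma \ref{lem:infinite p-Laplacian comparison} replacing Lemma \ref{lem:p-Laplacian comparison} as the pointwise input. The key issue, as in the finite $N$ case, is that $\rho_{\bm}$ is only smooth on $M \setminus \cut \bm$, so we cannot directly apply Green's formula on all of $M$; we need to exhaust $M \setminus \cut \bm$ by domains with controlled boundary behavior.

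First, I would invoke Lemma \ref{lem:avoiding the cut locus2} (applied to $\Omega = M$, so $\bar{\Omega} = M$) to produce a sequence $\{\Omega_k\}_{k\in\mathbb{N}}$ of closed subsets of $M$ with $\Omega_{k_1}\subset \Omega_{k_2}$ for $k_1<k_2$, with $M\setminus \cut\bm = \bigcup_k \Omega_k$, with $\partial \Omega_k$ a smooth hypersurface satisfying $\partial \Omega_k \cap \bm = \bm$, and with a unit outer normal vector field $\nu_k$ on $\partial \Omega_k \setminus \bm$ satisfying $g(\nu_k, \nabla \rho_{\bm}) \geq 0$.

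Next, fix a non-negative smooth test function $\psi$ with compact support in $\inte M$, and apply the weighted Green formula on each $\Omega_k$ to the functions $\psi$ and $\Phi = \phi \circ \rho_{\bm}$. Since $\partial \Omega_k \cap \bm = \bm$ and $\psi$ vanishes near $\bm$, the boundary integral reduces to one over $\partial \Omega_k \setminus \bm$ against the weighted hypersurface measure $m_{f,k}$. This yields
\begin{equation*}
\int_{\Omega_{k}} \Vert \nabla \Phi \Vert^{p-2} g(\nabla \psi, \nabla \Phi)\, d\,m_{f}
= \int_{\Omega_{k}} \psi\, \Delta_{f,p} \Phi\, d\,m_{f}
+ \int_{\partial \Omega_{k}\setminus \bm} \Vert \nabla \Phi \Vert^{p-2}\, \psi\, g(\nu_{k}, \nabla \Phi)\, d\,m_{f,k}.
\end{equation*}

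Now I invoke Lemma \ref{lem:infinite p-Laplacian comparison} to bound $\Delta_{f,p} \Phi \geq -((\phi')^{p-1})' \circ \rho_{\bm}$ pointwise on $\Omega_k \setminus \bm$ (where $\rho_{\bm}$ is smooth). For the boundary integral, note that $\nabla \Phi = \phi'(\rho_{\bm})\, \nabla \rho_{\bm}$, so that $g(\nu_k, \nabla \Phi) = \phi'(\rho_{\bm})\, g(\nu_k, \nabla \rho_{\bm}) \geq 0$ since $\phi$ is monotone increasing and $g(\nu_k, \nabla \rho_{\bm}) \geq 0$; together with $\psi \geq 0$, the boundary term is non-negative and can be discarded. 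This gives
\begin{equation*}
\int_{\Omega_{k}} \Vert \nabla \Phi \Vert^{p-2} g(\nabla \psi, \nabla \Phi)\, d\,m_{f}
\geq \int_{\Omega_{k}} \psi \bigl(-((\phi')^{p-1})' \circ \rho_{\bm}\bigr)\, d\,m_{f}.
\end{equation*}
Finally, since $\cut \bm$ has measure zero in $(M, m_f)$ and the integrands are bounded on compact sets, I pass to the limit $k \to \infty$ by monotone/dominated convergence to recover \eqref{eq:global infinite p-Laplacian comparison}. The main subtlety throughout is managing the cut locus, but Lemma \ref{lem:avoiding the cut locus2} encapsulates all that is needed and the rest is a direct transcription of the finite $N$ argument with the simpler comparison inequality.
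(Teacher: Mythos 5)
Your proof is correct and follows essentially the same approach as the paper: exhaust $M\setminus\cut\bm$ by the $\{\Omega_k\}$ from Lemma \ref{lem:avoiding the cut locus2}, apply the weighted Green formula on each $\Omega_k$, invoke the pointwise comparison from Lemma \ref{lem:infinite p-Laplacian comparison}, discard the non-negative boundary term via $g(\nu_k,\nabla\rho_{\bm})\geq 0$ and $\phi'\geq 0$, and pass to the limit. (Incidentally, the paper's published proof cites Lemma \ref{lem:p-Laplacian comparison} at the step where the pointwise bound is applied, which is a harmless slip for the intended Lemma \ref{lem:infinite p-Laplacian comparison}; you cite the right one.)
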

\begin{proof}
Lemma \ref{lem:avoiding the cut locus2} implies that
there exists a sequence $\{\Omega_{k}\}_{k\in \mathbb{N}}$ of closed subsets of $M$
satisfying that
for every $k$,
the set $\partial \Omega_{k}$ is a smooth hypersurface in $M$,
and satisfying the following:
(1) for all $k_{1},k_{2}\in \mathbb{N}$ with $k_{1}<k_{2}$,
we have $\Omega_{k_{1}}\subset \Omega_{k_{2}}$;
(2) $M\setminus \cut \bm=\bigcup_{k}\,\Omega_{k}$;
(3) $\partial \Omega_{k}\cap \bm=\bm$ for all $k$;
(4) for each $k$,
on $\partial \Omega_{k}\setminus \bm$,
there exists the unit outer normal vector field $\nu_{k}$ for $\Omega_{k}$ with $g(\nu_{k},\nabla \rho_{\bm})\geq 0$.

For the canonical Riemannian volume measure $\vol_{k}$ on $\partial \Omega_{k}\setminus \bm$,
put $m_{f,k}:=e^{-f|_{\partial \Omega_{k}\setminus \bm}}\,\vol_{k}$.
Let $\psi:M\to \mathbb{R}$ be a non-negative smooth function whose support is compact and contained in $\inte M$.
By the Green formula,
and by $\partial \Omega_{k}\cap \bm=\bm$,
we see
\begin{multline*}
\int_{\Omega_{k}}\,  \Vert \nabla \Phi \Vert^{p-2} g\left(\nabla \psi, \nabla \Phi  \right)  \, d\,m_{f}\\
=\int_{\Omega_{k}}\, \psi\,\Delta_{f,p} \Phi \,d\,m_{f}+\int_{\partial \Omega_{k}\setminus \bm}\, \Vert \nabla \Phi \Vert^{p-2}\, \psi \,g\left(\nu_{k},\nabla \Phi \right) \, d\,m_{f,k}.
\end{multline*}
By Lemma \ref{lem:p-Laplacian comparison} and $g(\nu_{k},\nabla \rho_{\bm})\geq 0$,
\begin{equation*}\label{eq:weak global p-Laplacian comparison}
\int_{\Omega_{k}}\,   \Vert \nabla \Phi \Vert^{p-2} g\left(\nabla \psi, \nabla \Phi  \right)\, d\,m_{f}
\geq \int_{\Omega_{k}}\,\psi\,  \left( -\left( \left(\phi' \right)^{p-1} \right)' \circ \rho_{\bm}\right)\,d\,m_{f}.
\end{equation*}
By letting $k \to \infty$,
we complete the proof.
\end{proof}
\begin{rem}\label{rem:Equality case in global p-Laplacian comparison}
In Proposition \ref{prop:global p-Laplacian comparison} (resp. \ref{prop:global infinite p-Laplacian comparison}),
assume that
the equality in (\ref{eq:global p-Laplacian comparison}) (resp. (\ref{eq:global infinite p-Laplacian comparison})) holds.
In this case,
for a fixed $x\in \bm$ we see that
for every $t\in (0,\tau(x))$
the equality in (\ref{eq:p-Laplacian comparison}) (resp. (\ref{eq:infinite p-Laplacian comparison})) also holds.
The equality case in Proposition \ref{prop:global p-Laplacian comparison} (resp. \ref{prop:global infinite p-Laplacian comparison})
results into that in Lemma \ref{lem:p-Laplacian comparison} (resp. \ref{lem:infinite p-Laplacian comparison}) (see Remark \ref{rem:Equality in p-Laplacian comparison}).
\end{rem}
\begin{rem}
Perales \cite{P} has proved a Laplacian comparison inequality for the distance function from the boundary in a barrier sense
for manifolds with boundary of non-negative Ricci curvature.
We can prove that
the Laplacian comparison inequalities for $\rho_{\bm}$ in Lemmas \ref{lem:Laplacian comparison} and \ref{lem:infinite Laplacian comparison} globally hold on $M$ in a barrier sense.
\end{rem}

\section{Inscribed radius rigidity}\label{sec:Inscribed radius rigidity}
Let $M$ be an $n$-dimensional, 
connected complete Riemannian manifold with boundary,
and let $f:M\to \mathbb{R}$ be a smooth function.
\subsection{Inscribed radius comparison}
From Lemma \ref{lem:Basic comparison},
we derive the following comparison result for the inscribed radius.
\begin{lem}\label{lem:Inscribed radius comparison}
Let $\kappa\in \mathbb{R}$ and $\lambda \in \mathbb{R}$ satisfy the ball-condition.
For $N\in [n,\infty)$,
we suppose $\ric^{N}_{f,M}\geq (N-1)\kappa$ and $H_{f,\bm}\geq (N-1)\lambda$.
Then $\dm \leq \const$.
\end{lem}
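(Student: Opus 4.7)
The plan is to establish the inequality pointwise on the boundary and then pass to the supremum, using the identity $D(M,\bm) = \sup_{x \in \bm} \tau(x)$ recorded earlier in the excerpt. So the task reduces to showing $\tau(x) \leq \const$ for every $x \in \bm$. Since $\tau(x) \leq \tau_1(x)$ always holds (by the well-known property $t > \rho_{\bm}(\gamma_x(t))$ for $t > \tau_1(x)$ noted in Section \ref{sec:Preliminaries}), it suffices to bound the first conjugate value of $\bm$ along $\gamma_x$.

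The argument I would use is a contradiction based on Lemma \ref{lem:Basic comparison}. Assume $\tau_1(x) > \const$. Because $\kappa,\lambda$ satisfy the ball-condition, $\bar{C}_{\kappa,\lambda} = \const$ is finite and $s_{\kappa,\lambda}(\const) = 0$. Then Lemma \ref{lem:Basic comparison} applies on $[0,\const)$ and yields
\begin{equation*}
\theta_f(t,x) \leq e^{-f(x)}\, s_{\kappa,\lambda}^{N-1}(t),
\end{equation*}
so letting $t \to \const^{-}$ forces $\theta_f(t,x) \to 0$. On the other hand, under the assumption $\tau_1(x) > \const$ there is no conjugate point of $\bm$ along $\gamma_x$ on $[0,\const]$, so the $\bm$-Jacobi fields $Y_{x,i}$ remain smooth and pointwise linearly independent on a neighborhood of $\const$. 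Hence $\theta(\const,x) > 0$, and since $f \circ \gamma_x$ is continuous at $\const$, also $\theta_f(\const,x) > 0$, contradicting the previous limit. Therefore $\tau_1(x) \leq \const$, and consequently $\tau(x) \leq \const$.

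Taking the supremum over $x \in \bm$ yields $D(M,\bm) \leq \const$, as desired. The proof is essentially soft once Lemma \ref{lem:Basic comparison} is in hand, the substantive curvature content having been absorbed into that lemma. The only point of care is to track the domain on which the Jacobian comparison is valid: the comparison is stated on $[0,\min\{\tau_1(x),\bar{C}_{\kappa,\lambda}\})$, and the whole argument rests on observing that the hypothesis to be contradicted puts us in the regime $\min\{\tau_1(x),\bar{C}_{\kappa,\lambda}\} = \const$, which is precisely where $s_{\kappa,\lambda}^{N-1}$ vanishes at the right endpoint. I do not anticipate any further obstacle.
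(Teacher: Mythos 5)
Your proof is correct and follows essentially the same route as the paper: reduce to bounding $\tau_1(x)$, apply the Jacobian estimate $\theta_f(t,x)\le e^{-f(x)}s_{\kappa,\lambda}^{N-1}(t)$ from Lemma \ref{lem:Basic comparison}, and derive a contradiction at $t=\const$ where $s_{\kappa,\lambda}$ vanishes. The paper phrases the contradiction as ``$\gamma_x(\const)$ would be a conjugate point, contradicting $\const<\tau_1(x)$,'' while you phrase it as ``no conjugate point forces $\theta(\const,x)>0$''; these are the same observation.
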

\begin{proof}
Take $x\in \bm$.
We suppose $\const<\tau_{1}(x)$.
By Lemma \ref{lem:Basic comparison},
for all $t\in [0,\const)$
we have $\theta_{f}(t,x)\leq e^{-f(x)}s^{N-1}_{\kappa,\lambda}(t)$.
Letting $t\to \const$, 
we have $\theta(\const,x)=0$;
in particular,
$\gamma_{x}(\const)$ is a conjugate point of $\bm$ along $\gamma_{x}$.
This is a contradiction.
Hence,
we have $\tau_{1}(x)\leq \const$.
The relationship between $\tau$ and $\tau_{1}$ implies $\tau(x)\leq \const$.
Since $\dm$ is equal to $\sup_{x\in \bm}\, \tau(x)$, 
we have $\dm \leq \const$.
\end{proof}
In \cite{K3},
Lemma \ref{lem:Inscribed radius comparison} has been proved when $f=0$ and $N=n$.

\subsection{Inscribed radius rigidity}
Now,
we prove Theorem \ref{thm:Ball rigidity}.
\begin{proof}[Proof of Theorem \ref{thm:Ball rigidity}]
Let $\kappa\in \mathbb{R}$ and $\lambda \in \mathbb{R}$ satisfy the ball-condition.
For $N\in [n,\infty)$,
we suppose $\ric^{N}_{f,M}\geq (N-1)\kappa$ and $H_{f,\bm}\geq (N-1)\lambda$.
By Lemma \ref{lem:Inscribed radius comparison},
we have $\dm \leq \const$.

Take $p_{0}\in M$ satisfying $\rho_{\bm}(p_{0})=\const$.
We put 
\begin{equation*}
\Omega:=\{p\in \inte M \setminus \{p_{0}\} \,|\, \rho_{\bm}(p)+\rho_{p_{0}}(p)=\const\}.
\end{equation*}
Take a foot point $x_{p_{0}}$ on $\bm$ of $p_{0}$,
and the normal minimal geodesic $\gamma_{0}:[0,\const]\to M$ from $x_{p_{0}}$ to $p_{0}$.
Then for all $t\in (0,\const)$,
we have $\gamma_{0}(t)\in \Omega$.
Therefore,
$\Omega$ is a non-empty closed subset of $\inte M \setminus \{p_{0}\}$.

We prove that
$\Omega$ is an open subset of $\inte M \setminus \{p_{0}\}$.
Fix $p\in \Omega$,
and take a foot point $x_{p}$ on $\bm$ of $p$.
Note that
$x_{p}$ is also a foot point on $\bm$ of $p_{0}$.
We take the normal minimal geodesic $\gamma:[0,\const]\to M$ from $x_{p}$ to $p_{0}$.
Then $\gamma|_{(0,\const)}$ passes through $p$.
There exists an open neighborhood $U$ of $p$ such that $\rho_{p_{0}}$ and $\rho_{\bm}$ are smooth on $U$,
and for every $q\in U$
there exists a unique normal minimal geodesic in $M$ from $p_{0}$ to $q$ that lies in $\inte M$.
By Lemmas \ref{lem:finite pointed Laplacian comparison} and \ref{lem:Laplacian comparison},
for all $q\in U$
\begin{align*}
\frac{\Delta_{f} (\rho_{\bm}+\rho_{p_{0}})(q)}{N-1}&\geq -\left(\frac{\lambda \ck(\rho_{\bm}(q))-\kappa \sk(\rho_{\bm}(q))}{\ck(\rho_{\bm}(q))+\lambda \sk(\rho_{\bm}(q))}+\frac{\ck(\rho_{p_{0}}(q))}{\sk(\rho_{p_{0}}(q))}\right) \\
                                               &=    -\frac{s_{\kappa,\lambda}(\rho_{\bm}(q)+\rho_{p_{0}}(q))}{s_{\kappa,\lambda}(\rho_{\bm}(q)) \sk(\rho_{p_{0}}(q))} \geq 0.
\end{align*}
Lemma \ref{lem:maximal principle} implies $U\subset \Omega$.
We prove the openness of $\Omega$.

Since $\inte M\setminus \{p_{0}\}$ is connected, 
we have $\Omega=\inte M\setminus \{p_{0}\}$,
and hence $\rho_{\bm}+\rho_{p_{0}}=\const$ on $M$.
This implies $M=B_{\const}(p_{0})$ and $\bm=\partial B_{\const}(p_{0})$.
Furthermore,
we see that the cut locus for $p_{0}$ is empty,
and the equality in (\ref{eq:finite pointed Laplacian comparison}) holds on $\inte M\setminus \{p_{0}\}$.
For each $u\in U_{p_{0}}M$,
choose an orthonormal basis $\{e_{u,i}\}_{i=1}^{n}$ of $T_{p_{0}}M$ with $e_{n}=u$.
Let $\{Y_{u,i}\}^{n-1}_{i=1}$ be the Jacobi fields along $\gamma_{u}$
with initial conditions $Y_{u,i}(0)=0$ and $Y_{u,i}'(0)=e_{u,i}$,
where $\gamma_{u}:[0,\const]\to M$ is the normal geodesic with $\gamma_{u}(0)=p_{0}$ and $\gamma_{u}'(0)=u$.
Then for all $i$
we have $Y_{u,i}=s_{\kappa}\, E_{u,i}$ on $[0,\const]$,
where $\{E_{u,i}\}^{n-1}_{i=1}$ are the parallel vector fields along $\gamma_{u}$ with initial condition $E_{u,i}(0)=e_{u,i}$ (see Remark \ref{rem:equality pointed Laplacian comparison}).
Let $\tilde{p}_{0}$ denote the center point of $\ball$.
Choose a linear isometry $I:T_{p_{0}}M\to T_{\tilde{p}_{0}}\ball$.
Define a map $\Phi:M\to \ball$ by $\Phi(p):=\exp_{\tilde{p}_{0}}\circ I\circ \exp^{-1}_{p_{0}}(p)$,
where $\exp_{p_{0}}$ and $\exp_{\tilde{p}_{0}}$ are the exponential maps at $p_{0}$ and at $\tilde{p}_{0}$,
respectively.
For every $p\in \inte M$
the differential map $D(\Phi|_{\inte M})_{p}$ of $\Phi|_{\inte M}$ at $p$
sends an orthonormal basis of $T_{p}M$ to that of $T_{\Phi(p)} \ball$,
and for every $x\in \bm$
the map $D(\Phi|_{\bm})_{x}$ sends an orthonormal basis of $T_{x}\bm$ to that of $T_{\Phi(x)}\partial \ball$.
Hence,
$\Phi$ is a Riemannian isometry with boundary from $M$ to $\ball$,
and $(M,d_{M})$ is isometric to $(\ball, d_{\ball})$.

Now,
the equality in Lemma \ref{lem:Laplacian comparison} holds on $\inte M\setminus \{p_{0}\}$.
For each $x\in \bm$
we see $f\circ \gamma_{x}=f(x)-(N-n)\log s_{\kappa,\lambda}$ on $[0,\const]$ (see Remark \ref{rem:Equality in Laplacian comparison}).
If we suppose $N>n$,
then $f(\gamma_{x}(t))$ tends to infinity as $t\to \const$.
This is a contradiction
since $f(\gamma_{x}(\const))=f(p_{0})$.
Hence,
we obtain $N=n$.
We complete the proof of Theorem \ref{thm:Ball rigidity}.
\end{proof}

\section{Volume comparisons}\label{sec:Volume comparisons}
Let $M$ be an $n$-dimensional, 
connected complete Riemannian manifold with boundary with Riemannian metric $g$,
and let $f:M\to \mathbb{R}$ be a smooth function.
\subsection{Absolute volume comparisons}\label{sec:Absolute volume comparison}
Let $\bar{\theta}_{f}:[0,\infty) \times \bm \to \mathbb{R}$ be a function defined by
\begin{equation*}
  \bar{\theta}_{f}(t,x) := \begin{cases}
                            \theta_{f}(t,x) & \text{if $t< \tau(x)$}, \\
                            0           & \text{if $t\geq \tau(x)$}.
                       \end{cases}
\end{equation*}

By the coarea formula (see e.g., Theorem 3.2.3 in \cite{F}),
we show:
\begin{lem}\label{lem:Basic volume lemma}
Suppose that
$\bm$ is compact.
Then for all $r\in (0,\infty)$
\begin{equation}\label{eq:volume comparison1}
m_{f}( B_{r}(\bm))=\int_{\bm} \int^{r}_{0}\bar{\theta}_{f}(t,x)\,dt\,d\vol_{h},
\end{equation}
where $h$ is the induced Riemannian metric on $\bm$.
\end{lem}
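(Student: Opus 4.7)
The plan is to pull back the volume integral from $M$ to the normal bundle $\tbp$ via the normal exponential map $\expp$, using the fact that $\expp$ is a diffeomorphism off of the cut locus. This is the natural setting for the "coarea formula" referenced in the statement (what is really needed here is just a change-of-variables / Jacobian computation, which the coarea formula subsumes).

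First, I would exploit the structural results already recalled from Section 2. By Proposition 2.3, for every $r>0$ we have
\begin{equation*}
B_{r}(\bm) = \expp\bigl(\{\, t u_{x} \mid x \in \bm,\; t \in [0,\min\{r,\tau(x)\}] \,\}\bigr),
\end{equation*}
and the set $TD_{\bm}\setminus 0(\tbp)$ is a maximal domain in $\tbp$ on which $\expp$ is regular and injective. Moreover $\cut \bm$ is a null set in $M$, so $m_{f}(B_{r}(\bm)) = m_{f}(B_{r}(\bm)\setminus \cut \bm)$. Thus it suffices to compute the integral over $B_{r}(\bm)\setminus \cut \bm$, which is the diffeomorphic $\expp$-image of the open set $\{\,(x,tu_{x}) \mid x\in \bm,\; t \in [0,\min\{r,\tau(x)\})\,\}$.

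Next I would change variables. By definition, $\thetae(t,x)$ is the absolute value of the Jacobian of $\expp$ at $(x,tu_{x})$ (computed with respect to the product measure $dt \otimes d\vol_{h}$ on the normal bundle and $\vol_{g}$ on $M$). Hence applying the change-of-variables formula to $e^{-f}\,d\vol_{g}$ gives
\begin{equation*}
m_{f}(B_{r}(\bm)\setminus \cut \bm)=\int_{\bm} \int_{0}^{\min\{r,\tau(x)\}} e^{-f(\gamma_{x}(t))}\,\thetae(t,x)\, dt\, d\vol_{h}(x).
\end{equation*}
Since $\theta_{f}(t,x) = e^{-f(\gamma_{x}(t))}\,\theta(t,x)$ by definition, and $\bar{\theta}_{f}(t,x)$ agrees with $\theta_{f}(t,x)$ for $t<\tau(x)$ and vanishes for $t\geq \tau(x)$, the inner integral may be extended from $[0,\min\{r,\tau(x)\}]$ to $[0,r]$ at no cost, yielding the desired identity~(5.1).

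The only delicate point is ensuring that the change-of-variables formula legitimately applies: $\expp$ is smooth on all of $\tbp$, but only a diffeomorphism on $TD_{\bm}\setminus 0(\tbp)$. The compactness of $\bm$ guarantees that $\tau:\bm \to \mathbb{R}\cup\{\infty\}$ is bounded below on compact subsets away from $\{\tau=\infty\}$, so the integrand $\bar{\theta}_{f}$ is measurable and the application of Fubini (implicit in writing the iterated integral) is justified by nonnegativity. I expect this measurability/nullset bookkeeping to be the only technical hurdle; the geometric content is simply the Jacobian interpretation of $\theta$.
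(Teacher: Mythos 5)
Your proof is correct and follows essentially the same route as the paper: it invokes Proposition \ref{prop:metric neighborhood} to identify $B_{r}(\bm)$ as the image under $\expp$ of the region below the graph of $\min\{r,\tau\}$, uses that $\cut\bm$ is a null set, and then applies a change of variables together with Fubini to pass from the Jacobian $\theta$ to the truncated weighted Jacobian $\bar\theta_{f}$. The only cosmetic difference is that the paper cites the coarea formula (Theorem 3.2.3 in \cite{F}) where you correctly observe that a direct Jacobian change-of-variables argument suffices; also note that the set you describe as "open" should exclude $t=0$ (as the paper does), but this is another null set and does not affect the computation.
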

\begin{proof}
Since $\bm$ is compact,
$B_{r}(\bm)$ is also compact;
in particular,
$m_{f}(B_{r}(\bm))<\infty$.
From Proposition \ref{prop:metric neighborhood},
we derive
\begin{equation*}
B_{r}(\bm)=\expp \left(\bigcup_{x\in \bm} \{tu_{x}\mid t\in [0,\min\{r,\tau(x)\}] \}\right).
\end{equation*}
The map $\expp$ is diffeomorphic on $\bigcup_{x\in \bm} \{tu_{x}\mid t\in (0,\min\{r,\tau(x)\}) \}$.
Furthermore,
the cut locus $\cut \bm$ for the boundary is a null set of $M$.
Hence,
the coarea formula and the Fubini theorem imply (\ref{eq:volume comparison1}).
\end{proof}
Bayle \cite{B} has stated the following absolute volume comparison inequality of Heintze-Karcher type without proof (see Theorem E.2.2 in \cite{B}, and also \cite{M}).
\begin{lem}[\cite{B}]\label{lem:absolute volume comparison}
Suppose that
$\bm$ is compact.
For $N\in [n,\infty)$,
we suppose $\ric^{N}_{f,M}\geq (N-1)\kappa$ and $H_{f,\bm} \geq (N-1)\lambda$.
Then for all $r\in (0,\infty)$
\begin{equation}\label{eq:absolute volume comparison1}
m_{f}(B_{r}(\bm))\leq s_{N,\kappa,\lambda}(r)\, m_{f,\bm}(\bm);
\end{equation}
in particular,
we have $(\ref{eq:volume growth sup})$.
\end{lem}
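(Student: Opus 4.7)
The plan is to reduce the inequality to the pointwise comparison $\bar{\theta}_{f}(t,x) \leq e^{-f(x)}\,\bar{s}_{\kappa,\lambda}^{N-1}(t)$ for all $(t,x) \in [0,\infty)\times \bm$, and then integrate via Lemma \ref{lem:Basic volume lemma}. Since $\bm$ is compact, Lemma \ref{lem:Basic volume lemma} gives
\begin{equation*}
m_{f}(B_{r}(\bm)) = \int_{\bm}\int_{0}^{r}\bar{\theta}_{f}(t,x)\,dt\,d\vol_{h},
\end{equation*}
so the lemma will follow once the pointwise bound is established and the Fubini theorem is applied together with the identity $s_{N,\kappa,\lambda}(r) = \int_{0}^{r}\bar{s}_{\kappa,\lambda}^{N-1}(t)\,dt$ and the definition $m_{f,\bm}(\bm) = \int_{\bm}e^{-f|_{\bm}}\,d\vol_{h}$.

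To establish the pointwise bound, fix $x\in\bm$ and split into cases according to the position of $t$ relative to $\tau(x)$ and $\bar{C}_{\kappa,\lambda}$. First, if $t\geq \tau(x)$, then $\bar{\theta}_{f}(t,x)=0$ by definition and there is nothing to prove. Second, if $0\leq t<\min\{\tau(x),\bar{C}_{\kappa,\lambda}\}$, then $t<\min\{\tau_{1}(x),\bar{C}_{\kappa,\lambda}\}$ since $\tau(x)\leq \tau_{1}(x)$, so Lemma \ref{lem:Basic comparison} applies and yields $\theta_{f}(t,x) \leq e^{-f(x)}s_{\kappa,\lambda}^{N-1}(t) = e^{-f(x)}\bar{s}_{\kappa,\lambda}^{N-1}(t)$. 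The remaining case is $\bar{C}_{\kappa,\lambda}\leq t<\tau(x)$; this case is vacuous, which is the main technical point. Indeed, if $\bar{C}_{\kappa,\lambda}=\infty$ nothing needs to be said; otherwise $\kappa$ and $\lambda$ satisfy the ball-condition, and Lemma \ref{lem:Inscribed radius comparison} yields $\tau(x)\leq D(M,\bm)\leq \const = \bar{C}_{\kappa,\lambda}$, which contradicts $t<\tau(x)$. In every case, therefore, $\bar{\theta}_{f}(t,x) \leq e^{-f(x)}\bar{s}_{\kappa,\lambda}^{N-1}(t)$.

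Inserting this bound into the coarea identity above and using Fubini gives
\begin{equation*}
m_{f}(B_{r}(\bm)) \leq \int_{\bm}e^{-f(x)}\,d\vol_{h}(x)\int_{0}^{r}\bar{s}_{\kappa,\lambda}^{N-1}(t)\,dt = m_{f,\bm}(\bm)\,s_{N,\kappa,\lambda}(r),
\end{equation*}
which is $(\ref{eq:absolute volume comparison1})$. For the limsup statement $(\ref{eq:volume growth sup})$, observe that $s_{N,\kappa,\lambda}(r)>0$ for every $r>0$ since $\bar{s}_{\kappa,\lambda}$ is positive on a neighborhood of the origin; dividing $(\ref{eq:absolute volume comparison1})$ by $s_{N,\kappa,\lambda}(r)$ and taking $\limsup_{r\to\infty}$ yields the claim.

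The only nontrivial step is the vacuous-case argument that rules out $\bar{C}_{\kappa,\lambda}\leq t<\tau(x)$ under the ball-condition; beyond that, the proof is a direct assembly of Lemmas \ref{lem:Basic volume lemma}, \ref{lem:Basic comparison}, and \ref{lem:Inscribed radius comparison}.
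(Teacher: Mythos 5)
Your proof is correct and follows essentially the same route as the paper's: integrate the pointwise bound $\bar{\theta}_{f}(t,x)\leq e^{-f(x)}\bar{s}^{N-1}_{\kappa,\lambda}(t)$ from Lemma \ref{lem:Basic comparison} against the coarea identity of Lemma \ref{lem:Basic volume lemma}. The paper states the pointwise bound in one line (in the equivalent form $\bar{\theta}_{f}(t,x)\leq \bar{s}^{N-1}_{\kappa,\lambda}(t)\,\bar{\theta}_{f}(0,x)$) and leaves implicit the case $\bar{C}_{\kappa,\lambda}\leq t<\tau(x)$; you correctly identify that this requires an argument and supply it via Lemma \ref{lem:Inscribed radius comparison}, which is a clean way to do it (the paper's own proof of that lemma effectively re-derives the same fact $\tau_{1}(x)\leq \const$ from the conjugate-point consequence of Lemma \ref{lem:Basic comparison}).
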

\begin{proof}
Fix $r\in (0,\infty)$.
By Lemma \ref{lem:Basic comparison},
for all $x\in \bm$ and $t\in (0,r)$,
we have $\bar{\theta}_{f}(t,x)\leq \bar{s}^{N-1}_{\kappa,\lambda}(t)\,\bar{\theta}_{f}(0,x)$.
Integrate the both sides of the inequality over $(0,r)$ with respect to $t$,
and then do that over $\bm$ with respect to $x$.
By Lemma \ref{lem:Basic volume lemma},
we have (\ref{eq:absolute volume comparison1}).
\end{proof}
Lemma \ref{lem:absolute volume comparison} has been proved in \cite{HK} when $f=0$ and $N=n$.

In the case of $N=\infty$,
Morgan \cite{Mo} has shown the following volume comparison inequality (see Theorem 2 in \cite{Mo}, and also \cite{M}).
\begin{lem}[\cite{Mo}]\label{lem:infinite absolute volume comparison}
Suppose that
$\bm$ is compact.
Suppose $\ric^{\infty}_{f,M}\geq 0$ and $H_{f,\bm} \geq 0$.
Then for all $r\in (0,\infty)$
\begin{equation}\label{eq:infinite absolute volume comparison}
m_{f}(B_{r}(\bm))\leq r\, m_{f,\bm}(\bm);
\end{equation}
in particular,
we have $(\ref{eq:infinite volume growth sup})$.
\end{lem}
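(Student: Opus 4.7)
The plan is to mimic the proof of Lemma \ref{lem:absolute volume comparison} (the finite-$N$ version), replacing the $\bar{s}^{N-1}_{\kappa,\lambda}$ pointwise bound for $\bar{\theta}_{f}(t,x)/\bar{\theta}_{f}(0,x)$ with the simpler bound $1$ that is provided by Lemma \ref{lem:infinite Basic comparison} in the $N=\infty$ case.

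First I would fix $r \in (0,\infty)$. By Lemma \ref{lem:infinite Basic comparison}, for every $x \in \bm$ and every $t \in [0,\tau_{1}(x))$ we have $\theta_{f}(t,x) \leq \theta_{f}(0,x)$. Since $\theta(0,x) = 1$ by definition, this gives $\theta_{f}(t,x) \leq e^{-f(x)}$ on $[0,\tau_{1}(x))$, and because $\tau(x) \leq \tau_{1}(x)$, the same bound holds on $[0,\tau(x))$. Extending to the truncated function $\bar{\theta}_{f}$, and using that $\bar{\theta}_{f}(t,x) = 0$ for $t \geq \tau(x)$, we obtain
\begin{equation*}
\bar{\theta}_{f}(t,x) \leq e^{-f(x)} \quad \text{for all } x \in \bm \text{ and } t \in [0,\infty).
\end{equation*}

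Next I would integrate this pointwise inequality. Integrating over $t \in (0,r)$ yields $\int_{0}^{r} \bar{\theta}_{f}(t,x)\,dt \leq r\, e^{-f(x)}$, and then integrating over $\bm$ with respect to $\vol_{h}$ together with Lemma \ref{lem:Basic volume lemma} gives
\begin{equation*}
m_{f}(B_{r}(\bm)) = \int_{\bm} \int_{0}^{r} \bar{\theta}_{f}(t,x)\,dt\,d\vol_{h} \leq r \int_{\bm} e^{-f(x)}\,d\vol_{h} = r\, m_{f,\bm}(\bm),
\end{equation*}
which is \eqref{eq:infinite absolute volume comparison}. Dividing by $r$ and taking $\limsup$ as $r\to\infty$ produces \eqref{eq:infinite volume growth sup}.

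There is no real obstacle here; the one point that needs care is justifying that the pointwise bound $\theta_{f}(t,x) \leq e^{-f(x)}$ extends to the truncated function $\bar{\theta}_{f}$ on the full half-line $[0,\infty)$, which is immediate from the definition of $\bar{\theta}_{f}$ and the relation $\tau(x) \leq \tau_{1}(x)$. The compactness of $\bm$ is used only to guarantee that $m_{f}(B_{r}(\bm))$ is finite and that Lemma \ref{lem:Basic volume lemma} applies without integrability concerns.
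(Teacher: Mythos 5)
Your proof is correct and follows essentially the same route as the paper's: both invoke Lemma \ref{lem:infinite Basic comparison} to get the pointwise monotonicity $\bar{\theta}_{f}(t,x)\leq \bar{\theta}_{f}(0,x)=e^{-f(x)}$, integrate in $t$ and then over $\bm$, and conclude via Lemma \ref{lem:Basic volume lemma}. The only difference is that you spell out the truncation step and the identification $\theta_{f}(0,x)=e^{-f(x)}$ more explicitly, which the paper leaves implicit.
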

\begin{proof}
Fix $r\in (0,\infty)$.
By Lemma \ref{lem:infinite Basic comparison},
for all $x\in \bm$ and $t\in (0,r)$,
we have $\bar{\theta}_{f}(t,x)\leq \bar{\theta}_{f}(0,x)$.
Integrate the both sides of the inequality over $(0,r)$ with respect to $t$,
and then do that over $\bm$ with respect to $x$.
Lemma \ref{lem:Basic volume lemma} implies the lemma.
\end{proof}
\begin{rem}\label{rem:infinite absolute volume eq}
In Lemma \ref{lem:absolute volume comparison} (resp. \ref{lem:infinite absolute volume comparison}),
assume that
for some $r>0$
the equality in (\ref{eq:absolute volume comparison1}) (resp. (\ref{eq:infinite absolute volume comparison})) holds.
For each $x\in \bm$,
choose an orthonormal basis $\{ e_{x,i} \}_{i=1}^{n-1}$ of $T_{x}\bm$.
Let $\{Y_{x,i}\}^{n-1}_{i=1}$ be the $\bm$-Jacobi fields along $\gamma_{x}$
with initial conditions $Y_{x,i}(0)=e_{x,i}$ and $Y_{x,i}'(0)=-A_{u_{x}}e_{x,i}$.
Then for all $i$
we see $Y_{x,i}=s_{\kappa,\lambda}\,E_{x,i}$ (resp. $Y_{x,i}=E_{x,i}$) on $[0,\min \{r,\bar{C}_{\kappa,\lambda}\}]$ (resp. $[0,r]$),
where $\{E_{x,i}\}^{n-1}_{i=1}$ are the parallel vector fields along $\gamma_{x}$ with initial condition $E_{x,i}(0)=e_{x,i}$.
Moreover,
$f\circ \gamma_{x}=f(x)-(N-n)\log s_{\kappa,\lambda}$ on $[0,\min \{r,\bar{C}_{\kappa,\lambda}\}]$ (cf. Remarks \ref{rem:Equality in Basic comparison} and \ref{rem:Equality in infinite Basic comparison}).
\end{rem}
\subsection{Relative volume comparison}\label{sec:Relative volume comparison}
We have the following relative volume comparison theorem of Bishop-Gromov type:
\begin{thm}\label{thm:volume comparison}
Let $M$ be an $n$-dimensional, 
connected complete Riemannian manifold with boundary,
and let $f:M\to \mathbb{R}$ be a smooth function.
Suppose that
$\bm$ is compact.
For $N\in [n,\infty)$,
we suppose $\ric^{N}_{f,M}\geq (N-1)\kappa$ and $H_{f,\bm}\geq (N-1)\lambda$.
Then for all $r,R\in (0,\infty)$ with $r\leq R$,
we have
\begin{equation}\label{eq:volume comparison}
\frac{m_{f} (B_{R}(\bm))}{m_{f}(B_{r}(\bm))}\leq \frac{s_{N,\kappa,\lambda}(R)}{s_{N,\kappa,\lambda}(r)}.
\end{equation}
\end{thm}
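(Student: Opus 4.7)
The plan is to reduce the global comparison to a fiberwise statement along each normal geodesic $\gamma_{x}$, and then invoke the standard monotonicity-of-ratios lemma. First, by Lemma \ref{lem:Basic volume lemma}, for every $\rho>0$ one has
\begin{equation*}
m_{f}(B_{\rho}(\bm))=\int_{\bm}\left(\int_{0}^{\rho}\bar{\theta}_{f}(t,x)\,dt\right)d\vol_{h}(x),
\end{equation*}
so it suffices to control the inner one-dimensional integral uniformly in $x$.

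Next, I would establish the pointwise cross-multiplied inequality
\begin{equation*}
\bar{\theta}_{f}(t,x)\,\bar{s}_{\kappa,\lambda}^{N-1}(s)\leq \bar{\theta}_{f}(s,x)\,\bar{s}_{\kappa,\lambda}^{N-1}(t)\qquad (0\leq s\leq t,\ x\in\bm).
\end{equation*}
In the regime $0\leq s\leq t<\min\{\tau(x),\bar{C}_{\kappa,\lambda}\}$ this is exactly (\ref{eq:Basic2}) of Lemma \ref{lem:Basic comparison} cleared of denominators. Outside this regime I would check that the left-hand side always vanishes: if $t\geq\tau(x)$ then $\bar{\theta}_{f}(t,x)=0$ by definition; and if $t\geq\bar{C}_{\kappa,\lambda}$ in the ball-condition case, then Lemma \ref{lem:Inscribed radius comparison} gives $\tau(x)\leq\bar{C}_{\kappa,\lambda}$, so again $t\geq\tau(x)$ and $\bar{\theta}_{f}(t,x)=0$. (If $\kappa,\lambda$ do not satisfy the ball-condition, $\bar{C}_{\kappa,\lambda}=\infty$ and this second case does not arise.)

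Then I would apply the elementary real-variable lemma: if $\phi,\psi\colon[0,\infty)\to[0,\infty)$ satisfy $\phi(t)\psi(s)\leq\phi(s)\psi(t)$ for $s\leq t$, and $\psi$ is integrable and positive near $0$, then $r\mapsto \int_{0}^{r}\phi\,/\int_{0}^{r}\psi$ is nonincreasing. Applied with $\phi=\bar{\theta}_{f}(\cdot,x)$ and $\psi=\bar{s}_{\kappa,\lambda}^{N-1}$, so that $\int_{0}^{r}\psi=s_{N,\kappa,\lambda}(r)$, this yields
\begin{equation*}
s_{N,\kappa,\lambda}(r)\int_{0}^{R}\bar{\theta}_{f}(t,x)\,dt\ \leq\ s_{N,\kappa,\lambda}(R)\int_{0}^{r}\bar{\theta}_{f}(t,x)\,dt
\end{equation*}
for each $x\in\bm$. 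Integrating over $\bm$ against $\vol_{h}$ and rearranging gives (\ref{eq:volume comparison}).

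The main (mild) obstacle is just the bookkeeping at the boundary cases where $\bar{\theta}_{f}$ or $\bar{s}_{\kappa,\lambda}$ vanishes; the curvature content is entirely packaged into Lemma \ref{lem:Basic comparison}, and the passage from pointwise cross-multiplication to the monotonicity of $m_{f}(B_{\rho}(\bm))/s_{N,\kappa,\lambda}(\rho)$ is a standard one-variable manipulation.
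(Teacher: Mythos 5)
Your proof is correct and follows essentially the same route as the paper: both establish the cross-multiplied pointwise inequality $\bar{\theta}_{f}(t,x)\,\bar{s}_{\kappa,\lambda}^{N-1}(s)\leq\bar{\theta}_{f}(s,x)\,\bar{s}_{\kappa,\lambda}^{N-1}(t)$ from Lemma \ref{lem:Basic comparison}, integrate it to obtain monotonicity of the ratio of one-dimensional integrals, and then pass to $m_{f}(B_{r}(\bm))$ via Lemma \ref{lem:Basic volume lemma}. The only cosmetic difference is that you clear denominators before integrating over $\bm$ whereas the paper integrates first and then uses the mediant inequality; and you make explicit the boundary cases ($t\geq\tau(x)$ or $t\geq\bar{C}_{\kappa,\lambda}$, the latter handled via Lemma \ref{lem:Inscribed radius comparison}) that the paper leaves implicit.
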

\begin{proof}
Lemma \ref{lem:Basic comparison} implies that
for all $s,t\in [0,\infty)$ with $s\leq t$,
\begin{equation}\label{eq:volume comparison2}
\bar{\theta}_{f}(t,x)\; \bar{s}_{\kappa,\lambda}^{N-1}(s) \leq \bar{\theta}_{f}(s,x)\; \bar{s}_{\kappa,\lambda}^{N-1}(t).
\end{equation}
By integrating the both sides of (\ref{eq:volume comparison2}) over $[0,r]$ with respect to $s$,
and then doing that over $[r,R]$ with respect to $t$,
we conclude
\begin{equation*}\label{eq:volume comparison3}
\frac{\int^{R}_{r}\bar{\theta}_{f}(t,x)\,dt}{\int^{r}_{0}\bar{\theta}_{f}(s,x)\,ds}\leq \frac{s_{N,\kappa,\lambda}(R)-s_{N,\kappa,\lambda}(r)}{s_{N,\kappa,\lambda}(r)}.
\end{equation*}
From Lemma \ref{lem:Basic volume lemma},
we derive
\begin{align*}
          \frac{m_{f}( B_{R}(\bm))}{m_{f}(B_{r}(\bm))}
&  =   1+\frac{\int_{\bm} \int^{R}_{r}\bar{\theta}_{f}(t,x)\,dt\,d\vol_{h}}{\int_{\bm} \int^{r}_{0}\bar{\theta}_{f}(s,x)\,ds\,d\vol_{h}}\\
&\leq 1+\frac{s_{N,\kappa,\lambda}(R)-s_{N,\kappa,\lambda}(r)}{s_{N,\kappa,\lambda}(r)}
    =      \frac{s_{N,\kappa,\lambda}(R)}{s_{N,\kappa,\lambda}(r)}.
\end{align*}
This proves the theorem.
\end{proof}
In \cite{Sa},
Theorem \ref{thm:volume comparison} has been proved when $f=0$ and $N=n$.

In the case of $N=\infty$,
we have:
\begin{thm}\label{thm:infinite volume comparison}
Let $M$ be a connected complete Riemannian manifold with boundary,
and let $f:M\to \mathbb{R}$ be a smooth function.
Suppose that
$\bm$ is compact.
Suppose $\ric^{\infty}_{f,M}\geq 0$ and $H_{f,\bm}\geq 0$.
Then for all $r,R\in (0,\infty)$ with $r\leq R$,
we have
\begin{equation}\label{eq:infinite volume comparison}
\frac{m_{f} (B_{R}(\bm))}{m_{f}(B_{r}(\bm))}\leq \frac{R}{r}.
\end{equation}
\end{thm}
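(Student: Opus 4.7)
The plan is to follow the same template as Theorem \ref{thm:volume comparison}, replacing the role of $\bar{s}^{N-1}_{\kappa,\lambda}$ by the constant function $1$. The key monotonicity is provided by Lemma \ref{lem:infinite Basic comparison}: under the assumptions $\ric^{\infty}_{f,M}\geq 0$ and $H_{f,\bm}\geq 0$, for every $x\in \bm$ and every $0\leq s\leq t<\tau_{1}(x)$ we have $\theta_{f}(t,x)\leq \theta_{f}(s,x)$. Since $\bar{\theta}_{f}(\cdot,x)$ agrees with $\theta_{f}(\cdot,x)$ on $[0,\tau(x))$ and vanishes on $[\tau(x),\infty)$, and since $\tau(x)\leq \tau_{1}(x)$, this inequality persists for all $0\leq s\leq t$ on $[0,\infty)\times \bm$, that is
\begin{equation*}
\bar{\theta}_{f}(t,x)\leq \bar{\theta}_{f}(s,x).
\end{equation*}

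Next, I would fix $x\in \bm$ and integrate this inequality first over $s\in[0,r]$ and then over $t\in[r,R]$ to obtain
\begin{equation*}
r\int_{r}^{R}\bar{\theta}_{f}(t,x)\,dt \leq (R-r)\int_{0}^{r}\bar{\theta}_{f}(s,x)\,ds,
\end{equation*}
which is the pointwise-in-$x$ analogue of the estimate on the density ratio used in the proof of Theorem \ref{thm:volume comparison}. Equivalently,
\begin{equation*}
\frac{\int_{r}^{R}\bar{\theta}_{f}(t,x)\,dt}{\int_{0}^{r}\bar{\theta}_{f}(s,x)\,ds}\leq \frac{R-r}{r}.
\end{equation*}

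Finally, I would integrate over $\bm$ with respect to $\vol_{h}$ and apply the coarea formula in the form of Lemma \ref{lem:Basic volume lemma} (whose hypothesis $\bm$ compact is satisfied). Writing $m_{f}(B_{R}(\bm))=m_{f}(B_{r}(\bm))+\int_{\bm}\int_{r}^{R}\bar{\theta}_{f}(t,x)\,dt\,d\vol_{h}$ yields
\begin{equation*}
\frac{m_{f}(B_{R}(\bm))}{m_{f}(B_{r}(\bm))}=1+\frac{\int_{\bm}\int_{r}^{R}\bar{\theta}_{f}(t,x)\,dt\,d\vol_{h}}{\int_{\bm}\int_{0}^{r}\bar{\theta}_{f}(s,x)\,ds\,d\vol_{h}}\leq 1+\frac{R-r}{r}=\frac{R}{r},
\end{equation*}
which is the desired inequality \eqref{eq:infinite volume comparison}.

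There is no serious obstacle: the argument is formally identical to the proof of Theorem \ref{thm:volume comparison}, and the only minor point to verify carefully is that the pointwise monotonicity of Lemma \ref{lem:infinite Basic comparison}, stated on $[0,\tau_{1}(x))$, extends to $[0,\infty)$ in the form $\bar{\theta}_{f}(t,x)\leq \bar{\theta}_{f}(s,x)$ after one uses the definition of $\bar{\theta}_{f}$ together with $\tau(x)\leq \tau_{1}(x)$.
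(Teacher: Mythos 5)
Your proof is correct and follows essentially the same approach as the paper: apply Lemma \ref{lem:infinite Basic comparison} to get the pointwise monotonicity of $\bar{\theta}_{f}(\cdot,x)$, integrate twice to obtain the ratio estimate for each $x$, and then integrate over $\bm$ via Lemma \ref{lem:Basic volume lemma}. Your explicit check that the monotonicity of $\theta_{f}$ on $[0,\tau_{1}(x))$ passes to $\bar{\theta}_{f}$ on all of $[0,\infty)$ is a small point of care that the paper leaves implicit.
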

\begin{proof}
By Lemma \ref{lem:infinite Basic comparison}, 
for all $s,t\in [0,\infty)$ with $s\leq t$,
we have $\bar{\theta}_{f}(t,x) \leq \bar{\theta}_{f}(s,x)$.
Integrating the both sides over $[0,r]$ with respect to $s$,
and then doing that over $[r,R]$ with respect to $t$,
we see 
\begin{equation*}
r\,\int^{R}_{r}\bar{\theta}_{f}(t,x)\,dt \leq (R-r)\,\int^{r}_{0}\bar{\theta}_{f}(s,x)\,ds.
\end{equation*}
By Lemma \ref{lem:Basic volume lemma},
we complete the proof.
\end{proof}
\begin{rem}
In \cite{Sa},
the author has proved a measure contraction inequality around the boundary when $f=0$ and $N=n$.
We can prove similar measure contraction inequalities in our setting.
The measure contraction inequalities enable us to give another proof of Theorem \ref{thm:volume comparison},
and of Theorem \ref{thm:infinite volume comparison}.
\end{rem}

\subsection{Volume growth rigidity}\label{sec:Volume growth rigidity}
We have the following lemma:
\begin{lem}\label{lem:half}
Suppose that
$\bm$ is compact.
For $N\in [n,\infty)$, 
we suppose $\ric^{N}_{f,M}\geq (N-1)\kappa$ and $H_{f,\bm} \geq (N-1)\lambda$.
Assume that
there exists $R\in (0,\bar{C}_{\kappa,\lambda}]\setminus\{\infty\}$ such that
for every $r\in (0,R]$
the equality in $(\ref{eq:volume comparison})$ holds.
Then we have $\tau\geq R$ on $\bm$.
\end{lem}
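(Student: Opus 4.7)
The hypothesis tells us that the function $r \mapsto m_f(B_r(\bm))/s_{N,\kappa,\lambda}(r)$ is constant on $(0, R]$. My plan is to first identify this constant as $m_{f,\bm}(\bm)$, and then to leverage the pointwise comparison $\bar{\theta}_f(t,x) \leq e^{-f(x)} s_{\kappa,\lambda}^{N-1}(t)$ from Lemma \ref{lem:Basic comparison} in a contradiction argument that uses the continuity of $\tau$.

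Write $V(r) := m_f(B_r(\bm))$ and $S(r) := s_{N,\kappa,\lambda}(r)$. By Lemma \ref{lem:Basic volume lemma},
\[
V(r) = \int_{\bm} \int_0^r \bar{\theta}_f(t,x)\, dt\, d\vol_h,
\]
so both $V$ and $S$ vanish at $r=0$ and satisfy $V'(r) = \int_{\bm} \bar{\theta}_f(r,x)\, d\vol_h$ and $S'(r) = \bar{s}_{\kappa,\lambda}^{N-1}(r)$. Since $\bar{\theta}_f(0,x) = e^{-f(x)}$ and $\bar{s}_{\kappa,\lambda}(0) = 1$, L'Hopital together with dominated convergence yields $\lim_{r \to 0^+} V(r)/S(r) = m_{f,\bm}(\bm)$; combined with the constancy of $V/S$ on $(0, R]$, this forces
\[
V(r) = m_{f,\bm}(\bm)\, S(r) \qquad \text{for every } r \in (0, R].
\]

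Now suppose, for contradiction, that $\tau(x_0) < R$ for some $x_0 \in \bm$. By continuity of $\tau$ on $\bm$, there exist a relatively open neighborhood $U$ of $x_0$ with $m_{f,\bm}(U) > 0$ and a number $r_0 \in (\tau(x_0), R)$ such that $\tau(x) < r_0$ for every $x \in U$. Since $\bar{\theta}_f(r,x) = 0$ whenever $r \geq \tau(x)$, and since Lemma \ref{lem:Basic comparison} gives $\bar{\theta}_f(r,x) \leq e^{-f(x)} s_{\kappa,\lambda}^{N-1}(r)$ on $[0, R] \times \bm$ (the hypothesis $R \leq \bconst$ ensures $\bar{s}_{\kappa,\lambda} = s_{\kappa,\lambda}$ on $[0, R]$), integrating over $\bm$ and then over $[r_0, R]$ via Lemma \ref{lem:Basic volume lemma} yields
\[
V(R) - V(r_0) \leq m_{f,\bm}(\bm \setminus U)\,(S(R) - S(r_0)) < m_{f,\bm}(\bm)\,(S(R) - S(r_0)),
\]
where the strict inequality uses $m_{f,\bm}(U) > 0$ together with $S(R) - S(r_0) > 0$ (the function $s_{\kappa,\lambda}^{N-1}$ being positive on $[0, \bconst)$). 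This contradicts the identity $V = m_{f,\bm}(\bm)\, S$ on $(0, R]$ and proves $\tau \geq R$ on $\bm$. I do not anticipate a serious obstacle; the only technical care is the mild edge-case bookkeeping when $R = \bconst$ is finite, which is absorbed by the positivity of $s_{\kappa,\lambda}$ on $[0, \bconst)$.
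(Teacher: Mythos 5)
Your proof is correct and follows essentially the same route as the paper's: suppose $\tau(x_0)<R$, use continuity of $\tau$ to find a positive-measure set of boundary points where $\tau$ stays below some $r_0<R$, apply the pointwise bound $\bar{\theta}_f(t,x)\le e^{-f(x)}s_{\kappa,\lambda}^{N-1}(t)$ from Lemma~\ref{lem:Basic comparison}, and derive a strict inequality contradicting the equality case. One modest improvement over the paper's exposition is your explicit L'H\^{o}pital computation showing that the constant value of $m_f(B_r(\bm))/s_{N,\kappa,\lambda}(r)$ must equal $m_{f,\bm}(\bm)$; the paper asserts $s_{N,\kappa,\lambda}(R)=m_f(B_R(\bm))/m_{f,\bm}(\bm)$ without derivation, and your limit argument cleanly supplies the missing justification.
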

\begin{proof}
The proof is by contradiction.
Suppose that
a point $x_{0}\in \bm$ satisfies $\tau(x_{0})<R$.
Put $t_{0}:=\tau(x_{0})$,
and take $\epsilon>0$ satisfying $t_{0}+\epsilon<R$.
By the continuity of $\tau$,
there exists a closed geodesic ball $B$ in $\bm$ centered at $x_{0}$ such that
for all $x\in B$ we have $\tau(x)\leq t_{0}+\epsilon$.
Lemma \ref{lem:Basic comparison} implies that
$m_{f}(B_{R}(\bm))$ is not larger than
\begin{equation*}
\int_{\bm\setminus B}\,\int^{\min\{R,\tau(x)\}}_{0}s^{N-1}_{\kappa,\lambda}(t)\,dt\,d\,m_{f,\bm}
+\int_{B}\,\int^{t_{0}+\epsilon}_{0}\,s^{N-1}_{\kappa,\lambda}(t)\,dt\,d\,m_{f,\bm}.
\end{equation*}
This is smaller than $m_{f,\bm}(\bm)\,s_{N,\kappa,\lambda}(R)$.
On the other hand,
$s_{N,\kappa,\lambda}(R)$ is equal to $m_{f}(B_{R}(\bm))/m_{f,\bm}(\bm)$.
This is a contradiction.
\end{proof}
In the case of $N=\infty$,
we have:
\begin{lem}\label{lem:infinite half}
Suppose that
$\bm$ is compact.
Suppose $\ric^{\infty}_{f,M}\geq 0$ and $H_{f,\bm} \geq 0$.
Assume that
there exists $R\in (0,\infty)$ such that
for every $r\in (0,R]$
the equality in $(\ref{eq:infinite volume comparison})$ holds.
Then we have $\tau \geq R$ on $\bm$.
\end{lem}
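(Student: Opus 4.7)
The plan is to mimic the strategy of Lemma \ref{lem:half}, replacing the sharp comparison function $s^{N-1}_{\kappa,\lambda}$ with the constant Jacobian bound $\bar{\theta}_{f}(0,x)$ coming from Lemma \ref{lem:infinite Basic comparison}, and arguing by contradiction. The key preliminary observation is that the equality in $(\ref{eq:infinite volume comparison})$ at $r=R$, combined with the fact that $m_{f}(B_{r}(\bm))/r\to m_{f,\bm}(\bm)$ as $r\to 0$ (via Lemma \ref{lem:Basic volume lemma} and $\bar{\theta}_{f}(0,\cdot)=e^{-f|_{\bm}}$), forces
\begin{equation*}
m_{f}(B_{R}(\bm))=R\,m_{f,\bm}(\bm).
\end{equation*}

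First I would suppose, toward a contradiction, that some $x_{0}\in \bm$ satisfies $\tau(x_{0})<R$. Set $t_{0}:=\tau(x_{0})$ and pick $\epsilon>0$ small enough that $t_{0}+\epsilon<R$. Using the continuity of $\tau$ on $\bm$, I obtain a closed geodesic ball $B\subset \bm$ centered at $x_{0}$ on which $\tau\leq t_{0}+\epsilon$. The compactness of $\bm$ together with $\tau>0$ everywhere ensures that $m_{f,\bm}(B)>0$, which is the quantitative ingredient that will produce a strict inequality.

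Next I would apply Lemma \ref{lem:Basic volume lemma} to rewrite $m_{f}(B_{R}(\bm))$ as an integral of $\bar{\theta}_{f}$, split the outer integration over $\bm\setminus B$ and $B$, and then use Lemma \ref{lem:infinite Basic comparison} in the form $\bar{\theta}_{f}(t,x)\leq \bar{\theta}_{f}(0,x)$ together with the truncation built into $\bar{\theta}_{f}$. On $\bm\setminus B$ this gives the bound $\int_{0}^{\min\{R,\tau(x)\}}\bar{\theta}_{f}(t,x)\,dt\leq R\,\bar{\theta}_{f}(0,x)$, while on $B$ it gives $\int_{0}^{t_{0}+\epsilon}\bar{\theta}_{f}(t,x)\,dt\leq (t_{0}+\epsilon)\bar{\theta}_{f}(0,x)$, so that
\begin{equation*}
m_{f}(B_{R}(\bm))\leq R\,m_{f,\bm}(\bm\setminus B)+(t_{0}+\epsilon)\,m_{f,\bm}(B)<R\,m_{f,\bm}(\bm),
\end{equation*}
which contradicts the equality displayed above and completes the argument.

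The main obstacle, minor as it is, is confirming that the equality in the relative comparison genuinely upgrades to the identity $m_{f}(B_{R}(\bm))=R\,m_{f,\bm}(\bm)$; this is where one needs the small-$r$ limit identification of $m_{f}(B_{r}(\bm))/r$ with $m_{f,\bm}(\bm)$, which follows from Lemma \ref{lem:Basic volume lemma} by dominated convergence once one notes $\bar{\theta}_{f}(t,x)\to e^{-f(x)}$ as $t\to 0$. Everything else is a direct parallel of the proof of Lemma \ref{lem:half}, with the comparison function $\bar{s}_{\kappa,\lambda}^{N-1}$ replaced by the constant bound $\bar{\theta}_{f}(0,x)$.
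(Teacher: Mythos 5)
Your proposal is correct and follows essentially the same contradiction argument as the paper: assume $\tau(x_{0})<R$, use continuity of $\tau$ to find a ball $B$ on which $\tau\leq t_{0}+\epsilon<R$, and then apply Lemma \ref{lem:infinite Basic comparison} together with Lemma \ref{lem:Basic volume lemma} to get the strict inequality $m_{f}(B_{R}(\bm))<R\,m_{f,\bm}(\bm)$. Your preliminary step justifying that the relative equality at all $r\in(0,R]$ upgrades to the absolute identity $m_{f}(B_{R}(\bm))=R\,m_{f,\bm}(\bm)$ via the small-$r$ limit $m_{f}(B_{r}(\bm))/r\to m_{f,\bm}(\bm)$ is a detail the paper uses implicitly (its proof simply asserts the identity), and making it explicit is a small but genuine improvement in exposition rather than a different route.
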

\begin{proof}
Suppose that
for some $x_{0}\in \bm$
we have $\tau(x_{0})<R$.
Put $t_{0}:=\tau(x_{0})$,
and take $\epsilon>0$ with $t_{0}+\epsilon<R$.
The continuity of $\tau$ implies that
there exists a closed geodesic ball $B$ in $\bm$ centered at $x_{0}$ such that
$\tau$ is smaller than or equal to $t_{0}+\epsilon$ on $B$.
By Lemma \ref{lem:infinite Basic comparison},
\begin{equation*}
m_{f}(B_{R}(\bm))\leq R\,m_{f,\bm}(\partial M \setminus B)+(t_{0}+\epsilon)\,m_{f,\bm}(B)<R\,m_{f,\bm}(\bm).
\end{equation*}
On the other hand,
$m_{f}(B_{R}(\bm))/m_{f,\bm}(\bm)$ is equal to $R$.
This is a contradiction.
We conclude the lemma.
\end{proof}
Suppose that
$\bm$ is compact.
Suppose that
for $N\in [n,\infty)$ we have $\ric^{N}_{f,M}\geq (N-1)\kappa$ and $H_{f,\bm} \geq (N-1)\lambda$
(resp. $\ric^{\infty}_{f,M}\geq 0$ and $H_{f,\bm} \geq 0$),
and that there exists $R\in (0,\bar{C}_{\kappa,\lambda}]\setminus \{\infty\}$ (resp. $R \in (0,\infty)$) such that
for every $r\in (0,R]$
the equality in (\ref{eq:volume comparison}) (resp. (\ref{eq:infinite volume comparison})) holds.
In this case,
for every $r\in (0,R)$
the level set $\rho_{\bm}^{-1}(r)$ is an $(n-1)$-dimensional submanifold of $M$ (see Lemmas \ref{lem:half} and \ref{lem:infinite half}).
In particular,
$(B_{r}(\bm),g)$ is an $n$-dimensional (not necessarily, connected) complete Riemannian manifold with boundary.
We denote by $d_{B_{r}(\bm)}$ and by $d_{\kappa,\lambda,r}$
the Riemannian distances on $(B_{r}(\bm),g)$ and on $[0,r]\times_{\kappa,\lambda}\bm$,
respectively.
\begin{lem}\label{lem:half rigidity}
Suppose that
$\bm$ is compact.
For $N\in [n,\infty)$,
we suppose $\ric^{N}_{f,M}\geq (N-1)\kappa$ and $H_{f,\bm} \geq (N-1)\lambda$.
Assume that
there exists $R\in (0,\bar{C}_{\kappa,\lambda}]\setminus \{\infty\}$ such that
for every $r\in (0,R]$
the equality in $(\ref{eq:volume comparison})$ holds.
Then for every $r\in (0,R)$,
the metric space $(B_{r}(\bm),d_{B_{r}(\bm)})$ is isometric to $([0,r]\times_{\kappa,\lambda}\bm,d_{\kappa,\lambda,r})$,
and for every $x\in \bm$
we have $f\circ \gamma_{x}=f(x)-\log s_{\kappa,\lambda}$ on $[0,r]$.
\end{lem}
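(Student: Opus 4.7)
The plan is to upgrade the integrated equality in the volume comparison to a pointwise equality for the Jacobi determinant $\theta_f(t,x)$, then extract rigidity of the $\bm$-Jacobi fields and the warping function from Remark \ref{rem:Equality in Basic comparison}, and finally build the isometry explicitly via the normal exponential map.

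First, Lemma \ref{lem:half} gives $\tau(x)\geq R$ for every $x\in\bm$, so $\expp$ is a diffeomorphism from $\{tu_x\mid x\in\bm,\ t\in[0,R)\}$ onto $B_R(\bm)\setminus\cut\bm$, the function $\rho_\bm$ is smooth on this image with $\rho_\bm\circ\gamma_x(t)=t$, and $\theta_f(\cdot,x)$ is smooth on $[0,R)$. Next, I combine the equality hypothesis with the absolute comparison and the initial behavior $m_f(B_r(\bm))/s_{N,\kappa,\lambda}(r)\to m_{f,\bm}(\bm)$ as $r\to 0$ to deduce the identity $m_f(B_R(\bm))=s_{N,\kappa,\lambda}(R)\,m_{f,\bm}(\bm)$. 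Feeding this into Lemma \ref{lem:Basic volume lemma} together with the pointwise bound $\theta_f(t,x)\le e^{-f(x)}s^{N-1}_{\kappa,\lambda}(t)$ from Lemma \ref{lem:Basic comparison}, equality must hold for a.e.\ pair $(t,x)$; smoothness in $t$ and continuity in $x$ upgrade this to $\theta_f(t,x)=e^{-f(x)}s^{N-1}_{\kappa,\lambda}(t)$ for all $x\in\bm$ and $t\in[0,R)$.

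From this identity, the inequality (\ref{eq:Basic1}) of Lemma \ref{lem:Basic comparison} is an equality for every $t\in(0,R)$. Then Remark \ref{rem:Equality in Basic comparison} applies on every $[0,t_0]\subset[0,R)$ and yields two structural facts: for each orthonormal basis $\{e_{x,i}\}_{i=1}^{n-1}$ of $T_x\bm$ the $\bm$-Jacobi fields $Y_{x,i}$ satisfy $Y_{x,i}=s_{\kappa,\lambda}E_{x,i}$, where $E_{x,i}$ is parallel along $\gamma_x$; and, when $N>n$, $f\circ\gamma_x(t)=f(x)-(N-n)\log s_{\kappa,\lambda}(t)$ on $[0,R)$ (the case $N=n$ making $f$ constant along $\gamma_x$).

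Finally, I would define $\Phi:[0,r]\times\bm\to B_r(\bm)$ by $\Phi(t,x):=\gamma_x(t)=\expp(tu_x)$; because $\tau\geq R>r$, this is a diffeomorphism carrying $\{0\}\times\bm$ onto $\bm$ and $\{r\}\times\bm$ onto $\rho_\bm^{-1}(r)$. Decomposing a tangent vector at $(t,x)$ as $a\partial_t+v$ with $v\in T_x\bm$, one has $\Phi_*\partial_t=\gamma_x'(t)$ (a unit vector) and $\Phi_*v=s_{\kappa,\lambda}(t)E_v(t)$ by the Jacobi rigidity above; since $E_v$ is parallel, $\|E_v(t)\|=\|v\|$ and $E_v(t)\perp\gamma_x'(t)$. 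Hence $\Phi^*g=dt^2+s_{\kappa,\lambda}^2(t)h$, so $\Phi$ is a Riemannian isometry with boundary from $[0,r]\times_{\kappa,\lambda}\bm$ onto $B_r(\bm)$, which yields the claimed metric-space isometry, while the warping-function assertion is the formula from Remark \ref{rem:Equality in Basic comparison} recorded in Step 3.

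I expect the only nontrivial step to be the upgrade from integrated to pointwise equality of $\theta_f$; all remaining assertions are routine consequences of the equality case in Lemma \ref{lem:Basic comparison} together with the standard warped-product identification via the normal exponential map.
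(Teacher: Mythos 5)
Your proof is correct and follows essentially the same route as the paper. The paper's own proof is terser: it simply cites Remark \ref{rem:infinite absolute volume eq} (the equality case of the absolute volume comparison, Lemma \ref{lem:absolute volume comparison}) to obtain the Jacobi field rigidity $Y_{x,i}=s_{\kappa,\lambda}E_{x,i}$ and the formula $f\circ\gamma_x=f(x)-(N-n)\log s_{\kappa,\lambda}$, and then defines $\Phi(t,x):=\gamma_x(t)$ and checks it is a Riemannian isometry with boundary. You make explicit two steps the paper leaves implicit: first, that constancy of $m_f(B_r(\bm))/s_{N,\kappa,\lambda}(r)$ on $(0,R]$ together with the $r\to 0$ limit forces the absolute equality $m_f(B_R(\bm))=s_{N,\kappa,\lambda}(R)\,m_{f,\bm}(\bm)$, and second, the promotion from an integrated equality of $\theta_f$ against $e^{-f(x)}\bar s_{\kappa,\lambda}^{N-1}$ to a pointwise equality via the one-sided bound of Lemma \ref{lem:Basic comparison} and continuity. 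These are exactly the missing details, so your write-up is, if anything, more complete. One small remark: the lemma as stated has $f\circ\gamma_x=f(x)-\log s_{\kappa,\lambda}$, but the correct formula (proved both by you and by the paper's own proof) is $f\circ\gamma_x=f(x)-(N-n)\log s_{\kappa,\lambda}$; the statement appears to be missing the factor $(N-n)$.
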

\begin{proof}
Since each connected component of $\bm$ one-to-one corresponds to the connected component of $B_{r}(\bm)$,
it suffices to consider the case where $B_{r}(\bm)$ is connected.
For each $x\in \bm$,
choose an orthonormal basis $\{ e_{x,i} \}_{i=1}^{n-1}$ of $T_{x}\bm$.
Let $\{Y_{x,i}\}_{i=1}^{n-1}$ be the $\bm$-Jacobi fields along $\gamma_{x}$
with initial conditions $Y_{x,i}(0)=e_{x,i}$ and $Y_{x,i}'(0)=-A_{u_{x}}e_{x,i}$.
For all $i$
we see $Y_{x,i}=s_{\kappa,\lambda}\, E_{x,i}$ on $[0,\min \{R,\bar{C}_{\kappa,\lambda}\}]$,
where $\{E_{x,i}\}_{i=1}^{n-1}$ are the parallel vector fields along $\gamma_{x}$ with initial condition $E_{x,i}(0)=e_{x,i}$.
Moreover,
$f\circ \gamma_{x}=f(x)-(N-n)\log s_{\kappa,\lambda}$ on $[0,\min \{R,\bar{C}_{\kappa,\lambda}\}]$ (see Remark \ref{rem:infinite absolute volume eq}).
Define a map $\Phi:[0,r]\times \bm\to B_{r}(\bm)$ by $\Phi(t,x):=\gamma_{x}(t)$.
For each $p\in (0,r)\times \bm$
the map $D(\Phi|_{(0,r)\times \bm})_{p}$ sends an orthonormal basis of $T_{p}([0,r]\times \bm)$ to that of $T_{\Phi(p)}B_{r}(\bm)$,
and for each $x\in \{0,r\}\times \bm$
the map $D(\Phi|_{\{0,r\}\times \bm})_{x}$ sends an orthonormal basis of $T_{x}(\{0,r\}\times \bm)$ to that of $T_{\Phi(x)}\partial (B_{r}(\bm))$.
Hence,
$\Phi$ is a Riemannian isometry with boundary from $[0,r]\times_{\kappa,\lambda}\bm$ to $B_{r}(\bm)$.
\end{proof}
In the case of $N=\infty$,
we have:
\begin{lem}\label{lem:infinite half rigidity}
Suppose that
$\bm$ is compact.
Suppose $\ric^{\infty}_{f,M}\geq 0$ and $H_{f,\bm} \geq 0$.
Assume that there exists $R\in (0,\infty)$ such that
for every $r\in (0,R]$
the equality in $(\ref{eq:infinite volume comparison})$ holds.
Then for every $r\in (0,R)$
the metric space $(B_{r}(\bm),d_{B_{r}(\bm)})$ is isometric to $([0,r]\times \bm,d_{[0,r]\times \bm})$.
\end{lem}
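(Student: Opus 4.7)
The plan is to mimic the proof of Lemma \ref{lem:half rigidity}, but with the $N=\infty$ comparison tools replacing their finite-dimensional counterparts. Since each connected component of $\bm$ corresponds bijectively to a connected component of $B_{r}(\bm)$ under the hypotheses (via Lemma \ref{lem:infinite half}, which gives $\tau\geq R$ on $\bm$), I may assume without loss of generality that $B_{r}(\bm)$ is connected.

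First, I would note that the equality in the relative volume comparison $(\ref{eq:infinite volume comparison})$ for all $r\in(0,R]$ forces the equality in the absolute volume comparison $(\ref{eq:infinite absolute volume comparison})$ on $(0,R]$, by letting $r\to 0^{+}$ and using $m_{f}(B_{r}(\bm))/r\to m_{f,\bm}(\bm)$. By Remark \ref{rem:infinite absolute volume eq}, for every $x\in\bm$ and every orthonormal basis $\{e_{x,i}\}_{i=1}^{n-1}$ of $T_{x}\bm$, the $\bm$-Jacobi fields $Y_{x,i}$ along $\gamma_{x}$ with initial conditions $Y_{x,i}(0)=e_{x,i}$, $Y_{x,i}'(0)=-A_{u_{x}}e_{x,i}$ satisfy $Y_{x,i}=E_{x,i}$ on $[0,R]$, where $E_{x,i}$ is the parallel transport of $e_{x,i}$ along $\gamma_{x}$.

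Next I would define the candidate isometry $\Phi:[0,r]\times\bm\to B_{r}(\bm)$ by $\Phi(t,x):=\gamma_{x}(t)$. Lemma \ref{lem:infinite half} guarantees $\tau\geq R>r$ on $\bm$, so by Proposition \ref{prop:metric neighborhood} and the injectivity of $\expp$ on $TD_{\bm}$, the map $\Phi$ is a homeomorphism from $[0,r]\times\bm$ onto $B_{r}(\bm)$, smooth on $(0,r)\times\bm$ and on $\{0,r\}\times\bm$ separately. The fact that $Y_{x,i}=E_{x,i}$ precisely says that the differential $D\Phi_{(t,x)}$ carries the product orthonormal frame $(\partial_{t},e_{x,1},\dots,e_{x,n-1})$ on $[0,r]\times\bm$ (equipped with the product metric $dt^{2}+h$) to an orthonormal frame on $B_{r}(\bm)$. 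Consequently $\Phi$ is a Riemannian isometry with boundary from $[0,r]\times\bm$ onto $B_{r}(\bm)$, and therefore $(B_{r}(\bm),d_{B_{r}(\bm)})$ is isometric to $([0,r]\times\bm,d_{[0,r]\times\bm})$.

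The main obstacle I anticipate is the subtle passage from the equality in the relative comparison to the equality in the absolute comparison on the full interval $(0,R]$, which is needed to conclude that $Y_{x,i}=E_{x,i}$ on all of $[0,r]$ rather than only at a single radius. Once that is secured, the rest of the argument is the verbatim analogue of Lemma \ref{lem:half rigidity}, with the warping factor $s_{\kappa,\lambda}$ replaced by the constant $1$; note also that, unlike the finite-$N$ case, there is no additional assertion about $f\circ\gamma_{x}$ to prove, in agreement with the statement of Lemma \ref{lem:infinite half rigidity}.
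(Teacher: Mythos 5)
Your proof is correct and follows the paper's own approach essentially verbatim: reduce to the case where $B_r(\bm)$ is connected, invoke Remark \ref{rem:infinite absolute volume eq} to conclude $Y_{x,i}=E_{x,i}$ on $[0,r]$, and verify that $\Phi(t,x):=\gamma_x(t)$ is a Riemannian isometry with boundary. The one step you spell out that the paper leaves implicit---that equality in the relative comparison (\ref{eq:infinite volume comparison}) for all $r\in(0,R]$ forces equality in the absolute comparison (\ref{eq:infinite absolute volume comparison}) by letting $r\to 0^+$ in the constant ratio $m_f(B_r(\bm))/r$---is exactly the right justification for citing Remark \ref{rem:infinite absolute volume eq}.
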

\begin{proof}
We may assume that
$B_{r}(\bm)$ is connected.
For each $x\in \bm$,
choose an orthonormal basis $\{ e_{x,i} \}_{i=1}^{n-1}$ of $T_{x}\bm$.
Let $\{Y_{x,i}\}_{i=1}^{n-1}$ be the $\bm$-Jacobi fields along $\gamma_{x}$
with initial conditions $Y_{x,i}(0)=e_{x,i}$ and $Y_{x,i}'(0)=-A_{u_{x}}e_{x,i}$.
For all $i$
we have $Y_{x,i}=E_{x,i}$ on $[0,r]$,
where $\{E_{x,i}\}_{i=1}^{n-1}$ are the parallel vector fields along $\gamma_{x}$ with initial condition $E_{x,i}(0)=e_{x,i}$ (see Remark \ref{rem:infinite absolute volume eq}).
Define a map $\Phi:[0,r]\times \bm\to B_{r}(\bm)$ by $\Phi(t,x):=\gamma_{x}(t)$.
We see that
$\Phi$ is a Riemannian isometry with boundary from $[0,r]\times \bm$ to $B_{r}(\bm)$.
\end{proof}
Now,
we prove Theorem \ref{thm:volume growth distance rigidity}.
\begin{proof}[Proof of Theorem \ref{thm:volume growth distance rigidity}]
Suppose that
$\bm$ is compact.
For $N\in [n,\infty)$,
suppose $\ric^{N}_{f,M}\geq (N-1)\kappa$ and $H_{f,\bm} \geq (N-1)\lambda$.
Suppose (\ref{eq:assumption of volume growth}).

By Lemma \ref{lem:absolute volume comparison} and Theorem \ref{thm:volume comparison},
for all $r,R\in (0,\infty)$ with $r\leq R$,
\begin{equation*}\label{eq:volume growth1}
\frac{m_{f}(B_{R}(\bm))}{s_{N,\kappa,\lambda}(R)}=\frac{m_{f}(B_{r}(\bm))}{s_{N,\kappa,\lambda}(r)}=m_{f,\bm}(\bm).
\end{equation*}
If $\kappa$ and $\lambda$ satisfy the ball-condition,
then for $R=\const$,
and for every $r\in (0,R]$ the equality in (\ref{eq:volume comparison}) holds;
in particular,
Lemmas \ref{lem:Inscribed radius comparison} and \ref{lem:half} imply that
$\tau$ is equal to $\const$ on $\bm$.
If $\kappa$ and $\lambda$ do not satisfy the ball-condition,
then for every $R\in (0,\infty)$,
and for every $r\in (0,R]$
the equality in (\ref{eq:volume comparison}) holds;
in particular,
Lemma \ref{lem:half} implies that
$\tau=\infty$ on $\bm$.
It follows that
$\tau$ coincides with $\bconst$ on $\bm$.
From Lemma \ref{lem:half rigidity},
for every $x\in \bm$
we derive $f\circ \gamma_{x}=f(x)-(N-n)\log s_{\kappa,\lambda}$ on $I_{\kappa,\lambda}$.

If $\kappa$ and $\lambda$ satisfy the ball-condition,
then $\dm=\const$.
By Lemma \ref{lem:bmcompact},
$M$ is compact.
There exists $p\in M$ with $\rho_{\bm}(p)=\const$.
Due to Theorem \ref{thm:Ball rigidity},
$(M,d_{M})$ is isometric to $(\ball,d_{\ball})$ and $N=n$.

If $\kappa$ and $\lambda$ do not satisfy the ball-condition,
then $\cut \bm=\emptyset$.
It follows that
$\bm$ is connected.
Take a sequence $\{r_{i}\}$ with $r_{i}\to \infty$.
By Lemma \ref{lem:half rigidity},
for each $r_{i}$,
there exists a Riemannian isometry $\Phi_{i}:[0,r_{i}]\times\bm\to B_{r_{i}}(\bm)$ with boundary
from $[0,r_{i}]\times_{\kappa,\lambda} \bm$ to $B_{r_{i}}(\bm)$ defined by $\Phi_{i}(t,x):=\gamma_{x}(t)$.
Since $\cut \bm=\emptyset$,
we obtain a Riemannian isometry $\Phi:[0,\infty)\times \bm\to M$ with boundary
from $[0,\infty)\times_{\kappa,\lambda}\bm$ to $M$
defined by $\Phi(t,x):=\gamma_{x}(t)$ such that $\Phi|_{[0,r_{i}]\times\bm}=\Phi_{i}$ for all $r_{i}$.
This proves Theorem \ref{thm:volume growth distance rigidity}.
\end{proof}
Next,
we prove Theorem \ref{thm:infinite volume growth distance rigidity}.
\begin{proof}[Proof of Theorem \ref{thm:infinite volume growth distance rigidity}]
Suppose that
$\bm$ is compact.
Suppose $\ric^{\infty}_{f,M}\geq 0$ and $H_{f,\bm} \geq 0$.
Furthermore,
we assume (\ref{eq:assumption of infinite volume growth}).

By Lemma \ref{lem:infinite absolute volume comparison} and Theorem \ref{thm:infinite volume comparison},
for all $R\in (0,\infty)$ and $r \in (0,R]$,
\begin{equation*}\label{eq:volume growth3}
\frac{m_{f}(B_{R}(\bm))}{R}=\frac{m_{f}(B_{r}(\bm))}{r}=m_{f,\bm}(\bm).
\end{equation*}
For every $R\in (0,\infty)$,
and for every $r \in (0,R]$
the equality in $(\ref{eq:infinite volume comparison})$ holds.
From Lemma \ref{lem:infinite half},
it follows that
$\tau=\infty$ on $\bm$.
We have $\cut \bm=\emptyset$,
and hence $\bm$ is connected.
Take a sequence $\{r_{i}\}$ with $r_{i}\to \infty$.
Lemma \ref{lem:infinite half rigidity} implies that
for each $r_{i}$
there exists a Riemannian isometry $\Phi_{i}:[0,r_{i}]\times \bm\to B_{r_{i}}(\bm)$ with boundary
from $[0,r_{i}]\times \bm$ to $B_{r_{i}}(\bm)$ defined by $\Phi_{i}(t,x):=\gamma_{x}(t)$.
Since $\cut \bm=\emptyset$,
we obtain a Riemannian isometry $\Phi:[0,\infty)\times\bm\to M$ with boundary
from $[0,\infty)\times \bm$ to $M$
defined by $\Phi(t,x):=\gamma_{x}(t)$ such that $\Phi|_{[0,r_{i}]\times \bm}=\Phi_{i}$ for all $r_{i}$.
This proves Theorem \ref{thm:infinite volume growth distance rigidity}.
\end{proof}
\section{Splitting theorems}\label{sec:Splitting theorems}
Let $M$ be an $n$-dimensional, 
connected complete Riemannian manifold with boundary,
and let $f:M\to \mathbb{R}$ be a smooth function.
\subsection{Main splitting theorems}
We prove Theorem \ref{thm:splitting}.
\begin{proof}[Proof of Theorem \ref{thm:splitting}]
Let $\kappa \leq 0$ and $\lambda:=\sqrt{\vert \kappa \vert}$.
For $N\in [n,\infty)$,
we suppose $\ric^{N}_{f,M}\geq (N-1)\kappa$ and $H_{f,\bm} \geq (N-1)\lambda$.
Suppose that
for some $x_{0}\in \bm$ we have $\tau(x_{0})=\infty$.

For the connected component $\bm_{0}$ of $\bm$ containing $x_{0}$,
we put
\begin{equation*}
\Omega:=\{y\in \bm_{0} \mid \tau(y)=\infty \}.
\end{equation*}
The assumption implies that
$\Omega$ is non-empty.
From the continuity of $\tau$,
it follows that $\Omega$ is closed in $\bm_{0}$.

We show the openness of $\Omega$ in $\bm_{0}$.
Fix $y_{0}\in \Omega$.
Take $l \in (0,\infty)$,
and put $p_{0}:=\gamma_{y_{0}}(l)$.
There exists an open neighborhood $U$ of $p_{0}$ in $\inte M$ contained in $D_{\bm}$.
Taking $U$ smaller,
we may assume that 
for each point $q\in U$
the unique foot point on $\bm$ of $q$ belongs to $\bm_{0}$.
By Lemma \ref{lem:asymptote},
there exists $\epsilon \in (0,\infty)$ such that
for all $q\in B_{\epsilon}(p_{0})$,
all asymptotes for $\gamma_{y_{0}}$ from $q$ lie in $\inte M$.
We may assume $U\subset B_{\epsilon}(p_{0})$.
Fix $q_{0}\in U$,
and take an asymptote $\gamma_{q_{0}}:[0,\infty)\to M$ for $\gamma_{y_{0}}$ from $q_{0}$.
For $t\in (0,\infty)$,
define a function $b_{\gamma_{y_{0}},t}:M\to \mathbb{R}$ by
\begin{equation*}
b_{\gamma_{y_{0}},t}(p):=b_{\gamma_{y_{0}}}(q_{0})+t-d_{M}(p,\gamma_{q_{0}}(t)).
\end{equation*}
We see that
$b_{\gamma_{y_{0}},t}-\rho_{\bm}$ is a support function of $b_{\gamma_{y_{0}}}-\rho_{\bm}$ at $q_{0}$.
Since $\gamma_{q_{0}}$ lie in $\inte M$,
for every $t\in(0,\infty)$
the function $b_{\gamma_{y_{0}},t}$ is smooth on a neighborhood of $q_{0}$.
From Lemma \ref{lem:finite pointed Laplacian comparison},
we deduce $\Delta_{f} b_{\gamma_{y_{0}},t}(q_{0})\leq (N-1)(s'_{\kappa}(t)/s_{\kappa}(t))$.
Note that $s'_{\kappa}(t)/s_{\kappa}(t)\to \lambda$ as $t\to \infty$.
Furthermore,
$\rho_{\bm}$ is smooth on $U$,
and by Lemma \ref{lem:Laplacian comparison} we have $\Delta_{f} \rho_{\bm}\geq (N-1)\lambda$ on $U$.
Hence,
$b_{\gamma_{y_{0}}}-\rho_{\bm}$ is $f$-subharmonic on $U$.
Since $b_{\gamma_{y_{0}}}-\rho_{\bm}$ takes the maximal value $0$ at $p_{0}$,
Lemma \ref{lem:maximal principle} implies $b_{\gamma_{y_{0}}}=\rho_{\bm}$ on $U$.
From Lemma \ref{lem:busemann function},
it follows that
$\Omega$ is open in $\bm_{0}$.

Since $\bm_{0}$ is a connected component of $\bm$,
we have $\Omega=\bm_{0}$.
By Lemma \ref{lem:splitting1},
$\bm$ is connected
and $\cut \bm=\emptyset$.
The equality in Lemma \ref{lem:Laplacian comparison} holds on $\inte M$.
For each $x\in \bm$,
choose an orthonormal basis $\{e_{x,i}\}_{i=1}^{n-1}$ of $T_{x}\bm$.
Let $\{Y_{x,i}\}_{i=1}^{n-1}$ be the $\bm$-Jacobi fields along $\gamma_{x}$
with initial conditions $Y_{x,i}(0)=e_{x,i}$ and $Y'_{x,i}(0)=-A_{u_{x}}e_{x,i}$.
For all $i$
we see $Y_{x,i}=s_{\kappa,\lambda}E_{x,i}$ on $[0,\infty)$,
where $\{E_{x,i}\}_{i=1}^{n-1}$ are the parallel vector fields along $\gamma_{x}$ with initial condition $E_{x,i}(0)=e_{x,i}$.
Moreover,
for all $t\in [0,\infty)$
we have $(f\circ \gamma_{x})(t)=f(x)+(N-n)\lambda t$ (see Remark \ref{rem:Equality in Laplacian comparison}).
Define a map $\Phi:[0,\infty)\times \bm\to M$ by $\Phi(t,x):=\gamma_{x}(t)$.
For every $p\in (0,\infty)\times \bm$
the map $D(\Phi|_{(0,\infty)\times \bm})_{p}$ sends an orthonormal basis of $T_{p}((0,\infty)\times \bm)$ to that of $T_{\Phi(p)}M$,
and for every $x\in \{0\}\times \bm$
the map $D(\Phi|_{\{0\}\times \bm})_{x}$ sends an orthonormal basis of $T_{x}(\{0\}\times \bm)$ to that of $T_{\Phi(x)}\bm$.
Hence,
$\Phi$ is a Riemannian isometry with boundary from $[0,\infty)\times_{\kappa,\lambda} \bm$ to $M$.
This proves Theorem \ref{thm:splitting}.
\end{proof}
Next,
we prove Theorem \ref{thm:infinite splitting}.
\begin{proof}[Proof of Theorem  \ref{thm:infinite splitting}]
Assume $\sup f(M)<\infty$.
Suppose $\ric^{\infty}_{f,M}\geq 0$ and $H_{f,\bm} \geq 0$.
Let $x_{0}\in \bm$ satisfy $\tau(x_{0})=\infty$.

For the connected component $\bm_{0}$ of $\bm$ containing $x_{0}$,
we put
\begin{equation*}
\Omega:=\{y\in \bm_{0} \mid \tau(y)=\infty \}.
\end{equation*}
The assumption and the continuity of $\tau$ imply that 
$\Omega$ is a non-empty closed subset of $\bm_{0}$.

We prove the openness of $\Omega$ in $\bm_{0}$.
Fix $y_{0}\in \Omega$.
Take $l \in (0,\infty)$,
and put $p_{0}:=\gamma_{y_{0}}(l)$.
There exists an open neighborhood $U$ of $p_{0}$ in $\inte M$ contained in $D_{\bm}$.
We may assume that 
for each point $q\in U$
the unique foot point on $\bm$ of $q$ belongs to $\bm_{0}$.
By Lemma \ref{lem:asymptote},
there exists $\epsilon \in (0,\infty)$ such that
for all $q\in B_{\epsilon}(p_{0})$,
all asymptotes for $\gamma_{y_{0}}$ from $q$ lie in $\inte M$.
We may assume $U\subset B_{\epsilon}(p_{0})$.
By Lemma \ref{lem:infinite pointed Laplacian comparison},
$b_{\gamma_{y_{0}}}$ is $f$-subharmonic on $U$.
Furthermore,
$\rho_{\bm}$ is smooth on $U$,
and Lemma \ref{lem:infinite Laplacian comparison} implies $\Delta_{f} \rho_{\bm} \geq 0$ on $U$.
Therefore,
$b_{\gamma_{y_{0}}}-\rho_{\bm}$ is $f$-subharmonic on $U$.
Since $b_{\gamma_{y_{0}}}-\rho_{\bm}$ takes the maximal value $0$ at $p_{0}$,
Lemma \ref{lem:maximal principle} implies $b_{\gamma_{y_{0}}}=\rho_{\bm}$ on $U$.
By Lemma \ref{lem:busemann function},
$\Omega$ is open in $\bm_{0}$.

Since $\bm_{0}$ is a connected component of $\bm$,
we have $\Omega=\bm_{0}$.
By Lemma \ref{lem:splitting1},
$\bm$ is connected
and $\cut \bm=\emptyset$.
The equality in Lemma \ref{lem:infinite Laplacian comparison} holds on $\inte M$.
For each $x\in \bm$,
choose an orthonormal basis $\{e_{x,i}\}_{i=1}^{n-1}$ of $T_{x}\bm$.
Let $\{Y_{x,i}\}_{i=1}^{n-1}$ be the $\bm$-Jacobi fields along $\gamma_{x}$
with initial conditions $Y_{x,i}(0)=e_{x,i}$ and $Y'_{x,i}(0)=-A_{u_{x}}e_{x,i}$.
For all $i$
we see $Y_{x,i}=E_{x,i}$ on $[0,\infty)$,
where $\{E_{x,i}\}_{i=1}^{n-1}$ are the parallel vector fields along $\gamma_{x}$ with initial condition $E_{x,i}(0)=e_{x,i}$ (see Remark \ref{rem:Equality in Laplacian comparison}).
Hence,
the map $\Phi:[0,\infty)\times \bm\to M$ defined by $\Phi(t,x):=\gamma_{x}(t)$ is a Riemannian isometry with boundary from $[0,\infty)\times \bm$ to $M$.
This completes the proof of Theorem \ref{thm:infinite splitting}.
\end{proof}
Lemma \ref{lem:bmcompact} and the continuity of $\tau$ imply that
if $\bm$ is compact and $M$ is non-compact,
then for some $x_{0}\in \bm$ we have $\tau(x_{0})=\infty$.
By Theorems \ref{thm:splitting} and \ref{thm:infinite splitting},
we have the following rigidity results that have been proved in \cite{K3} (see also \cite{CK}) when $f=0$ and $N=n$.
\begin{cor}\label{cor:Kasue splitting}
Let $M$ be an $n$-dimensional,
connected complete Riemannian manifold with boundary,
and let $f:M\to \mathbb{R}$ be a smooth function.
Let $\kappa \leq 0$ and $\lambda:=\sqrt{\vert \kappa \vert}$.
For $N\in [n,\infty)$,
we suppose $\ric^{N}_{f,M}\geq (N-1)\kappa$ and $H_{f,\bm} \geq (N-1)\lambda$.
If $M$ is non-compact and $\bm$ is compact,
then $(M,d_{M})$ is isometric to $([0,\infty)\times_{\kappa,\lambda}\bm,d_{\kappa,\lambda})$,
and for all $x\in \bm$ and $t\in [0,\infty)$
we have $(f \circ \gamma_{x})(t)=f(x)+(N-n)\lambda t$.
\end{cor}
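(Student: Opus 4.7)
The plan is to reduce the corollary directly to Theorem \ref{thm:splitting} by producing a point $x_{0}\in\bm$ with $\tau(x_{0})=\infty$; once this is done, Theorem \ref{thm:splitting} supplies both the isometry with $[0,\infty)\times_{\kappa,\lambda}\bm$ and the explicit formula for $f\circ\gamma_{x}$, which is exactly the statement of the corollary.

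First I would invoke Lemma \ref{lem:bmcompact}: since $\bm$ is compact, the finiteness of $D(M,\bm)$ would force $M$ itself to be compact, contradicting our non-compactness assumption. Therefore $D(M,\bm)=\infty$. Using the identity $D(M,\bm)=\sup_{x\in\bm}\tau(x)$, I then choose a sequence $\{x_{i}\}\subset\bm$ with $\tau(x_{i})\to\infty$. By compactness of $\bm$, I may pass to a subsequence converging to some $x_{0}\in\bm$.

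The next step is the only real content: I need $\tau(x_{0})=\infty$. This follows from the continuity of $\tau$ as a map $\bm\to\mathbb{R}\cup\{\infty\}$ recalled in Section \ref{sec:Preliminaries}: if $\tau(x_{0})$ were finite, then $\tau(x_{i})$ would converge to the finite value $\tau(x_{0})$, contradicting $\tau(x_{i})\to\infty$. Hence $\tau(x_{0})=\infty$.

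With $x_{0}$ in hand, I apply Theorem \ref{thm:splitting} under the standing hypotheses $\ric^{N}_{f,M}\geq (N-1)\kappa$, $H_{f,\bm}\geq (N-1)\lambda$, $\kappa\leq 0$, $\lambda=\sqrt{|\kappa|}$, and conclude that $(M,d_{M})$ is isometric to $([0,\infty)\times_{\kappa,\lambda}\bm,d_{\kappa,\lambda})$ with $(f\circ\gamma_{x})(t)=f(x)+(N-n)\lambda t$ for all $x\in\bm$ and $t\in[0,\infty)$. There is essentially no obstacle here: the whole argument is the compactness-plus-continuity trick for producing a ray emanating normally from $\bm$, and the genuinely hard work — the splitting itself via Busemann functions, asymptotes, and rigidity in the Laplacian comparison — has already been carried out in Theorem \ref{thm:splitting}.
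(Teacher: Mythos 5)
Your proposal is correct and matches the paper's own argument: the paper likewise invokes Lemma \ref{lem:bmcompact} (to get $D(M,\bm)=\infty$ from non-compactness of $M$ with $\bm$ compact) together with the continuity of $\tau$ on the compact set $\bm$ to produce an $x_{0}\in\bm$ with $\tau(x_{0})=\infty$, and then applies Theorem \ref{thm:splitting}. Your extraction of a convergent subsequence is just an explicit spelling-out of that compactness-plus-continuity step.
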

\begin{cor}\label{cor:infinite Kasue splitting}
Let $M$ be a connected complete Riemannian manifold with boundary,
and let $f:M\to \mathbb{R}$ be a smooth function such that $\sup f(M)<\infty$.
Suppose $\ric^{\infty}_{f,M}\geq 0$ and $H_{f,\bm} \geq 0$.
If $M$ is non-compact and $\bm$ is compact,
then the metric space $(M,d_{M})$ is isometric to $([0,\infty)\times\bm,d_{[0,\infty)\times\bm})$.
\end{cor}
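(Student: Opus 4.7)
The plan is to deduce Corollary~\ref{cor:infinite Kasue splitting} from Theorem~\ref{thm:infinite splitting} by verifying that its hypothesis is met: namely, that there exists $x_{0}\in\bm$ with $\tau(x_{0})=\infty$. All remaining hypotheses of Theorem~\ref{thm:infinite splitting} (connectedness and completeness of $M$, the bound $\sup f(M)<\infty$, the curvature and mean-curvature bounds) are immediate from the statement of the corollary.

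The main step is the existence of such an $x_{0}$. First I would invoke Lemma~\ref{lem:bmcompact}: since $\bm$ is compact and $M$ is non-compact, we must have $D(M,\bm)=\infty$. Next, recall that $D(M,\bm)=\sup_{x\in\bm}\tau(x)$ by the definition of $\tau$ and the formula for $D(M,\bm)$ stated after Proposition~\ref{prop:metric neighborhood}. So $\sup_{x\in\bm}\tau(x)=\infty$. Since $\tau:\bm\to\mathbb{R}\cup\{\infty\}$ is continuous and $\bm$ is compact, the image of $\tau$ is compact in $\mathbb{R}\cup\{\infty\}$; in particular, if $\tau$ were finite everywhere it would be bounded, contradicting $\sup\tau=\infty$. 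Hence there exists $x_{0}\in\bm$ with $\tau(x_{0})=\infty$.

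With that hypothesis verified, Theorem~\ref{thm:infinite splitting} applies directly and yields that $(M,d_{M})$ is isometric to $([0,\infty)\times\bm,d_{[0,\infty)\times\bm})$, which is the conclusion of the corollary.

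The proof is essentially a one-line reduction, so there is no real obstacle; the only subtlety is the topological point that a continuous function on a compact space with values in $\mathbb{R}\cup\{\infty\}$ whose supremum is $+\infty$ must actually attain $+\infty$. This is a standard consequence of compactness of the image and is what forces the existence of $x_{0}$ from the pointwise finiteness failure of $\tau$.
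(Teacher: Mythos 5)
Your proof is correct and follows the same route as the paper: the paper's justification for Corollaries \ref{cor:Kasue splitting} and \ref{cor:infinite Kasue splitting} is precisely the observation that Lemma \ref{lem:bmcompact} forces $D(M,\bm)=\sup_{x\in\bm}\tau(x)=\infty$ when $\bm$ is compact and $M$ non-compact, and the continuity of $\tau$ on the compact $\bm$ then forces $\tau(x_{0})=\infty$ for some $x_{0}$, so Theorem \ref{thm:infinite splitting} applies. Your elaboration of the compactness step is the same standard argument the paper leaves implicit.
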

\subsection{Weighted Ricci curvature on the boundary}
Let $h$ be the induced Riemannian metric on $\bm$.
For $x \in \bm$,
and for a unit vector $u$ in $T_{x}\bm$,
we denote by $K_{g}(u_{x},u)$ the sectional curvature at $x$ in $(M,g)$ determined by $u_{x}$ and $u$.

It seems that
the following is well-known,
especially in a submanifold setting (see e.g., Proposition 9.36 in \cite{Be}, and Lemma 5.4 in \cite{Sa}).
\begin{lem}\label{lem:boundary Ricci curvature1}
Take $x\in \bm$,
and a unit vector $u$ in $T_{x}\bm$.
Choose an orthonormal basis $\{ e_{x,i} \}_{i=1}^{n-1}$ of $T_{x}\bm$ with $e_{x,1}=u$.
Then we have
\begin{equation*}
\ric_{h}(u)=\ric_{g}(u)-K_{g}(u_{x},u)+\tr A_{S(u,u)}-\sum_{i=1}^{n-1} \Vert S(u,e_{x,i})\Vert^{2}.
\end{equation*}
\end{lem}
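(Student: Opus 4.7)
The proof is a direct computation from the Gauss equation for the submanifold $\partial M \subset M$, followed by summation to pass from sectional to Ricci curvatures. Here is the plan.

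First, I would extend the given basis to an orthonormal basis of $T_xM$ by adjoining the unit inner normal: set $e_{x,n}:=u_x$. Then $\ric_g(u)=\sum_{i=2}^{n}K_g(u,e_{x,i})$, and since the term $i=n$ is exactly $K_g(u_x,u)$, this gives
\begin{equation*}
\ric_g(u)-K_g(u_x,u)=\sum_{i=2}^{n-1}K_g(u,e_{x,i}).
\end{equation*}
Similarly, because $\{e_{x,i}\}_{i=1}^{n-1}$ is an orthonormal basis of $T_x\partial M$ with $e_{x,1}=u$, we have $\ric_h(u)=\sum_{i=2}^{n-1}K_h(u,e_{x,i})$.

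Next, I would invoke the Gauss equation for the hypersurface $\partial M$: for any pair of orthonormal tangent vectors $v,w\in T_x\partial M$,
\begin{equation*}
K_h(v,w)=K_g(v,w)+g(S(v,v),S(w,w))-\Vert S(v,w)\Vert^{2}.
\end{equation*}
Applying this to each pair $(u,e_{x,i})$ with $i=2,\ldots,n-1$ and summing yields
\begin{equation*}
\ric_h(u)=\bigl(\ric_g(u)-K_g(u_x,u)\bigr)+\sum_{i=2}^{n-1}g(S(u,u),S(e_{x,i},e_{x,i}))-\sum_{i=2}^{n-1}\Vert S(u,e_{x,i})\Vert^{2}.
\end{equation*}

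Finally, I would rewrite the last two sums in the form stated. Using the defining relation $g(A_{v}w,z)=g(S(w,z),v)$ of the shape operator, we have $\tr A_{S(u,u)}=\sum_{i=1}^{n-1}g(S(e_{x,i},e_{x,i}),S(u,u))$, whose $i=1$ contribution equals $\Vert S(u,u)\Vert^{2}$. Similarly, the $i=1$ term of $\sum_{i=1}^{n-1}\Vert S(u,e_{x,i})\Vert^{2}$ is also $\Vert S(u,u)\Vert^{2}$. Thus
\begin{equation*}
\sum_{i=2}^{n-1}g(S(u,u),S(e_{x,i},e_{x,i}))-\sum_{i=2}^{n-1}\Vert S(u,e_{x,i})\Vert^{2}=\tr A_{S(u,u)}-\sum_{i=1}^{n-1}\Vert S(u,e_{x,i})\Vert^{2},
\end{equation*}
and substituting into the previous identity gives the claimed formula. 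The whole argument is routine; the only point requiring care is making sure the Gauss equation and the convention for the shape operator are aligned correctly in sign, so that the extra $i=1$ terms cancel when passing from the sums over $i\geq 2$ to sums over all $i$.
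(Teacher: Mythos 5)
Your proof is correct. The paper does not actually supply a proof of this lemma; it labels the identity ``well-known, especially in a submanifold setting'' and cites Proposition~9.36 in Besse and Lemma~5.4 in \cite{Sa}. Your argument is the standard Gauss-equation derivation one would find in those references: extend the basis by $e_{x,n}:=u_x$, express $\ric_g(u)$ and $\ric_h(u)$ as sums of sectional curvatures, subtract $K_g(u_x,u)$ to drop the normal direction, apply the Gauss equation $K_h(v,w)=K_g(v,w)+g(S(v,v),S(w,w))-\Vert S(v,w)\Vert^2$ for orthonormal $v,w\in T_x\bm$ to each pair $(u,e_{x,i})$ with $2\le i\le n-1$, and finally use $g(A_vw,z)=g(S(w,z),v)$ to identify $\sum_i g(S(e_{x,i},e_{x,i}),S(u,u))$ with $\tr A_{S(u,u)}$. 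The observation that the $i=1$ terms of the two extended sums are both $\Vert S(u,u)\Vert^2$ and therefore cancel is exactly the bookkeeping needed to pass from the sums over $i\ge 2$ to the sums over all $i$ appearing in the statement. There is no gap, and since the Gauss equation is quadratic in $S$, the formula is insensitive to the choice of unit normal, so the inner-normal convention of the paper causes no sign trouble.
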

For all $x\in \bm$ and $u\in T_{x}\bm$,
we see
\begin{align}\label{eq:boundary gradient}
h((\nabla(f|_{\bm}))_{x},u)&=g((\nabla f)_{x},u),\\ 
\Hess (f|_{\bm})(u,u)&=\Hess f(u,u)+g\left((\nabla f)_{x},u_{x}\right)\,g\left(S(u,u),u_{x}\right).  \label{eq:boundary hessian}
\end{align}

We show the following:
\begin{lem}\label{lem:boundary weighted Ricci curvature}
Take $x\in \bm$,
and a unit vector $u$ in $T_{x}\bm$.
Choose an orthonormal basis $\{ e_{x,i} \}_{i=1}^{n-1}$ of $T_{x}\bm$ with $e_{x,1}=u$.
Then for all $N\in [n,\infty)$,
we have
\begin{align}\label{eq:boundary weighted Ricci curvature}
\ric^{N-1}_{f|_{\bm}}(u) & = \ric^{N}_{f}(u)+g((\nabla f)_{x},u_{x})\,g(S(u,u),u_{x})\\
                                    & - K_{g}(u_{x},u)+\tr A_{S(u,u)}-\sum_{i=1}^{n-1} \Vert S(u,e_{x,i})\Vert^{2}.\notag
\end{align}
\end{lem}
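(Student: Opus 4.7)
The plan is to verify the identity by unfolding the definition of $\ric^{N-1}_{f|_{\bm}}$ on the boundary, then substituting the three previously established identities: Lemma~5.3 expressing $\ric_h(u)$ in terms of $\ric_g(u)$ plus curvature and second-fundamental-form corrections, together with the pointwise identities $(\ref{eq:boundary gradient})$ and $(\ref{eq:boundary hessian})$ for the intrinsic gradient and Hessian of $f|_{\bm}$. The final step is a bookkeeping rearrangement that assembles the terms $\Hess f(u,u)$ and $-(N-n)^{-1}g((\nabla f)_x,u)^2$ back into $\ric^N_f(u)-\ric_g(u)$.

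More concretely, I would first treat the generic case $N>n$. By definition,
\begin{equation*}
\ric^{N-1}_{f|_{\bm}}(u)=\ric_h(u)+\Hess(f|_{\bm})(u,u)-\frac{h((\nabla(f|_{\bm}))_x,u)^2}{(N-1)-(n-1)}.
\end{equation*}
Here the denominator simplifies to $N-n$, which is exactly the denominator appearing in $\ric^N_f$. Using $(\ref{eq:boundary gradient})$ I can replace $h((\nabla(f|_{\bm}))_x,u)^2$ by $g((\nabla f)_x,u)^2$, and using $(\ref{eq:boundary hessian})$ I can replace $\Hess(f|_{\bm})(u,u)$ by $\Hess f(u,u)+g((\nabla f)_x,u_x)\,g(S(u,u),u_x)$. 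Substituting these, together with Lemma~\ref{lem:boundary Ricci curvature1} for $\ric_h(u)$, produces
\begin{align*}
\ric^{N-1}_{f|_{\bm}}(u)
&=\ric_g(u)-K_g(u_x,u)+\tr A_{S(u,u)}-\sum_{i=1}^{n-1}\Vert S(u,e_{x,i})\Vert^{2}\\
&\quad+\Hess f(u,u)-\frac{g((\nabla f)_x,u)^2}{N-n}+g((\nabla f)_x,u_x)\,g(S(u,u),u_x).
\end{align*}
Recognising that the second line equals $\ric^N_f(u)-\ric_g(u)+g((\nabla f)_x,u_x)\,g(S(u,u),u_x)$ by the very definition of $\ric^N_f$, cancelling $\ric_g(u)$, and regrouping yields the desired formula $(\ref{eq:boundary weighted Ricci curvature})$.

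The only nuisance is the boundary case $N=n$: the definition convention excludes division by $N-n$ and instead forces $f$ to be constant in order for $\ric^n_f$ not to be $-\infty$. In that situation $f|_{\bm}$ is also constant, the terms involving $\nabla f$ vanish, and $\ric^{N-1}_{f|_{\bm}}(u)=\ric_h(u)$, $\ric^N_f(u)=\ric_g(u)$, so the identity reduces directly to Lemma~\ref{lem:boundary Ricci curvature1}. I expect the only mild obstacle to be keeping track of the shift $N\mapsto N-1$ and $n\mapsto n-1$ between the ambient and boundary Bakry--\'Emery tensors so that the two denominators $N-n$ genuinely coincide; once that is noted, the proof is a direct substitution.
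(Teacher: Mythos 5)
Your proposal is correct and follows essentially the same route as the paper: unfold the definition of $\ric^{N-1}_{f|_{\bm}}$, substitute $(\ref{eq:boundary gradient})$, $(\ref{eq:boundary hessian})$, and Lemma~\ref{lem:boundary Ricci curvature1}, and observe that the denominator $(N-1)-(n-1)=N-n$ matches that of $\ric^{N}_{f}$. The only point you gloss over is the subcase $N=n$ with $f$ non-constant, where the paper notes that both sides of $(\ref{eq:boundary weighted Ricci curvature})$ equal $-\infty$, so the identity holds trivially; otherwise the argument is identical.
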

\begin{proof}
Assume $N \in (n,\infty)$.
By (\ref{eq:boundary gradient}) and (\ref{eq:boundary hessian}),
we have
\begin{multline*}
\ric^{N-1}_{f|_{\bm}}(u)=\ric_{h}(u)+\Hess (f|_{\bm})(u,u)-\frac{h((\nabla(f|_{\bm}))_{x},u)^{2}}{(N-1)-(n-1)}\\
                                    =\ric_{h}(u)+\Hess f(u,u)+g((\nabla f)_{x},u_{x})\,g(S(u,u),u_{x})-\frac{g((\nabla f)_{x},u)^{2}}{N-n}.
\end{multline*}
By Lemma \ref{lem:boundary Ricci curvature1},
we see (\ref{eq:boundary weighted Ricci curvature}).

Assume $N=n$.
If $f$ is constant,
then we see $\ric^{N-1}_{f|_{\bm}}(u)=\ric_{h}(u)$ and $\ric^{N}_{f}(u)=\ric_{g}(u)$,
and hence Lemma \ref{lem:boundary Ricci curvature1} implies (\ref{eq:boundary weighted Ricci curvature}).
If $f$ is not constant,
then both the left hand side of (\ref{eq:boundary weighted Ricci curvature}) and the right hand side are equal to $-\infty$.
Therefore,
we complete the proof.
\end{proof}
In the case of $N=\infty$,
we have:
\begin{lem}\label{lem:infinite boundary weighted Ricci curvature}
Take $x\in \bm$,
and a unit vector $u$ in $T_{x}\bm$.
Choose an orthonormal basis $\{ e_{x,i} \}_{i=1}^{n-1}$ of $T_{x}\bm$ with $e_{x,1}=u$.
Then we have
\begin{align}\label{eq:infinite boundary weighted Ricci curvature}
\ric^{\infty}_{f|_{\bm}}(u) & = \ric^{\infty}_{f}(u)+g((\nabla f)_{x},u_{x})\,g(S(u,u),u_{x})\\
                                    & - K_{g}(u_{x},u)+\tr A_{S(u,u)}-\sum_{i=1}^{n-1} \Vert S(u,e_{x,i})\Vert^{2}.\notag
\end{align}
\end{lem}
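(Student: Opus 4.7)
The plan is to mimic the proof of Lemma \ref{lem:boundary weighted Ricci curvature} but with the $N=\infty$ definition of the Bakry-\'Emery Ricci curvature, which actually simplifies matters because there is no $\nabla f \otimes \nabla f/(N-n)$ correction term to track.

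First I would unfold the definitions. By definition,
\begin{equation*}
\ric^{\infty}_{f|_{\bm}}(u) = \ric_{h}(u) + \Hess(f|_{\bm})(u,u), \qquad \ric^{\infty}_{f}(u) = \ric_{g}(u) + \Hess f(u,u).
\end{equation*}
So the claim reduces to checking the identity
\begin{equation*}
\ric_{h}(u) + \Hess(f|_{\bm})(u,u) - \ric_{g}(u) - \Hess f(u,u) = g((\nabla f)_{x},u_{x})\,g(S(u,u),u_{x}) - K_{g}(u_{x},u) + \tr A_{S(u,u)} - \sum_{i=1}^{n-1} \Vert S(u,e_{x,i})\Vert^{2}.
\end{equation*}

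Next I would apply Lemma \ref{lem:boundary Ricci curvature1} to rewrite $\ric_{h}(u) - \ric_{g}(u)$ as $-K_{g}(u_{x},u) + \tr A_{S(u,u)} - \sum_{i=1}^{n-1}\Vert S(u,e_{x,i})\Vert^{2}$, and apply (\ref{eq:boundary hessian}) to rewrite $\Hess(f|_{\bm})(u,u) - \Hess f(u,u)$ as $g((\nabla f)_{x},u_{x})\,g(S(u,u),u_{x})$. Adding the two identities immediately yields the desired equality (\ref{eq:infinite boundary weighted Ricci curvature}).

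This is a direct verification and I do not expect any serious obstacle; unlike the finite $N$ case, no splitting into sub-cases ($N=n$ versus $N>n$, constant versus non-constant $f$) is needed, because for $N=\infty$ both sides of the identity are genuine real numbers regardless of whether $f$ is constant, and there is no $(N-n)^{-1}$ denominator to handle. Note also that (\ref{eq:boundary gradient}) is not needed here since the $N=\infty$ curvature contains no first-order term in $\nabla f$.
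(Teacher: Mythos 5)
Your proof is correct and coincides with the paper's: both apply (\ref{eq:boundary hessian}) to relate the two Hessian terms and Lemma \ref{lem:boundary Ricci curvature1} to relate the two Ricci terms, then add. Your observation that no case-splitting and no use of (\ref{eq:boundary gradient}) is needed in the $N=\infty$ case also matches the paper.
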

\begin{proof}
From (\ref{eq:boundary hessian}),
it follows that
\begin{align*}
\ric^{\infty}_{f|_{\bm}}(u)&=\ric_{h}(u)+\Hess (f|_{\bm})(u,u)\\
                                      &=\ric_{h}(u)+\Hess f(u,u)+g((\nabla f)_{x},u_{x})\,g(S(u,u),u_{x}).
\end{align*}
Using Lemma \ref{lem:boundary Ricci curvature1},
we have (\ref{eq:infinite boundary weighted Ricci curvature}).
\end{proof}

\subsection{Multi-splitting}\label{sec:Multi-splitting}
By Lemma \ref{lem:boundary weighted Ricci curvature},
we see the following:
\begin{lem}\label{lem:boundary nonnegative Ricci curvature}
For $N\in [n,\infty)$,
we suppose $\ric^{N}_{f,M}\geq 0$.
If $(M,d_{M})$ is isometric to $([0,\infty)\times \bm,d_{[0,\infty)\times \bm})$,
then $\ric^{N-1}_{f|_{\bm},\bm}\geq 0$.
\end{lem}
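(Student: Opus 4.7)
The plan is to derive the inequality by a direct application of Lemma \ref{lem:boundary weighted Ricci curvature}, once the rigid product structure is used to kill off all the extrinsic terms that appear on its right-hand side. The isometry hypothesis is strong enough that the second fundamental form and the mixed sectional curvatures vanish, after which only the Bakry--\'Emery Ricci term $\ric^{N}_{f}(u)$ survives and the conclusion follows from the hypothesis together with continuity up to the boundary.

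More precisely, I would first upgrade the given metric isometry to a Riemannian isometry with boundary: by the characterization recalled in Section \ref{sec:Preliminaries} (``there exists a Riemannian isometry with boundary from $M_{1}$ to $M_{2}$ if and only if $(M_{1},d_{M_{1}})$ is isometric to $(M_{2},d_{M_{2}})$''), the hypothesis yields a smooth Riemannian isometry with boundary $\Phi\colon [0,\infty)\times\bm\to M$ for the direct product metric on the source. Through $\Phi$, the boundary $\bm\subset M$ corresponds to $\{0\}\times\bm$ in the direct product, which is manifestly totally geodesic, so the shape operator $A_{u_{x}}$ and the second fundamental form $S$ both vanish identically on $\bm$. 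Moreover, for a direct Riemannian product the sectional curvature of any plane spanned by a vector from one factor and a vector from the other factor is zero; since $u_{x}$ corresponds to $\partial/\partial t$ (the unit vector of the $[0,\infty)$-factor) and $u\in T_{x}\bm$ lies in the other factor, this gives $K_{g}(u_{x},u)=0$ for every $x\in\bm$ and every unit $u\in T_{x}\bm$.

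With these two vanishing statements in hand I would plug $S\equiv 0$ and $K_{g}(u_{x},u)=0$ into Lemma \ref{lem:boundary weighted Ricci curvature}. The terms $g((\nabla f)_{x},u_{x})\,g(S(u,u),u_{x})$, $\tr A_{S(u,u)}$ and $\sum_{i}\|S(u,e_{x,i})\|^{2}$ all disappear, leaving the clean identity
\begin{equation*}
\ric^{N-1}_{f|_{\bm}}(u)=\ric^{N}_{f}(u)
\end{equation*}
for every unit $u\in T_{x}\bm$ and every $x\in\bm$. Now $\ric^{N}_{f,M}\geq 0$ holds on $\inte M$ by hypothesis, and $\ric^{N}_{f}$ is a continuous $(-\infty,\infty]$-valued tensor because $g$ and $f$ are smooth on all of $M$, so approaching $x\in\bm$ from interior points (e.g.\ along $\gamma_{x}$) shows $\ric^{N}_{f}(u)\geq 0$ at boundary points as well. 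Taking the infimum over unit tangent vectors on $\bm$ gives $\ric^{N-1}_{f|_{\bm},\bm}\geq 0$.

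There is no substantive obstacle here; the only case worth checking separately is $N=n$. If $f$ is not constant then $\ric^{n}_{f}\equiv -\infty$, contradicting $\ric^{n}_{f,M}\geq 0$, so $f$ must be constant, whence $f|_{\bm}$ is constant and $\ric^{N-1}_{f|_{\bm}}=\ric_{h}$; the vanishing of $S$ and of $K_{g}(u_{x},u)$ then reduces Lemma \ref{lem:boundary Ricci curvature1} to $\ric_{h}(u)=\ric_{g}(u)\geq 0$, as required. This confirms the conclusion in every case of $N\in[n,\infty)$.
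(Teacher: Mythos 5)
Your proposal is correct and follows essentially the same route as the paper: reduce to Lemma \ref{lem:boundary weighted Ricci curvature} and kill the extrinsic terms ($S$, $A_{u_x}$, $K_g(u_x,u)$) using the direct-product structure so that only $\ric^{N}_{f}(u)$ survives. The only stylistic difference is in how those vanishings are established: you appeal directly to standard facts about Riemannian products (the boundary is totally geodesic, mixed planes have zero sectional curvature), whereas the paper derives the same facts from the parallelism of the $\bm$-Jacobi fields $Y_{x,i}=E_{x,i}$ along $\gamma_{x}$, namely $Y'_{x,i}(0)=-A_{u_x}e_{x,i}=0$ giving $S\equiv 0$ and $Y''_{x,1}(0)=0$ giving $K_g(u_x,e_{x,1})=0$. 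The two justifications are equivalent; the paper's phrasing simply matches how the isometries $\Phi(t,x)=\gamma_x(t)$ are constructed throughout. Your explicit continuity remark for pushing $\ric^{N}_{f}\geq 0$ from $\inte M$ to $\bm$, and your separate check of $N=n$, are both fine; the latter is already subsumed in the case analysis inside Lemma \ref{lem:boundary weighted Ricci curvature} and so is redundant but harmless.
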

\begin{proof}
There exists a Riemannian isometry with boundary from $M$ to $[0,\infty)\times\bm$.
Take $x\in \bm$,
and choose an orthonormal basis $\{e_{x,i}\}_{i=1}^{n-1}$ of $T_{x}\bm$.
Let $\{Y_{x,i}\}_{i=1}^{n-1}$ be the $\bm$-Jacobi fields along $\gamma_{x}$
with initial conditions $Y_{x,i}(0)=e_{x,i}$ and $Y'_{x,i}(0)=-A_{u_{x}}e_{x,i}$.
For all $i$
we have $Y_{x,i}=E_{x,i}$,
where $\{E_{x,i}\}_{i=1}^{n-1}$ are the parallel vector fields along $\gamma_{x}$ with initial condition $E_{x,i}(0)=e_{x,i}$.
We see $A_{u_{x}}e_{x,i}=0_{x}$ and $Y''_{x,1}(0)=0_{x}$;
in particular,
$\tr A_{u_{x}}=0$ and $K_{g}(u_{x},e_{x,1})=0$.
For all $i,j$
we have $S(e_{x,i},e_{x,j})=0_{x}$.
By (\ref{eq:boundary weighted Ricci curvature}) and $\ric^{N}_{f,M}\geq 0$,
we have $\ric^{N-1}_{f|_{\bm},\bm}\geq 0$.
\end{proof}
Let $M_{0}$ be a connected complete Riemannian manifold (without boundary).
A normal geodesic $\gamma:\mathbb{R} \to M_{0}$ is said to be a \textit{line}
if for all $s,t\in \mathbb{R}$ 
we have $d_{M_{0}}(\gamma(s),\gamma(t))=|s-t|$.

Fang, Li and Zhang \cite{FLZ} have proved the following splitting theorem of Cheeger-Gromoll type (see Theorem 1.3 in \cite{FLZ}):
\begin{thm}[\cite{FLZ}]\label{thm:splitting theorem of Cheeger-Gromoll type}
Let $M_{0}$ be an $n$-dimensional,
connected complete Riemannian manifold,
and let $f:M_{0}\to \mathbb{R}$ be a smooth function.
For $N\in [n,\infty)$,
we suppose $\ric^{N}_{f,M_{0}}\geq 0$.
If $M_{0}$ contains a line,
then there exists an $(n-1)$-dimensional Riemannian manifold $N_{0}$ such that
$M_{0}$ is isometric to the standard product $\mathbb{R}\times N_{0}$.
\end{thm}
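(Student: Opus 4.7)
The plan is to follow the classical Cheeger--Gromoll argument, with the $f$-Laplacian replacing the usual Laplacian and the weighted Bochner identity replacing the classical one. Fix a line $\gamma:\mathbb{R}\to M_{0}$ and set $\gamma^{\pm}(t):=\gamma(\pm t)$. Define the two Busemann functions
\begin{equation*}
b^{\pm}(p):=\lim_{t\to\infty}\bigl(t-d_{M_{0}}(p,\gamma^{\pm}(t))\bigr),
\end{equation*}
which are well-defined and $1$-Lipschitz. The triangle inequality gives $b^{+}+b^{-}\leq 0$ globally, with equality on the image of $\gamma$.

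First I would establish subharmonicity. For each $p\in M_{0}$ and each asymptote for $\gamma^{\pm}$ from $p$, Lemma \ref{lem:finite pointed Laplacian comparison} applied to $\rho_{\gamma^{\pm}(t)}$ at $p$ gives the bound
\begin{equation*}
\Delta_{f}\bigl(t-\rho_{\gamma^{\pm}(t)}\bigr)(p)\leq \frac{N-1}{t},
\end{equation*}
so the support-function construction of Section~\ref{sec:Preliminaries} shows $b^{+}$ and $b^{-}$ are $f$-subharmonic on $M_{0}$ (this is the unweighted-boundary analogue of Lemma \ref{lem:infinite pointed Laplacian comparison}, and works for finite $N$ by using the finite comparison above). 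Hence $b^{+}+b^{-}$ is $f$-subharmonic and attains its maximum $0$ along $\gamma$; the Calabi-type maximum principle Lemma \ref{lem:maximal principle} forces $b^{+}+b^{-}\equiv 0$ on $M_{0}$. Consequently $b^{+}=-b^{-}$ is simultaneously $f$-subharmonic and $f$-superharmonic, hence $f$-harmonic in the distributional sense, and standard elliptic regularity for the linear operator $\Delta_{f}$ yields smoothness of $b:=b^{+}$.

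Next I would exploit the equality case. Since the asymptotic Laplacian inequalities are now equalities, the weighted Bochner--Weitzenb\"ock identity
\begin{equation*}
\tfrac{1}{2}\Delta_{f}\bigl(\Vert \nabla b\Vert^{2}\bigr)
= \Vert\Hess b\Vert^{2}+g(\nabla b,\nabla\Delta_{f}b)+\ric^{N}_{f}(\nabla b)+\tfrac{1}{N-n}g(\nabla f,\nabla b)^{2}
\end{equation*}
applied to $b$ (which is $1$-Lipschitz, so $\Vert\nabla b\Vert\leq 1$, with equality along asymptotes) together with $\Delta_{f}b=0$ and $\ric^{N}_{f}\geq 0$ forces $\Hess b\equiv 0$, $\ric^{N}_{f}(\nabla b)\equiv 0$, and (when $N<\infty$) $g(\nabla f,\nabla b)\equiv 0$. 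Thus $\nabla b$ is a globally parallel unit vector field and $f$ is constant along its integral curves.

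Finally, letting $N_{0}:=b^{-1}(0)$ with the induced metric, the flow of $\nabla b$ gives a diffeomorphism $\Phi:\mathbb{R}\times N_{0}\to M_{0}$, $\Phi(t,x):=\exp_{x}(t(\nabla b)_{x})$. Since $\nabla b$ is parallel and a unit vector field, $\Phi$ is a Riemannian isometry onto the standard product. The main obstacle I anticipate is the regularity step: a priori $b$ is only continuous, and one must promote the distributional $f$-harmonicity to full smoothness. In the $N=\infty$ case this is standard linear elliptic regularity, but for finite $N$ one should either invoke the equality discussion in Remark \ref{rem:equality pointed Laplacian comparison} to deduce smoothness geometrically along asymptotes, or prove directly that the asymptote from each point is unique so that $b$ is differentiable with $\Vert\nabla b\Vert=1$ everywhere before invoking Bochner.
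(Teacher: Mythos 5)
The paper does not prove this theorem: it is imported from Fang--Li--Zhang with the citation $[\cite{FLZ}]$ placed in the theorem header, and Sakurai gives no argument for it. So there is no in-paper proof to compare against; what can be compared is whether your reconstruction matches the known argument, and it does.

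Your proposal is the standard Cheeger--Gromoll splitting argument adapted to the finite-$N$ Bakry--\'Emery setting, which is exactly how \cite{FLZ} prove this result: two Busemann functions from the line, $f$-subharmonicity via the weighted Laplacian comparison for distance functions (the role of Lemma~\ref{lem:finite pointed Laplacian comparison} with $\kappa=0$ is that $\Delta_f(t-\rho_{\gamma^\pm(t)})\leq (N-1)/\rho_{\gamma^\pm(t)}\to 0$ as $t\to\infty$, giving an $\epsilon$-barrier), the Calabi-type maximum principle (Lemma~\ref{lem:maximal principle}) applied to $b^{+}+b^{-}\leq 0$, then distributional $f$-harmonicity, elliptic regularity, and the weighted Bochner identity to force $\Hess b\equiv 0$. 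The additional term $\frac{1}{N-n}\,g(\nabla f,\nabla b)^2$ is exactly what converts the $\ric^\infty_f$ Bochner formula into the $\ric^N_f$ one, and since you only need it after $\Delta_f b=0$, no refined trace inequality is required. Two small points worth noting. First, the paper's sign convention is $\Delta\phi:=-\operatorname{trace}\Hess\phi$ (so ``$f$-subharmonic'' means $\Delta_f\phi\leq 0$ in the barrier sense); your displayed Bochner identity is written with the opposite (geometer's) sign, so under the paper's conventions the left side should carry a minus. This is purely notational and does not affect the argument because the cross term $g(\nabla b,\nabla\Delta_f b)$ vanishes once $\Delta_f b=0$ and all remaining terms have a definite sign. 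Second, for $N=n$ the hypothesis forces $f$ to be constant (otherwise $\ric^n_f\equiv-\infty$), so the formally indeterminate term $\frac{1}{N-n}g(\nabla f,\nabla b)^2$ is genuinely $0$ and the argument reduces to classical Cheeger--Gromoll. The regularity concern you flag at the end is real but is resolved exactly as you say: barrier-sense sub- and superharmonicity of a continuous function are equivalent to distributional sub- and superharmonicity, so $b$ is a weak solution of the linear uniformly elliptic equation $\Delta_f b=0$ with smooth coefficients, hence smooth by standard elliptic regularity; after that, $|\nabla b|=1$ on a dense set (differentiability points, via asymptotes) together with $f$-subharmonicity of $\|\nabla b\|^2$ and the maximum principle gives $\|\nabla b\|\equiv 1$ globally, and Bochner finishes the job.
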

We have the following corollary of Theorem \ref{thm:splitting}:
\begin{cor}\label{cor:boundary splitting}
Let $M$ be an $n$-dimensional,
connected complete Riemannian manifold with boundary,
and let $f:M\to \mathbb{R}$ be a smooth function.
For $N\in [n,\infty)$,
we suppose $\ric^{N}_{f,M}\geq 0$ and $H_{f,\bm} \geq 0$.
If for some $x_{0}\in \bm$
we have $\tau(x_{0})=\infty$,
then there exist $k\in \{0,\dots,n-1\}$ and
an $(n-1-k)$-dimensional,
connected complete Riemannian manifold $N_{0}$ containing no line such that
$(\bm,d_{\bm})$ is isometric to $(\mathbb{R}^{k}\times N_{0},d_{\mathbb{R}^{k}\times N_{0}})$.
In particular,
$(M,d_{M})$ is isometric to $([0,\infty)\times \mathbb{R}^{k}\times N_{0},d_{[0,\infty)\times \mathbb{R}^{k}\times N_{0}})$.
\end{cor}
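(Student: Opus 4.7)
The plan is to first apply Theorem \ref{thm:splitting} to split off the $[0,\infty)$ factor corresponding to the normal direction, then apply the weighted Cheeger--Gromoll splitting theorem (Theorem \ref{thm:splitting theorem of Cheeger-Gromoll type}) iteratively on the boundary to peel off all Euclidean factors.

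First, I would specialize Theorem \ref{thm:splitting} to the case $\kappa=0$, $\lambda=0$. A direct check shows $s_{0,0}(t)\equiv 1$, so the warped product $[0,\infty)\times_{0,0}\bm$ reduces to the standard product $[0,\infty)\times \bm$, and the $(N-n)\lambda t$ term in the formula for $f\circ\gamma_x$ vanishes, so $f$ is constant along each normal geodesic $\gamma_x$. The hypothesis $\tau(x_0)=\infty$ therefore yields that $(M,d_M)$ is isometric to $([0,\infty)\times \bm,d_{[0,\infty)\times \bm})$; in particular, $\bm$ is connected and inherits a complete Riemannian metric. From Lemma \ref{lem:boundary nonnegative Ricci curvature}, $\ric^{N-1}_{f|_{\bm},\bm}\geq 0$ on $\bm$.

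Next, I would apply Theorem \ref{thm:splitting theorem of Cheeger-Gromoll type} to $(\bm,f|_{\bm})$. If $\bm$ contains a line, then $\bm$ is isometric to $\mathbb{R}\times N_1$ as Riemannian manifolds, with the line as the $\mathbb{R}$ factor. Since $f$ is constant along the splitting line (this falls out of the Busemann-function construction in the weighted splitting argument: the Busemann functions $b^{\pm}$ associated to the line are $f|_{\bm}$-harmonic and sum to $0$, so their common gradient gives a parallel vector field along which $f|_{\bm}$ is constant), the Bakry--\'Emery Ricci curvature descends: for every $v$ tangent to $N_1$,
\begin{equation*}
\ric^{N-2}_{f|_{N_1},N_1}(v)=\ric^{N-1}_{f|_{\bm},\bm}(v)\geq 0,
\end{equation*}
because the metric is a Riemannian product, $f|_{\bm}$ is independent of the $\mathbb{R}$ coordinate, and the dimension drops by one together with the $N$ parameter. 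Iterating this argument produces, after finitely many ($k\leq n-1$) steps, an $(n-1-k)$-dimensional connected complete Riemannian manifold $N_0$ containing no line, and a Riemannian isometry $\bm\cong \mathbb{R}^k\times N_0$. The iteration must terminate in at most $n-1$ steps since each step reduces the dimension.

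Combining this with the first step gives the Riemannian isometry $(M,d_M)\cong ([0,\infty)\times\mathbb{R}^k\times N_0,d_{[0,\infty)\times\mathbb{R}^k\times N_0})$, as required. The main obstacle is the bookkeeping in the iteration in the third step: one must verify at each stage that the weighted Cheeger--Gromoll splitting forces $f$ to be constant along the newly extracted $\mathbb{R}$ factor, so that the Bakry--\'Emery bound passes to the remaining factor with the parameter correctly decremented from $N-1-j$ to $N-2-j$; this is standard in the weighted setting but is not explicitly stated in Theorem \ref{thm:splitting theorem of Cheeger-Gromoll type} as quoted, so it needs to be invoked from the proof in \cite{FLZ}.
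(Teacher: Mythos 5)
Your proof follows the paper's approach exactly: apply Theorem~\ref{thm:splitting} with $\kappa=\lambda=0$ to split off the normal $[0,\infty)$ factor, invoke Lemma~\ref{lem:boundary nonnegative Ricci curvature} to get $\ric^{N-1}_{f|_{\bm},\bm}\geq 0$, and then iterate Theorem~\ref{thm:splitting theorem of Cheeger-Gromoll type} on the boundary. You also correctly flag a point the paper's terse proof leaves implicit, namely that the iteration needs the Bakry--\'Emery bound to descend with $N$ decremented at each stage, which rests on $f$ being constant along the split line --- this is indeed not in the quoted statement of Theorem~\ref{thm:splitting theorem of Cheeger-Gromoll type} but comes out of the equality case of the weighted Bochner formula in the proof in \cite{FLZ}.
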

\begin{proof}
Due to Theorem \ref{thm:splitting},
the metric space $(M,d_{M})$ is isometric to $([0,\infty)\times \bm,d_{[0,\infty)\times \bm})$.
Lemma \ref{lem:boundary nonnegative Ricci curvature} implies $\ric^{N-1}_{f|_{\bm},\bm}\geq 0$.
Applying Theorem \ref{thm:splitting theorem of Cheeger-Gromoll type} to $\bm$ inductively,
we complete the proof.
\end{proof}
In the case of $N=\infty$,
we see:
\begin{lem}\label{lem:infinite boundary nonnegative Ricci curvature}
If $\ric^{\infty}_{f,M}\geq 0$,
and if the metric space $(M,d_{M})$ is isometric to $([0,\infty)\times \bm,d_{[0,\infty)\times \bm})$,
then $\ric^{\infty}_{f|_{\bm},\bm}\geq 0$.
\end{lem}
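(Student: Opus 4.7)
The plan is to mirror the proof of Lemma \ref{lem:boundary nonnegative Ricci curvature}, substituting Lemma \ref{lem:infinite boundary weighted Ricci curvature} for Lemma \ref{lem:boundary weighted Ricci curvature} and erasing the $1/(N-n)$-term throughout. Since there is a Riemannian isometry with boundary from $M$ onto $[0,\infty)\times \bm$ with the product metric, the boundary $\bm$ is totally geodesic in $M$ and each normal geodesic $\gamma_{x}$ is a product geodesic. Concretely, fixing $x\in \bm$ and a unit $u\in T_{x}\bm$, I would choose an orthonormal basis $\{e_{x,i}\}_{i=1}^{n-1}$ of $T_{x}\bm$ with $e_{x,1}=u$, and consider the $\bm$-Jacobi fields $\{Y_{x,i}\}$ along $\gamma_{x}$ whose initial conditions appear in the statement.

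Under the product splitting, these $\bm$-Jacobi fields coincide with the parallel transports $E_{x,i}$ of $e_{x,i}$ along $\gamma_{x}$. From $Y_{x,i}'(0)=-A_{u_{x}}e_{x,i}=0_{x}$ I read off that the shape operator $A_{u_{x}}$ is identically zero, so $S(e_{x,i},e_{x,j})=0_{x}$ for all $i,j$; in particular $S(u,u)=0_{x}$ and $\tr A_{S(u,u)}=0$. From $Y_{x,i}''(0)=-R(E_{x,i},\gamma_{x}')\gamma_{x}'(0)=0_{x}$ I read off that $K_{g}(u_{x},e_{x,i})=0$ for every $i$, and in particular $K_{g}(u_{x},u)=0$.

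Plugging these vanishings into Lemma \ref{lem:infinite boundary weighted Ricci curvature} collapses every correction term on the right-hand side, yielding
\begin{equation*}
\ric^{\infty}_{f|_{\bm}}(u)=\ric^{\infty}_{f}(u).
\end{equation*}
Since $\ric^{\infty}_{f,M}\geq 0$ holds on the unit tangent bundle over $\inte M$, and $\ric^{\infty}_{f}$ is smooth on all of $M$, continuity extends the bound to boundary points, giving $\ric^{\infty}_{f}(u)\geq 0$ and hence $\ric^{\infty}_{f|_{\bm}}(u)\geq 0$. As $x$ and $u$ were arbitrary, this establishes $\ric^{\infty}_{f|_{\bm},\bm}\geq 0$. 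No real obstacle is anticipated: the only subtlety is the passage from the interior bound to the boundary point, which is handled cleanly by the smoothness of $f$ and $g$ up to $\bm$.
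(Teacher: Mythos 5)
Your proof is correct and follows essentially the same route as the paper's: use the product isometry to force the $\bm$-Jacobi fields to be parallel, read off $A_{u_x}=0$ and $K_g(u_x,e_{x,i})=0$ from the first- and second-order initial data, substitute into Lemma \ref{lem:infinite boundary weighted Ricci curvature}, and conclude. The only detail you make explicit that the paper leaves implicit is the passage (by smoothness) from the interior bound $\ric^{\infty}_{f,M}\geq 0$ to the boundary point $x$, which is a harmless and correct addition.
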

\begin{proof}
There exists a Riemannian isometry with boundary from $M$ to $[0,\infty)\times\bm$.
Take $x\in \bm$,
and choose an orthonormal basis $\{e_{x,i}\}_{i=1}^{n-1}$ of $T_{x}\bm$.
Let $\{Y_{x,i}\}_{i=1}^{n-1}$ be the $\bm$-Jacobi fields along $\gamma_{x}$
with initial conditions $Y_{x,i}(0)=e_{x,i}$ and $Y'_{x,i}(0)=-A_{u_{x}}e_{x,i}$.
For all $i$
we have $Y_{x,i}=E_{x,i}$,
where $\{E_{x,i}\}_{i=1}^{n-1}$ are the parallel vector fields along $\gamma_{x}$ with initial condition $E_{x,i}(0)=e_{x,i}$.
This implies $A_{u_{x}}e_{x,i}=0_{x}$ and $Y''_{x,1}(0)=0_{x}$.
Hence,
$\tr A_{u_{x}}=0$ and $K_{g}(u_{x},e_{x,1})=0$.
For all $i,j$
we see $S(e_{x,i},e_{x,j})=0_{x}$.
From (\ref{eq:infinite boundary weighted Ricci curvature}),
and from $\ric^{\infty}_{f,M}\geq 0$,
we deduce $\ric^{\infty}_{f|_{\bm},\bm}\geq 0$.
\end{proof}
Fang, Li and Zhang \cite{FLZ} have proved the following splitting theorem of Cheeger-Gromoll type (see Theorem 1.1 in \cite{FLZ}):
\begin{thm}[\cite{FLZ}]\label{thm:infinite splitting theorem of Cheeger-Gromoll type}
Let $M_{0}$ be an $n$-dimensional,
connected complete Riemannian manifold,
and let $f:M_{0}\to \mathbb{R}$ be a smooth function such that  $\sup f(M_{0})<\infty$.
Suppose $\ric^{\infty}_{f,M_{0}}\geq 0$.
If $M_{0}$ contains a line,
then there exists an $(n-1)$-dimensional Riemannian manifold $N_{0}$ such that
$M_{0}$ is isometric to the standard product $\mathbb{R}\times N_{0}$.
\end{thm}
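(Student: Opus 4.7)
The plan is to follow the classical Cheeger--Gromoll line argument, adapted to the weighted $N=\infty$ setting, where the hypothesis $\sup f(M_{0})<\infty$ plays the role that is automatic in the $N<\infty$ or unweighted cases.

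First, fix a line $\gamma\colon \mathbb{R}\to M_{0}$ and decompose it into two rays $\gamma^{\pm}(t):=\gamma(\pm t)$ for $t\geq 0$. Define the two Busemann functions
\[
b^{\pm}(p):=\lim_{t\to \infty}\bigl(t-d_{M_{0}}(p,\gamma^{\pm}(t))\bigr).
\]
Both $b^{\pm}$ are $1$-Lipschitz, and the triangle inequality immediately yields $b^{+}+b^{-}\leq 0$ on $M_{0}$, with equality along $\gamma(\mathbb{R})$.

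The crucial step is to show that each of $b^{\pm}$ is $f$-subharmonic on $M_{0}$. This is the boundaryless analogue of Lemma \ref{lem:infinite pointed Laplacian comparison}: since $\partial M_{0}=\emptyset$, asymptotes for $\gamma^{\pm}$ from any point $p\in M_{0}$ exist without obstruction, and under $\ric^{\infty}_{f,M_{0}}\geq 0$ with $\sup f(M_{0})<\infty$ one obtains a weighted Laplacian comparison of Wei--Wylie/Fang--Li--Zhang type, namely $\Delta_{f}\rho_{q}\leq \varepsilon_{r}$ with $\varepsilon_{r}\to 0$ as $r\to \infty$. Letting $t\to \infty$ along the asymptote from $p$ produces, for each $\varepsilon>0$, a smooth support function $\psi_{p,\varepsilon}$ of $b^{\pm}$ at $p$ with $\Delta_{f}\psi_{p,\varepsilon}(p)\leq \varepsilon$, which gives the required $f$-subharmonicity. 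Consequently $b^{+}+b^{-}$ is $f$-subharmonic on the connected manifold $M_{0}$ and attains its maximum value $0$ along $\gamma$, so the Calabi-type maximum principle (Lemma \ref{lem:maximal principle}) forces $b^{+}+b^{-}\equiv 0$.

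Hence $b^{+}=-b^{-}$ is simultaneously $f$-subharmonic and $f$-superharmonic, so $f$-harmonic in the distributional sense; elliptic regularity for $\Delta_{f}$ makes $b^{+}$ smooth on $M_{0}$. Finally, apply the weighted Bochner identity
\[
\tfrac{1}{2}\Delta_{f}\Vert \nabla b^{+}\Vert^{2}=\Vert \Hess b^{+}\Vert^{2}+g(\nabla \Delta_{f}b^{+},\nabla b^{+})+\ric^{\infty}_{f}(\nabla b^{+}).
\]
A standard maximum-principle argument, using $b^{+}(\gamma(t))=t$, gives $\Vert \nabla b^{+}\Vert \equiv 1$; the left-hand side then vanishes, the middle term vanishes because $\Delta_{f}b^{+}\equiv 0$, and $\ric^{\infty}_{f}\geq 0$ forces $\Hess b^{+}\equiv 0$. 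Thus $\nabla b^{+}$ is a parallel, unit vector field on $M_{0}$, and integrating its flow yields the desired isometric splitting $M_{0}\cong \mathbb{R}\times N_{0}$ with $N_{0}:=(b^{+})^{-1}(0)$, which is totally geodesic by $\Hess b^{+}=0$.

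The main obstacle is the $f$-subharmonicity of the Busemann functions. In the $N=\infty$ case, the weighted Laplacian comparison for $\rho_{q}$ does not automatically decay at infinity as in the $N<\infty$ or unweighted settings; the drift term $g(\nabla f,\nabla \rho_{q})$ must be controlled. The assumption $\sup f(M_{0})<\infty$ is precisely what makes this control possible, and without it the construction genuinely breaks down. Once the subharmonicity is established, the remaining steps are classical weighted Bochner theory and the standard flow construction.
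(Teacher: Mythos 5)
The paper does not prove this statement: it is quoted as Theorem 1.1 of Fang--Li--Zhang \cite{FLZ}, and indeed the whole point of the surrounding discussion is to cite this splitting theorem and apply it to $\bm$. So there is no ``paper's own proof'' to compare against, and your task reduces to sketching the \cite{FLZ} argument.

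Your outline does reproduce the \cite{FLZ}/Cheeger--Gromoll scheme faithfully, and the structure is sound: two Busemann functions, $f$-subharmonicity via asymptotes, the Calabi-type maximum principle (Lemma \ref{lem:maximal principle}) applied to $b^{+}+b^{-}$, elliptic regularity for the $f$-harmonic limit, the weighted Bochner identity forcing $\Hess b^{+}\equiv 0$, and the de~Rham splitting from the parallel unit gradient. Two small remarks. First, the phrase ``$\Delta_{f}\rho_{q}\leq \varepsilon_{r}$ with $\varepsilon_{r}\to 0$'' compresses the genuinely nontrivial step of \cite{FLZ}: under $\ric^{\infty}_{f}\geq 0$ alone there is no such decay, and what one actually proves is a second-variation estimate of the form $\Delta_{f}\rho_{q}(p)\lesssim \bigl((n-1)+2(\sup f-f(p))\bigr)/\rho_{q}(p)$, obtained by writing $\ric=\ric^{\infty}_{f}-\Hess f$ inside the index form and integrating the $\Hess f$ term by parts; it is precisely here that the hypothesis $\sup f<\infty$ (and only the upper bound) enters, which is why \cite{FLZ} weaken Lichnerowicz's boundedness assumption to one-sided boundedness. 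Stating the mechanism this way would make the sketch self-contained. Second, the Bochner identity you wrote uses the analyst's sign convention; this paper defines $\Delta$ as the \emph{minus} of the trace of the Hessian, so in the paper's convention the identity is $\tfrac12\Delta_{f}\Vert\nabla b^{+}\Vert^{2}=-\Vert\Hess b^{+}\Vert^{2}+g(\nabla\Delta_{f}b^{+},\nabla b^{+})-\ric^{\infty}_{f}(\nabla b^{+})$, so that $\Vert\nabla b^{+}\Vert^{2}$ is $f$-subharmonic (not superharmonic) in the sense of Lemma \ref{lem:maximal principle}, which is exactly what the maximum principle needs. These are notational points; the mathematics of the sketch is correct.
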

\begin{rem}\label{rem:splitting theorem of Cheeger-Gromoll type}
Lichnerowicz \cite{Lic} has proved Theorem \ref{thm:infinite splitting theorem of Cheeger-Gromoll type}
under the assumption that
$f$ is bounded.
\end{rem}
In the case of $N=\infty$,
we have the following:
\begin{cor}\label{cor:infinite boundary splitting}
Let $M$ be an $n$-dimensional,
connected complete Riemannian manifold with boundary,
and let $f:M\to \mathbb{R}$ be a smooth function such that $\sup f(M)<\infty$.
Suppose $\ric^{\infty}_{f,M}\geq 0$ and $H_{f,\bm} \geq 0$.
If for some $x_{0}\in \bm$
we have $\tau(x_{0})=\infty$,
then there exist $k\in \{0,\dots,n-1\}$ and
an $(n-1-k)$-dimensional,
connected complete Riemannian manifold $N_{0}$ containing no line such that
$(\bm,d_{\bm})$ is isometric to $(\mathbb{R}^{k}\times N_{0},d_{\mathbb{R}^{k}\times N_{0}})$.
In particular,
$(M,d_{M})$ is isometric to $([0,\infty)\times \mathbb{R}^{k}\times N_{0},d_{[0,\infty)\times \mathbb{R}^{k}\times N_{0}})$.
\end{cor}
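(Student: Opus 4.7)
The plan is to combine Theorem \ref{thm:infinite splitting} with an inductive application of the Cheeger-Gromoll type splitting theorem of Fang-Li-Zhang (Theorem \ref{thm:infinite splitting theorem of Cheeger-Gromoll type}) on the boundary. The hypotheses $\sup f(M)<\infty$, $\ric^{\infty}_{f,M}\geq 0$, $H_{f,\partial M}\geq 0$ and $\tau(x_{0})=\infty$ put us exactly in the situation of Theorem \ref{thm:infinite splitting}, so as a first step $(M,d_{M})$ is isometric to $([0,\infty)\times \partial M,d_{[0,\infty)\times\partial M})$.

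Once this product structure is established, Lemma \ref{lem:infinite boundary nonnegative Ricci curvature} applies and yields $\ric^{\infty}_{f|_{\partial M},\partial M}\geq 0$. Moreover, $\sup (f|_{\partial M})(\partial M)\leq \sup f(M)<\infty$, so $\partial M$ equipped with the weight $f|_{\partial M}$ satisfies the hypotheses of Theorem \ref{thm:infinite splitting theorem of Cheeger-Gromoll type}. If $\partial M$ contains a line, that theorem produces an isometric splitting $\partial M\cong \mathbb{R}\times N_{1}$. I then need to observe that the weight descends: since the Busemann function of the line is affine, its gradient is a parallel unit field, and along this direction $\Hess f$ is non-negative by $\ric^{\infty}_{f}\geq 0$; combined with $\sup f<\infty$, this forces $f$ to be constant in the $\mathbb{R}$ direction, so $f|_{\partial M}$ descends to a function $\tilde f_{1}$ on $N_{1}$ with $\ric^{\infty}_{\tilde f_{1},N_{1}}\geq 0$ and $\sup \tilde f_{1}<\infty$.

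With the weight preserved, I iterate: as long as the current factor contains a line, split off another $\mathbb{R}$ factor. Each iteration reduces the Riemannian dimension by one, so after at most $n-1$ steps the process terminates, producing $k\in\{0,\dots,n-1\}$ and an $(n-1-k)$-dimensional connected complete Riemannian manifold $N_{0}$ containing no line, together with an isometry $(\partial M,d_{\partial M})\cong (\mathbb{R}^{k}\times N_{0},d_{\mathbb{R}^{k}\times N_{0}})$. Combining with the first step gives the claimed isometry $(M,d_{M})\cong ([0,\infty)\times\mathbb{R}^{k}\times N_{0},d_{[0,\infty)\times\mathbb{R}^{k}\times N_{0}})$.

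I expect the main obstacle to be the descent of the weight function $f$ under each iteration, since the Cheeger-Gromoll type splitting theorem cited in Theorem \ref{thm:infinite splitting theorem of Cheeger-Gromoll type} produces only the metric splitting. The key point is the convexity-plus-boundedness argument sketched above, which uses $\sup f<\infty$ essentially; without it the induction would break down (as Remark following Corollary \ref{cor:infinite Kasue splitting} and the example after Theorem \ref{thm:infinite splitting} indicate). Everything else is a formal consequence of the two main inputs.
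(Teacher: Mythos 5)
Your proposal is correct and follows exactly the paper's route: first split off the $[0,\infty)$ factor via Theorem \ref{thm:infinite splitting}, then transfer the curvature bound to $\bm$ via Lemma \ref{lem:infinite boundary nonnegative Ricci curvature}, and iterate the Fang--Li--Zhang splitting (Theorem \ref{thm:infinite splitting theorem of Cheeger-Gromoll type}) on the boundary. The extra detail you supply—that $f|_{\bm}$ is constant in each split-off $\mathbb{R}$ direction because $\ric^{\infty}_{f}\geq 0$ forces convexity along the parallel field while $\sup f<\infty$ prevents nonconstant convexity—is exactly the point that makes the iteration legitimate; the paper's two-line proof leaves this implicit (it is part of the conclusion of Fang--Li--Zhang's Theorem 1.1, which the paper's citation abbreviates), so your proof is the same argument with a genuinely needed verification made explicit.
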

\begin{proof}
By Theorem \ref{thm:infinite splitting},
$(M,d_{M})$ is isometric to $([0,\infty)\times \bm,d_{[0,\infty)\times \bm})$.
From Lemma \ref{lem:infinite boundary nonnegative Ricci curvature},
we derive $\ric^{\infty}_{f|_{\bm},\bm}\geq 0$.
Notice that
$\sup_{x\in \bm}f(x)$ is finite.
By using Theorem \ref{thm:infinite splitting theorem of Cheeger-Gromoll type},
we obtain the corollary.
\end{proof}
\subsection{Variants of splitting theorems}\label{sec:Variants of splitting theorems}
We have already known several rigidity results
studied in \cite{K3} (and \cite{CK}, \cite{I})
for manifolds with boundary whose boundaries are disconnected.
We study generalizations of the results in \cite{K3} (and \cite{CK}, \cite{I}). 

The following has been proved in \cite{K3} (see Lemma 1.6 in \cite{K3}):
\begin{lem}[\cite{K3}]\label{lem:disconnected lemma}
Suppose that
$\bm$ is disconnected.
Let $\{\bm_{i}\}_{i=1,2,\dots}$ denote the connected components of $\bm$.
Assume that $\bm_{1}$ is compact.
Put $D:=\inf_{i=2,3,\dots}\, d_{M}(\bm_{1},\bm_{i})$.
Then there exists a connected component $\bm_{2}$ of $\bm$ such that $d_{M}(\bm_{1},\bm_{2})=D$.
Furthermore,
for every $i=1,2$
there exists $x_{i}\in \bm_{i}$ such that $d_{M}(x_{1},x_{2})=D$.
The normal minimal geodesic $\gamma:[0,D]\to M$ from $x_{1}$ to $x_{2}$ is orthogonal to $\bm$ both at $x_{1}$ and at $x_{2}$,
and the restriction $\gamma|_{(0,D)}$ lies in $\inte M$.
\end{lem}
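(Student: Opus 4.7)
The plan is to establish the four claims in sequence: $D>0$, existence of a realizing pair $(x_1,x_2)$ on some component $\bm_2$, the interior-passage property of the connecting geodesic, and the orthogonality at the endpoints. All four rely on the fact that $(M,d_M)$ is a proper geodesic space (by the Hopf--Rinow theorem for length spaces, as recalled in Section \ref{sec:Preliminaries}), combined with the compactness of $\bm_1$ and the fact that a connected component of the smooth manifold $\bm$ is both open and closed in $\bm$.

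First, I would observe that since $\bm_1$ is a clopen subset of the closed set $\bm \subset M$, the complement $\bm \setminus \bm_1 = \bigsqcup_{i\geq 2}\bm_i$ is a closed subset of $M$ disjoint from the compact set $\bm_1$; hence $D = d_M(\bm_1,\bm\setminus\bm_1)>0$. To produce the minimizing pair, choose sequences $y_1^k \in \bm_1$ and $y_2^k \in \bm_{i_k}$ with $d_M(y_1^k,y_2^k)\to D$. Compactness of $\bm_1$ provides a subsequential limit $y_1^k\to x_1\in\bm_1$; then $\{y_2^k\}$ eventually lies in the closed bounded set $B_{D+1}(x_1)$, which is compact by properness. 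Passing to a further subsequence, $y_2^k\to x_2$, and since $\bm\setminus\bm_1$ is closed, $x_2\in\bm_j$ for some $j\geq 2$; relabel this component as $\bm_2$. The continuity of $d_M$ together with $d_M(\bm_1,\bm_2)\geq D$ yields $d_M(x_1,x_2)=D$.

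By properness of $(M,d_M)$, there exists a normal minimal geodesic $\gamma:[0,D]\to M$ from $x_1$ to $x_2$. To show $\gamma|_{(0,D)}$ lies in $\inte M$, I would argue by contradiction: suppose $\gamma(t_0)\in\bm_j$ for some $t_0\in(0,D)$ and some $j$. If $j\geq 2$, then $\gamma|_{[0,t_0]}$ realizes $d_M(\bm_1,\bm_j)\leq t_0<D$, contradicting the definition of $D$. If $j=1$, then $\gamma|_{[t_0,D]}$ realizes $d_M(\bm_1,\bm_2)\leq D-t_0<D$, again a contradiction. Hence $\gamma((0,D))\subset\inte M$.

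Finally, the orthogonality follows from a standard first-variation argument. Since $x_1$ minimizes $d_M(\cdot,x_2)$ over the smooth submanifold $\bm_1\subset M$, any tangential component of $\gamma'(0)$ in $T_{x_1}\bm_1$ could be used to construct a shorter curve to $x_2$; therefore $\gamma'(0)\perp T_{x_1}\bm_1$, which forces $\gamma'(0)=u_{x_1}$ (pointing inward, since $\gamma$ must enter $\inte M$). Symmetrically $\gamma'(D)=-u_{x_2}$. I expect the most delicate step to be the extraction of the realizing point $x_2\in\bm_2$: one must simultaneously exploit the compactness of $\bm_1$ and the properness of $(M,d_M)$, and verify that the limit does not escape into $\bm_1$ itself — this is precisely where the clopen-component property of $\bm_1$ is used.
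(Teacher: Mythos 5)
The paper does not reprove this lemma; it simply cites Kasue's Lemma 1.6 in \cite{K3}, so there is no in-paper argument to compare against. Your self-contained proof is correct. The four steps — positivity of $D$ via the clopen-component structure of $\bm$, extraction of the minimizing pair $(x_1,x_2)$ from properness and compactness of $\bm_1$, the contradiction showing $\gamma((0,D))\subset\inte M$, and the first-variation orthogonality at both endpoints — are all sound and in the spirit of the original. One small remark: for the orthogonality step you could alternatively avoid invoking a first-variation argument directly on the boundary by observing that, since $\gamma|_{(0,D)}\subset\inte M$ and $d_M(\gamma(\epsilon),\bm)=\epsilon$ for small $\epsilon>0$, the point $x_1$ is the foot point on $\bm$ of $\gamma(\epsilon)$, so the paper's foot-point discussion in Section \ref{sec:Preliminaries} gives $\gamma|_{[0,\epsilon]}=\gamma_{x_1}|_{[0,\epsilon]}$ and hence $\gamma'(0)=u_{x_1}$ outright (and symmetrically at $x_2$). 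Either route is acceptable; your first-variation sketch is standard and adequate.
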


First,
we prove the following:
\begin{thm}\label{thm:disconnected splitting1}
Let $M$ be an $n$-dimensional,
connected complete Riemannian manifold with boundary,
and let $f:M\to \mathbb{R}$ be a smooth function.
Suppose that $\bm$ is disconnected.
Let $\{\bm_{i}\}_{i=1,2,\dots}$ denote the connected components of $\bm$.
Assume that $\bm_{1}$ is compact.
Put $D:=\inf_{i=2,3,\dots}\, d_{M}(\bm_{1},\bm_{i})$.
For $N\in [n,\infty]$,
we suppose $\ric^{N}_{f,M}\geq 0$ and $H_{f,\bm} \geq 0$.
Then $(M,d_{M})$ is isometric to $([0,D]\times \bm_{1},d_{[0,D]\times \bm_{1}})$.
Moreover,
if $N\in [n,\infty)$,
then for every $x\in \bm_{1}$
the function $f\circ \gamma_{x}$ is constant on $[0,D]$.
\end{thm}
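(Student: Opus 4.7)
The plan is to mirror the proof of Theorem \ref{thm:Ball rigidity}, producing the desired rigidity through a two-sided Laplacian comparison that uses both $\bm_1$ and $\bm\setminus\bm_1$. By Lemma \ref{lem:disconnected lemma}, there exist $x_1\in\bm_1$, $x_2\in\bm\setminus\bm_1$, and a normal minimal geodesic $\gamma:[0,D]\to M$ from $x_1$ to $x_2$ that is orthogonal to $\bm$ at both endpoints and has $\gamma|_{(0,D)}\subset\inte M$; in particular $d_M(\bm_1,\bm\setminus\bm_1)=D$. Writing $\rho_1:=\rho_{\bm_1}$ and $\rho_2:=\rho_{\bm\setminus\bm_1}$, the triangle inequality yields $\phi:=\rho_1+\rho_2\geq D$ on $M$, with equality along $\gamma((0,D))$.

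\textbf{Showing $\phi\equiv D$.} Let $\Omega:=\{p\in\inte M : \phi(p)=D\}$; it is non-empty and closed in $\inte M$. To prove openness, fix $p\in\Omega$ and choose foot points $x_p\in\bm_1$ and $y_p\in\bm\setminus\bm_1$. The concatenation of the minimizing segments from $x_p$ to $p$ and from $p$ to $y_p$ has length $\phi(p)=D=d_M(\bm_1,\bm\setminus\bm_1)$, so it is a smooth normal minimal geodesic $\tilde\gamma:[0,D]\to M$ orthogonal to $\bm$ at both endpoints and with $\tilde\gamma|_{(0,D)}\subset\inte M$. Along $\tilde\gamma$, the global bound $\phi\geq D$ combined with the a priori inequalities $\rho_1(\tilde\gamma(t))\leq t$ and $\rho_2(\tilde\gamma(t))\leq D-t$ forces $\rho_1(\tilde\gamma(t))=t$ for every $t\in[0,D]$, so $\gamma_{x_p}$ remains a minimizer of distance to $\bm_1$ on the entire interval $[0,D]$; in particular $p$ lies strictly before the cut locus of $\bm_1$ and $\rho_1$ is smooth on a neighborhood of $p$, and symmetrically for $\rho_2$. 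Lemma \ref{lem:Laplacian comparison} (or Lemma \ref{lem:infinite Laplacian comparison} when $N=\infty$) with $\kappa=\lambda=0$ gives $\Delta_f\rho_1\geq 0$ and $\Delta_f\rho_2\geq 0$ on a common neighborhood $U$ of $p$, so $-\phi$ is smooth and $f$-subharmonic on $U$ with $(-\phi)(p)=\sup_U(-\phi)=-D$. Lemma \ref{lem:maximal principle} then yields $\phi\equiv D$ on $U$, so $\Omega$ is open. Connectedness of $\inte M$ forces $\Omega=\inte M$, and by continuity $\phi\equiv D$ on all of $M$.

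\textbf{Rigidity and isometry.} Since $\phi\equiv D$, one has $\Delta_f\rho_1+\Delta_f\rho_2\equiv 0$ wherever both are smooth, forcing equality in Lemma \ref{lem:Laplacian comparison} (resp.\ Lemma \ref{lem:infinite Laplacian comparison}) along every $\gamma_x$ with $x\in\bm_1$. Via Remark \ref{rem:Equality in Laplacian comparison} and Remark \ref{rem:Equality in Basic comparison} (resp.\ Remark \ref{rem:Equality in infinite Basic comparison}), the $\bm$-Jacobi fields satisfy $Y_{x,i}=E_{x,i}$ on $[0,D]$, and when $N\in[n,\infty)$ one reads off $(f\circ\gamma_x)(t)=f(x)-(N-n)\log s_{0,0}(t)\equiv f(x)$ since $s_{0,0}\equiv 1$. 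The identity $\phi\equiv D$ also gives $\tau(x)\geq D$ with $\gamma_x(D)\in\bm\setminus\bm_1$ for every $x\in\bm_1$, so $\Phi:[0,D]\times\bm_1\to M$, $\Phi(t,x):=\gamma_x(t)$, is well defined, and by the parallel-Jacobi conclusion its differential carries orthonormal frames to orthonormal frames on both the interior and the boundary pieces. Thus $\Phi$ is a Riemannian isometry with boundary from $[0,D]\times\bm_1$ onto $M$, exactly as in the closing paragraphs of the proofs of Theorems \ref{thm:Ball rigidity} and \ref{thm:splitting}.

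\textbf{Main obstacle.} The crux is the openness of $\Omega$: one must control the smoothness of $\rho_1$ and $\rho_2$ simultaneously in a neighborhood of an arbitrary $p\in\Omega$, not just near the initial geodesic $\gamma$. The concatenated-geodesic device supplies this by realizing $p$ as a genuine interior point of a length-$D$ orthogonal minimizer and certifying that $p$ lies strictly before the cut loci of both $\bm_1$ and $\bm\setminus\bm_1$; and it is precisely the hypothesis $H_{f,\bm}\geq 0$, entering as $\lambda=0$, that makes the two Laplacian inequalities $\Delta_f\rho_i\geq 0$ available in the first place.
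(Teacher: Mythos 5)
Your argument is correct and follows essentially the same route as the paper's: define $\Omega=\{p\in\inte M:\rho_1(p)+\rho_2(p)=D\}$, show it is open via the Calabi-type maximum principle (Lemma \ref{lem:maximal principle}) applied to $-(\rho_1+\rho_2)$ after establishing $\Delta_f\rho_i\geq 0$ near $p$, conclude $\inte M=\Omega$, and then read off the rigidity from the equality case of the Laplacian comparison as in Remarks \ref{rem:Equality in Basic comparison} and \ref{rem:Equality in infinite Basic comparison}. Two minor differences: the paper fixes a single closest component $\bm_2$ (supplied by Lemma \ref{lem:disconnected lemma}) and works with $\rho_{\bm_2}$, whereas you use $\rho_{\bm\setminus\bm_1}$; since $d_M(\bm_1,\bm\setminus\bm_1)=D$, either choice works, and locally near $p$ both distance functions have a unique foot point so the Laplacian comparison applies unchanged. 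You also spell out why $\rho_1$ and $\rho_2$ are smooth near an arbitrary $p\in\Omega$ by exhibiting $p$ as an interior point of a length-$D$ orthogonal minimizer and deducing that $p$ lies strictly before both cut loci; the paper asserts this smoothness more tersely, so your version is a slightly fuller account of the same step.
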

\begin{proof}
By Lemma \ref{lem:disconnected lemma},
there exists a connected component $\bm_{2}$ of $\bm$ such that
$d_{M}(\bm_{1},\bm_{2})=D$.
For each $i=1,2$,
let $\rho_{\bm_{i}}:M\to \mathbb{R}$ be the distance function from $\bm_{i}$ defined as $\rho_{\bm_{i}}(p):=d_{M}(p,\bm_{i})$.
Put
\begin{equation*}
\Omega:=\{p\in \inte M \mid \rho_{\bm_{1}}(p)+\rho_{\bm_{2}}(p)=D\}.
\end{equation*}
Lemma \ref{lem:disconnected lemma} implies that
$\Omega$ is a non-empty closed subset of $\inte M$.

We show that
$\Omega$ is open in $\inte M$.
Take $p\in \Omega$.
For each $i=1,2$,
there exists a foot point $x_{p,i}\in \bm_{i}$ on $\bm_{i}$ of $p$ such that
$d_{M}(p,x_{p,i})=\rho_{\bm_{i}}(p)$.
From the triangle inequality,
we derive $d_{M}(x_{p,1},x_{p,2})=D$.
The normal minimal geodesic $\gamma:[0,D]\to M$ from $x_{p,1}$ to $x_{p,2}$ is orthogonal to $\bm$ at $x_{p,1}$ and at $x_{p,2}$.
Furthermore,
$\gamma|_{(0,D)}$ lies in $\inte M$ and passes through $p$.
There exists an open neighborhood $U$ of $p$ such that 
$U$ is contained in $\inte M$
and $\rho_{\bm_{i}}$ is smooth on $U$.
By using Lemmas \ref{lem:Basic comparison} and \ref{lem:infinite Basic comparison},
we see $\Delta_{f}\, \rho_{\bm_{i}} \geq 0$ on $U$;
in particular,
$-(\rho_{\bm_{1}}+\rho_{\bm_{2}})$ is $f$-subharmonic on $U$.
By Lemma \ref{lem:maximal principle},
$\Omega$ is open in $\inte M$.

Since $\inte M$ is connected,
we have $\inte M=\Omega$.
For each $x\in \bm_{1}$,
choose an orthonormal basis $\{e_{x,i}\}_{i=1}^{n-1}$ of $T_{x}\bm$.
Let $\{Y_{x,i}\}_{i=1}^{n-1}$ be the $\bm$-Jacobi fields along $\gamma_{x}$
with initial conditions $Y_{x,i}(0)=e_{x,i}$ and $Y'_{x,i}(0)=-A_{u_{x}}e_{x,i}$.
For all $i$
we see $Y_{x,i}=E_{x,i}$ on $[0,D]$,
where $\{E_{x,i}\}_{i=1}^{n-1}$ are the parallel vector fields along $\gamma_{x}$ with initial condition $E_{x,i}(0)=e_{x,i}$.
Moreover,
if $N\in [n,\infty)$,
then $f\circ \gamma_{x}$ is constant on $[0,D]$ (see Remarks \ref{rem:Equality in Basic comparison} and \ref{rem:Equality in infinite Basic comparison}).
We see that
a map $\Phi:[0,D]\times \bm_{1}\to M$ defined by $\Phi(t,x):=\gamma_{x}(t)$ is a Riemannian isometry with boundary from $[0,D]\times \bm_{1}$ to $M$.
\end{proof}
Next,
we show the following:
\begin{thm}\label{thm:disconnected splitting2}
Let $M$ be an $n$-dimensional,
connected complete Riemannian manifold with boundary,
and let $f:M\to \mathbb{R}$ be a smooth function.
Suppose that
$\bm$ is disconnected.
Let $\{\bm_{i}\}_{i=1,2,\dots}$ denote the connected components of $\bm$.
Assume that $\bm_{1}$ is compact.
Put $D:=\inf_{i=2,3,\dots}\, d_{M}(\bm_{1},\bm_{i})$.
Let $\kappa>0$.
For $N\in [n,\infty)$,
we suppose $\ric^{N}_{f,M}\geq (N-1)\kappa$ and $H_{f,\bm} \geq (N-1)\lambda$.
Then $\lambda<0$ and $D\leq 2D_{\kappa,\lambda}$,
where $D_{\kappa,\lambda}:=\inf\, \{\,t>0\,|\, s'_{\kappa,\lambda}(t)=0\,\}$.
If $D=2D_{\kappa,\lambda}$,
then $(M,d_{M})$ is isometric to $([0,D]\times_{\kappa,\lambda} \bm_{1},d_{[0,D]\times_{\kappa,\lambda} \bm_{1}})$,
and for every $x\in \bm_{1}$
we have $f\circ \gamma_{x}=f(x)-(N-n)\log s_{\kappa,\lambda}$ on $[0,D]$.
\end{thm}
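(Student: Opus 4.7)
The plan is to follow the Kasue-style template behind Theorem \ref{thm:disconnected splitting1}, now augmenting the two-boundary maximum-principle argument with a second-variation computation tuned to the model warping function $s_{\kappa,\lambda}$; this latter ingredient will supply the sharp bound $D\leq 2D_{\kappa,\lambda}$ and in particular force $\lambda<0$. First, by Lemma \ref{lem:disconnected lemma} I fix a component $\bm_{2}$ realizing $d_{M}(\bm_{1},\bm_{2})=D$ together with a normal minimizing geodesic $\gamma=\gamma_{x_{1}}:[0,D]\to M$ with $\gamma(D)=x_{2}\in\bm_{2}$, $\gamma'(D)=-u_{x_{2}}$, and $\gamma|_{(0,D)}\subset\inte M$. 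Since $\gamma$ realizes distance from $\bm_{1}$ throughout $[0,D]$ there is no focal point of $\bm_{1}$ in $(0,D)$, and the argument inside the proof of Lemma \ref{lem:Inscribed radius comparison} forces $D\leq\tau_{1}(x_{1})\leq\const$; this bound will later keep the trigonometric estimate in the monotone branch of $\tan$.

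For the main inequality I will use the two-boundary index form. Take an orthonormal basis $\{e_{i}\}_{i=1}^{n-1}$ of $T_{x_{1}}\bm_{1}$ and parallel-transport it to $\{E_{i}(t)\}$ along $\gamma$; since $\gamma'(D)=-u_{x_{2}}$ is normal to $\bm_{2}$, $\{E_{i}(D)\}$ is orthonormal in $T_{x_{2}}\bm_{2}$. For any smooth $\phi:[0,D]\to\mathbb{R}$ the fields $W_{i}:=\phi E_{i}$ are admissible variations of $\gamma$ among curves from $\bm_{1}$ to $\bm_{2}$, so the sum of their second variations is nonnegative. Each endpoint contributes a boundary term $-\phi(\cdot)^{2}\langle A_{u_{x_{i}}}E_{i},E_{i}\rangle$ (summing to $-H_{x_{i}}\phi(\cdot)^{2}$), and after substituting the Bakry--\'Emery decomposition of $\ric_{g}$ and $H_{x_{i}}$ in terms of $\ric^{N}_{f}$ and $H_{f,x_{i}}$ as in the proof of Lemma \ref{lem:Basic comparison}, integrating $F'':=(f\circ\gamma)''$ by parts, and discarding the manifestly nonnegative square $(\sqrt{N-n}\phi'+F'\phi/\sqrt{N-n})^{2}$, I will arrive at
\begin{equation*}
0\leq (N-1)\int_{0}^{D}[\phi'(t)^{2}-\kappa\phi(t)^{2}]\,dt-(N-1)\lambda[\phi(0)^{2}+\phi(D)^{2}].
\end{equation*}
Testing $\phi\equiv 1$ yields $\kappa D+2\lambda\leq 0$, hence $\lambda<0$ and the model-condition holds. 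Testing $\phi=s_{\kappa,\lambda}$, an integration by parts exploiting $\phi''+\kappa\phi=0$, $\phi(0)=1$ and $\phi'(0)=-\lambda$ reduces the inequality to $0\leq s_{\kappa,\lambda}(D)[s'_{\kappa,\lambda}(D)-\lambda s_{\kappa,\lambda}(D)]$; writing $s_{\kappa,\lambda}(t)=A\cos(\sqrt{\kappa}\,t-\alpha)$ with $\alpha=\sqrt{\kappa}D_{\kappa,\lambda}\in(0,\pi/2)$ and using $D\leq\const$ to place $\sqrt{\kappa}D-\alpha$ in $(-\pi/2,\pi/2)$, this rearranges to $\tan(\sqrt{\kappa}D-\alpha)\leq\tan\alpha$, whence $D\leq 2D_{\kappa,\lambda}$.

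In the equality case $D=2D_{\kappa,\lambda}$, equality in the index form propagates to equality in Lemma \ref{lem:Basic comparison} along $\gamma_{x_{1}}$, so by Remark \ref{rem:Equality in Basic comparison} the $\bm_{1}$-Jacobi fields are $Y_{x_{1},i}=s_{\kappa,\lambda}E_{x_{1},i}$ on $[0,D]$ and $f\circ\gamma_{x_{1}}=f(x_{1})-(N-n)\log s_{\kappa,\lambda}$. To globalize this I will use the maximum-principle template of Theorems \ref{thm:Ball rigidity} and \ref{thm:disconnected splitting1}: set $\Omega:=\{p\in\inte M:\rho_{\bm_{1}}(p)+\rho_{\bm_{2}}(p)=D\}$, which is nonempty (containing $\gamma|_{(0,D)}$) and closed. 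Near any $p\in\Omega$, avoiding the cut loci, Lemma \ref{lem:Laplacian comparison} applied from $\bm_{1}$ and $\bm_{2}$ combined with the odd symmetry
\begin{equation*}
\frac{s'_{\kappa,\lambda}(D_{\kappa,\lambda}+s)}{s_{\kappa,\lambda}(D_{\kappa,\lambda}+s)}=-\frac{s'_{\kappa,\lambda}(D_{\kappa,\lambda}-s)}{s_{\kappa,\lambda}(D_{\kappa,\lambda}-s)}
\end{equation*}
will show $\Delta_{f}(\rho_{\bm_{1}}+\rho_{\bm_{2}})\geq 0$ on a small neighborhood, so $-(\rho_{\bm_{1}}+\rho_{\bm_{2}})$ is $f$-subharmonic with maximum at $p$; Lemma \ref{lem:maximal principle} forces the neighborhood into $\Omega$, and connectedness of $\inte M$ gives $\Omega=\inte M$. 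Equality in Lemma \ref{lem:Basic comparison} then propagates to every $\gamma_{x}$ with $x\in\bm_{1}$, and the map $\Phi(t,x):=\gamma_{x}(t)$ is checked to be a Riemannian isometry with boundary from $[0,D]\times_{\kappa,\lambda}\bm_{1}$ onto $M$, exactly as in Theorems \ref{thm:splitting} and \ref{thm:disconnected splitting1}.

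The main technical obstacle is Step 2: one must (i) correctly assemble the two-boundary index form so that both endpoint contributions produce the $-(N-1)\lambda$ term with the same sign (the opposite orientations of $\gamma'$ at $x_{1}$ and $x_{2}$ are absorbed by running one end through time-reversal), (ii) carry the Bakry--\'Emery integration-by-parts through to the clean model inequality displayed above, and (iii) verify that the resulting trigonometric bound delivers $D\leq 2D_{\kappa,\lambda}$ rather than merely $D\leq\const$, a step that crucially uses the focal-point estimate from Step 1 to place $\sqrt{\kappa}D-\alpha$ inside the monotone branch of $\tan$.
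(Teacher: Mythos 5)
Your proposal is correct in its main thrust but takes a genuinely different route to the two preliminary inequalities. The paper derives $\lambda<0$ indirectly (if $\lambda\geq 0$ then Theorem \ref{thm:disconnected splitting1} would make $M$ a flat cylinder $[0,D]\times\bm_1$ with $f\circ\gamma_x$ constant, incompatible with $\ric^{N}_{f}\geq(N-1)\kappa>0$), and it proves $D\leq 2D_{\kappa,\lambda}$ and the rigidity statement \emph{simultaneously}: it runs the $\Omega$-maximum-principle argument under the hypothesis $D\geq 2D_{\kappa,\lambda}$ directly, using the identity
\begin{equation*}
\frac{s'_{\kappa,\lambda}(a)}{s_{\kappa,\lambda}(a)}+\frac{s'_{\kappa,\lambda}(b)}{s_{\kappa,\lambda}(b)}=\frac{s'_{\kappa,\lambda}(a+b)-\lambda\,s_{\kappa,\lambda}(a+b)}{s_{\kappa,\lambda}(a)\,s_{\kappa,\lambda}(b)}
\end{equation*}
together with the monotonicity of $s'_{\kappa,\lambda}/s_{\kappa,\lambda}$ and its value $\lambda$ at $2D_{\kappa,\lambda}$, obtains the warped-product structure, and then reads off $D=2D_{\kappa,\lambda}$ from the shape operator of $\bm_2$, thereby ruling out $D>2D_{\kappa,\lambda}$ a posteriori. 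Your two-boundary index-form inequality
\begin{equation*}
0\leq(N-1)\int_0^D\bigl[\phi'(t)^2-\kappa\,\phi(t)^2\bigr]\,dt-(N-1)\lambda\bigl[\phi(0)^2+\phi(D)^2\bigr]
\end{equation*}
is correct (the endpoint signs do work out because $\gamma'(D)=-u_{x_2}$, and the Bakry--\'Emery integration by parts discards the nonnegative square exactly as in Lemma \ref{lem:Basic comparison}), and testing $\phi\equiv 1$ and $\phi=s_{\kappa,\lambda}$ gives $\lambda<0$ and $0\leq s_{\kappa,\lambda}(D)\bigl[s'_{\kappa,\lambda}(D)-\lambda s_{\kappa,\lambda}(D)\bigr]$ directly, which is arguably cleaner than the contradiction argument. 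Your rigidity step is essentially the paper's, and your odd symmetry of $s'_{\kappa,\lambda}/s_{\kappa,\lambda}$ about $D_{\kappa,\lambda}$ is equivalent to the paper's identity above.

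There is one gap to close. Deducing $D\leq 2D_{\kappa,\lambda}$ from $0\leq s_{\kappa,\lambda}(D)\bigl[s'_{\kappa,\lambda}(D)-\lambda s_{\kappa,\lambda}(D)\bigr]$ requires $s_{\kappa,\lambda}(D)>0$, i.e., $D<\const$ strictly, but the focal-point argument you cite only gives $D\leq\tau_1(x_1)\leq\const$. If $D=\const$ the product vanishes identically and the trigonometric rearrangement conveys no information. One repair: if the right-hand side is zero, every inequality in your index-form chain is an equality; in particular (for $N>n$) the discarded square forces $f\circ\gamma_{x_1}=f(x_1)-(N-n)\log s_{\kappa,\lambda}$ on $(0,D)$, which diverges as $t\to D=\const$, contradicting smoothness of $f$ at $x_2$. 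For $N=n$ it is simpler to adopt the paper's ordering — prove the warped-product rigidity under the hypothesis $D\geq 2D_{\kappa,\lambda}$ and read off $D=2D_{\kappa,\lambda}$ afterward — which automatically eliminates the boundary case. With that adjustment the proposal is sound.
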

\begin{proof}
If $\lambda \geq 0$,
then Theorem \ref{thm:disconnected splitting1} implies that
$(M,d_{M})$ is isometric to $([0,D]\times \bm_{1},d_{[0,D]\times \bm_{1}})$,
and for every $x\in \bm_{1}$
the function $f\circ \gamma_{x}$ is constant on $[0,D]$.
This contradicts the positivity of $\kappa$,
and hence we have $\lambda<0$.

We prove that
if $D\geq 2D_{\kappa,\lambda}$,
then the metric space $(M,d_{M})$ is isometric to $([0,2D_{\kappa,\lambda}]\times_{\kappa,\lambda} \bm_{1},d_{[0,D_{\kappa,\lambda}]\times_{\kappa,\lambda} \bm_{1}})$,
and for every $x\in \bm_{1}$
we have $f\circ \gamma_{x}=f(x)-(N-n)\log s_{\kappa,\lambda}$ on $[0,2D_{\kappa,\lambda}]$.
Assume $D\geq 2D_{\kappa,\lambda}$.
By Lemma \ref{lem:disconnected lemma},
there exists a connected component $\bm_{2}$ of $\bm$ such that
$d_{M}(\bm_{1},\bm_{2})=D$.
For each $i=1,2$,
let $\rho_{\bm_{i}}:M\to \mathbb{R}$ be the distance function from $\bm_{i}$ defined as $\rho_{\bm_{i}}(p):=d_{M}(p,\bm_{i})$.
Put
\begin{equation*}
\Omega:=\{p\in \inte M \mid \rho_{\bm_{1}}(p)+\rho_{\bm_{2}}(p)=D\}.
\end{equation*}
The set $\Omega$ is a non-empty closed subset of $\inte M$.

We show that
$\Omega$ is open in $\inte M$.
Take $p\in \Omega$.
For each $i=1,2$,
we take a foot point $x_{p,i}\in \bm_{i}$ on $\bm_{i}$ of $p$ such that
$d_{M}(p,x_{p,i})=\rho_{\bm_{i}}(p)$.
From the triangle inequality,
we derive $d_{M}(x_{p,1},x_{p,2})=D$.
The normal minimal geodesic $\gamma:[0,D]\to M$ from $x_{p,1}$ to $x_{p,2}$ is orthogonal to $\bm$ at $x_{p,1}$ and at $x_{p,2}$.
Furthermore,
$\gamma|_{(0,D)}$ lies in $\inte M$ and passes through $p$.
There exists an open neighborhood $U$ of $p$ such that $\rho_{\bm_{i}}$ is smooth on $U$.
By using Lemma \ref{lem:Basic comparison},
for all $q\in U$,
we see
\begin{multline}\label{eq:disconnected}
       -\frac{\Delta_{f}\, \left(\rho_{\bm_{1}}+\rho_{\bm_{2}}\right)(q)}{N-1}
\leq \frac{s'_{\kappa,\lambda}(\rho_{\bm_{1}}(q))}{s_{\kappa,\lambda}(\rho_{\bm_{1}}(q))}+ \frac{s'_{\kappa,\lambda}(\rho_{\bm_{2}}(q))}{s_{\kappa,\lambda}(\rho_{\bm_{2}}(q))}\\
  =   \frac{s'_{\kappa,\lambda}(\rho_{\bm_{1}}(q)+\rho_{\bm_{2}}(q))-\lambda s_{\kappa,\lambda}(\rho_{\bm_{1}}(q)+\rho_{\bm_{2}}(q))}{s_{\kappa,\lambda}(\rho_{\bm_{1}}(q))  s_{\kappa,\lambda}(\rho_{\bm_{2}}(q))}.
\end{multline}
Since $\kappa>0$,
the function $s'_{\kappa,\lambda}/s_{\kappa,\lambda}$ is monotone decreasing on $(0,\const)$,
and satisfies $s'_{\kappa,\lambda}(2D_{\kappa,\lambda})/s_{\kappa,\lambda}(2D_{\kappa,\lambda})=\lambda$.
By the triangle inequality and the assumption $D\geq 2D_{\kappa,\lambda}$,
we have $\rho_{\bm_{1}}+\rho_{\bm_{2}}\geq 2D_{\kappa,\lambda}$ on $U$.
Therefore,
by (\ref{eq:disconnected}),
$-(\rho_{\bm_{1}}+\rho_{\bm_{2}})$ is $f$-subharmonic on $U$.
By Lemma \ref{lem:maximal principle},
$\Omega$ is open in $\inte M$.

The connectedness of $\inte M$ implies $\inte M=\Omega$.
For each $x\in \bm_{1}$,
choose an orthonormal basis $\{e_{x,i}\}_{i=1}^{n-1}$ of $T_{x}\bm$.
Let $\{Y_{x,i}\}_{i=1}^{n-1}$ be the $\bm$-Jacobi fields along $\gamma_{x}$
with initial conditions $Y_{x,i}(0)=e_{x,i}$ and $Y'_{x,i}(0)=-A_{u_{x}}e_{x,i}$.
For all $i$
we have $Y_{x,i}=s_{\kappa,\lambda}E_{x,i}$ on $[0,D]$,
where $\{Y_{x,i}\}_{i=1}^{n-1}$ are the parallel vector fields along $\gamma_{x}$ with initial condition $E_{x,i}(0)=e_{x,i}$.
Moreover,
$f\circ \gamma_{x}=f(x)-(N-n)\log s_{\kappa,\lambda}$ on $[0,D]$ (see Remark \ref{rem:Equality in Basic comparison}).
We see $D=2D_{\kappa,\lambda}$.
A map $\Phi:[0,2D_{\kappa,\lambda}]\times \bm_{1}\to M$ defined by $\Phi(t,x):=\gamma_{x}(t)$ is a Riemannian isometry with boundary from $[0,2D_{\kappa,\lambda}]\times_{\kappa,\lambda} \bm_{1}$ to $M$.
\end{proof}

\section{Eigenvalue rigidity}\label{sec:Eigenvalue rigidity}
Let $M$ be an $n$-dimensional,
connected complete Riemannian manifold with boundary with Riemannian metric $g$,
and let $f:M\to \mathbb{R}$ be a smooth function.
\subsection{Lower bounds}
We prove the inequalities (\ref{eq:eigenvalue rigidity}) in Theorem \ref{thm:eigenvalue rigidity}
and (\ref{eq:infinite eigenvalue rigidity}) in Theorem \ref{thm:infinite eigenvalue rigidity}.

Allegretto and Huang \cite{AH} have shown the following inequality of Picone type in a Euclidean setting (see Theorem 1.1 in \cite{AH}):
\begin{lem}\label{lem:Picone identity}
Let $\phi$ and $\psi$ be functions on $M$ that are smooth on a domain $U$ in $M$,
and satisfy $\phi>0$ and $\psi \geq 0$ on $U$.
Then for all $p\in (1,\infty)$ we have the following inequality on $U$:
\begin{equation}\label{eq:Picone identity}
\Vert \nabla \psi \Vert^{p}\geq \Vert \nabla \phi \Vert^{p-2} g\left(\nabla \left(  \psi^{p}\,  \phi^{1-p} \right),\nabla \phi \right).
\end{equation}
\end{lem}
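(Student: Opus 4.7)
The plan is to verify the inequality pointwise at each $q \in U$, reducing the Riemannian statement to an algebraic inequality in two nonnegative reals by combining Cauchy--Schwarz with Young's inequality.

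First, I would expand the gradient on the right-hand side using the Leibniz rule:
\[
\nabla\bigl(\psi^{p}\,\phi^{1-p}\bigr) = p\,\psi^{p-1}\,\phi^{1-p}\,\nabla\psi + (1-p)\,\psi^{p}\,\phi^{-p}\,\nabla\phi.
\]
Substituting this into the right-hand side of (\ref{eq:Picone identity}) and bounding the cross term via Cauchy--Schwarz, $g(\nabla\psi,\nabla\phi) \leq \Vert\nabla\psi\Vert\,\Vert\nabla\phi\Vert$, the desired inequality reduces to
\[
\Vert\nabla\psi\Vert^{p} + (p-1)\,(\psi/\phi)^{p}\,\Vert\nabla\phi\Vert^{p} \geq p\,(\psi/\phi)^{p-1}\,\Vert\nabla\phi\Vert^{p-1}\,\Vert\nabla\psi\Vert.
\]
Setting $X := \Vert\nabla\psi\Vert$ and $Y := (\psi/\phi)\,\Vert\nabla\phi\Vert$, this becomes $X^{p} + (p-1)\,Y^{p} \geq p\,X\,Y^{p-1}$, which follows directly from Young's inequality $ab \leq a^{p}/p + b^{q}/q$ with conjugate exponents $p$ and $q=p/(p-1)$ applied to $a=X$ and $b=Y^{p-1}$.

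The main technical concern, rather than a serious obstacle, is the behavior at points where $\psi = 0$ or $\nabla\phi = 0$, since then the intermediate factors $\psi^{p-1}$ or $\Vert\nabla\phi\Vert^{p-2}$ may appear singular when $p \in (1,2)$. I would handle this by noting that $\psi$ is smooth and nonnegative, so each zero of $\psi$ is a local minimum and $\nabla\psi$ vanishes there as well; hence the Leibniz expansion above extends continuously to zero. At points where $\nabla\phi = 0$, the inner product $g(\,\cdot\,,\nabla\phi)$ vanishes identically and both sides reduce to the trivial inequality $\Vert\nabla\psi\Vert^{p} \geq 0$. Once the pointwise inequality is established on the open set where $\psi > 0$ and $\nabla\phi \neq 0$, a routine continuity argument transfers it to all of $U$.
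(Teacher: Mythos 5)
Your proposal is correct and is essentially the same argument as the paper's, just run in the opposite direction: the paper starts from Young's inequality, rearranges, then applies Cauchy--Schwarz and recognizes the result as the Leibniz expansion of $\nabla(\psi^{p}\phi^{1-p})$, whereas you begin from that expansion and apply Cauchy--Schwarz followed by Young. The additional discussion of potential singularities is sound but can be simplified: since $p>1$ the exponents $p-1$ and $p$ on $\psi$ are positive, so $\psi^{p-1}$ and $\psi^{p}$ vanish continuously at zeros of $\psi$ without needing the observation that $\nabla\psi$ also vanishes there; and at zeros of $\nabla\phi$ the expanded right-hand side carries a factor $\Vert\nabla\phi\Vert^{p-1}$ (or $\Vert\nabla\phi\Vert^{p}$) with positive exponent, so it tends to $0$ as well.
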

\begin{proof}
For a fixed $p\in (1,\infty)$,
we put $q:=p(p-1)^{-1}$.
By the Young inequality,
we have
\begin{equation}\label{eq:Young}
\Vert \nabla \psi \Vert \left( \frac{\psi \Vert \nabla \phi \Vert}{\phi}   \right)^{p-1}
\leq
\frac{\Vert \nabla \psi \Vert^{p}}{p}+\frac{1}{q}\left( \frac{\psi \Vert \nabla \phi \Vert}{\phi} \right)^{p}
\end{equation}
on $U$.
By (\ref{eq:Young}),
and by the Cauchy-Schwarz inequality,
we have
\begin{align}\label{eq:Cauchy-Schwarz}
&\Vert \nabla \psi \Vert^{p} \geq p \left(  \psi \phi^{-1} \right)^{p-1} \Vert \nabla \psi \Vert    \Vert \nabla \phi \Vert^{p-1} -(p-1)\left(  \psi\phi^{-1} \right)^{p}\Vert \nabla \phi \Vert^{p} \\ \notag
                                        \geq\  & p \left(  \psi \phi^{-1} \right)^{p-1}  g(\nabla \phi,\nabla \psi)  \Vert \nabla \phi \Vert^{p-2} -(p-1)\left(  \psi\phi^{-1} \right)^{p}\Vert \nabla \phi \Vert^{p}\\ \notag
                                           =  \  & \Vert \nabla \phi \Vert^{p-2} g\left(\nabla \left(  \psi^{p}\,  \phi^{1-p} \right),\nabla \phi \right). \notag
\end{align}
This completes the proof.
\end{proof}
\begin{rem}\label{rem:the equality case in Picone identity}
In Lemma \ref{lem:Picone identity},
we assume that
the equality in (\ref{eq:Picone identity}) holds on $U$.
In this case,
the equalities in (\ref{eq:Cauchy-Schwarz}) also hold on $U$.
From the equality in the Young inequality,
and from that in the Cauchy-Schwarz inequality,
we deduce that
for some constant $c\neq 0$
we have $\phi \Vert \nabla \psi \Vert =\psi \Vert \nabla \phi \Vert$ and $\nabla \psi=c\nabla \phi$ on $U$;
in particular,
$\psi=c\, \phi$ on $U$.
\end{rem}
Now,
we prove the inequality $(\ref{eq:eigenvalue rigidity})$ in Theorem \ref{thm:eigenvalue rigidity}.
\begin{prop}\label{prop:inequality in eigenvalue rigidity}
Suppose that
$M$ is compact.
Let $p\in (1,\infty)$.
For $N\in [n,\infty)$,
we suppose $\ric^{N}_{f,M}\geq (N-1)\kappa$ and $H_{f,\bm} \geq (N-1)\lambda$.
For $D\in (0,\bar{C}_{\kappa,\lambda}]\setminus \{\infty\}$,
assume $\dm \leq D$.
Then we have $(\ref{eq:eigenvalue rigidity})$.
\end{prop}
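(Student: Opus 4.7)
The plan is to combine the distributional $p$-Laplacian comparison of Proposition \ref{prop:global p-Laplacian comparison} with the Picone-type inequality of Lemma \ref{lem:Picone identity}, using the first eigenfunction of the one-dimensional model problem as a transplanted test object.

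First I would build the model comparison function: let $\phi_{0}:[0,D]\to\mathbb{R}$ be the (positive) first eigenfunction of $(\ref{eq:model space eigenvalue problem})$ with eigenvalue $\mu:=\mu_{p,N,\kappa,\lambda,D}$, normalized so that $\phi_{0}(0)=0$, $\phi_{0}>0$ on $(0,D]$, and $\phi_{0}'>0$ on $[0,D)$ (so that $\phi_{0}$ is monotone increasing). Since $\dm\leq D$, the image of $\rho_{\bm}$ lies in $[0,D]$, so we may extend $\phi_{0}$ to any smooth monotone increasing function on $[0,\infty)$; only the values on $[0,\dm]$ enter the estimate. Set $\Phi:=\phi_{0}\circ\rho_{\bm}$; then $\Phi\geq 0$, $\Phi|_{\bm}=0$, and $\Phi>0$ on $\inte M$. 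The eigenfunction ODE reads
\begin{equation*}
-\bigl((\phi_{0}')^{p-1}\bigr)'(t)-(N-1)\frac{s'_{\kappa,\lambda}(t)}{s_{\kappa,\lambda}(t)}\bigl(\phi_{0}'(t)\bigr)^{p-1}=\mu\,\phi_{0}(t)^{p-1}
\end{equation*}
on $(0,D)$, so Proposition \ref{prop:global p-Laplacian comparison} applied to $\Phi$ gives, for every non-negative smooth $\psi$ with compact support in $\inte M$,
\begin{equation*}
\int_{M}\Vert\nabla\Phi\Vert^{p-2}\,g(\nabla\psi,\nabla\Phi)\,dm_{f}\geq \mu\int_{M}\psi\,\Phi^{p-1}\,dm_{f}.
\end{equation*}

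Next, by the density of smooth compactly-supported-in-$\inte M$ functions in $W^{1,p}_{0}(M,m_{f})$, and the fact that $\Vert\nabla|u|\Vert=\Vert\nabla u\Vert$ almost everywhere, it suffices to show $R_{f,p}(u)\geq\mu$ for every non-negative smooth $u$ with $\mathrm{supp}(u)\subset\inte M$. For such $u$, since $\Phi$ is continuous on $M$ and strictly positive on $\inte M$, it is bounded away from $0$ on the compact set $\mathrm{supp}(u)$, so $\psi:=u^{p}\Phi^{1-p}$ is non-negative, bounded, and supported in $\inte M$. Then Picone's inequality (Lemma \ref{lem:Picone identity}), with $\phi\leftarrow\Phi$ and $\psi\leftarrow u$, holds pointwise almost everywhere on $\inte M$:
\begin{equation*}
\Vert\nabla u\Vert^{p}\geq \Vert\nabla\Phi\Vert^{p-2}\,g\bigl(\nabla(u^{p}\Phi^{1-p}),\nabla\Phi\bigr).
\end{equation*}
Integrating this against $m_{f}$ and inserting the displayed consequence of Proposition \ref{prop:global p-Laplacian comparison} with $\psi=u^{p}\Phi^{1-p}$ yields
\begin{equation*}
\int_{M}\Vert\nabla u\Vert^{p}\,dm_{f}\geq \mu\int_{M}u^{p}\Phi^{1-p}\,\Phi^{p-1}\,dm_{f}=\mu\int_{M}u^{p}\,dm_{f},
\end{equation*}
which is $R_{f,p}(u)\geq\mu_{p,N,\kappa,\lambda,D}$, whence $\mu_{f,1,p}(M)\geq\mu_{p,N,\kappa,\lambda,D}$.

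The main obstacle is regularity: $\rho_{\bm}$ is only Lipschitz on $M$ (smooth off $\cut\bm$), so $\Phi$ and the test function $u^{p}\Phi^{1-p}$ are not literally smooth, and Proposition \ref{prop:global p-Laplacian comparison} is stated for genuinely smooth test functions $\psi$. This is handled by the same exhaustion device used in proving that proposition: via Lemma \ref{lem:avoiding the cut locus2}, one replaces $M$ by the closed subsets $\Omega_{k}$ on which $\Phi$ is smooth, applies the Green formula with test function $u^{p}\Phi^{1-p}$ (which vanishes near $\bm$), uses $g(\nu_{k},\nabla\rho_{\bm})\geq 0$ to discard the boundary term with the favorable sign (noting $\phi_{0}'\geq 0$), and passes to the limit $k\to\infty$; the integral form of Picone survives because the pointwise inequality holds wherever $\rho_{\bm}$ is differentiable, i.e.\ almost everywhere on $M$.
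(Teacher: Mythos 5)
Your proposal is correct and reproduces the paper's argument exactly: transplant the model eigenfunction to $\Phi=\phi_{p,N,\kappa,\lambda,D}\circ\rho_{\bm}$, apply the Picone inequality (Lemma \ref{lem:Picone identity}) pointwise off $\cut\bm$, integrate, and then invoke Proposition \ref{prop:global p-Laplacian comparison} with $\psi^{p}\Phi^{1-p}$ as test function to land on $\mu_{p,N,\kappa,\lambda,D}\int_{M}\psi^{p}\,dm_{f}$. Your extra paragraph addressing the fact that $\psi^{p}\Phi^{1-p}$ is only Lipschitz (so Proposition \ref{prop:global p-Laplacian comparison} as literally stated for smooth test functions does not directly apply, and one must re-run the exhaustion via Lemma \ref{lem:avoiding the cut locus2}) is a legitimate refinement of a point the paper glosses over, but it does not change the route.
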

\begin{proof}
Let $\phi_{p,N,\kappa,\lambda,D}:[0,D]\to \mathbb{R}$ be a function satisfying (\ref{eq:model space eigenvalue problem}) for $\mu=\mu_{p,N,\kappa,\lambda,D}$.
We may assume $\phi_{p,N,\kappa,\lambda,D}|_{(0,D]}>0$.
The equation (\ref{eq:model space eigenvalue problem}) is written in the form
\begin{align*}
\left(\vert \phi'(t)\vert^{p-2} \phi'(t) s^{N-1}_{\kappa,\lambda}(t) \right)'+\mu\, \vert \phi(t)\vert^{p-2}\phi(t) s^{N-1}_{\kappa,\lambda}(t)=0,\\
                                                              \phi(0)=0, \quad \phi'(D)=0.                           \notag
\end{align*}
Therefore,
it follows that $\phi'_{p,N,\kappa,\lambda,D}|_{[0,D)}>0$.
Put $\Phi:=\phi_{p,N,\kappa,\lambda,D}\circ \rho_{\bm}$.
Take a non-negative,
non-zero smooth function $\psi$ on $M$ whose support is compact and contained in $\inte M$.
By Lemma \ref{lem:Picone identity},
we have
\begin{equation}\label{eq:eigenvalue rigidity4}
\Vert \nabla \psi \Vert^{p}\geq \Vert \nabla \Phi \Vert^{p-2} g\left(\nabla \left(  \psi^{p}\,  \Phi^{1-p} \right),\nabla \Phi \right)
\end{equation}
on $\inte M \setminus \cut \bm$.
By using (\ref{eq:eigenvalue rigidity4}) and Proposition \ref{prop:global p-Laplacian comparison},
we have
\begin{align*}
&\quad \, \int_{M}\, \Vert \nabla \psi \Vert^{p}\, d\,m_{f}\geq \int_{M}\,\Vert \nabla \Phi \Vert^{p-2} g\left(\nabla \left(  \psi^{p}\,  \Phi^{1-p} \right),\nabla \Phi \right) \,d\,m_{f}\\
&                                                                      \geq \int_{M}\,  \left( \psi^{p}\,  \Phi^{1-p} \right)  \left(\left( -\left( \left(\phi' \right)^{p-1} \right)' -(N-1)\frac{s'_{\kappa,\lambda}}{s_{\kappa,\lambda}}\left(\phi' \right)^{p-1}\right)\circ \rho_{\bm}\right)   \,d\,m_{f}\\
&                                                                      =\mu_{p,N,\kappa,\lambda,D}\,\int_{M}\, \psi^{p} \, d\,m_{f}.
\end{align*}
We obtain $R_{f,p}(\psi)\geq \mu_{p,N,\kappa,\lambda,D}$.
This implies (\ref{eq:eigenvalue rigidity}).
\end{proof}
Next,
we prove the inequality $(\ref{eq:infinite eigenvalue rigidity})$ in Theorem \ref{thm:infinite eigenvalue rigidity}.
\begin{prop}\label{prop:inequality in infinite eigenvalue rigidity}
Suppose that
$M$ is compact.
Let $p\in (1,\infty)$.
Suppose $\ric^{\infty}_{f,M}\geq 0$ and $H_{f,\bm} \geq 0$.
For $D\in (0,\infty)$,
assume $\dm \leq D$.
Then we have $(\ref{eq:infinite eigenvalue rigidity})$.
\end{prop}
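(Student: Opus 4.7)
The plan is to mimic the proof of Proposition \ref{prop:inequality in eigenvalue rigidity} almost line for line, simply replacing the Laplacian comparison for $\rho_{\bm}$ in the finite-dimensional case with its $N=\infty$ counterpart Proposition \ref{prop:global infinite p-Laplacian comparison}. First I would let $\phi:=\phi_{p,\infty,D}$ denote a non-trivial solution on $[0,D]$ of the one-dimensional eigenvalue problem (\ref{eq:infinite model space eigenvalue problem}) associated with $\mu=\mu_{p,\infty,D}$, normalized so that $\phi|_{(0,D]}>0$. Rewriting (\ref{eq:infinite model space eigenvalue problem}) as $(|\phi'|^{p-2}\phi')'+\mu\,|\phi|^{p-2}\phi=0$ and integrating from $t$ to $D$ with $\phi'(D)=0$ shows $\phi'|_{[0,D)}>0$; in particular $\phi$ is strictly increasing.

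Next I would set $\Phi:=\phi\circ\rho_{\bm}$. Since $\dm\leq D$ and $\phi>0$ on $(0,D]$, the function $\Phi$ is strictly positive on $\inte M$. Fix a non-negative, non-zero smooth function $\psi$ on $M$ whose support is compact and contained in $\inte M$. Applying Lemma \ref{lem:Picone identity} pointwise on the open dense set $\inte M\setminus \cut\bm$ (where $\Phi$ is smooth and positive) gives
\begin{equation*}
\Vert\nabla\psi\Vert^{p}\geq \Vert\nabla\Phi\Vert^{p-2}\,g\!\left(\nabla(\psi^{p}\,\Phi^{1-p}),\nabla\Phi\right).
\end{equation*}
Since $\cut\bm$ has measure zero, integrating against $m_{f}$ preserves the inequality.

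Now I would feed the non-negative test function $\psi^{p}\Phi^{1-p}$ (which is compactly supported in $\inte M$, since $\psi$ vanishes in a neighborhood of $\bm$) into the distributional comparison of Proposition \ref{prop:global infinite p-Laplacian comparison}, obtaining
\begin{equation*}
\int_{M}\Vert\nabla\Phi\Vert^{p-2}g\!\left(\nabla(\psi^{p}\Phi^{1-p}),\nabla\Phi\right)d\,m_{f}\geq \int_{M}\psi^{p}\Phi^{1-p}\left(-\left((\phi')^{p-1}\right)'\circ\rho_{\bm}\right)d\,m_{f}.
\end{equation*}
Using the ODE (\ref{eq:infinite model space eigenvalue problem}) and $\phi\geq 0$, the right-hand integrand equals $\mu_{p,\infty,D}\,\psi^{p}\Phi^{1-p}\cdot\Phi^{p-1}=\mu_{p,\infty,D}\,\psi^{p}$. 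Combining the two integral inequalities yields $R_{f,p}(\psi)\geq \mu_{p,\infty,D}$, and taking the infimum over all admissible $\psi$ proves (\ref{eq:infinite eigenvalue rigidity}).

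The main subtlety, as in the finite-dimensional case, is the legitimacy of using $\psi^{p}\Phi^{1-p}$ as a test function in Proposition \ref{prop:global infinite p-Laplacian comparison}, because $\Phi$ is only continuous (not smooth) across $\cut\bm$. This is handled by the very construction of that global comparison, which is proved via the exhaustion $\{\Omega_{k}\}$ of $M\setminus\cut\bm$ from Lemma \ref{lem:avoiding the cut locus2}: applying the Green formula on each $\Omega_{k}$ with the legitimate test function $\psi^{p}\Phi^{1-p}$ (smooth there) and then passing to the limit $k\to\infty$ gives the desired inequality without ever demanding global smoothness of $\Phi$. Apart from this standard but essential technical point, the argument is a direct transcription of the proof of Proposition \ref{prop:inequality in eigenvalue rigidity}.
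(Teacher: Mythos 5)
Your proof is correct and takes essentially the same route as the paper: construct the test function $\Phi=\phi_{p,\infty,D}\circ\rho_{\bm}$, apply the Picone-type inequality (Lemma \ref{lem:Picone identity}) on $\inte M\setminus\cut\bm$, and combine with the distributional comparison of Proposition \ref{prop:global infinite p-Laplacian comparison} to get $R_{f,p}(\psi)\geq\mu_{p,\infty,D}$. Your explicit justification of $\phi'|_{[0,D)}>0$ by integrating the ODE, and your remark about the legitimacy of the non-smooth test function $\psi^{p}\Phi^{1-p}$ via the exhaustion $\{\Omega_{k}\}$, make explicit two points that the paper's proof leaves implicit but do not change the argument.
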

\begin{proof}
Let $\phi_{p,\infty,D}:[0,D]\to \mathbb{R}$ be a function satisfying (\ref{eq:infinite model space eigenvalue problem}) for $\mu=\mu_{p,\infty,D}$.
We may assume $\phi_{p,\infty,D}|_{(0,D]}>0$.
In this case,
we have
$\phi'_{p,\infty,D}|_{[0,D)}>0$.
Put $\Phi:=\phi_{p,\infty,D}\circ \rho_{\bm}$.
Take a non-negative,
non-zero smooth function $\psi$ on $M$ whose support is compact and contained in $\inte M$.
By Lemma \ref{lem:Picone identity},
we have
\begin{equation}\label{eq:infinite eigenvalue rigidity4}
\Vert \nabla \psi \Vert^{p}\geq \Vert \nabla \Phi \Vert^{p-2} g\left(\nabla \left(  \psi^{p}\,  \Phi^{1-p} \right),\nabla \Phi \right)
\end{equation}
on $\inte M \setminus \cut \bm$.
By using (\ref{eq:infinite eigenvalue rigidity4}) and Proposition \ref{prop:global infinite p-Laplacian comparison},
we have
\begin{multline*}
\int_{M}\, \Vert \nabla \psi \Vert^{p}\, d\,m_{f}\geq \int_{M}\,\Vert \nabla \Phi \Vert^{p-2} g\left(\nabla \left(  \psi^{p}\,  \Phi^{1-p} \right),\nabla \Phi \right) \,d\,m_{f}\\
                                                                      \geq \int_{M}\,  \left( \psi^{p}\,  \Phi^{1-p} \right)  \left(-\left( \left(\phi' \right)^{p-1} \right)' \circ \rho_{\bm}\right)   \,d\,m_{f}=\mu_{p,\infty,D}\,\int_{M}\, \psi^{p} \, d\,m_{f}.
\end{multline*}
We obtain $R_{f,p}(\psi)\geq \mu_{p,\infty,D}$.
This implies (\ref{eq:infinite eigenvalue rigidity}).
\end{proof}
\begin{rem}\label{rem:the equality case in eigenvalue rigidity}
In Proposition \ref{prop:inequality in eigenvalue rigidity} (resp. \ref{prop:inequality in infinite eigenvalue rigidity}),
we assume that
there exists a non-negative,
non-zero smooth function $\psi:M\to \mathbb{R}$ whose support is compact and contained in $\inte M$ such that $R_{f,p}(\psi)=\mu_{p,N,\kappa,\lambda,D}$ (resp. $R_{f,p}(\psi)=\mu_{p,\infty,D}$).
In this case,
the equality in (\ref{eq:eigenvalue rigidity4}) (resp. (\ref{eq:infinite eigenvalue rigidity4})) holds on $\inte M \setminus \cut \bm$.
Therefore,
for some constant $c\neq 0$
we have $\psi=c\, \Phi$ on $M$ (see Remark \ref{rem:the equality case in Picone identity}).
Furthermore,
the equality case in (\ref{eq:global p-Laplacian comparison}) (resp. \ref{eq:global infinite p-Laplacian comparison}) happens (see Remark \ref{rem:Equality case in global p-Laplacian comparison}).
\end{rem}
\subsection{Equality cases}
We prove Theorems \ref{thm:eigenvalue rigidity} and  \ref{thm:infinite eigenvalue rigidity}.

In the proofs,
we use the following fact:
\begin{prop}\label{prop:eigenfunction}
Suppose that
$M$ is compact.
Let $p\in (1,\infty)$.
Then there exists a non-negative,
non-zero function $\Psi$ in $W^{1,p}_{0}(M,m_{f})$ such that $R_{f,p}(\Psi)=\mu_{f,1,p}(M)$.
Moreover,
for some $\alpha \in (0,1)$
the function $\Psi$ is $C^{1,\alpha}$-H\"older continuous on $M$.
\end{prop}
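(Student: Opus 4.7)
Since $M$ is compact and $f$ is smooth on $M$, the weight $e^{-f}$ is bounded from above and from below by positive constants, so $m_{f}$ is comparable to $\vol_{g}$. Consequently $W^{1,p}_{0}(M,m_{f})$ coincides with the usual unweighted Sobolev space $W^{1,p}_{0}(M,\vol_{g})$ and carries an equivalent norm. The standard reflexivity of $W^{1,p}_{0}$ for $p\in(1,\infty)$ and the Rellich-Kondrachov compact embedding $W^{1,p}_{0}\hookrightarrow L^{p}$ are therefore available, and the plan is to apply the direct method of the calculus of variations followed by the nonlinear regularity theory for quasilinear equations of $p$-Laplace type.

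For existence, I would choose a minimizing sequence $\{\phi_{k}\}\subset W^{1,p}_{0}(M,m_{f})$ normalized so that $\int_{M}|\phi_{k}|^{p}\,d\,m_{f}=1$ and $\int_{M}\Vert\nabla\phi_{k}\Vert^{p}\,d\,m_{f}\to\mu_{f,1,p}(M)$. The sequence is bounded in $W^{1,p}_{0}(M,m_{f})$, hence admits a weakly convergent subsequence with weak limit $\Psi\in W^{1,p}_{0}(M,m_{f})$; a further subsequence converges strongly in $L^{p}$ by Rellich-Kondrachov, giving $\int_{M}|\Psi|^{p}\,d\,m_{f}=1$, so $\Psi$ is non-zero. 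Weak lower semicontinuity of the gradient $L^{p}$-norm yields $R_{f,p}(\Psi)\leq \mu_{f,1,p}(M)$, which forces equality by the very definition of $\mu_{f,1,p}(M)$. Replacing $\Psi$ by $|\Psi|$, which still lies in $W^{1,p}_{0}(M,m_{f})$ and has the same Rayleigh quotient since $\Vert\nabla|\Psi|\Vert=\Vert\nabla\Psi\Vert$ almost everywhere, produces the desired non-negative minimizer.

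For the H\"older regularity, taking the first variation of the Rayleigh quotient shows that $\Psi$ is a weak solution of the Euler-Lagrange equation $\Delta_{f,p}\Psi=\mu_{f,1,p}(M)\,\Psi^{p-1}$ on $\inte M$ in the distributional sense on $W^{1,p}_{0}(M,m_{f})$. In local charts this is a quasilinear elliptic equation of $p$-Laplace type with smooth bounded coefficients coming from $g$ and $f$, so a Moser-type iteration first upgrades $\Psi$ to $L^{\infty}(M)$. The hard step, which is the main obstacle, is then the $C^{1,\alpha}$-regularity of bounded weak solutions; for this I would invoke the interior regularity theorems of Tolksdorf and DiBenedetto together with the boundary regularity theorem of Lieberman, which together yield $\Psi\in C^{1,\alpha}(M)$ for some $\alpha\in(0,1)$ depending only on $p$ and the structural constants of the equation. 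Since $\Psi$ vanishes on $\bm$ in the Dirichlet sense inherent to membership in $W^{1,p}_{0}$, the boundary regularity result applies and completes the proof.
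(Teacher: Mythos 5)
Your proposal is correct and follows exactly the route the paper itself points to: the paper omits a detailed proof, stating only that existence follows from ``the standard compactness argument'' (your direct-method argument with Rellich--Kondrachov and weak lower semicontinuity) and that regularity follows from Tolksdorf's results, with the weighted case handled by the observation that $e^{-f}$ is bounded between positive constants on compact $M$. You fill in the details faithfully, and your additional citation of Lieberman for $C^{1,\alpha}$ regularity up to $\bm$ is a reasonable and arguably helpful supplement to the paper's single reference to Tolksdorf.
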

Proposition \ref{prop:eigenfunction} is well-known in the standard case where $f=0$.
In the standard case,
the existence follows from the standard compactness argument,
and the regularity follows from the results by Tolksdorf in \cite{T}. 
The method of the proof also works in our weighted setting.

For $D\in (0,\infty)$,
we put $S_{D}(\bm):=\{\, q\in M \mid \rho_{\bm}(q)=D\,\}$.

Kasue has shown the following in the proof of Theorem 2.1 in \cite{K4}:
\begin{prop}[\cite{K4}]\label{prop:conclude rigidity}
Let $\kappa \in \mathbb{R}$ and $\lambda \in \mathbb{R}$.
Suppose that
$M$ is compact.
Assume that
for some $D\in (0,\bar{C}_{\kappa,\lambda})$
we have $\cut \bm=S_{D}(\bm)$.
For each $x\in \bm$,
choose an orthonormal basis $\{e_{x,i}\}_{i=1}^{n-1}$ of $T_{x}\bm$.
Let $\{Y_{x,i}\}^{n-1}_{i=1}$ be the $\bm$-Jacobi fields along $\gamma_{x}$
with initial conditions $Y_{x,i}(0)=e_{x,i}$ and $Y_{x,i}'(0)=-A_{u_{x}}e_{x,i}$.
Assume further that
for all $x\in \bm$ and $i$
we have $Y_{x,i}=s_{\kappa,\lambda}\, E_{x,i}$ on $[0,D]$,
where $\{E_{x,i}\}^{n-1}_{i=1}$ are the parallel vector fields along $\gamma_{x}$ with initial condition $E_{x,i}(0)=e_{x,i}$.
Then $\kappa$ and $\lambda$ satisfy the model-condition,
$(M,d_{M})$ is a $(\kappa,\lambda)$-equational model space,
and $D=D_{\kappa,\lambda}(M)$.
\end{prop}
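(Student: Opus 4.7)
My plan is to use the Jacobi-field rigidity to realize $D_\bm$ as an open warped product $[0,D) \times_{\kappa,\lambda} \bm$, and then close up the missing piece $S_D(\bm) = \cut\bm$ by producing an involution of $\bm$ that identifies the two ``sides'' meeting at each cut point.

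First, I would observe $\tau \equiv D$ on $\bm$. Compactness of $M$ gives $\dm < \infty$ by Lemma \ref{lem:bmcompact}, so $\tau$ is finite; for each $x \in \bm$ the point $\gamma_x(\tau(x))$ lies in $\cut\bm = S_D(\bm)$, so $\rho_\bm(\gamma_x(\tau(x))) = D$, while minimality of $\gamma_x|_{[0,\tau(x)]}$ gives the same quantity as $\tau(x)$. Next, the identity $Y_{x,i} = s_{\kappa,\lambda} E_{x,i}$ on $[0,D]$ means that the differential of the normal exponential map carries a warped-product orthonormal frame on $[0,D) \times \bm$ to an orthonormal frame on $M$; combined with $\tau \equiv D$, the map $\Phi(t,x) := \gamma_x(t)$ defines a Riemannian isometry with boundary from $[0,D) \times_{\kappa,\lambda} \bm$ onto $D_\bm$. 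By continuity $\Phi$ extends to $\bar\Phi\colon [0,D] \times \bm \to M$, surjective with $\bar\Phi(\{D\} \times \bm) = S_D(\bm)$.

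For the cut-locus identification, fix $p \in S_D(\bm)$. Since $Y_{x,i}(D) = s_{\kappa,\lambda}(D) E_{x,i}(D) \neq 0$ (using $D < \bconst$, hence $s_{\kappa,\lambda}(D) > 0$), the point $p$ is not a conjugate point of $\bm$ along any minimizer realizing $\rho_\bm(p) = D$, so $p$ must admit at least two distinct foot points on $\bm$. Applying the warped-product rigidity from both sides, I would show that exactly two foot points $x_1, x_2 \in \bm$ occur, with $\gamma'_{x_2}(D) = -\gamma'_{x_1}(D)$, so that the two minimizing geodesics join into one smooth geodesic through $p$. This produces a smooth fixed-point-free involution $\sigma\colon \bm \to \bm$ with $\gamma_{\sigma(x)}(s) = \gamma_x(2D - s)$ for $s$ near $D$.

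Finally, comparing derivatives of the Jacobi fields at $t = D$ from the two sides yields $Y'_{x,i}(D) = s'_{\kappa,\lambda}(D) E_{x,i}(D)$ from the $x$-side and, after reversing the parameter, $-s'_{\kappa,\lambda}(D) E_{x,i}(D)$ from the $\sigma(x)$-side; smoothness of the extended Jacobi field forces $s'_{\kappa,\lambda}(D) = 0$. Hence $\kappa,\lambda$ satisfy the model-condition, $D = D_{\kappa,\lambda}(M)$, and $\bar\Phi$ together with the identification $(D,x) \sim (D,\sigma(x))$ realizes $(M,d_M)$ either as the quotient $([0, 2D] \times_{\kappa,\lambda} \bm)/G_{\hat\sigma}$, or, when $\sigma$ interchanges two isometric components of $\bm$, as the warped product $[0, 2D] \times_{\kappa,\lambda} \bm_1$; in each case $M$ is the required $(\kappa,\lambda)$-equational model space. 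I expect the main obstacle to be in the third step: ruling out $\geq 3$ foot points and establishing antipodality of the incoming directions, both of which require combining the non-conjugacy (ensured by $D < \bconst$) with the warped-product rigidity on both sides of each $p \in \cut\bm$, and then verifying that the resulting $\sigma$ is indeed a smooth isometric involution of $\bm$.
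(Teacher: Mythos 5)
The paper attributes this proposition to Kasue's proof of Theorem~2.1 in \cite{K4} and gives no argument of its own, so I can only assess your proposal on its merits. Your outline is correct: $D_{\bm}$ carries the warped product metric of $[0,D)\times_{\kappa,\lambda}\bm$, the set $S_D(\bm)=\cut\bm$ is the $2$-to-$1$ quotient of $\bm$ by a fixed-point-free isometric involution $\sigma$, and matching the two sides along $S_D(\bm)$ forces $s'_{\kappa,\lambda}(D)=0$, whence the model-condition and the identification with a $(\kappa,\lambda)$-equational model space. You also rightly flag where the real work lies.

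Here is how to close the gap you identify; note it needs only $\tau\equiv D$ (your first step) and non-conjugacy, not warped-product rigidity from both sides. Fix $p\in S_D(\bm)$ with distinct foot points $x_1,\dots,x_k$, $k\geq 2$. Non-conjugacy gives smooth branch distance functions $\rho_1,\dots,\rho_k$ near $p$ with $\rho_i(p)=D$ and $\nabla\rho_i(p)=\gamma_{x_i}'(D)=:v_i$; the $v_i$ are pairwise distinct (distinct foot points give distinct incoming directions). Each $\rho_j\geq\rho_{\bm}$, while $\rho_{\bm}\equiv D$ on the sheet $\{\rho_i=D\}$, since every point of that sheet has the form $\gamma_{x'}(D)$ with $x'$ near $x_i$ and $\rho_{\bm}(\gamma_{x'}(D))=\tau(x')=D$. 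Thus $\rho_j\geq D$ on the smooth hypersurface $\{\rho_i=D\}$ with equality at $p$, so $v_j=\nabla\rho_j(p)$ is orthogonal to $T_p\{\rho_i=D\}$, which forces $v_j=\pm v_i$. Since the $v_i$ are distinct, $v_j=-v_i$ for every $j\neq i$, so at most one such $j$ exists; hence $k=2$ and $v_2=-v_1$.

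Two smaller points deserve a line each. First, the concatenation of $Y_{x,i}|_{[0,D]}$ with the time-reversed $\bm$-Jacobi field from the $\sigma(x)$ side really is a single smooth Jacobi field: it is the variation field of the geodesic variation $\hat\gamma_u$ given by $\hat\gamma_u=\gamma_{c(u)}$ on $[0,D]$ and $\hat\gamma_u(t)=\gamma_{\sigma(c(u))}(2D-t)$ on $[D,2D]$, whose halves agree in value and velocity at $t=D$ by the antipodality just proved. Equating one-sided derivatives at $t=D$ then yields $s'_{\kappa,\lambda}(D)=-s'_{\kappa,\lambda}(D)$ (equivalently, compute the second fundamental form of $S_D(\bm)$ from the two sides with opposite normals). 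Second, $\sigma$ is an isometry because $x\mapsto\gamma_x(D)$ is a proper $2$-to-$1$ local homothety of ratio $s_{\kappa,\lambda}(D)$ onto $S_D(\bm)$ and $\sigma$ is its deck transformation; and $D=D_{\kappa,\lambda}(M)$ because, once the model-condition holds, $s'_{\kappa,\lambda}$ has a unique zero in $(0,\bconst)$.
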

Now,
we prove Theorem \ref{thm:eigenvalue rigidity}.
\begin{proof}[Proof of Theorem \ref{thm:eigenvalue rigidity}]
Suppose that
$M$ is compact.
Let $p\in (1,\infty)$.
For $N\in [n,\infty)$,
we suppose $\ric^{N}_{f,M}\geq (N-1)\kappa$ and $H_{f,\bm} \geq (N-1)\lambda$.
For $D\in (0,\bar{C}_{\kappa,\lambda}]\setminus \{\infty\}$,
assume $\dm \leq D$.
By Proposition \ref{prop:inequality in eigenvalue rigidity},
we have (\ref{eq:eigenvalue rigidity}).

Assume that
the equality in (\ref{eq:eigenvalue rigidity}) holds.
By Proposition \ref{prop:eigenfunction},
there exists a non-negative, 
non-zero function $\Psi$ in $W^{1,p}_{0}(M,m_{f})$ such that
$R_{f,p}(\Psi)=\mu_{p,N,\kappa,\lambda,D}$ and $\Psi$ is $C^{1,\alpha}$-H\"older continuous on $M$.
Put $\Phi:=\phi_{p,N,\kappa,\lambda,D}\circ \rho_{\bm}$.
Then $\Phi$ coincides with a constant multiplication of $\Psi$ on $M$ (see Remark \ref{rem:the equality case in eigenvalue rigidity});
in particular,
$\Phi$ is also $C^{1,\alpha}$-H\"older continuous.

For each $x\in \bm$,
choose an orthonormal basis $\{e_{x,i}\}_{i=1}^{n-1}$ of $T_{x}\bm$.
Let $\{Y_{x,i}\}^{n-1}_{i=1}$ be the $\bm$-Jacobi fields along $\gamma_{x}$
with initial conditions $Y_{x,i}(0)=e_{x,i}$ and $Y_{x,i}'(0)=-A_{u_{x}}e_{x,i}$.
For all $i$
we see $Y_{x,i}=s_{\kappa,\lambda}\, E_{x,i}$ on $[0,\tau(x)]$,
where $\{E_{x,i}\}^{n-1}_{i=1}$ are the parallel vector fields along $\gamma_{x}$ with initial condition $E_{x,i}(0)=e_{x,i}$.
Moreover,
$f\circ \gamma_{x}=f(x)-(N-n) \log s_{\kappa,\lambda}$ on $[0,\tau(x)]$ (see Remarks \ref{rem:Equality case in global p-Laplacian comparison} and \ref{rem:the equality case in eigenvalue rigidity}).

Let $D=\bar{C}_{\kappa,\lambda}$.
Since $D$ is finite,
$\kappa$ and $\lambda$ satisfy the ball-condition and $D=\const$.
There exists $p_{0}\in M$ such that $\rho_{\bm}(p_{0})=D(M,\bm)$.
Note that
$p_{0}$ belongs to $\cut \bm$.
Now,
we prove $\rho_{\bm}(p_{0})=\const$.
We assume $\rho_{\bm}(p_{0})<\const$.
Let $x_{0}$ be a foot point on $\bm$ of $p_{0}$.
From the property of Jacobi fields,
$p_{0}$ is not the first conjugate point of $\bm$ along $\gamma_{x_{0}}$.
Hence,
$\rho_{\bm}$ is not differentiable at $p_{0}$.
Since $\Phi$ is $C^{1,\alpha}$-H\"older continuous,
we see $\phi'_{p,N,\kappa,\lambda,D}(\rho_{\bm}(p_{0}))=0$.
From $\phi'_{p,N,\kappa,\lambda,D}|_{[0,D)}>0$,
we deduce $\rho_{\bm}(p_{0})=D$.
This contradicts $D=\const$.
Therefore,
$\rho_{\bm}(p_{0})=\const$.
By Theorem \ref{thm:Ball rigidity},
$(M,d_{M})$ is isometric to $(\ball,d_{\ball})$ and $N=n$.

Let $D\in (0,\bar{C}_{\kappa,\lambda})$.
We prove $\cut \bm=S_{D}(\bm)$.
Since $\dm \leq D$,
we see $S_{D}(\bm)\subset \cut \bm$.
We show the opposite.
Take $p_{0}\in \cut \bm$.
By the property of Jacobi fields,
$\rho_{\bm}$ is not differentiable at $p_{0}$.
The regularity of $\Phi$ implies $\phi'_{p,N,\kappa,\lambda,D}(\rho_{\bm}(p_{0}))=0$;
in particular,
$\rho_{\bm}(p_{0})=D$.
We have $\cut \bm=S_{D}(\bm)$.
By Proposition \ref{prop:conclude rigidity},
$\kappa$ and $\lambda$ satisfy the model-condition,
$(M,d_{M})$ is a $(\kappa,\lambda)$-equational model space,
and $D=D_{\kappa,\lambda}(M)$.
From $\tau=D_{\kappa,\lambda}(M)$ on $\bm$,
it follows that $f\circ \gamma_{x}=f(x)-(N-n) \log s_{\kappa,\lambda}$ on $[0,D_{\kappa,\lambda}(M)]$ for all $x\in \bm$.
We complete the proof of Theorem \ref{thm:eigenvalue rigidity}.
\end{proof}
\begin{rem}\label{rem:method of the proof}
In \cite{K4},
the proof of Theorem \ref{thm:eigenvalue rigidity} in the standard case where $f=0,N=n$ and $p=2$
relies on the approximation theorem obtained by Greene and Wu in \cite{GW}.
It seems that
the approximation theorem in \cite{GW} does not work in our non-linear case of $p\neq 2$.
\end{rem}
Next,
we prove Theorem \ref{thm:infinite eigenvalue rigidity}.
\begin{proof}[Proof of Theorem \ref{thm:infinite eigenvalue rigidity}]
Suppose that
$M$ is compact.
Let $p\in (1,\infty)$.
Suppose $\ric^{\infty}_{f,M}\geq 0$ and $H_{f,\bm} \geq 0$.
For $D\in (0,\infty)$,
we assume $\dm \leq D$.
By Proposition \ref{prop:inequality in infinite eigenvalue rigidity},
we have (\ref{eq:infinite eigenvalue rigidity}).

Assume that
the equality in (\ref{eq:infinite eigenvalue rigidity}) holds.
By Proposition \ref{prop:eigenfunction},
there exists a non-negative,
non-zero function $\Psi$ in $W^{1,p}_{0}(M,m_{f})$ such that
$R_{f,p}(\Psi)=\mu_{p,\infty,D}$ and $\Psi$ is $C^{1,\alpha}$-H\"older continuous on $M$.
Put $\Phi:=\phi_{p,\infty,D}\circ \rho_{\bm}$.
Then $\Phi$ coincides with a constant multiplication of $\Psi$ on $M$ (see Remark \ref{rem:the equality case in eigenvalue rigidity});
in particular,
$\Phi$ is also $C^{1,\alpha}$-H\"older continuous.

For each $x\in \bm$,
choose an orthonormal basis $\{e_{x,i}\}_{i=1}^{n-1}$ of $T_{x}\bm$.
Let $\{Y_{x,i}\}^{n-1}_{i=1}$ be the $\bm$-Jacobi fields along $\gamma_{x}$
with initial conditions $Y_{x,i}(0)=e_{x,i}$ and $Y_{x,i}'(0)=-A_{u_{x}}e_{x,i}$.
For all $i$
we have $Y_{x,i}=E_{x,i}$ on $[0,\tau(x)]$,
where $\{E_{x,i}\}^{n-1}_{i=1}$ are the parallel vector fields along $\gamma_{x}$ with initial condition $E_{x,i}(0)=e_{x,i}$ (see Remarks \ref{rem:Equality case in global p-Laplacian comparison} and \ref{rem:the equality case in eigenvalue rigidity}).

We prove $\cut \bm=S_{D}(\bm)$.
Since $\dm \leq D$,
it holds that $S_{D}(\bm)\subset \cut \bm$.
We show the opposite.
Take $p_{0}\in \cut \bm$.
By the property of Jacobi fields,
$\rho_{\bm}$ is not differentiable at $p_{0}$.
By the regularity of $\Phi$,
we see $\phi'_{p,\infty,D}(\rho_{\bm}(p_{0}))=0$;
in particular,
$\rho_{\bm}(p_{0})=D$.
It follows that $\cut \bm=S_{D}(\bm)$;
in particular,
$\dm=D$.
By Proposition \ref{prop:conclude rigidity},
we complete the proof of Theorem \ref{thm:infinite eigenvalue rigidity}.
\end{proof}
\subsection{Explicit lower bounds}\label{sec:Concrete large lower bounds}
For $N\in [2,\infty)$ and $D\in (0,\infty)$,
we see $\mu_{2,N,0,0,D}=\mu_{2,\infty,D}=\pi^{2}(2D)^{-2}$.

By Theorems \ref{thm:eigenvalue rigidity} and \ref{thm:infinite eigenvalue rigidity},
we have the following:
\begin{cor}
Let $M$ be an $n$-dimensional,
connected complete Riemannian manifold with boundary,
and let $f:M\to \mathbb{R}$ be a smooth function.
Suppose that
$M$ is compact.
For $N\in [n,\infty]$,
we suppose $\ric^{N}_{f,M}\geq 0$ and $H_{f,\bm} \geq 0$.
For $D\in (0,\infty)$,
we assume $\dm \leq D$.
Then we have
\begin{equation}\label{eq:Li-Yau estimate}
\mu_{f,1,2}(M)\geq \frac{{\pi}^{2}}{4D^{2}}.
\end{equation}
If the equality in $(\ref{eq:Li-Yau estimate})$ holds,
then $\dm=D$,
and $(M,d_{M})$ is a $(0,0)$-equational model space.
Moreover,
if $N\in [n,\infty)$,
then for every $x\in \bm$
the function $f\circ \gamma_{x}$ is constant on $[0,D]$.
\end{cor}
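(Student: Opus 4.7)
The plan is to obtain this corollary as a direct specialization of Theorems \ref{thm:eigenvalue rigidity} and \ref{thm:infinite eigenvalue rigidity} at the parameter values $p=2$, $\kappa=0$, $\lambda=0$. First I would record that $\kappa=\lambda=0$ does not satisfy the ball-condition, so $\bar{C}_{0,0}=\infty$, and hence every $D\in(0,\infty)$ satisfies $D\in(0,\bar{C}_{0,0})\setminus\{\infty\}$; this means the two rigidity theorems apply and, moreover, that one lands in case (2) rather than case (1) of Theorem \ref{thm:eigenvalue rigidity}.

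Next I would verify the identity $\mu_{2,N,0,0,D}=\mu_{2,\infty,D}=\pi^{2}/(4D^{2})$ already noted before the corollary. Solving $\phi''+\kappa\phi=0$ with $\phi(0)=1$, $\phi'(0)=-\lambda$ for $\kappa=\lambda=0$ gives $s_{0,0}\equiv 1$, so the logarithmic derivative $s'_{0,0}/s_{0,0}$ vanishes identically. Therefore both equations (\ref{eq:model space eigenvalue problem}) and (\ref{eq:infinite model space eigenvalue problem}) collapse to the same Sturm--Liouville problem
\begin{equation*}
\phi''(t)+\mu\,\phi(t)=0,\qquad \phi(0)=0,\qquad \phi'(D)=0,
\end{equation*}
whose smallest positive eigenvalue $\pi^{2}/(4D^{2})$ is realized by $\phi(t)=\sin(\pi t/(2D))$.

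With these two observations in hand, the inequality (\ref{eq:Li-Yau estimate}) is exactly the content of (\ref{eq:eigenvalue rigidity}) when $N\in[n,\infty)$, and of (\ref{eq:infinite eigenvalue rigidity}) when $N=\infty$. For the equality case, when $N=\infty$, Theorem \ref{thm:infinite eigenvalue rigidity} immediately yields that $(M,d_{M})$ is a $(0,0)$-equational model space and $D(M,\partial M)=D$. When $N\in[n,\infty)$, case (2) of Theorem \ref{thm:eigenvalue rigidity} applies: $\kappa=\lambda=0$ satisfies the model-condition, $(M,d_{M})$ is a $(0,0)$-equational model space, and the rigidity of $f$ reads $f\circ\gamma_{x}=f(x)-(N-n)\log s_{0,0}=f(x)$ on $[0,D_{0,0}(M)]$. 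Since by definition $D_{0,0}(M):=D(M,\partial M)$, and the equality conclusion of Theorem \ref{thm:eigenvalue rigidity} forces $D_{0,0}(M)=D$, we deduce $D(M,\partial M)=D$ and that $f\circ\gamma_{x}$ is constant on $[0,D]$.

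No substantive obstacle is expected: the corollary is a read-off of the two main rigidity theorems, and the only point worth flagging is the degeneration of the warping function $s_{0,0}$ to the constant $1$, which is what turns the weighted rigidity $f\circ\gamma_{x}=f(x)-(N-n)\log s_{\kappa,\lambda}$ into the plain constancy of $f$ along normal geodesics.
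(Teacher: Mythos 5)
Your proposal is correct and follows the same route the paper takes: specialize Theorems \ref{thm:eigenvalue rigidity} and \ref{thm:infinite eigenvalue rigidity} to $p=2$, $\kappa=\lambda=0$, compute $s_{0,0}\equiv 1$ so that both model eigenvalue problems collapse to $\phi''+\mu\phi=0$, $\phi(0)=0$, $\phi'(D)=0$ with first eigenvalue $\pi^{2}/(4D^{2})$, and read off the conclusions; the vanishing of $\log s_{0,0}$ is exactly what turns the weighted rigidity into constancy of $f\circ\gamma_{x}$.

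The one place where you are a little quick is the assertion that ``the equality conclusion of Theorem \ref{thm:eigenvalue rigidity} forces $D_{0,0}(M)=D$.'' As stated, case (2) of Theorem \ref{thm:eigenvalue rigidity} does not explicitly say $D_{\kappa,\lambda}(M)=D$; that fact is established inside the proof (via Proposition \ref{prop:conclude rigidity}), not in the theorem's wording. If you want the corollary to follow purely from the statement of Theorem \ref{thm:eigenvalue rigidity}, you can supply a one-line monotonicity argument: if $\dm<D$, apply Theorem \ref{thm:eigenvalue rigidity} with $D':=\dm$ to get $\mu_{f,1,2}(M)\geq\pi^{2}/(4\dm^{2})>\pi^{2}/(4D^{2})$, contradicting equality, so $\dm=D$ and hence $D_{0,0}(M)=\dm=D$. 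For $N=\infty$ this is of course explicit in Theorem \ref{thm:infinite eigenvalue rigidity}. With that small addition, your derivation is complete and matches the paper's intended (implicit) argument.
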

Li and Yau \cite{LY} have obtained (\ref{eq:Li-Yau estimate}) when $f=0$ and $N=n$.

Kasue \cite{K4} has proved the following (see Lemma 1.3 in \cite{K4}):
\begin{lem}[\cite{K4}]\label{lem:Kasue computable estimate}
For all $N\in [2,\infty),\,\kappa,\lambda\in \mathbb{R}$ and $D\in (0,\bar{C}_{\kappa,\lambda}]\setminus \{\infty\}$,
we  have
\begin{equation*}
\mu_{2,N,\kappa,\lambda,D}> \left(4 \max_{t\in [0,D]}\, \int^{D}_{t}\, s^{N-1}_{\kappa,\lambda}(s) \, ds\, \int^{t}_{0}\, s^{1-N}_{\kappa,\lambda}(s)\,ds \right)^{-1}.
\end{equation*}
\end{lem}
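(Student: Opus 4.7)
The plan is to combine the variational characterization of $\mu_{2,N,\kappa,\lambda,D}$ with a one-dimensional weighted Hardy inequality of Muckenhoupt type. First, I would recast (\ref{eq:model space eigenvalue problem}) with $p=2$ in Sturm-Liouville form
\[
\bigl(s_{\kappa,\lambda}^{N-1}(t)\,\phi'(t)\bigr)' + \mu\, s_{\kappa,\lambda}^{N-1}(t)\,\phi(t) = 0, \qquad \phi(0)=0,\ \phi'(D)=0,
\]
so that standard Sturm-Liouville theory yields
\[
\mu_{2,N,\kappa,\lambda,D} = \inf_{\phi}\, \frac{\int_0^D (\phi'(t))^2 s_{\kappa,\lambda}^{N-1}(t)\, dt}{\int_0^D \phi(t)^2 s_{\kappa,\lambda}^{N-1}(t)\, dt},
\]
with the infimum taken over absolutely continuous $\phi \not\equiv 0$ on $[0,D]$ with $\phi(0)=0$, attained by the positive first eigenfunction $\phi_{1}$.

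Write $V(t):=\int_t^D s_{\kappa,\lambda}^{N-1}(\tau)\,d\tau$ and $W(t):=\int_0^t s_{\kappa,\lambda}^{1-N}(\tau)\,d\tau$, so that $M=\max_{t}V(t)W(t)$. The key step is to prove the weighted Hardy inequality
\[
\int_0^D \phi^2\, s_{\kappa,\lambda}^{N-1}\, dt \leq 4M \int_0^D (\phi')^2\, s_{\kappa,\lambda}^{N-1}\, dt
\]
for every $\phi$ with $\phi(0)=0$. The heart of the argument is the choice of auxiliary weight $g(\tau):=s_{\kappa,\lambda}^{N-1}(\tau)\,W(\tau)^{1/2}$, which satisfies $\int_0^t g(\tau)^{-1}\,d\tau = 2W(t)^{1/2}$. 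Cauchy-Schwarz applied to $\phi(t)=\int_0^t \phi'(\tau)\,d\tau$ then yields
\[
\phi(t)^2 \leq 2W(t)^{1/2}\int_0^t (\phi'(\tau))^2\, s_{\kappa,\lambda}^{N-1}(\tau)\, W(\tau)^{1/2}\, d\tau.
\]
Multiplying by $s_{\kappa,\lambda}^{N-1}(t)$, integrating over $[0,D]$, and swapping the order of integration gives
\[
\int_0^D \phi^2 s_{\kappa,\lambda}^{N-1}\, dt \leq 2\int_0^D (\phi'(\tau))^2 s_{\kappa,\lambda}^{N-1}(\tau)\, W(\tau)^{1/2}\Bigl(\int_{\tau}^{D} s_{\kappa,\lambda}^{N-1}(t)\,W(t)^{1/2}\, dt\Bigr) d\tau.
\]
Using the pointwise bound $W \leq M/V$ together with the antiderivative identity $-V' = s_{\kappa,\lambda}^{N-1}$, the inner integral is controlled by $\sqrt{M}\int_{\tau}^{D}(-V'/V^{1/2})\,dt = 2\sqrt{M}\,V(\tau)^{1/2}$, so
\[
\int_0^D \phi^2 s_{\kappa,\lambda}^{N-1}\, dt \leq 4\sqrt{M}\int_0^D (\phi')^2 s_{\kappa,\lambda}^{N-1}\,(VW)^{1/2}\, d\tau \leq 4M\int_0^D (\phi')^2 s_{\kappa,\lambda}^{N-1}\, d\tau,
\]
the final step again invoking $VW \leq M$.

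Applying this to $\phi_1$ gives $\mu_{2,N,\kappa,\lambda,D}\geq 1/(4M)$. To upgrade to strict inequality, note that $VW$ is real-analytic on $(0,D)$ and attains its maximum $M$ on a null set, so $(VW)^{1/2}<\sqrt{M}$ almost everywhere, while $(\phi'_1)^2$ is positive on a set of positive measure (otherwise $\phi_1$ would be identically zero); the strict estimate $\int (\phi_1')^2 s_{\kappa,\lambda}^{N-1}(VW)^{1/2}\,d\tau < \sqrt{M}\int (\phi_1')^2 s_{\kappa,\lambda}^{N-1}\,d\tau$ then delivers $\mu_{2,N,\kappa,\lambda,D}>1/(4M)$. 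The main obstacle is spotting the auxiliary weight $g = s_{\kappa,\lambda}^{N-1} W^{1/2}$ that makes the opening Cauchy-Schwarz step produce precisely the constant $4M$; once it is found, the rest of the argument is routine manipulation with Fubini and the two pointwise bounds $VW\leq M$ and $-V' = s_{\kappa,\lambda}^{N-1}$.
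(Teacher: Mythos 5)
The paper itself gives no proof of this lemma; it is cited verbatim from Kasue's 1984 paper \cite{K4} (Lemma 1.3 there), so there is no in-paper argument to compare against.

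Your proof is correct. After writing the $p=2$ eigenvalue problem in Sturm--Liouville form and invoking the Rayleigh quotient over $\{\phi:\phi(0)=0\}$, you reduce the lemma to the one-dimensional weighted Hardy inequality $\int_0^D\phi^2 s^{N-1}\le 4M\int_0^D(\phi')^2 s^{N-1}$ with $M=\max_t V(t)W(t)$, and the argument you give is precisely the Muckenhoupt proof of the two-sided Hardy criterion specialized to $p=2$: the auxiliary weight $g=s^{N-1}W^{1/2}$ is exactly what makes $\int_0^t g^{-1}=2W(t)^{1/2}$, the ensuing Cauchy--Schwarz/Fubini step is sound, and the two uses of $VW\le M$ together with $-V'=s^{N-1}$ and $V(D)=0$ assemble to the constant $4M$. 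The promotion to a strict inequality is also handled correctly: $VW$ is real-analytic and not constant on $(0,D)$ (since $VW$ vanishes at both endpoints), so $\{VW=M\}$ is discrete; meanwhile $\phi_1'>0$ on $[0,D)$, so the intermediate bound $\int(\phi_1')^2 s^{N-1}(VW)^{1/2}<\sqrt{M}\int(\phi_1')^2 s^{N-1}$ is genuinely strict. The only point worth flagging, though it does not affect correctness, is that at $D=\bar{C}_{\kappa,\lambda}$ the weight $s_{\kappa,\lambda}^{N-1}$ vanishes at the right endpoint, making the Sturm--Liouville problem singular; one should note that $M$ remains finite (since $VW\to 0$ as $t\to D$) and that the Rayleigh characterization extends to this singular case, which it does.
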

In the case of $p=2$,
by Theorem \ref{thm:eigenvalue rigidity} and Lemma \ref{lem:Kasue computable estimate}
we have:
\begin{cor}
Let $M$ be an $n$-dimensional,
connected complete Riemannian manifold with boundary,
and let $f:M\to \mathbb{R}$ be a smooth function.
Suppose that
$M$ is compact.
For $N\in [n,\infty)$,
we suppose $\ric^{N}_{f,M}\geq (N-1)\kappa$ and $H_{f,\bm} \geq (N-1)\lambda$.
For $D\in (0,\bar{C}_{\kappa,\lambda}]\setminus \{\infty\}$,
we assume $\dm \leq D$.
Then we have
\begin{equation*}
\mu_{f,1,2}(M)>\left(4 \max_{t\in [0,D]}\, \int^{D}_{t}\, s^{N-1}_{\kappa,\lambda}(s) \, ds\, \int^{t}_{0}\, s^{1-N}_{\kappa,\lambda}(s)\,ds \right)^{-1}.
\end{equation*}
\end{cor}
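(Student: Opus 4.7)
The plan is simply to chain together the two preceding results cited just above the corollary, namely Theorem \ref{thm:eigenvalue rigidity} specialized to $p=2$ and the explicit estimate in Lemma \ref{lem:Kasue computable estimate} due to Kasue. The hypotheses of the corollary ($M$ compact, $\ric^{N}_{f,M}\geq (N-1)\kappa$, $H_{f,\bm}\geq (N-1)\lambda$, and $\dm\leq D$ for $D\in (0,\bar{C}_{\kappa,\lambda}]\setminus\{\infty\}$) are precisely what Theorem \ref{thm:eigenvalue rigidity} requires, so I may invoke it without any additional work.

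First I would apply Theorem \ref{thm:eigenvalue rigidity} with $p=2$ to obtain the lower bound
\begin{equation*}
\mu_{f,1,2}(M)\geq \mu_{2,N,\kappa,\lambda,D},
\end{equation*}
which reduces the problem to a purely one-dimensional Sturm-Liouville estimate on the model interval $[0,D]$. Next I would invoke Lemma \ref{lem:Kasue computable estimate}, which is already stated above, to bound the model eigenvalue strictly from below by
\begin{equation*}
\mu_{2,N,\kappa,\lambda,D}> \left(4 \max_{t\in [0,D]}\, \int^{D}_{t}\, s^{N-1}_{\kappa,\lambda}(s) \, ds\, \int^{t}_{0}\, s^{1-N}_{\kappa,\lambda}(s)\,ds \right)^{-1}.
\end{equation*}
Concatenating these two inequalities gives exactly the asserted strict inequality.

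Since both ingredients are already established earlier in the paper, there is essentially no obstacle in the proof of this corollary itself; it is a routine combination. The only point to be careful about is preserving the strictness of the inequality from Lemma \ref{lem:Kasue computable estimate} when passing through the (possibly non-strict) bound supplied by Theorem \ref{thm:eigenvalue rigidity}, but this is immediate from $\mu_{f,1,2}(M)\geq \mu_{2,N,\kappa,\lambda,D}>(\cdots)^{-1}$. No further verification of the ball-condition, model-condition, or equality cases is needed here, since the statement is only the lower estimate.
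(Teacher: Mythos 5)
Your proposal is correct and is exactly the paper's argument: the corollary is stated immediately after the sentence ``In the case of $p=2$, by Theorem \ref{thm:eigenvalue rigidity} and Lemma \ref{lem:Kasue computable estimate} we have,'' so the intended proof is precisely the chain $\mu_{f,1,2}(M)\geq \mu_{2,N,\kappa,\lambda,D}>(\cdots)^{-1}$ that you describe.
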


\section{First eigenvalue estimates}\label{sec:First eigenvalue estimates}
Let $M$ be an $n$-dimensional,
connected complete Riemannian manifold with boundary with Riemannian metric $g$,
and let $f:M\to \mathbb{R}$ be a smooth function.
\subsection{Area estimates}\label{sec:Area estimates}
Let $\Omega$ be a relatively compact domain in $M$ such that
$\partial \Omega$ is a smooth hypersurface in $M$ satisfying $\partial \Omega \cap \bm=\emptyset$.
For the canonical Riemannian volume measure $\vol_{\partial \Omega}$ on $\partial \Omega$,
let $m_{f,\partial \Omega}:=e^{-f|_{\partial \Omega}}\, \vol_{\partial \Omega}$.
Put
\begin{equation}\label{eq:diameter of Omega}
\delta_{1}(\Omega):=\inf_{p\in \Omega}\, \rho_{\bm}(p),\quad \delta_{2}(\Omega):=\sup_{p\in \Omega} \,\rho_{\bm}(p).
\end{equation}

Kasue \cite{K5} has proved the following when $f=0$ and $N=n$.
\begin{prop}\label{prop:Kasue volume estimate}
For $N\in [n,\infty)$,
we suppose $\ric^{N}_{f,M}\geq (N-1)\kappa$ and $H_{f,\bm} \geq (N-1)\lambda$.
Let $\Omega$ be a relatively compact domain in $M$ such that
$\partial \Omega$ is a smooth hypersurface in $M$ satisfying $\partial \Omega \cap \bm=\emptyset$.
Then
\begin{equation}\label{eq:Kasue volume estimate}
m_{f}( \Omega) \leq m_{f,\partial \Omega}\, (\partial \Omega) \,\sup_{t\in (\delta_{1}(\Omega),\delta_{2}(\Omega))}\, \frac{\int^{\delta_{2}(\Omega)}_{t}\,  s^{N-1}_{\kappa,\lambda}(s)\, ds}{s^{N-1}_{\kappa,\lambda}(t)},
\end{equation}
where $\delta_{1}(\Omega)$ and $\delta_{2}(\Omega)$ are the values defined as $(\ref{eq:diameter of Omega})$.
\end{prop}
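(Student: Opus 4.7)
My plan is to construct a radial barrier $\Phi = \phi \circ \rho_{\bm}$ whose weighted Laplacian dominates $1$ off the cut locus, and then apply the weighted divergence theorem on $\Omega$ in the style of Proposition~\ref{prop:global p-Laplacian comparison}. Define $\phi \colon [0, \delta_2(\Omega)] \to \mathbb{R}$ by $\phi(0) = 0$ and
\[
\phi'(t) := \frac{1}{s^{N-1}_{\kappa,\lambda}(t)} \int_{t}^{\delta_2(\Omega)} s^{N-1}_{\kappa,\lambda}(s)\, ds,
\]
so that $\phi' \geq 0$, $\phi'(\delta_2(\Omega)) = 0$, and $\phi$ satisfies the ODE $\phi'' + (N-1)(s'_{\kappa,\lambda}/s_{\kappa,\lambda})\phi' = -1$. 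Set $\Phi := \phi \circ \rho_{\bm}$. Using $\Vert \nabla \rho_{\bm} \Vert = 1$ on $\inte M \setminus \cut \bm$, the chain rule yields $\Delta_{f} \Phi = -\phi''(\rho_{\bm}) + \phi'(\rho_{\bm})\, \Delta_{f} \rho_{\bm}$, which by Lemma~\ref{lem:Laplacian comparison} and the ODE above is $\geq 1$ pointwise on $\inte M \setminus \cut \bm$.

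To integrate $\Delta_{f} \Phi \geq 1$ on $\Omega$ against $dm_{f}$, apply Lemma~\ref{lem:avoiding the cut locus2} to $\Omega$. This produces an exhausting sequence $\{\Omega_{k}\}$ of closed subsets of $\bar{\Omega} \setminus \cut \bm$ whose boundaries split into a part inside $\partial \Omega$ (carrying the outward normal $\nu_{\Omega}$ almost everywhere) and an auxiliary part inside $\Omega$ whose outward normal $\nu_{k}$ satisfies $g(\nu_{k}, \nabla \rho_{\bm}) \geq 0$. Since $\phi' \geq 0$, the flux $g(\nabla \Phi, \nu_{k}) = \phi'(\rho_{\bm})\, g(\nabla \rho_{\bm}, \nu_{k})$ is nonnegative on the auxiliary part, so that after applying the weighted Green formula on $\Omega_{k}$ its contribution drops in our favor, yielding
\[
m_{f}(\Omega_{k}) \leq \int_{\Omega_{k}} \Delta_{f} \Phi\, dm_{f} \leq -\int_{\partial \Omega_{k} \cap \partial \Omega} \phi'(\rho_{\bm})\, g(\nabla \rho_{\bm}, \nu_{\Omega})\, dm_{f,\partial \Omega}.
\]

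Letting $k \to \infty$ and using that $\cut \bm$ is a null set, the left side converges to $m_{f}(\Omega)$ while the right side converges to the corresponding integral over all of $\partial \Omega$. Since $\vert g(\nabla \rho_{\bm}, \nu_{\Omega}) \vert \leq 1$ and, by construction, $\phi'(t) \leq \sup_{t \in (\delta_{1}(\Omega), \delta_{2}(\Omega))} \phi'(t)$, the inequality (\ref{eq:Kasue volume estimate}) follows immediately. The main technical care lies in verifying the favorable sign of the auxiliary-boundary flux in the limit $k\to\infty$ and that the pointwise inequality $\Delta_{f}\Phi \geq 1$ transfers properly across $\cut \bm$; beyond this, the argument is a direct adaptation of the cut-locus-avoidance scheme employed in Proposition~\ref{prop:global p-Laplacian comparison}.
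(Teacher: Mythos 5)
Your proof is correct and follows essentially the same route as the paper: you use the same barrier $\Phi = \phi \circ \rho_{\bm}$ with $\phi'(t) = s^{1-N}_{\kappa,\lambda}(t)\int_t^{\delta_2(\Omega)} s^{N-1}_{\kappa,\lambda}(s)\,ds$ (the paper sets $\phi(\delta_1(\Omega))=0$ rather than $\phi(0)=0$, which only changes $\Phi$ by an additive constant), obtain $\Delta_{f,2}\Phi \geq 1$ from Lemma~\ref{lem:Laplacian comparison} (the paper invokes Lemma~\ref{lem:p-Laplacian comparison} with $p=2$, which is the same computation), exhaust $\bar\Omega\setminus\cut\bm$ via Lemma~\ref{lem:avoiding the cut locus2}, apply the weighted Green formula, drop the auxiliary boundary flux using $g(\nu_k,\nabla\rho_{\bm})\geq 0$ and $\phi'\geq 0$, bound the remaining term by Cauchy--Schwarz, and let $k\to\infty$.
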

\begin{proof}
Define a function $\phi:[\delta_{1}(\Omega),\delta_{2}(\Omega)]\to \mathbb{R}$ by
\begin{equation*}
\phi(t):=\int^{t}_{\delta_{1}(\Omega)}\, \frac{\int^{\delta_{2}(\Omega)}_{s}\,s^{N-1}_{\kappa,\lambda}(u) \,du}{s^{N-1}_{\kappa,\lambda}(s)} \,ds,
\end{equation*}
and put $\Phi:=\phi \circ \rho_{\bm}$.
By Lemma \ref{lem:p-Laplacian comparison},
on $\inte M\setminus \cut \bm$
\begin{equation}\label{eq:Kasue volume estimate Laplacian comparison}
\Delta_{f,2}\,\Phi \geq 1.
\end{equation}

By Lemma \ref{lem:avoiding the cut locus2},
there exists a sequence $\{\Omega_{k}\}_{k\in \mathbb{N}}$ of compact subsets of $\bar{\Omega}$
satisfying that
for every $k$,
the set $\partial \Omega_{k}$ is a smooth hypersurface in $M$ except for a null set in $(\partial \Omega,m_{f,\partial \Omega})$,
and satisfying the following:
(1) for all $k_{1},k_{2}\in \mathbb{N}$ with $k_{1}<k_{2}$,
we have $\Omega_{k_{1}}\subset \Omega_{k_{2}}$;
(2) $\bar{\Omega} \setminus \cut \bm=\bigcup_{k\in \mathbb{N}}\,\Omega_{k}$:
(3) for every $k\in \mathbb{N}$,
     and for almost every point $p \in \partial \Omega_{k}\cap \partial \Omega$ in $(\partial \Omega,m_{f,\partial \Omega})$,
there exists the unit outer normal vector for $\Omega_{k}$ at $p$
that coincides with the unit outer normal vector on $\partial \Omega$ for $\Omega$ at $p$;
(4) for every $k\in \mathbb{N}$,
on $\partial \Omega_{k}\setminus \partial \Omega$,
there exists the unit outer normal vector field $\nu_{k}$ for $\Omega_{k}$ such that $g(\nu_{k},\nabla \rho_{\bm})\geq 0$.

For the canonical Riemannian volume measure $\vol_{k}$ on $\partial \Omega_{k}\setminus \partial \Omega$,
put $m_{f,k}:=e^{-f|_{\partial \Omega_{k}\setminus \partial \Omega}}\,\vol_{k}$.
Let $\nu_{\partial \Omega}$ be the unit outer normal vector on $\partial \Omega$ for $\Omega$.
By integrating the both sides of (\ref{eq:Kasue volume estimate Laplacian comparison}) on $\Omega_{k}$,
and by the Green formula,
we have
\begin{multline*}
m_{f}\left(\Omega_{k}\right)
\leq \int_{\Omega_{k}} \, \Delta_{f,2}\,\Phi \,d\,m_{f}\\
= -\int_{\partial \Omega_{k}\setminus \partial \Omega} g(\nu_{k},\nabla \Phi)  \,d\,m_{f,k}-\int_{\partial \Omega_{k} \cap \partial \Omega} g(\nu_{\partial \Omega},\nabla \Phi)  \,d\,m_{f,\partial \Omega}.
\end{multline*}
Since $g(\nu_{k},\nabla \Phi)\geq 0$ on $\partial \Omega_{k}\setminus \partial \Omega$,
we have
\begin{equation*}
m_{f}\left(\Omega_{k}\right) \leq -\int_{\partial \Omega_{k}\cap \partial \Omega} g(\nu_{\partial \Omega},\nabla \Phi)  \,d\,m_{f,\partial \Omega}.
\end{equation*}
Therefore,
from the Cauchy-Schwarz inequality,
we derive
\begin{align*}
        m_{f}\left(\Omega_{k}\right)
&\leq \int_{\partial \Omega_{k}\cap \partial \Omega} \left(\phi'\circ \rho_{\bm} \right) \vert g(\nu_{\partial \Omega},\nabla \rho_{\bm}) \vert  \,d\,m_{f,\partial \Omega}\\
&\leq m_{f,\partial \Omega}\, (\partial \Omega) \,\sup_{t\in (\delta_{1}(\Omega),\delta_{2}(\Omega))}\,\phi'(t).
\end{align*}
By letting $k \to \infty$,
we have (\ref{eq:Kasue volume estimate}).
\end{proof}
\begin{rem}
In \cite{K5},
the key points of the proof of Proposition \ref{prop:Kasue volume estimate}
in the standard case where $f=0$ and $N=n$
are to use the comparison theorem concerning a generalized Laplacian of $\rho_{\bm}$ proved in \cite{K2},
and to apply the approximation theorem in \cite{GW} to $\rho_{\bm}$.
We see that
similar theorems also hold in our weighted case.
From this point of view,
Proposition \ref{prop:Kasue volume estimate} can be proved in the same way as that in \cite{K5}.
\end{rem}
In the case of $N=\infty$,
we have the following:
\begin{prop}\label{prop:infinite Kasue volume estimate}
Suppose $\ric^{\infty}_{f,M}\geq 0$ and $H_{f,\bm} \geq 0$.
Let $\Omega$ be a relatively compact domain in $M$ such that
$\partial \Omega$ is a smooth hypersurface in $M$ satisfying $\partial \Omega \cap \bm=\emptyset$.
Then
\begin{equation}\label{eq:infinite Kasue volume estimate}
m_{f}(\Omega) \leq m_{f,\partial \Omega}\, (\partial \Omega) \,\left(\delta_{2}(\Omega)-\delta_{1}(\Omega)   \right),
\end{equation}
where $\delta_{1}(\Omega)$ and $\delta_{2}(\Omega)$ are the values defined as $(\ref{eq:diameter of Omega})$.
\end{prop}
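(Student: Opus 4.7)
The plan is to mimic the proof of Proposition \ref{prop:Kasue volume estimate} but with a simpler auxiliary function, replacing the input Lemma \ref{lem:p-Laplacian comparison} by its $N=\infty$ counterpart Lemma \ref{lem:infinite p-Laplacian comparison}. The key is to find a monotone increasing smooth function $\phi:[\delta_{1}(\Omega),\delta_{2}(\Omega)]\to \mathbb{R}$ such that $\Phi:=\phi\circ \rho_{\bm}$ satisfies $\Delta_{f,2}\Phi \geq 1$ off $\cut \bm$, and such that $\sup \phi'$ is $\delta_{2}(\Omega)-\delta_{1}(\Omega)$.

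Concretely, I would define $\phi$ by $\phi'(t):=\delta_{2}(\Omega)-t$, so $\phi''(t)=-1$ and $\phi'\geq 0$ on $[\delta_{1}(\Omega),\delta_{2}(\Omega)]$. Lemma \ref{lem:infinite p-Laplacian comparison} applied with $p=2$ gives, for every $x\in \bm$ and every $t\in (0,\tau(x))$,
\begin{equation*}
\Delta_{f,2}\,\Phi(\gamma_{x}(t)) \geq -\phi''(t) = 1
\end{equation*}
on $\inte M \setminus \cut \bm$. This is the analogue of inequality~(\ref{eq:Kasue volume estimate Laplacian comparison}) in our setting, and $\sup_{t\in(\delta_{1}(\Omega),\delta_{2}(\Omega))} \phi'(t) = \delta_{2}(\Omega)-\delta_{1}(\Omega)$.

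Next, I would invoke Lemma \ref{lem:avoiding the cut locus2} to produce the exhausting sequence $\{\Omega_{k}\}_{k\in \mathbb{N}}$ of compact subsets of $\bar{\Omega}$ with the four stated properties, so that $\bar{\Omega}\setminus \cut\bm=\bigcup_{k}\Omega_{k}$ and each $\partial \Omega_{k}$ carries a unit outer normal $\nu_{k}$ satisfying $g(\nu_{k},\nabla \rho_{\bm})\geq 0$ on $\partial \Omega_{k}\setminus \partial \Omega$, and agreeing with $\nu_{\partial \Omega}$ on $\partial \Omega_{k}\cap \partial \Omega$ almost everywhere. Then, integrating $\Delta_{f,2}\Phi\geq 1$ over $\Omega_{k}$ and applying the Green formula,
\begin{equation*}
m_{f}(\Omega_{k}) \leq -\int_{\partial \Omega_{k}\setminus \partial \Omega} g(\nu_{k},\nabla \Phi)\,d\,m_{f,k} - \int_{\partial \Omega_{k}\cap \partial \Omega} g(\nu_{\partial \Omega},\nabla \Phi)\,d\,m_{f,\partial \Omega}.
\end{equation*}
Since $\nabla \Phi=(\phi'\circ \rho_{\bm})\,\nabla \rho_{\bm}$ with $\phi'\geq 0$, property (4) forces the first boundary integral to be non-positive, while the Cauchy--Schwarz inequality together with $\|\nabla \rho_{\bm}\|=1$ bounds the second by $\sup \phi'\cdot m_{f,\partial \Omega}(\partial \Omega) \leq (\delta_{2}(\Omega)-\delta_{1}(\Omega))\, m_{f,\partial \Omega}(\partial \Omega)$.

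Letting $k\to\infty$ and using that $\cut \bm$ is a null set then yields $(\ref{eq:infinite Kasue volume estimate})$. No step looks genuinely hard: the selection of $\phi$ is straightforward in the $N=\infty$ case because the comparison inequality has no drift term, and the exhaustion argument is already packaged in Lemma \ref{lem:avoiding the cut locus2}. The only point that deserves care is the verification that the boundary contribution on $\partial \Omega_{k}\setminus \partial \Omega$ has the right sign, which is exactly what property (4) of Lemma \ref{lem:avoiding the cut locus2} is designed to ensure.
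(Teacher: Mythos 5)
Your proposal is correct and follows essentially the same route as the paper's own proof: the paper also chooses $\phi$ with $\phi'(t)=\delta_{2}(\Omega)-t$ (writing the antiderivative explicitly), applies Lemma \ref{lem:infinite p-Laplacian comparison} with $p=2$ to get $\Delta_{f,2}\Phi\geq 1$ away from $\cut\bm$, exhausts $\bar{\Omega}\setminus\cut\bm$ via Lemma \ref{lem:avoiding the cut locus2}, integrates by the Green formula, discards the inner boundary term by the sign condition $g(\nu_{k},\nabla\rho_{\bm})\geq 0$, and bounds the $\partial\Omega$ term by Cauchy--Schwarz. No discrepancies.
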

\begin{proof}
Define a function $\phi:[\delta_{1}(\Omega),\delta_{2}(\Omega)]\to \mathbb{R}$ by
\begin{equation*}
\phi(t):=-\frac{t^{2}}{2}+\delta_{2}(\Omega)t-\delta_{1}(\Omega)\delta_{2}(\Omega)+\frac{\delta_{1}(\Omega)^{2}}{2},
\end{equation*}
and put $\Phi:=\phi \circ \rho_{\bm}$.
By Lemma \ref{lem:infinite p-Laplacian comparison},
on $\inte M\setminus \cut \bm$
\begin{equation}\label{eq:infinite Kasue volume estimate Laplacian comparison}
\Delta_{f,2}\,\Phi \geq 1.
\end{equation}

By Lemma \ref{lem:avoiding the cut locus2},
there exists a sequence $\{\Omega_{k}\}_{k\in \mathbb{N}}$ of compact subsets of $\bar{\Omega}$ satisfying that
for every $k$,
the set $\partial \Omega_{k}$ is a smooth hypersurface in $M$ except for a null set in $(\partial \Omega,m_{f,\partial \Omega})$,
satisfying the following:
(1) for all $k_{1},k_{2}\in \mathbb{N}$ with $k_{1}<k_{2}$,
we have $\Omega_{k_{1}}\subset \Omega_{k_{2}}$;
(2) $\bar{\Omega} \setminus \cut \bm=\bigcup_{k\in \mathbb{N}}\,\Omega_{k}$;
(3) for every $k\in \mathbb{N}$,
and for almost every point $p \in \partial \Omega_{k}\cap \partial \Omega$ in $(\partial \Omega,m_{f,\partial \Omega})$,
there exists the unit outer normal vector for $\Omega_{k}$ at $p$
that coincides with the unit outer normal vector on $\partial \Omega$ for $\Omega$ at $p$;
(4) for every $k\in \mathbb{N}$,
on $\partial \Omega_{k}\setminus \partial \Omega$,
there exists the unit outer normal vector field $\nu_{k}$ for $\Omega_{k}$ such that $g(\nu_{k},\nabla \rho_{\bm})\geq 0$.

For the canonical Riemannian volume measure $\vol_{k}$ on $\partial \Omega_{k}\setminus \partial \Omega$,
put $m_{f,k}:=e^{-f|_{\partial \Omega_{k}\setminus \partial \Omega}}\,\vol_{k}$.
Let $\nu_{\partial \Omega}$ be the unit outer normal vector on $\partial \Omega$ for $\Omega$.
By integrating the both sides of (\ref{eq:infinite Kasue volume estimate Laplacian comparison}) on $\Omega_{k}$,
and by the Green formula,
we have
\begin{multline*}
m_{f}\left(\Omega_{k}\right)
\leq \int_{\Omega_{k}} \, \Delta_{f,2}\,\Phi \,d\,m_{f}\\
= -\int_{\partial \Omega_{k}\setminus \partial \Omega} g(\nu_{k},\nabla \Phi)  \,d\,m_{f,k}-\int_{\partial \Omega_{k}\cap \partial \Omega} g(\nu_{\partial \Omega},\nabla \Phi)  \,d\,m_{f,\partial \Omega}.
\end{multline*}
Since $g(\nu_{k},\nabla \Phi)\geq 0$ on $\partial \Omega_{k}\setminus \partial \Omega$,
we have
\begin{equation*}
m_{f}\left(\Omega_{k}\right) \leq -\int_{\partial \Omega_{k}\cap \partial \Omega} g(\nu_{\partial \Omega},\nabla \Phi)  \,d\,m_{f,\partial \Omega}.
\end{equation*}
By the Cauchy-Schwarz inequality,
\begin{align*}
        m_{f}\left(\Omega_{k}\right)
&\leq \int_{\partial \Omega_{k}\cap \partial \Omega} \left(\delta_{2}(\Omega)-\rho_{\bm}\right) \vert g(\nu_{\partial \Omega},\nabla \rho_{\bm}) \vert  \,d\,m_{f,\partial \Omega}\\
&\leq m_{f,\partial \Omega}\, (\partial \Omega) \,\left(\delta_{2}(\Omega)-\delta_{1}(\Omega)   \right).
\end{align*}
Letting $k \to \infty$,
we obtain (\ref{eq:infinite Kasue volume estimate}).
\end{proof}
\subsection{Eigenvalue estimates}
Let $\alpha \in (0,\infty)$.
The \textit{$f$-Dirichlet $\alpha$-isoperimetric constant} $ID_{\alpha}(M,m_{f})$ of $M$ is defined as
\begin{equation*}
ID_{\alpha}(M,m_{f}):=\inf_{\Omega}\,  \frac{m_{f,\partial \Omega}(\partial \Omega)}{\left( m_{f}(\Omega) \right)^{1/\alpha}},
\end{equation*}
where the infimum is taken over all relatively compact domains $\Omega$ in $M$ such that
$\partial \Omega$ are smooth hypersurfaces in $M$ satisfying $\partial \Omega \cap \partial M=\emptyset$.
The \textit{$f$-Dirichlet $\alpha$-Sobolev constant} $SD_{\alpha}(M,m_{f})$ of $M$ is defined as
\begin{equation*}
SD_{\alpha}(M,m_{f}):=\inf_{\phi \in W^{1,1}_{0}(M,m_{f})\setminus \{0\}}\, \frac{\int_{M}\,  \Vert \nabla \phi \Vert \,d\,m_{f}}{\left(\int_{M}\,  \vert \phi \vert^{\alpha} \,d\,m_{f} \right)^{1/\alpha}},
\end{equation*}
where the infimum is taken over all non-zero functions $\phi$ in $W^{1,1}_{0}(M,m_{f})$.

The following relationship between 
the isoperimetric constant and the Sobolev constant has been 
formally established by Federer and Fleming in \cite{FF} (see e.g., \cite{Ch}, \cite{Li}),
and later used by Cheeger in \cite{Che1} for the estimate of the first Dirichlet eigenvalue of the Laplacian.
\begin{prop}[\cite{FF}]\label{thm:isoperimetric and Sobolev}
For all $\alpha\in (0,\infty)$
we have 
\begin{equation*}
ID_{\alpha}(M,m_{f})=SD_{\alpha}(M,m_{f}).
\end{equation*}
\end{prop}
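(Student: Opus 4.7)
The plan is to establish the two inequalities $SD_{\alpha}(M,m_{f}) \leq ID_{\alpha}(M,m_{f})$ and $ID_{\alpha}(M,m_{f}) \leq SD_{\alpha}(M,m_{f})$ separately, via the classical Federer--Fleming argument adapted to our weighted setting.

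For the forward direction $SD_{\alpha} \leq ID_{\alpha}$, fix a relatively compact domain $\Omega$ in $M$ with smooth $\partial \Omega$ satisfying $\partial \Omega \cap \bm = \emptyset$, and approximate $\chi_{\Omega}$ by the family of compactly supported Lipschitz functions
\[
\phi_{\epsilon}(p) := \max\{0,\, 1 - \epsilon^{-1} d_{M}(p, \bar{\Omega})\}, \qquad \epsilon > 0.
\]
Since $\bar{\Omega}$ is compact and disjoint from $\bm$, for $\epsilon$ small the support of $\phi_{\epsilon}$ is compact and contained in $\inte M$; after a routine mollification, such functions lie in $W^{1,1}_{0}(M,m_{f})$. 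Then $\int_{M} \phi_{\epsilon}^{\alpha}\, dm_{f} \to m_{f}(\Omega)$ by dominated convergence, and the coarea formula applied in the $\epsilon$-tube around $\partial \Omega$ (where $\Vert \nabla \phi_{\epsilon} \Vert = \epsilon^{-1}$ a.e.) yields $\int_{M} \Vert \nabla \phi_{\epsilon}\Vert\, dm_{f} \to m_{f,\partial \Omega}(\partial \Omega)$. Taking the ratio and passing to the infimum over $\Omega$ gives $SD_{\alpha} \leq ID_{\alpha}$.

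For the reverse direction $ID_{\alpha} \leq SD_{\alpha}$, it suffices to bound the quotient $\int_{M} \Vert \nabla \phi \Vert\, dm_{f} / (\int_{M} |\phi|^{\alpha}\, dm_{f})^{1/\alpha}$ from below by $ID_{\alpha}$ for every non-negative smooth $\phi$ with compact support in $\inte M$, since $W^{1,1}_{0}(M,m_{f})$ is the completion of such functions. By Sard's theorem, for almost every $t \in (0,\max \phi)$ the value $t$ is regular for $\phi$, so $\Omega_{t} := \{\phi > t\}$ is a relatively compact domain with smooth $\partial \Omega_{t}$ disjoint from $\bm$. The weighted coarea formula gives
\[
\int_{M} \Vert \nabla \phi \Vert\, dm_{f} = \int_{0}^{\infty} m_{f,\partial \Omega_{t}}(\partial \Omega_{t})\, dt \geq ID_{\alpha} \int_{0}^{\infty} (m_{f}(\Omega_{t}))^{1/\alpha}\, dt,
\]
where the inequality is the definition of $ID_{\alpha}$ applied to each $\Omega_{t}$. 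Combining this with Minkowski's integral inequality applied to the layer-cake representation $\phi(p) = \int_{0}^{\infty} \chi_{\Omega_{t}}(p)\, dt$, which yields $(\int_{M} \phi^{\alpha}\, dm_{f})^{1/\alpha} \leq \int_{0}^{\infty} (m_{f}(\Omega_{t}))^{1/\alpha}\, dt$, produces the desired lower bound on the quotient.

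The main technical obstacle will be the rigorous justification of the weighted coarea formula and of the boundary-measure convergence $\int_{M}\Vert \nabla \phi_{\epsilon}\Vert\, dm_{f} \to m_{f,\partial \Omega}(\partial \Omega)$; since $m_{f} = e^{-f}\vol_{g}$ with $f$ smooth, both reduce to their unweighted counterparts via the continuous factor $e^{-f}$. A secondary subtlety is that Minkowski's integral inequality reverses when $\alpha \in (0,1)$, so the second half of the argument is cleanest under the standing assumption $\alpha \geq 1$; the case $\alpha < 1$ either requires a separate truncation argument or is simply outside the range of interest for the Cheeger-type applications that follow.
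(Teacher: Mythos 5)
Your proposal is the classical Federer--Fleming argument and is precisely what the paper relies on: the paper offers no proof of its own, only the remark that the proof of Theorem 9.5 in P.~Li's \emph{Geometric Analysis} carries over to the weighted setting because the density $e^{-f}$ is smooth and multiplies both the volume and boundary-area measures, and that proof is exactly what you have reproduced (cutoff approximation of $\chi_{\Omega}$ for $SD_{\alpha}\leq ID_{\alpha}$; coarea formula, layer-cake representation, and Minkowski's integral inequality for the reverse direction). Your caveat about $\alpha<1$ is well-taken and worth recording: the Minkowski step $\bigl(\int_M\phi^{\alpha}\,dm_f\bigr)^{1/\alpha}\leq\int_0^{\infty}m_f(\Omega_t)^{1/\alpha}\,dt$ genuinely fails for $\alpha<1$ (take a triangular bump on $\mathbb{R}$ and $\alpha=1/2$, say), and the same restriction is needed for the subadditivity of $x\mapsto x^{1/\alpha}$ that one implicitly uses to pass from the possibly disconnected superlevel sets $\{\phi>t\}$ to the connected domains over which $ID_{\alpha}$ is defined. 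So your argument, like the cited one, establishes the claim for $\alpha\geq1$; since the paper invokes the proposition only with $\alpha=1$, in the proofs of Theorems \ref{thm:p-Laplacian1} and \ref{thm:infinite p-Laplacian1}, this costs nothing downstream, but the stated range $\alpha\in(0,\infty)$ is broader than what this line of argument actually delivers.
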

A proof of Proposition \ref{thm:isoperimetric and Sobolev} has been given in \cite{Li} in the case of $f=0$ (see Theorem 9.5 in \cite{Li}).
The method of the proof also works in our weighted setting.

For $N\in [2,\infty)$,
$\kappa,\lambda\in \mathbb{R}$,
and $D\in (0,\bar{C}_{\kappa,\lambda}]$,
let $C(N,\kappa,\lambda,D)$ be a positive constant defined by
\begin{equation}\label{eq:constant}
C(N,\kappa,\lambda,D):=\sup_{t\in [0,D)}\, \frac{\int^{D}_{t}\,  s^{N-1}_{\kappa,\lambda}(s)\, ds}{s^{N-1}_{\kappa,\lambda}(t)}.
\end{equation}
Notice that 
$C(N,\kappa,\lambda,\infty)$ is finite if and only if $\kappa<0$ and $\lambda=\sqrt{\vert \kappa \vert}$;
in this case,
we have
$C(N,\kappa,\lambda,D)=\left((N-1)\lambda \right)^{-1}\,\left(1-e^{-(N-1)\lambda\, D} \right)$;
in particular,
$(2\,C(N,\kappa,\lambda,\infty))^{-2}=\left((N-1)\lambda/2 \right)^{2}$.

By using Proposition \ref{prop:Kasue volume estimate},
we obtain the following:
\begin{thm}\label{thm:p-Laplacian1}
Let $M$ be an $n$-dimensional,
connected complete Riemannian manifold with boundary,
and let $f:M\to \mathbb{R}$ be a smooth function.
Suppose that
$\bm$ is compact.
Let $p\in (1,\infty)$.
For $N\in [n,\infty)$,
we suppose $\ric^{N}_{f,M}\geq (N-1)\kappa$ and $H_{f,\bm} \geq (N-1)\lambda$.
For $D\in(0,\bar{C}_{\kappa,\lambda}]$,
we assume $\dm \leq D$.
Then we have
\begin{equation}\label{eq:p-Laplacian1}
\mu_{f,1,p}(M)\geq (\,p\,C(N,\kappa,\lambda,D)\,)^{-p},
\end{equation}
where $C(N,\kappa,\lambda,D)$ is the constant defined as $(\ref{eq:constant})$.
\end{thm}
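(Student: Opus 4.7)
My plan is to combine the volume estimate of Proposition \ref{prop:Kasue volume estimate} with the Federer--Fleming identity (Proposition \ref{thm:isoperimetric and Sobolev}) and a Cheeger-type argument for the weighted $p$-Laplacian. First, I would convert Proposition \ref{prop:Kasue volume estimate} into a uniform lower bound for the $f$-Dirichlet $1$-isoperimetric constant. For any relatively compact domain $\Omega \subset M$ such that $\partial \Omega$ is a smooth hypersurface with $\partial \Omega \cap \bm = \emptyset$, the hypothesis $\dm \leq D$ gives $\delta_{2}(\Omega)\leq D$; since $s_{\kappa,\lambda}\geq 0$ on $[0,\bconst]$, I get
\[
\sup_{t\in (\delta_{1}(\Omega),\delta_{2}(\Omega))}\frac{\int^{\delta_{2}(\Omega)}_{t}s^{N-1}_{\kappa,\lambda}(s)\,ds}{s^{N-1}_{\kappa,\lambda}(t)}\leq C(N,\kappa,\lambda,D).
\]
Proposition \ref{prop:Kasue volume estimate} then yields $m_{f}(\Omega)\leq C(N,\kappa,\lambda,D)\, m_{f,\partial\Omega}(\partial\Omega)$, hence $ID_{1}(M,m_{f})\geq C(N,\kappa,\lambda,D)^{-1}$. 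Proposition \ref{thm:isoperimetric and Sobolev} with $\alpha=1$ promotes this to $SD_{1}(M,m_{f})\geq C(N,\kappa,\lambda,D)^{-1}$.

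Next, I would run the standard Cheeger trick for the $p$-Laplacian. Fix a non-zero smooth $\psi$ on $M$ whose support is compact and contained in $\inte M$. Since $|\psi|^{p}$ lies in $W^{1,1}_{0}(M,m_{f})$, the definition of $SD_{1}$ gives
\[
SD_{1}(M,m_{f})\int_{M}|\psi|^{p}\,dm_{f}\leq \int_{M}\Vert \nabla |\psi|^{p}\Vert\,dm_{f}.
\]
Using $\Vert \nabla |\psi|^{p}\Vert = p|\psi|^{p-1}\Vert \nabla \psi \Vert$ and the H\"older inequality with exponents $p$ and $p/(p-1)$, the right-hand side is at most
\[
p\left(\int_{M}\Vert \nabla \psi \Vert^{p}\,dm_{f}\right)^{1/p}\left(\int_{M}|\psi|^{p}\,dm_{f}\right)^{(p-1)/p}.
\]
Rearranging and raising to the $p$-th power yields $R_{f,p}(\psi)\geq (SD_{1}(M,m_{f})/p)^{p}\geq (p\,C(N,\kappa,\lambda,D))^{-p}$. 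Taking the infimum over all such $\psi$ and extending to all non-zero functions in $W^{1,p}_{0}(M,m_{f})$ by density proves $(\ref{eq:p-Laplacian1})$.

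The main obstacle is mostly bookkeeping rather than conceptual: one must check that the supremum in Proposition \ref{prop:Kasue volume estimate} can indeed be bounded by the $\Omega$-independent constant $C(N,\kappa,\lambda,D)$, which rests on the monotonicity $\delta_{2}(\Omega)\leq D$ combined with positivity of $s_{\kappa,\lambda}$; and that the Cheeger-type manipulation passes from compactly supported smooth test functions to all of $W^{1,p}_{0}(M,m_{f})$ by a standard approximation, which is justified because both sides of the Rayleigh quotient are continuous in the $(1,p)$-Sobolev norm. Compactness of $\bm$ ensures that $m_{f,\partial\Omega}(\partial\Omega)$ is meaningful for the admissible $\Omega$ that approach $\bm$ from inside, so that $ID_{1}(M,m_{f})$ is well defined.
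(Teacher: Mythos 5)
Your proof is correct and follows essentially the same route as the paper: bound the $f$-Dirichlet $1$-isoperimetric constant from below via Proposition \ref{prop:Kasue volume estimate} (using $\delta_{2}(\Omega)\leq \dm\leq D$ to pass to the $\Omega$-independent constant $C(N,\kappa,\lambda,D)$), invoke the Federer--Fleming equality $ID_{1}=SD_{1}$ from Proposition \ref{thm:isoperimetric and Sobolev}, and then apply the Cheeger-type substitution $\phi=\vert\psi\vert^{p}$ together with H\"older's inequality to pass from the $L^{1}$-Poincar\'e inequality to the $p$-Rayleigh quotient. The only cosmetic difference is that you start from compactly supported smooth test functions and appeal to density, whereas the paper works directly with $\psi\in W^{1,p}_{0}(M,m_{f})$; these are equivalent.
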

\begin{proof}
Let $\Omega$ be a relatively compact domain in $M$ such that
$\partial \Omega$ is a smooth hypersurface in $M$ satisfying $\partial \Omega \cap \bm=\emptyset$.
By Proposition \ref{prop:Kasue volume estimate},
we have
\begin{equation*}\label{eq:Kasue}
m_{f}(\Omega) \leq m_{f,\partial \Omega}( \partial \Omega)\,C(N,\kappa,\lambda,D).
\end{equation*}
By Proposition \ref{thm:isoperimetric and Sobolev},
we have $ID_{1}(M,m_{f})=SD_{1}(M,m_{f})$.
We obtain $SD_{1}(M,m_{f})\geq C(N,\kappa,\lambda,D)^{-1}$.
Therefore,
for all $\phi \in W^{1,1}_{0}(M,m_{f})$
\begin{equation}\label{eq:Poincare1}
\int_{M}\, \vert \phi \vert \,d\,m_{f} \leq C(N,\kappa,\lambda,D) \int_{M}\,\Vert \nabla \phi \Vert \,d\,m_{f}.
\end{equation}

Let $\psi$ be a non-zero function in $W^{1,p}_{0}(M,m_{f})$.
Put $q:=p\,(1-p)^{-1}$.
In (\ref{eq:Poincare1}),
by replacing $\phi$ with $\vert \psi \vert^{p}$, 
and by the H\"older inequality,
we see
\begin{multline*}
\int_{M}\, \vert \psi \vert^{p} \,d\,m_{f} \leq p\,C(N,\kappa,\lambda,D)\, \int_{M}\, \vert \psi \vert^{p-1} \,  \Vert \nabla \psi\Vert \,d\,m_{f}\\
                                \leq p\, C(N,\kappa,\lambda,D)\, \left(\int_{M}\, \vert \psi \vert^{p}\,d\,m_{f}\right)^{1/q} \left(\int_{M}\, \Vert \nabla \psi\Vert^{p}\,d\, m_{f}\right)^{1/p}.\\
\end{multline*}
Considering the Rayleigh quotient $R_{f,p}(\psi)$,
we obtain (\ref{eq:p-Laplacian1}).
\end{proof}
In the case of $N=\infty$,
we have the following:
\begin{thm}\label{thm:infinite p-Laplacian1}
Let $M$ be a connected complete Riemannian manifold with boundary,
and let $f:M\to \mathbb{R}$ be a smooth function.
Suppose that
$\bm$ is compact.
Let $p\in (1,\infty)$.
Suppose $\ric^{\infty}_{f,M}\geq 0$ and $H_{f,\bm} \geq 0$.
For $D\in(0,\infty]$,
we assume $D(M,\bm)\leq D$.
Then we have
\begin{equation}\label{eq:infinite p-Laplacian1}
\mu_{f,1,p}(M)\geq (\,pD\,)^{-p}.
\end{equation}
\end{thm}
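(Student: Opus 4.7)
The plan is to imitate the proof of Theorem \ref{thm:p-Laplacian1}, replacing the role of Proposition \ref{prop:Kasue volume estimate} with its $N=\infty$ counterpart Proposition \ref{prop:infinite Kasue volume estimate}. The bound $(pD)^{-p}$ is trivial when $D=\infty$, so I only need to treat $D\in(0,\infty)$. The key observation is that in the infinite case, the role of the constant $C(N,\kappa,\lambda,D)$ is played simply by $D$ itself, since Proposition \ref{prop:infinite Kasue volume estimate} yields a linear volume-vs-area estimate with coefficient $\delta_{2}(\Omega)-\delta_{1}(\Omega)\leq \dm \leq D$.

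First, I would fix a relatively compact domain $\Omega$ in $M$ with $\partial \Omega$ a smooth hypersurface satisfying $\partial \Omega \cap \bm=\emptyset$. Proposition \ref{prop:infinite Kasue volume estimate} gives
\begin{equation*}
m_{f}(\Omega)\leq m_{f,\partial \Omega}(\partial \Omega)\,\bigl(\delta_{2}(\Omega)-\delta_{1}(\Omega)\bigr)\leq D\, m_{f,\partial \Omega}(\partial \Omega),
\end{equation*}
where the second inequality uses $\dm\leq D$. Taking the infimum over $\Omega$ yields $ID_{1}(M,m_{f})\geq D^{-1}$, and then Proposition \ref{thm:isoperimetric and Sobolev} upgrades this to $SD_{1}(M,m_{f})\geq D^{-1}$. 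Equivalently, for every $\phi \in W^{1,1}_{0}(M,m_{f})$,
\begin{equation*}
\int_{M}|\phi|\,d\,m_{f}\leq D\int_{M}\Vert \nabla \phi\Vert\,d\,m_{f}.
\end{equation*}

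Next, given a non-zero $\psi \in W^{1,p}_{0}(M,m_{f})$, I substitute $\phi:=|\psi|^{p}$ into the above Poincaré-type inequality and apply the H\"older inequality with conjugate exponent $q:=p/(p-1)$:
\begin{equation*}
\int_{M}|\psi|^{p}\,d\,m_{f}\leq pD\int_{M}|\psi|^{p-1}\Vert \nabla \psi\Vert\,d\,m_{f}\leq pD\left(\int_{M}|\psi|^{p}\,d\,m_{f}\right)^{\!1/q}\!\left(\int_{M}\Vert \nabla \psi\Vert^{p}\,d\,m_{f}\right)^{\!1/p}.
\end{equation*}
Dividing both sides by $(\int_{M}|\psi|^{p}\,d\,m_{f})^{1/q}$ and raising to the $p$-th power gives $R_{f,p}(\psi)\geq (pD)^{-p}$, and the infimum over $\psi$ produces (\ref{eq:infinite p-Laplacian1}).

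There is no serious obstacle here; the argument is essentially a $N=\infty$ transcription of the proof of Theorem \ref{thm:p-Laplacian1}. The only points warranting attention are (i) checking that the substitution $\phi=|\psi|^{p}$ is admissible in $W^{1,1}_{0}(M,m_{f})$ when $\psi\in W^{1,p}_{0}(M,m_{f})$ (which is standard via approximation by smooth functions of compact support in $\inte M$), and (ii) handling the degenerate case $D=\infty$, for which (\ref{eq:infinite p-Laplacian1}) reduces to the trivially true bound $\mu_{f,1,p}(M)\geq 0$.
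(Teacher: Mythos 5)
Your proposal is correct and follows essentially the same route as the paper: apply Proposition \ref{prop:infinite Kasue volume estimate} to bound the isoperimetric constant, upgrade via Proposition \ref{thm:isoperimetric and Sobolev} to a Poincar\'e-type inequality, then substitute $\phi=|\psi|^p$ and apply H\"older. The remarks on the trivial $D=\infty$ case and the admissibility of the substitution are sensible but do not alter the argument.
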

\begin{proof}
Let $\Omega$ be a relatively compact domain in $M$ such that
$\partial \Omega$ is a smooth hypersurface in $M$ satisfying $\partial \Omega \cap \bm=\emptyset$.
Proposition \ref{prop:infinite Kasue volume estimate} implies
$m_{f}(\Omega) \leq m_{f,\partial \Omega}( \partial \Omega)\,D$.
From Proposition \ref{thm:isoperimetric and Sobolev},
we derive $SD_{1}(M,m_{f})\geq D^{-1}$.
Therefore,
for all $\phi \in W^{1,1}_{0}(M,m_{f})$
\begin{equation}\label{eq:infinite Poincare1}
\int_{M}\, \vert \phi \vert \,d\,m_{f} \leq D \int_{M}\,\Vert \nabla \phi \Vert \,d\,m_{f}.
\end{equation}

Take a non-zero function $\psi$ in $W^{1,p}_{0}(M,m_{f})$.
Put $q:=p\,(1-p)^{-1}$.
In (\ref{eq:infinite Poincare1}),
by replacing $\phi$ with $\vert \psi \vert^{p}$, 
and by the H\"older inequality,
we see
\begin{equation*}
\int_{M}\, \vert \psi \vert^{p} \,d\,m_{f} \leq p\, D\, \left(\int_{M}\, \vert \psi \vert^{p}\,d\,m_{f}\right)^{1/q} \left(\int_{M}\, \Vert \nabla \psi\Vert^{p}\,d\, m_{f}\right)^{1/p}.
\end{equation*}
Considering the Rayleigh quotient $R_{f,p}(\psi)$,
we obtain (\ref{eq:infinite p-Laplacian1}).
\end{proof}
Now,
we prove Theorem \ref{thm:spectrum rigidity}.
\begin{proof}[Proof of Theorem \ref{thm:spectrum rigidity}]
Suppose that
$\bm$ is compact.
Let $p\in (1,\infty)$.
Let $\kappa<0$ and $\lambda:=\sqrt{\vert \kappa \vert}$.
For $N\in [n,\infty)$,
we suppose $\ric^{N}_{f,M}\geq (N-1)\kappa$ and $H_{f,\bm} \geq (N-1)\lambda$.
We have 
\begin{equation*}
C(N,\kappa,\lambda,D)=\left((N-1)\lambda \right)^{-1}\,\left(1-e^{-(N-1)\lambda\, D} \right).
\end{equation*}
The right hand side is monotone increasing as $D\to \infty$.
From Theorem \ref{thm:p-Laplacian1},
we derive (\ref{eq:noncompact estimate}).

Assume that
the equality in (\ref{eq:noncompact estimate}) holds.
By Theorem \ref{thm:p-Laplacian1},
we have $D=\infty$.
Since $\bm$ is compact,
$M$ is non-compact.
By Corollary \ref{cor:Kasue splitting},
$(M,d_{M})$ is isometric to $([0,\infty)\times_{\kappa,\lambda}\bm, d_{\kappa,\lambda})$,
and for all $x\in \bm$ and $t\in [0,\infty)$
we have $(f \circ \gamma_{x})(t)=f(x)+(N-n)\lambda t$.
This completes the proof of Theorem \ref{thm:spectrum rigidity}.
\end{proof}

\end{document}